\theoremstyle{plain}
\newtheorem{theor0}{Theorem}
\newenvironment{theor}
  {\pushQED{\qed}\begin{theor0}}
  {\popQED\end{theor0}}
\newtheorem{lem0}{Lemma}[section]
\newenvironment{lem}
  {\pushQED{\qed}\begin{lem0}}
  {\popQED\end{lem0}}
\newtheorem{prop0}[lem0]{Proposition}
\newenvironment{prop}
  {\pushQED{\qed}\begin{prop0}}
  {\popQED\end{prop0}}
\newtheorem{cor0}[lem0]{Corollary}
\newenvironment{cor}
  {\pushQED{\qed}\begin{cor0}}
  {\popQED\end{cor0}}
\newtheorem{propr0}[lem0]{Property}
\newtheorem{hyp0}[lem0]{Hypothesis}
\newtheorem{result0}[lem0]{Result}
\newtheorem{conj0}[lem0]{Conjecture}
\newtheorem{heur0}[lem0]{Heuristics}
\theoremstyle{definition}
\newtheorem{defin0}[lem0]{Definition}
\newenvironment{defin}
  {\pushQED{\qed}\begin{defin0}}
  {\popQED\end{defin0}}
\newtheorem{rems0}[lem0]{Remarks}
\newenvironment{rems}
  {\pushQED{\qed}\begin{rems0}}
  {\popQED\end{rems0}}
\newtheorem{ex0}[lem0]{Example}
\newtheorem{exs0}[lem0]{Examples}
\newtheorem{rem0}[lem0]{Remark}
\newenvironment{rem}
  {\pushQED{\qed}\begin{rem0}}
  {\popQED\end{rem0}}
\newtheorem{qu0}[lem0]{Question}
\newtheorem{qus0}[lem0]{Questions}
  \newtheorem{as0}[lem0]{Assumption}
\newcommand{\e}{\epsilon}
\newcommand{\R}{\mathbb R}
\newcommand{\Cc}{\mathcal C}
\newcommand{\Pc}{\mathcal P}
\newcommand{\M}{\mathcal M}
\newcommand{\Id}{\operatorname{Id}}
\newcommand{\Mes}{\operatorname{Mes}}
\newcommand{\Div}{\operatorname{div}}
\newcommand{\loc}{{\operatorname{loc}}}
\newcommand{\uloc}{{\operatorname{uloc}}}
\newcommand{\curl}{{\,\operatorname{curl}\,}}
\newcommand{\Ld}{\operatorname{L}}
\newcommand{\cvf}[1]{\mathrel{\mathop{\xrightharpoonup{#1}}}}
\newcommand{\step}[1]{\noindent \textit{Step} #1.}
\newcommand{\substep}[1]{\noindent \textit{Substep} #1.}
\numberwithin{equation}{section}
\definecolor{dgreen}{rgb}{0, 0.6, 0}
\title{Well-posedness for mean-field evolutions arising in superconductivity}
\author{Mitia Duerinckx\\\\with an appendix jointly written with Julian Fischer}
\date{}
\begin{document}
\selectlanguage{english}
\maketitle

We establish the existence of a global solution for a new family of fluid-like equations, which are obtained in certain regimes in~\cite{DS-16} as the mean-field evolution of the supercurrent density in a (2D section of a) type-II superconductor with pinning and with imposed electric current. We also consider general vortex-sheet initial data, and investigate the uniqueness and regularity properties of the solution. For some choice of parameters, the equation under investigation coincides with the so-called lake equation from 2D shallow water fluid dynamics, and our analysis then leads to a new existence result for rough initial data.

\bigskip

\section{Introduction}\label{chap:intro}

\subsection{General overview}

We study the well-posedness of the following two fluid-like evolution models coming from the mean-field limit equations of Ginzburg-Landau vortices: first, for $\alpha\ge0$, $\beta\in\R$, we consider the ``incompressible'' flow
\begin{align}\label{eq:limeqn1}
\partial_tv=\nabla P-\alpha(\Psi+v) \curl v+\beta(\Psi+v)^\bot \curl v,\qquad\Div (av)=0,\qquad\text{in $\R^+\times\R^2$,}
\end{align}
and second, for $0\le \lambda<\infty$, $\alpha>0$, $\beta\in\R$, we consider the ``compressible'' flow
\begin{align}\label{eq:limeqn2}
\partial_tv=\lambda\nabla(a^{-1}\Div(av))-\alpha(\Psi+v) \curl v+\beta(\Psi+v)^\bot \curl v,\qquad\text{in $\R^+\times\R^2$,}
\end{align}
with $v:\R^+\times\R^2:=[0,\infty)\times\R^2\to\R^2$ and $\curl v\ge0$, where $\Psi:\R^2\to\R^2$ is a given forcing vector field, and where the weight $a:=e^h$ is determined by a given ``pinning potential'' $h:\R^2\to\R$.
Note that the incompressible model~\eqref{eq:limeqn1} can be seen as the limiting case $\lambda=\infty$ of the family of compressible models~\eqref{eq:limeqn2}. As established in a companion paper~\cite{DS-16} with Sylvia Serfaty, these equations are obtained in certain regimes as the mean-field evolution of the supercurrent density in a (2D section of a) type-II superconductor described by the 2D Ginzburg-Landau equation with pinning and with imposed electric current --- but without gauge and in whole space, for simplicity.
In this context, the cases $\lambda=\infty$, $0<\lambda<\infty$, and $\lambda=0$ correspond respectively to a low, an intermediate, and a high vortex density regime.
Note that in the parabolic case $\alpha>0$, $\beta=0$, the incompressible model~\eqref{eq:limeqn1} can be seen as a Wasserstein gradient flow for the vorticity $\curl v$, but a common gradient flow structure seems to be missing for the whole family of equations~\eqref{eq:limeqn2} with $\lambda\in[0,\infty]$.
In the conservative case $\alpha=0$ with $\Psi=0$, the incompressible model~\eqref{eq:limeqn1} takes the form of the so-called lake equation from 2D shallow water fluid dynamics~\cite[p.235]{Greenspan-80} (see also~\cite{Camassa-Holm-Levermore-1,Camassa-Holm-Levermore-2}), which reduces to the usual 2D Euler equation if the weight $a$ is constant.

In the nondegenerate case $\lambda>0$, we investigate existence, uniqueness, and regularity, both locally and globally in time, for the Cauchy problems associated with~\eqref{eq:limeqn1} and~\eqref{eq:limeqn2}, and we also consider vortex-sheet initial data. In Appendix~\ref{sec:degenerate} jointly written with Julian Fischer, a complete theory is further obtained for the degenerate parabolic case $\lambda=\beta=0$, $\alpha>0$.

\subsection{Brief discussion of the model}
Superconductors are materials that in certain circumstances lose their resistivity, which allows permanent supercurrents to circulate without loss of energy. In the case of type-II superconductors, if the external magnetic field is not too strong, it is expelled from the material (Meissner effect), while, if it is much too strong, the material returns to a normal state. Between these two critical values of the external field, these materials are in a mixed state, allowing a partial penetration of the external field through ``vortices'', which are accurately described by the (mesoscopic) Ginzburg-Landau theory.
Restricting ourselves to a 2D section of a superconducting material, it is standard to study for simplicity the 2D Ginzburg-Landau equation on the whole plane (to avoid boundary issues) and without gauge (although the gauge is expected to bring only minor difficulties). We refer e.g.\@ to~\cite{Tinkham,Tilley} for further reference on these models, and to~\cite{SS-book} for a mathematical introduction.
In this framework, in the asymptotic regime of a large Ginzburg-Landau parameter (which is indeed typically the case in real-life superconductors), vortices are known to become point-like, and to interact with one another according to a Coulomb pair potential.
In the mean-field limit of a large number of vortices, the evolution of the suitably normalized (macroscopic) mean-field density $\omega:\R^+\times\R^2\to\R$ of the vortex liquid was then naturally conjectured to satisfy the following Chapman-Rubinstein-Schatzman-E equation~\cite{WE-94,CRS-96}
\begin{align}\label{eq:LZh0}
\partial_t\omega=\Div(|\omega| \nabla (-\triangle)^{-1}\omega),\qquad\text{in $\R^+\times\R^2$},
\end{align}
where $(-\triangle)^{-1}\omega$ is indeed the Coulomb potential generated by the vortices. Although the vortex density $\omega$ is a priori a signed measure, we restrict here (and throughout this paper) to positive measures, $|\omega|=\omega\ge0$, so that the above is replaced by
\begin{align}\label{eq:LZh}
\partial_t\omega=\Div(\omega \nabla (-\triangle)^{-1}\omega),\qquad\text{in $\R^+\times\R^2$}.
\end{align}
More precisely, the mean-field supercurrent density $v:\R^+\times\R^2\to\R^2$ (linked to the vortex density through the relation $\omega=\curl v$) was conjectured to satisfy
\begin{align}\label{eq:LZh00-forv}
\partial_tv=\nabla P-v\curl v,\qquad\Div v=0,\qquad\text{in $\R^+\times\R^2$}.
\end{align}
Taking the curl of this equation indeed formally yields~\eqref{eq:LZh}, noting that the incompressibility constraint $\Div v=0$ allows to write $v=\nabla^\bot\triangle^{-1}\omega$.

In the context of superfluidity~\cite{Aftalion-06,Rougerie-these}, a conservative counterpart of the usual parabolic Ginzburg-Landau equation is used as a mesoscopic model. This counterpart is given by the Gross-Pitaevskii equation, which is a particular instance of a nonlinear Schrödinger equation. At the level of the mean-field evolution of the corresponding vortices, we then need to replace~\eqref{eq:LZh0}--\eqref{eq:LZh} by their conservative versions, thus replacing $\nabla(-\triangle)^{-1}\omega$ by $\nabla^\bot(-\triangle)^{-1}\omega$.
As argued in~\cite{Aranson-Kramer}, there is also physical interest in rather starting from the ``mixed-flow'' (or ``complex'') Ginzburg-Landau equation, which is a mix between the usual Ginzburg-Landau equation describing superconductivity ($\alpha=1$, $\beta=0$, below), and its conservative counterpart given by the Gross-Pitaevskii equation ($\alpha=0$, $\beta=1$, below). The above mean-field equation~\eqref{eq:LZh00-forv} for the supercurrent density $v$ is then replaced by the following, for $\alpha\ge0$, $\beta\in\R$,
\begin{align}\label{eq:limeqnS1}
\partial_tv=\nabla P-\alpha v\curl v+\beta v^\bot\curl v,\qquad\Div v=0,\qquad\text{in $\R^+\times\R^2$}.
\end{align}
Note that in the conservative case $\alpha=0$, this equation is equivalent to the 2D Euler equation, as is clear from the identity $v^\bot\curl v=(v\cdot \nabla)v-\frac12\nabla|v|^2$.

The first rigorous deductions of such (macroscopic) mean-field limit models from the (mesoscopic) 2D Ginzburg-Landau equation are due to~\cite{Kurzke-Spirn-14,Jerrard-Spirn-15,Serfaty-15}. As discovered by Serfaty~\cite{Serfaty-15}, in the dissipative case $\alpha>0$, the limiting equation~\eqref{eq:limeqnS1} is only correct in a regime of dilute vortices, while for a higher vortex density it must be replaced by the following compressible flow,
\begin{align}\label{eq:limeqnS2}
\partial_tv=\lambda\nabla (\Div v)-\alpha v\curl v+\beta v^\bot\curl v,\qquad\text{in $\R^+\times\R^2$},
\end{align}
for some $0<\lambda<\infty$.
In~\cite[Theorem~8.1.3]{D-thesis} we have further shown that for an even higher vortex density the relevant limiting equation is~\eqref{eq:limeqnS2} with $\lambda=0$.
In contrast, in the conservative case $\alpha=0$, the equation~\eqref{eq:limeqnS1} is always expected to hold in the corresponding mean-field limit.
To the best of our knowledge, this compressible model~\eqref{eq:limeqnS2} with $0\le\lambda<\infty$
is completely new in the literature.

When an electric current is applied to a type-II superconductor, it flows through the material, inducing a Lorentz-like force that makes the vortices move, dissipates energy, and disrupts the permanent supercurrents. As most technological applications of superconducting materials occur in the mixed state, it is crucial to design ways to reduce this energy dissipation, by preventing vortices from moving. For that purpose a common attempt consists in introducing in the material inhomogeneities (e.g.\@ impurities, or dislocations), which are indeed meant to destroy superconductivity locally and therefore ``pin down'' the vortices.
This is usually modeled by correcting the 2D Ginzburg-Landau equations with
a non-uniform equilibrium density
$a:\R^2\to[0,1]$ (or ``pinning weight''), which locally lowers the energy penalty associated with the vortices (see e.g.~\cite{Chapman-Richardson-97,BFGLV} for further details).
As formally predicted by Chapman and Richardson~\cite{Chapman-Richardson-97}, and first completely proven by~\cite{Jian-Song-01,S-Tice-11} (see also~\cite{Jerrard-Smets-15,Kurzke-Marzuola-Spirn-15} for the conservative case), in the asymptotic regime of a large Ginzburg-Landau parameter, this non-uniform density $a$ translates at the level of isolated vortices into an effective ``pinning potential'' $h=\log a$, indeed attracting the vortices to the minima of $a$. As shown in our companion paper~\cite{DS-16}, the mean-field equations~\eqref{eq:limeqnS1}--\eqref{eq:limeqnS2} are then replaced by~\eqref{eq:limeqn1}--\eqref{eq:limeqn2}, where the forcing $\Psi$ can be decomposed as $\Psi:=F^\bot-\nabla^\bot h$ in terms of the pinning force $-\nabla h$ and of some vector field $F:\R^2\to\R^2$ related to the imposed electric current (see also~\cite{Tice-10,S-Tice-11}).
In the conservative regime $\alpha=0$, $\beta=1$, the incompressible model~\eqref{eq:limeqn1} takes the form of the following inhomogeneous version of the 2D Euler equation: using the identity $ v^\bot\curl v=( v\cdot \nabla) v-\frac12\nabla| v|^2$, and setting $\tilde P:= P-\frac12| v|^2$,
\begin{align}\label{eq:2D-euler-lake}
\partial_t v=\nabla\tilde P+\Psi^\bot\curl v+( v\cdot \nabla) v,\qquad\Div(a v)=0,\qquad\text{in $\R^+\times\R^2$}.
\end{align}
In the context of 2D fluid dynamics, this conservative equation is known as the lake equation~\cite[p.235]{Greenspan-80} (see also~\cite{Camassa-Holm-Levermore-1,Camassa-Holm-Levermore-2}): the pinning weight $a$ corresponds to the effect of a varying depth in shallow water~\cite{Oliver-97a}, while the forcing $\Psi$ is similar to a background flow.

\subsection{Relation to previous works}
The simplified model~\eqref{eq:LZh} describes the mean-field limit of the gradient-flow evolution of any particle system with Coulomb interactions~\cite{D-15}. As such, it is related to nonlocal aggregation and swarming models, which have attracted a lot of mathematical interest in recent years (see e.g.~\cite{Bertozzi-Laurent-Leger-12,Carrillo-Choi-Hauray-14} and the references therein); they consist in replacing the Coulomb potential $(-\triangle)^{-1}$ by a convolution with a more general kernel corresponding to an attractive (rather than repulsive) nonlocal interaction.
Equation~\eqref{eq:LZh} was first studied by Lin and Zhang~\cite{Lin-Zhang-00}, who established global existence for vortex-sheet initial data $\omega|_{t=0}\in\Pc(\R^2)$, and uniqueness in some Zygmund space.
To prove global existence for such rough initial data, they proceed by regularization of the data, then passing to the limit in the equation using the compactness given by some very strong a priori estimates obtained by ODE arguments.
As our main source of inspiration, their approach is described in more detail in the sequel.
When viewing~\eqref{eq:LZh} as a mean-field model for the motion of the Ginzburg-Landau vortices in a superconductor, there is also interest in changing sign solutions and the correct model is then rather~\eqref{eq:LZh0}, for which global existence and uniqueness have been investigated in~\cite{Du-Zhang-03,Masmoudi-Zhang-05}, but for which an $\Ld^p$ well-posedness theory is still missing.
In~\cite{Ambrosio-S-08,Ambrosio-Mainini-S-11}, using
an energy approach where the equation is seen as a formal gradient flow in the Wasserstein space of probability measures à la Otto~\cite{Otto-01}, made rigorous by the minimizing movement approach of Jordan, Kinderlehrer, and Otto~\cite{JKO-98} (see also~\cite{Ambrosio-Gigli-Savare}),
analogues of equations~\eqref{eq:LZh0}--\eqref{eq:LZh} were studied in a 2D bounded domain, taking into account the possibility of mass entering or exiting the domain. In the case of nonnegative vorticity $\omega\ge0$, essentially the same existence and uniqueness results as those by Lin and Zhang were established in that setting in~\cite{Ambrosio-S-08}.
In the case $\omega\ge0$ on the whole plane, still a different approach was developed by Serfaty and V\'azquez~\cite{Serfaty-Vazquez-14}, where equation~\eqref{eq:LZh} is obtained as a limit of nonlocal diffusions, and where uniqueness is further established for bounded solutions using transport arguments à la Loeper~\cite{Loeper-06}.
Note that no uniqueness is expected to hold for general measure solutions of~\eqref{eq:LZh} (see~\cite[Section~8]{Ambrosio-S-08}). In the present paper, we focus on the case $\omega\ge0$ on the whole plane $\R^2$.

The model~\eqref{eq:limeqnS1} is a linear interpolation between the gradient-flow equation~\eqref{eq:LZh} (obtained for $\alpha=1$, $\beta=0$) and its conservative counterpart that is nothing but the 2D Euler equation (obtained for $\alpha=0$, $\beta=1$). The theory for the 2D Euler equation has been well-developed for a long time: global existence for vortex-sheet data is due to Delort~\cite{Delort-91}, while the only known uniqueness result, due to Yudovich~\cite{Yudovich-63}, holds in the class of bounded vorticity (see also~\cite{Bardos-Titi} and the references therein).
Regarding the general model~\eqref{eq:limeqnS1}, global existence and uniqueness results for smooth solutions are easily obtained by standard methods (see e.g.~\cite{Chemin-98}). Although not surprising, global existence for this model is further established here for vortex-sheet initial data, as well as uniqueness in the class of bounded vorticity.

In contrast, the compressible model~\eqref{eq:limeqnS2}, first introduced by Serfaty~\cite{Serfaty-15}, is completely new in the literature. In \cite[Appendix~B]{Serfaty-15}, only local-in-time existence and uniqueness of smooth solutions are proven in the non-degenerate case $\lambda>0$, using a standard iterative method. In the present paper,
in the parabolic regime $\alpha>0$, $\beta=0$, global existence for vortex-sheet data is further established in the non-degenerate case $\lambda>0$, while in the degenerate case $\lambda=0$ global existence with bounded data is obtained by exploiting the particular scalar structure of the corresponding equation.

The general equations~\eqref{eq:limeqn1}--\eqref{eq:limeqn2}, introduced in our companion paper~\cite{DS-16},
are seen as inhomogeneous versions of~\eqref{eq:limeqnS1}--\eqref{eq:limeqnS2} with forcing.
Since these models are new in the literature (except in the case~\eqref{eq:2D-euler-lake}), we wish to provide in the present paper a detailed discussion of local and global existence, uniqueness, and regularity issues.
In the conservative regime $\alpha=0$, $\beta=1$, the incompressible model~\eqref{eq:limeqn1} takes the form of the so-called lake equation~\eqref{eq:2D-euler-lake}, which has been studied in a bounded domain by Levermore, Oliver, and Titi~\cite{Levermore-Oliver-Titi-1,Levermore-Oliver-Titi-2,Oliver-97a} (see also~\cite{Bresch-Metivier-06}): global existence was established for $\Ld^2$ initial vorticity, as well as uniqueness in the class of bounded vorticity.
In the present paper, we improve on these previous results by establishing for equation~\eqref{eq:2D-euler-lake} on the whole plane $\R^2$ a global existence result for initial data in $\Ld^q(\R^2)$ with $q>1$.
It should be clear from the Delort type identity~\eqref{eq:delort1} below that inhomogeneities give rise to important difficulties: indeed, for $h$ non-constant, the first term $-\frac12| v|^2\nabla^\bot h$ in~\eqref{eq:delort1} does not vanish and is clearly not weakly continuous as a function of $v$ (although the second term is, as in Delort's classical theory for the 2D Euler equation~\cite{Delort-91}), so that the usual Delort's argument is no longer available to pass to the limit in the nonlinearity $v\curl v$.
Because of this difficulty and of the lack of strong enough a priori estimates for the conservative equation~\eqref{eq:2D-euler-lake}, we do not manage to reach vortex-sheet initial data in that case, as opposed to the simpler situation of the 2D Euler equation.

\subsection{Notions of weak solutions for~\eqref{eq:limeqn1} and~\eqref{eq:limeqn2}}

We first introduce the vorticity formulation of equations~\eqref{eq:limeqn1} and~\eqref{eq:limeqn2}, which will be more convenient to work with. Setting $\omega:=\curl v$ and $\zeta:=\Div(a v)$, each of these equations may be rewritten as a nonlinear nonlocal transport equation for the vorticity $\omega$,
\begin{align}\label{eq:limeqn1VF}
\partial_t\omega=\Div\!\Big(\omega\big(\alpha(\Psi+ v)^\bot+\beta(\Psi+ v)\big)\Big),\quad\curl v=\omega,\quad\Div(a v)=\zeta,
\end{align}
where in the incompressible case~\eqref{eq:limeqn1} we have $\zeta:=0$, while in the compressible case~\eqref{eq:limeqn2} $\zeta$ is the solution of the following transport-diffusion equation (which is highly degenerate as $\lambda=0$),
\begin{align}\label{eq:limeqn2VF}
\partial_t\zeta-\lambda\triangle\zeta+\lambda\Div(\zeta\nabla h)&=\Div\!\Big(a\omega\big(\!-\alpha(\Psi+ v)+\beta(\Psi+ v)^\bot\big)\Big).
\end{align}
Let us now precisely define our notions of weak solutions for~\eqref{eq:limeqn1} and~\eqref{eq:limeqn2}. (We denote by $\M_\loc^+(\R^2)$ the convex cone of locally finite non-negative Borel measures on $\R^2$, and by $\Pc(\R^2)$ the convex subset of probability measures, endowed with the usual weak-* topology.)
\begin{defin}\label{defin:sol}
Let $h,\Psi\in\Ld^\infty(\R^2)$, $T>0$, and set $a:=e^h$.
\begin{enumerate}[(a)]
\item Given $ v^\circ\in\Ld^2_\loc(\R^2)^2$ with $\omega^\circ=\curl v^\circ\in\M_\loc^+(\R^2)$ and $\zeta^\circ:=\Div(a v^\circ)\in\Ld^2_\loc(\R^2)$,
we say that $ v$ is a {\it weak solution of~\eqref{eq:limeqn2}} on $[0,T)\times\R^2$ with initial data $ v^\circ$, if $ v\in\Ld^2_\loc([0,T)\times\R^2)^2$ satisfies $\omega:=\curl v\in\Ld^1_\loc([0,T);\M_\loc^+(\R^2))$, $\zeta:=\Div(a v)\in\Ld^2_\loc([0,T);\Ld^2(\R^2))$, $| v|^2\omega\in\Ld^1_\loc([0,T);\Ld^1(\R^2))$ (hence also $\omega v\in\Ld^1_\loc([0,T)\times\R^2)^2$), and satisfies~\eqref{eq:limeqn2} in the distributional sense, that is, for all $\psi\in C^\infty_c([0,T)\times\R^2)^2$,
\begin{align*}
\int_{\R^d}\psi(0,\cdot) \cdot  v^\circ+\iint_{\R^+\times\R^d} v\cdot\partial_t\psi =\lambda\iint_{\R^+\times\R^d} a^{-1}\zeta\Div\psi+\iint_{\R^+\times\R^d}\psi\cdot(\alpha(\Psi+ v)-\beta(\Psi+ v)^\bot)\omega.
\end{align*}
\item Given $ v^\circ\in\Ld^2_\loc(\R^2)^2$ with $\omega^\circ:=\curl v^\circ\in\M_\loc^+(\R^2)$ and $\Div(a v^\circ)=0$, we say that $ v$ is a {\it weak solution of~\eqref{eq:limeqn1}} on $[0,T)\times\R^2$ with initial data $ v^\circ$, if $ v\in\Ld^2_\loc([0,T)\times\R^2)^2$ satisfies $\omega:=\curl v\in\Ld^1_\loc([0,T);\M_\loc^+(\R^2))$,  $| v|^2\omega\in\Ld^1_\loc([0,T);\Ld^1(\R^2)^2)$ (hence also $\omega v\in\Ld^1_\loc([0,T)\times\R^2)^2$), $\Div(a v)=0$ in the distributional sense, and satisfies the vorticity formulation~\eqref{eq:limeqn1VF} in the distributional sense, that is, for all $\psi\in C^\infty_c([0,T)\times\R^2)$,
\begin{align*}
\int_{\R^d} \psi(0,\cdot)\omega^\circ+\iint_{\R^+\times\R^d}\omega \partial_t\psi =\iint_{\R^+\times\R^d}\nabla\psi\cdot (\alpha(\Psi+ v)^\bot+\beta(\Psi+ v))\omega.
\end{align*}
\item Given $ v^\circ\in\Ld^2_\loc(\R^2)^2$ with $\omega^\circ:=\curl v^\circ\in\M_\loc^+(\R^2)$ and $\Div(a v^\circ)=0$, we say that $ v$ is a {\it very weak solution of~\eqref{eq:limeqn1}} on $[0,T)\times\R^2$ with initial data $ v^\circ$, if $ v\in\Ld^2_\loc([0,T)\times\R^2)^2$ satisfies $\omega:=\curl v\in\Ld^1_\loc([0,T);\M_\loc^+(\R^2))$, $\Div(a v)=0$ in the distributional sense, and satisfies, for all $\psi\in C^\infty_c([0,T)\times\R^2)$,
\begin{multline*}
\int_{\R^d} \psi(0,\cdot)\omega^\circ+\iint_{\R^+\times\R^d}\omega \partial_t\psi =\iint_{\R^+\times\R^d}(\alpha\nabla\psi+\beta\nabla^\bot\psi)\cdot \Big(\Psi^\bot\omega+\frac12| v|^2\nabla h\Big)\\
-\iint_{\R^+\times\R^d}aS_{ v}:\nabla\big(a^{-1}(\alpha\nabla\psi+\beta\nabla^\bot\psi)\big),
\end{multline*}
in terms of the stress-energy tensor $S_{ v}:= v\otimes v-\frac12\Id| v|^2$.\qedhere
\end{enumerate}
\end{defin}

\begin{rems}\label{rem:sol}$  $
\begin{enumerate}[(i)]
\item Weak solutions of~\eqref{eq:limeqn2} are defined directly from~\eqref{eq:limeqn2}, and satisfy in particular the vorticity formulation~\eqref{eq:limeqn1VF}--\eqref{eq:limeqn2VF} in the distributional sense. Regarding weak solutions of~\eqref{eq:limeqn1}, they are rather defined in terms of the vorticity formulation~\eqref{eq:limeqn1VF}, in order to avoid compactness and regularity issues related to the pressure $ P$. Nevertheless, if $ v$ is a weak solution of~\eqref{eq:limeqn1} in the above sense, then under mild regularity assumptions we may use the formula $ v=a^{-1}\nabla^\bot(\Div a^{-1}\nabla)^{-1}\omega$ to deduce that $ v$ actually satisfies~\eqref{eq:limeqn1} in the distributional sense on $[0,T)\times\R^2$ for some distribution $ P$ (cf.\@ Lemma~\ref{lem:pressure} below for detail).
\item The definition~(c) of a very weak solution of~\eqref{eq:limeqn1} is motivated as follows (see also the notion of ``general weak solutions'' of~\eqref{eq:LZh} in~\cite{Lin-Zhang-00}). In the purely conservative case $\alpha=0$, there are too few a priori estimates to make sense of the product $\omega v$. As is now common in 2D fluid dynamics (see e.g.\@ \cite{Chemin-98}), the idea is to reinterpret this product in terms of the stress-energy tensor $S_{ v}$, using the following identity: given $\Div(a v)=0$, we have for smooth enough fields
\begin{align}\label{eq:delort1}
\omega v=-\frac12| v|^2\nabla^\bot h-a^{-1}(\Div(aS_{ v}))^\bot,
\end{align}
where the right-hand side now makes sense in $\Ld^1_\loc([0,T);W^{-1,1}_\loc(\R^2)^2)$ whenever $ v\in\Ld^2_\loc([0,T)\times\R^2)^2$.
In particular, if $\omega\in\Ld^p_\loc ([0,T)\times\R^2)$ and $ v\in\Ld^{p'}_\loc([0,T)\times\R^2)$ for some $1\le p\le\infty$, $\frac1p+\frac1{p'}=1$, then the product $\omega v$ makes perfect sense and the above identity~\eqref{eq:delort1} holds in the distributional sense, hence in that case $ v$ is a weak solution of~\eqref{eq:limeqn1} whenever it is a very weak solution. In reference to Delort's work~\cite{Delort-91}, identity~\eqref{eq:delort1} is henceforth called an ``(inhomogeneous) Delort type identity''.\qedhere
\end{enumerate}
\end{rems}

\subsection{Statement of the main results}

Global existence and regularity results are summarized in the following theorem.
Our approach relies on proving a priori estimates for the vorticity $\omega$ in $\Ld^q(\R^2)$ for some $q>1$. For the compressible model~\eqref{eq:limeqn2}, such estimates are only obtained in the parabolic regime, hence our limitation to that setting. In parabolic cases, particularly strong estimates are available, and existence is then established even for vortex-sheet initial data, thus completely extending the known theory for~\eqref{eq:LZh} (see~\cite{Lin-Zhang-00,Serfaty-Vazquez-14}). Note that the additional exponential growth in the boundedness effect~\eqref{eq:flat1} below is only due to the forcing $\Psi$.
In the conservative incompressible case, the situation is the most delicate because of a lack of strong enough a priori estimates, and only existence of very weak solutions is expected and proven. As is standard in 2D fluid dynamics (see e.g.~\cite{Chemin-98}), the natural space for the solution $ v$ is $\Ld^\infty_\loc(\R^+;\bar  v^\circ+\Ld^2(\R^2)^2)$ for a given smooth reference field $\bar  v^\circ:\R^2\to\R^2$.

\begin{theor}[Global existence]\label{th:main}
Let $\lambda>0$, $\alpha\ge0$, $\beta\in\R$, $h,\Psi\in W^{1,\infty}(\R^2)^2$, and set $a:=e^h$. Let $\bar  v^\circ\in W^{1,\infty}(\R^2)^2$ be some reference map with $\bar \omega^\circ:=\curl\bar  v^\circ\in \Pc\cap H^{s_0}(\R^2)$ for some $s_0>1$, and with either $\Div(a\bar  v^\circ)=0$ in the case~\eqref{eq:limeqn1}, or $\bar\zeta^\circ:=\Div(a\bar  v^\circ)\in H^{s_0}(\R^2)$ in the case~\eqref{eq:limeqn2}. Let $ v^\circ\in \bar  v^\circ+\Ld^2(\R^2)^2$, with $\omega^\circ:=\curl v^\circ\in\Pc(\R^2)$, and with either $\Div(a v^\circ)=0$ in the case~\eqref{eq:limeqn1}, or $\zeta^\circ:=\Div(a v^\circ)\in\Ld^2(\R^2)$ in the case~\eqref{eq:limeqn2}.
The following hold:
\begin{enumerate}[(i)]
\item \emph{Parabolic compressible case (that is, \eqref{eq:limeqn2} with $\alpha>0$, $\beta=0$):}\\
There exists a weak solution $ v\in\Ld^\infty_\loc(\R^+;\bar  v^\circ+\Ld^2(\R^2)^2)$
on $\R^+\times\R^2$ with initial data $ v^\circ$, with $\omega:=\curl v\in \Ld^\infty(\R^+;\Pc(\R^2))$ and $\zeta:=\Div(a v)\in \Ld^2_\loc(\R^+;\Ld^2(\R^2))$, and with
\begin{align}\label{eq:flat1}
\|\omega^t\|_{\Ld^\infty}\le(\alpha t)^{-1}+C\alpha^{-1}e^{Ct},\qquad\text{for all $t>0$,}
\end{align}
where the constant $C>0$ depends only on an upper bound on $\alpha$, $|\beta|$, and $\|(h,\Psi)\|_{W^{1,\infty}}$.
Moreover, if $\omega^\circ\in\Ld^q(\R^2)$ for some $q>1$, then $\omega\in\Ld^\infty_\loc(\R^+;\Ld^q(\R^2))$.
\item \emph{Parabolic incompressible case (that is, \eqref{eq:limeqn1} with $\alpha>0$, $\beta=0$, or with $\alpha>0$, $\beta\in\R$, $h$ constant):}\\
There exists a weak solution $ v\in\Ld^\infty_\loc(\R^+;\bar  v^\circ+\Ld^{2}(\R^2)^2)$ on $\R^+\times\R^2$ with initial data $ v^\circ$, with $\omega:=\curl v\in \Ld^\infty(\R^+;\Pc(\R^2))$, and with the boundedness effect~\eqref{eq:flat1}.
Moreover, if $\omega^\circ\in\Ld^q(\R^2)$ for some $q>1$, then $\omega\in\Ld^\infty_\loc(\R^+;\Ld^q(\R^2))\cap\Ld^{q+1}_\loc(\R^+;\Ld^{q+1}(\R^2))$.
\item \emph{Mixed-flow incompressible case (that is, \eqref{eq:limeqn1} with $\alpha>0$, $\beta\in\R$):}\\
If $\omega^\circ\in\Ld^q(\R^2)$ for some $q>1$, there exists a weak solution $ v\in\Ld^\infty_\loc(\R^+;\bar  v^\circ+\Ld^{2}(\R^2)^2)$ on $\R^+\times\R^2$ with initial data $ v^\circ$, and with $\omega:=\curl v\in \Ld^\infty_\loc(\R^+;\Pc\cap\Ld^q(\R^2))\cap\Ld^{q+1}_\loc(\R^+;\Ld^{q+1}(\R^2))$.
\item \emph{Conservative incompressible case (that is, \eqref{eq:limeqn1} with $\alpha=0$, $\beta\in\R$):}\\
If $\omega^\circ\in\Ld^q(\R^2)$ for some $q>1$, there exists a very weak solution $ v\in\Ld^\infty_\loc(\R^+;\bar  v^\circ+\Ld^2(\R^2)^2)$ on $\R^+\times\R^2$ with initial data $ v^\circ$, and with $\omega:=\curl v\in\Ld^\infty_\loc(\R^+;\Pc\cap\Ld^q(\R^2))$. This is a weak solution whenever $q\ge4/3$.
\end{enumerate}
We set $\zeta^\circ,\bar\zeta^\circ,\zeta:=0$ in the incompressible case~\eqref{eq:limeqn1}.
If in addition $\omega^\circ,\zeta^\circ\in\Ld^\infty(\R^2)$, then we further have $ v\in\Ld^\infty_\loc(\R^+;\Ld^\infty(\R^2)^2)$, $\omega\in\Ld^\infty_\loc(\R^+;\Ld^1\cap\Ld^\infty(\R^2))$, and $\zeta\in\Ld^\infty_\loc(\R^+;\Ld^2\cap\Ld^\infty(\R^2))$.
If $h$, $\Psi$, $\bar  v^\circ\in W^{s+1,\infty}(\R^2)^2$ and $\omega^\circ$, $\bar\omega^\circ$, $\zeta^\circ$, $\bar\zeta^\circ\in H^s(\R^2)$ for some $s>1$, then $ v\in \Ld^\infty_\loc(\R^+;\bar  v^\circ+H^{s+1}(\R^2)^2)$ and $\omega,\zeta\in \Ld^\infty_\loc(\R^+;H^{s}(\R^2))$. If $h$, $\Psi$, $ v^\circ\in C^{s+1}(\R^2)^2$ for some non-integer $s>0$, then $ v\in\Ld^\infty_\loc(\R^+;C^{s+1}(\R^2)^2)$.
\end{theor}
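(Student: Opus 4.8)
The plan is to adapt the regularization strategy of Lin and Zhang~\cite{Lin-Zhang-00}: mollify the data, solve the resulting smooth Cauchy problems, prove a priori estimates uniform in the regularization parameter, pass to the limit by compactness, and finally bootstrap the higher-regularity statements on the solution so obtained. Concretely, I would mollify $v^\circ$ at scale $\e$ (hence also $\omega^\circ=\curl v^\circ$ and, in the compressible case, $\zeta^\circ=\Div(av^\circ)$), and add an artificial viscosity $\e\triangle v$ in the incompressible equation~\eqref{eq:limeqn1} (and $\e\triangle\zeta$ in~\eqref{eq:limeqn2VF} if one wants to reach $\lambda=0$) so that the regularized systems are uniformly parabolic; for each $\e>0$ a global smooth solution is produced by a standard fixed-point/energy argument, so everything reduces to $\e$-uniform estimates. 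Three estimates drive the whole proof: an \emph{energy estimate} placing $v$ in $\bar v^\circ+\Ld^2$, an \emph{$\Ld^q$ estimate} for the vorticity, and a \emph{dispersive $\Ld^\infty$ estimate} for the vorticity in the parabolic cases; mass conservation $\int\omega^t=1$ keeps $\omega^t\in\Pc$.

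For the vorticity estimates, write~\eqref{eq:limeqn1VF} as a continuity equation $\partial_t\omega+\Div(\omega b)=0$ with $b=-\alpha(\Psi+v)^\bot-\beta(\Psi+v)$, so that $\Div b=\alpha(\curl\Psi+\omega)-\beta\Div(\Psi+v)$. Multiplying by $q\omega^{q-1}$ (legitimate since $\omega\ge0$ is propagated by the regularized flow) and integrating by parts gives
\[\frac{d}{dt}\|\omega^t\|_{\Ld^q}^q+(q-1)\alpha\|\omega^t\|_{\Ld^{q+1}}^{q+1}=(q-1)\int(\omega^t)^q\big(-\alpha\curl\Psi+\beta\Div(\Psi+v)\big).\]
Since $h,\Psi\in W^{1,\infty}$ and $\Div v$ is controlled by $a^{-1}\zeta$ and $v\cdot\nabla h$, the right-hand side is closed by Gronwall, using the energy and $\Ld^q$ bounds to absorb the terms involving $v$ when $\beta\ne0$ — which is exactly where the integrability loss forcing $\omega^\circ\in\Ld^q$, $q>1$, enters in the mixed-flow and conservative cases — and this yields the $\Ld^\infty_\loc(\Ld^q)$ and $\Ld^{q+1}_\loc(\Ld^{q+1})$ bounds. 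When $\beta=0$ the dissipative term $-\alpha\omega^{q+1}$ survives, and running the same computation along the characteristics $\dot X=b(X)$ gives $\frac{d}{dt}\omega(t,X(t))=-\alpha\omega^2-\alpha\omega\curl\Psi$; comparison with the Riccati ODE $\dot y=-\alpha y^2+Cy$, whose solutions satisfy $y(t)\le(\alpha t)^{-1}+C\alpha^{-1}e^{Ct}$ for \emph{any} initial value, produces the dispersive estimate~\eqref{eq:flat1} and hence existence even for mere vortex-sheet data. The energy estimate itself is obtained by testing the $v$-equation against (a weight times) $v-\bar v^\circ$, the pressure and incompressibility terms dropping because $\Div(a\bar v^\circ)=0$; in the compressible case one adjoins the $\Ld^2$ estimate for $\zeta$ from~\eqref{eq:limeqn2VF}, using the parabolic dissipation $\lambda\|\nabla\zeta^t\|_{\Ld^2}^2$.

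The uniform bounds yield weak-$*$ limits $(v_\e,\omega_\e,\zeta_\e)\rightharpoonup(v,\omega,\zeta)$, and since $\partial_t\omega_\e$ is bounded in a negative Sobolev norm, an Aubin-Lions/Arzela-Ascoli argument upgrades this to $\omega_\e\to\omega$ strongly in $C_\loc(W^{-1,q}_\loc)$; elliptic regularity for the reconstruction $v=a^{-1}\nabla^\bot(\Div a^{-1}\nabla)^{-1}\omega$ then gives $v_\e\to v$ strongly in $\Ld^2_\loc$. When $\omega^\circ\in\Ld^q$, $q>1$, this suffices to pass to the limit in $\omega_\e v_\e$; when $\alpha=0$ one instead passes to the limit in the Delort type identity~\eqref{eq:delort1}, using that $S_{v_\e}$ is weakly continuous whereas $\tfrac12|v|^2\nabla^\bot h$ is not — which is precisely why no vortex-sheet result survives for non-constant $h$, and why one only recovers a genuine weak (not merely very weak) solution when $q\ge4/3$, so that $v\in\Ld^{q'}_\loc$ by Sobolev embedding and $\omega v$ is integrable. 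In the parabolic vortex-sheet cases, existence on $(0,\infty)$ follows directly from~\eqref{eq:flat1}, and the initial datum is attained by a compactness argument as in~\cite{Lin-Zhang-00}. Finally, on the constructed solution: $\Ld^1\cap\Ld^\infty$ bounds for $\omega$, $\Ld^2\cap\Ld^\infty$ bounds for $\zeta$ and $\Ld^\infty$ bounds for $v$ follow from the transport/maximum-principle structure once $\omega^\circ,\zeta^\circ\in\Ld^\infty$; $H^s$ regularity is propagated by differentiating the equations and using Kato-Ponce commutator estimates together with the parabolic smoothing for $\zeta$; and $C^{s+1}$ regularity follows from Schauder estimates for the transport and transport-diffusion equations.

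The main obstacle is the conservative incompressible case $\alpha=0$ with non-constant $h$: the dissipative Riccati mechanism is absent, only the energy estimate is available, the product $\omega v$ must be reinterpreted through~\eqref{eq:delort1}, and the non-weakly-continuous term $\tfrac12|v|^2\nabla^\bot h$ obstructs the vortex-sheet limit, which is what forces the restriction to $\omega^\circ\in\Ld^q$, $q>1$, and the threshold $q\ge4/3$ for a weak solution. Closely related is the need, in the compressible model, to close the coupled and (for $\lambda$ small) nearly degenerate transport-diffusion equation~\eqref{eq:limeqn2VF} for $\zeta$ uniformly in $\e$, which is the reason existence there is confined to the parabolic regime $\beta=0$.
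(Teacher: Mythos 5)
Your sketch identifies all the key a priori estimates of the paper's proof — the energy estimate in $\bar v^\circ+\Ld^2$, the $\Ld^q$ bound on the vorticity, the dispersive Riccati-type $\Ld^\infty$ bound, the Delort reinterpretation of $\omega v$, and a Lions-type compactness argument for vortex-sheet limits — but the route used to generate approximate solutions is genuinely different from the paper's. The paper does \emph{not} introduce artificial viscosity: it proves local existence of smooth solutions by an iterative scheme (Section~\ref{chap:local}), then extends globally by propagating Sobolev and H\"older regularity from the $\Ld^\infty$ control on $\omega$ (Lemmas~\ref{lem:conservSob}--\ref{lem:conservHold}), and only then mollifies the data and passes to the limit (Corollary~\ref{cor:globexist2}, Proposition~\ref{cor:globexist3}). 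Adding $\e\triangle$ to skip the propagation-of-regularity step is a legitimate alternative and may be more economical, but it creates a tension you would need to resolve: once the vorticity equation carries an $\e\triangle\omega$ term, the identity $\frac{d}{dt}\omega(t,X(t))=-\alpha\omega^2-\alpha\omega\curl\Psi$ along characteristics is no longer exact, so the Riccati comparison has to be run either as a maximum-principle argument (the viscous term has the right sign only at the spatial max) or, equivalently, by combining the $\Ld^p$ energy identity with $\int\omega=1$ and H\"older to get a Bernoulli-type ODE for $\|\omega^t\|_{\Ld^p}^p$. The paper instead exploits the exact transport structure via the Liouville formula for $\det\nabla\psi^t$ (Lemma~\ref{lem:Lpest}), which also accommodates the additional trick of integrating $\partial_th$ along characteristics — without that trick the $\Ld^q$ bound does not close in the conservative incompressible case with non-constant $h$ by the naive ``Gronwall plus energy'' argument you invoke. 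Similarly, the closure of the $\Ld^q$ estimate in the mixed-flow case relies on the critical potential estimate of Lemma~\ref{lem:singint-1} to control $\|v\|_{\Ld^\infty}$ by $\|\omega\|_{\Ld^{q+1}}$ up to a logarithm, which is then absorbed by the $\alpha\|\omega\|_{\Ld^{q+1}}^{q+1}$ dissipation; this step requires $\alpha>0$ and is more delicate than your sketch suggests. Finally, the propagation-of-regularity lemmas the paper uses directly furnish the final $H^{s+1}$ and $C^{s+1}$ statements of the theorem, whereas your bootstrap leaves this unaddressed in detail.
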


Regarding the regimes that are not described in the above (that is, the mixed-flow compressible case as well as the a priori unphysical case $\alpha<0$), only local-in-time existence is proven for smooth enough initial data (stated here in Sobolev spaces).
Note that for the mixed-flow degenerate case $\lambda=0$, $\alpha>0$, $\beta\ne0$, even local-in-time existence remains an open problem.

\begin{theor}[Local existence]\label{th:mainloc}
Given some $s>1$, let $h,\Psi,\bar  v^\circ\in W^{s+1,\infty}(\R^2)^2$, set $a:=e^h$, and let $ v^\circ\in \bar  v^\circ+H^{s+1}(\R^2)^2$ with $\omega^\circ:=\curl v^\circ$, $\bar\omega^\circ:=\curl\bar  v^\circ\in H^s(\R^2)$, and with either $\Div(a v^\circ)=\Div(a\bar  v^\circ)=0$ in the case~\eqref{eq:limeqn1}, or $\zeta^\circ:=\Div(a v^\circ)$, $\bar\zeta^\circ:=\Div(a\bar  v^\circ)\in H^s(\R^2)$ in the case~\eqref{eq:limeqn2}. The following hold:
\begin{enumerate}[(i)]
\item \emph{Incompressible case (that is, \eqref{eq:limeqn1} with $\alpha,\beta\in\R$):}\\
There exists $T>0$ and a weak solution $ v\in\Ld^\infty_\loc([0,T); \bar  v^\circ+H^{s+1}(\R^2)^2)$ on $[0,T)\times\R^2$ with initial data $ v^\circ$.
\item \emph{Non-degenerate compressible case (that is, \eqref{eq:limeqn2} with $\alpha,\beta\in\R$, $\lambda>0$):}\\
There exists $T>0$ and a weak solution $ v\in\Ld^\infty_\loc([0,T); \bar  v^\circ+H^{s+1}(\R^2)^2)$ on $[0,T)\times\R^2$ with initial data~$ v^\circ$.
\item \emph{Degenerate parabolic compressible case (that is, \eqref{eq:limeqn2} with $\alpha\in\R$, $\beta=\lambda=0$):}\\
If $\Psi$, $\bar  v^\circ\in W^{s+2,\infty}(\R^2)^2$ and $\omega^\circ$, $\bar\omega^\circ\in H^{s+1}(\R^2)$, there exists $T>0$ and a weak solution $ v\in \Ld^\infty_\loc([0,T); \bar  v^\circ+ H^{s+1}(\R^2)^2)$ on $[0,T)\times\R^2$ with initial data $ v^\circ$, and with $\omega:=\curl v\in\Ld^\infty_\loc([0,T);H^{s+1}(\R^2))$.\qedhere
\end{enumerate}
\end{theor}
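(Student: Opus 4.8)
The plan is to construct solutions by a Picard iteration in the affine class $\bar v^\circ+H^{s+1}(\R^2)^2$, to obtain uniform bounds on a short time interval $[0,T]$ by high-order energy estimates, and then to pass to the limit (the scheme being, for $T$ small, a contraction in a lower-order norm, so that it converges strongly in that norm while staying bounded in the high-order space). Throughout, it is convenient to work with the vorticity formulation \eqref{eq:limeqn1VF}--\eqref{eq:limeqn2VF}. In the compressible case \eqref{eq:limeqn2}, given $(\omega,\zeta)$ with $\omega-\bar\omega^\circ,\zeta-\bar\zeta^\circ\in H^{r}$ one recovers $v\in\bar v^\circ+H^{r+1}(\R^2)^2$ by solving the elliptic system $\curl v=\omega$, $\Div(av)=\zeta$ (with $\zeta\equiv0$ in the incompressible case \eqref{eq:limeqn1}); since $a=e^h\in W^{1,\infty}$ is bounded above and below this system is uniformly elliptic, and the reconstruction operator is bounded with norm controlled by $\|(h,\Psi,\bar v^\circ)\|_{W^{r+1,\infty}}$. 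Writing $v=\bar v^\circ+w$ with $w\in H^{s+1}$, one carries the non-integrable reference field through the estimates by systematically splitting off $\Psi+\bar v^\circ$ (whose derivatives lie in $W^{s+1,\infty}$, resp.\ $W^{s+2,\infty}$ in case (iii)), the remaining products and commutators being handled by the usual Moser product and Kato--Ponce commutator estimates, using $s>1$ so that $H^{s+1}(\R^2)\hookrightarrow W^{1,\infty}(\R^2)$ and $H^{s}(\R^2)\hookrightarrow \Ld^\infty(\R^2)$.

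For the incompressible case (i), freezing the transport velocity and the coefficients at the previous iterate reduces the scheme to a linear transport equation for $\omega$, schematically $\partial_t\omega=u\cdot\nabla\omega+\omega\,\Div u$ with $u=\alpha(\Psi+v)^\bot+\beta(\Psi+v)$ and $\Div v=-v\cdot\nabla h$ (harmless at this order); the standard $H^{s}$ transport estimate then gives $\frac{d}{dt}\|\omega\|_{H^{s}}^2\le C(1+\|\omega\|_{H^{s}})\|\omega\|_{H^{s}}^2$, hence a uniform bound up to some $T>0$. One then passes to the limit in the quadratic nonlinearity (with the help of an Aubin--Lions argument, using the resulting uniform bound on $\partial_t\omega$ in a negative-order space) and checks that the limit satisfies Definition \ref{defin:sol}(b), the pressure being recovered a posteriori by Lemma~\ref{lem:pressure}. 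Case (ii) adds the coupled equation \eqref{eq:limeqn2VF} for $\zeta$, which for $\lambda>0$ is a genuine linear parabolic equation $\partial_t\zeta-\lambda\triangle\zeta+\lambda\Div(\zeta\nabla h)=\Div(a\omega(\cdots))$; one propagates $\zeta\in\Ld^\infty_tH^{s}\cap\Ld^2_tH^{s+1}$ by parabolic maximal regularity with the source estimated in $\Ld^2_tH^{s-1}$, the coupling being only a lower-order perturbation thanks to the smoothing, and runs the same fixed-point argument on the pair $(\omega,\zeta)$. Finally one verifies that $v=v[\omega,\zeta]$ solves \eqref{eq:limeqn2} itself, and not merely its vorticity/divergence form, by noting that the residual has vanishing $\curl$ and vanishing $\Div(a\,\cdot\,)$ and hence vanishes.

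The delicate case is (iii), the degenerate flow $\partial_tv=-\alpha(\Psi+v)\omega$, where the equation for $\zeta$ is pure transport and offers no smoothing, so any scheme that loses a derivative is fatal; here $v$ and $\omega=\curl v$ are on the same footing, and $\omega$ must in fact be propagated at the \emph{higher} level $H^{s+1}$, one level above the generic regularity of $\curl v$ for $v\in H^{s+1}$. I would use the semi-implicit iteration solving the linear transport equation $\partial_t\omega_{n+1}=\alpha\Div(\omega_{n+1}(\Psi+v_n)^\bot)$ for $\omega_{n+1}$ and then $\partial_tv_{n+1}=-\alpha(\Psi+v_n)\,\omega_{n+1}$ for $v_{n+1}$; the crucial point is that $\curl v_{n+1}$ and $\omega_{n+1}$ then solve \emph{the same} linear transport equation with the same datum $\omega^\circ=\curl v^\circ$, so by uniqueness the constraint $\curl v_{n+1}=\omega_{n+1}$ is preserved along the scheme. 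The $H^{s+1}$ transport estimate for $\omega_{n+1}$ --- whose zeroth-order term $\alpha\omega_{n+1}(\curl\Psi+\omega_n)$ is the only genuinely top-order contribution, which is precisely why $\Psi\in W^{s+2,\infty}$ and $\omega^\circ,\bar\omega^\circ\in H^{s+1}$ are assumed --- together with the direct $H^{s+1}$ estimate for $w_{n+1}=v_{n+1}-\bar v^\circ$ (using $\|(\Psi+v_n)\omega_{n+1}\|_{H^{s+1}}\lesssim(1+\|w_n\|_{H^{s+1}})\|\omega_{n+1}\|_{H^{s+1}}$, valid since $s+1>1$) yields a closed system of differential inequalities in $(\|w_n\|_{H^{s+1}},\|\omega_n\|_{H^{s+1}})$, hence uniform short-time bounds; $\zeta:=\Div(av)$ then lies automatically in $H^{s}$, and convergence, passage to the limit and verification of Definition \ref{defin:sol}(a) are routine.

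I expect the main obstacle to be precisely the design of the scheme in case (iii): identifying the pair of quantities whose high-order estimates close with \emph{no} loss of derivatives in the total absence of dissipation, while simultaneously arranging the iteration so that the compatibility $\curl v_n=\omega_n$ is exactly preserved. A pervasive secondary nuisance, common to all three cases, is the bookkeeping forced by working in the affine space $\bar v^\circ+H^{s+1}$ with a non-integrable reference field, which is what makes the $W^{s+1,\infty}$ (resp.\ $W^{s+2,\infty}$) assumptions on $h$, $\Psi$ and $\bar v^\circ$ necessary.
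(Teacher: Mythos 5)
Your proposal is essentially correct and, in cases (i) and (ii), follows the same route as the paper: a Picard iteration on the vorticity formulation with $v_{n+1}$ reconstructed from $(\omega_{n+1},\zeta_{n+1})$ by the elliptic system $\curl v_{n+1}=\omega_{n+1}$, $\Div(av_{n+1})=\zeta_{n+1}$, uniform short-time bounds in $\bar v^\circ+H^{s+1}$ obtained from the Kato--Ponce commutator estimate (Lemma~\ref{lem:katoponce}) for $\omega_{n+1}$ and the parabolic estimate (Lemma~\ref{lem:parreg+tsp}) for $\zeta_{n+1}$, and passage to the limit by Aubin--Lions. One small remark on (i): the paper's notion of weak solution of \eqref{eq:limeqn1} is defined directly through the vorticity formulation, so the pressure recovery via Lemma~\ref{lem:pressure} is not actually needed to verify Definition~\ref{defin:sol}(b); it is a side remark in the paper.

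In case (iii) your scheme is genuinely different from the paper's, and your choice is a nice simplification. The paper (Proposition~\ref{prop:locexistdeg}) runs the \emph{same} iteration as in the non-degenerate case with $\lambda=\beta=0$: it solves the transport equation for $\omega_{n+1}$ with frozen velocity $v_n$, separately integrates $\partial_t\zeta_{n+1}=-\alpha\Div(a\omega_n(\Psi+v_n))$ (with the \emph{previous} vorticity $\omega_n$ as source, so that the system stays fully linear at each step), and only then reconstructs $v_{n+1}$ from the pair $(\omega_{n+1},\zeta_{n+1})$ via the elliptic solver of Lemma~\ref{lem:reconstr}; the loss of a derivative in the $\zeta$-equation is absorbed by propagating $\omega$ one level higher, exactly the point you identify. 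Your scheme skips $\zeta$ and the elliptic step entirely: you solve the transport equation for $\omega_{n+1}$ and then integrate the ODE $\partial_t v_{n+1}=-\alpha(\Psi+v_n)\,\omega_{n+1}$ directly, which requires the observation that the compatibility constraint is preserved along the scheme. Your justification is correct: since $\curl((\Psi+v_n)\omega_{n+1})=\omega_{n+1}\curl(\Psi+v_n)+\nabla^\bot\omega_{n+1}\cdot(\Psi+v_n)=-\Div(\omega_{n+1}(\Psi+v_n)^\bot)$, one has $\partial_t(\curl v_{n+1}-\omega_{n+1})=0$, so the constraint holds for all $t$. Both schemes rely on the same two structural facts, which you correctly isolate: with $\beta=0$ only $\curl$ of the transport field enters the top-order commutator term of Lemma~\ref{lem:katoponce} (so that the scheme does not lose a derivative through $\Div v_n$), and $\omega$ must be propagated at the level $H^{s+1}$, hence the extra regularity $\Psi,\bar v^\circ\in W^{s+2,\infty}$ and $\omega^\circ,\bar\omega^\circ\in H^{s+1}$. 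The paper's route is systematic (it keeps the degenerate case within the same framework as the non-degenerate one); yours avoids the elliptic reconstruction and makes the preservation of the $\curl$ compatibility manifest, at the price of having to notice and verify that preservation. Either works.
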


We now turn to uniqueness issues. No uniqueness is expected to hold for general weak measure solutions of~\eqref{eq:limeqn1}, as it is already known to fail for the 2D Euler equation (see e.g.~\cite{Bardos-Titi} and the references therein), and as it is also expected to fail for equation~\eqref{eq:LZh} (see~\cite[Section~8]{Ambrosio-S-08}). In both cases, as already explained, the only known uniqueness results are in the class of bounded vorticity. For the general incompressible model~\eqref{eq:limeqn1}, similar arguments are still available and the same uniqueness result holds. For the compressible model~\eqref{eq:limeqn2}, we only obtain uniqueness in a class with stronger regularity, as a consequence of some weak-strong principle stated in Proposition~\ref{prop:limeqn-unique}.

\begin{theor}[Uniqueness]\label{th:mainunique}
Let $\lambda\ge0$, $\alpha,\beta\in\R$, $T>0$, $h,\Psi\in W^{1,\infty}(\R^2)$, and set $a:=e^h$. Let $ v^\circ:\R^2\to\R^2$ with $\curl v^\circ\in\Pc(\R^2)$, and with either $\Div(a v^\circ)=0$ in the case~\eqref{eq:limeqn1}, or $\Div(a v^\circ)\in\Ld^2(\R^2)$ in the case~\eqref{eq:limeqn2}.
\begin{enumerate}[(i)]
\item \emph{Incompressible case (that is,~\eqref{eq:limeqn1} with $\alpha,\beta\in\R$):}\\
There exists at most a unique weak solution $ v$ on $[0,T)\times\R^2$ with initial data $ v^\circ$, in the class of all $w$'s with $\curl w\in\Ld^\infty_\loc([0,T);\Ld^\infty(\R^2))$.
\item \emph{Non-degenerate compressible case (that is,~\eqref{eq:limeqn2} with $\alpha,\beta\in\R$, $\lambda>0$):}\\
There exists at most a unique weak solution $ v$ on $[0,T)\times\R^2$ with initial data $ v^\circ$, in the class $\Ld^2_\loc([0,T); v^\circ+\Ld^2(\R^2)^2)\cap\Ld^\infty_\loc([0,T);W^{1,\infty}(\R^2)^2)$.
\qedhere
\end{enumerate}
\end{theor}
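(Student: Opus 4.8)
The plan is to prove each part by a quantitative stability estimate: given two solutions $v_1,v_2$ in the prescribed class sharing the same initial datum $v^\circ$, we construct an ``energy'' $\mathcal E(t)\ge0$ controlling their difference, check $\mathcal E(0)=0$, and derive a differential inequality forcing $\mathcal E\equiv0$ and hence $v_1\equiv v_2$. Throughout, $a=e^h$ is bounded above and below with $\log a=h\in W^{1,\infty}$, so the weighted operators $\Div(a^{\pm1}\nabla)$ enjoy the same $\Ld^p$- and elliptic estimates as the flat Laplacian with constants controlled by $\|h\|_{W^{1,\infty}}$, and the weighted Biot--Savart law recovering $v$ from $(\curl v,\Div(av))$ behaves like the classical one. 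Writing $w:=v_1-v_2$, $\delta\omega:=\omega_1-\omega_2$, $\delta\zeta:=\zeta_1-\zeta_2$, the quantity $\int a|w|^2$ is then finite --- here one uses that $\omega_i^t\in\Pc$, so $\delta\omega$ has zero mass, together with $v_i\in\bar v^\circ+\Ld^2$ --- and comparable to $\|\delta\omega\|_{\dot H^{-1}}^2+\|\delta\zeta\|_{\dot H^{-1}}^2$. Since the solutions are only assumed to have the regularity of the uniqueness class, the formal differential identities below should be justified by regularizing the (vorticity) equations and bounding the commutators in the spirit of DiPerna--Lions; I will take this routine step for granted. The forcing $\Psi\in W^{1,\infty}$ only contributes bounded lower-order terms and is harmless.

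For~(i) (incompressible, $\delta\zeta\equiv0$), the vorticity equation is the transport equation $\partial_t\omega_i=\Div(\omega_i b_i)$ with $b_i:=\alpha(\Psi+v_i)^\bot+\beta(\Psi+v_i)$, and $\omega_i\in\Ld^\infty$. Following Loeper~\cite{Loeper-06} and Serfaty--V\'azquez~\cite{S-Vazquez-14}, I would track $\mathcal E(t):=\tfrac12\int a|w^t|^2=\tfrac12\int a^{-1}|\nabla\psi^t|^2$, where $\psi^t$ is the weighted stream function of $w^t$ (so $aw^t=\nabla^\bot\psi^t$ and $\Div(a^{-1}\nabla\psi^t)=-\delta\omega^t$); this is a weighted negative Sobolev norm of $\delta\omega^t$, comparable to the squared $2$-Wasserstein distance between $\omega_1^t$ and $\omega_2^t$ (bounded densities). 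Differentiation and $\partial_t\delta\omega=\Div(\delta\omega\,b_1)+\Div(\omega_2(b_1-b_2))$ give, up to a harmless sign, $\tfrac{d}{dt}\mathcal E=\int\nabla\psi\cdot\delta\omega\,b_1+\int\nabla\psi\cdot\omega_2(b_1-b_2)$. The second term is $\le C\|\omega_2\|_{\Ld^\infty}\mathcal E$ since $|b_1-b_2|\lesssim|w|$ and $\|\nabla\psi\|_{\Ld^2}\lesssim\sqrt{\mathcal E}$. In the first term, one integration by parts moves a derivative onto $b_1$ and produces $\tfrac12\int\Div(a^{-1}b_1)\,|\nabla\psi|^2+\int a^{-1}(\nabla b_1)^{T}\nabla\psi\cdot\nabla\psi$, where $\Div(a^{-1}b_1)$ involves no derivative of $\omega_1$ (using $\Div(av_1)=0$) and is bounded, while $\nabla b_1\sim\nabla v_1$ is only log-Lipschitz; this last term is treated exactly as in Yudovich's argument, by $\int|\nabla v_1|\,|\nabla\psi|^2\le\|\nabla v_1\|_{\Ld^p}\|\nabla\psi\|_{\Ld^{2p'}}^2$ together with the Calder\'on--Zygmund bound $\|\nabla v_1\|_{\Ld^p}\le Cp\,\|\omega_1\|_{\Ld^1\cap\Ld^\infty}$, interpolation of $\|\nabla\psi\|_{\Ld^{2p'}}$ between $\Ld^2$ and $\Ld^\infty$, and optimization in $p$. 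This yields the Osgood inequality $\tfrac{d}{dt}\mathcal E\le C\,\mathcal E\,(1+\log_+(1/\mathcal E))$, whence $\mathcal E(0)=0$ forces $\mathcal E\equiv0$.

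For~(ii) (non-degenerate compressible, $\lambda>0$) the uniqueness class imposes $v_i\in W^{1,\infty}$, so $\omega_i,\zeta_i,\nabla v_i\in\Ld^\infty$; then $\nabla b_i\in\Ld^\infty$, no Osgood refinement is needed, and the parabolic term in~\eqref{eq:limeqn2VF} provides dissipation. I would track $\mathcal E(t):=\tfrac12(\|\delta\omega^t\|_{\dot H^{-1}}^2+\|\delta\zeta^t\|_{\dot H^{-1}}^2)$: pairing the difference of the vorticity equations with $(\Div a^{-1}\nabla)^{-1}\delta\omega$ gives, after the same manipulations, only terms $\le C\mathcal E$; pairing the difference of the $\zeta$-equations with $(-\triangle)^{-1}\delta\zeta$ produces the good term $-\lambda\|\delta\zeta\|_{\Ld^2}^2$, against which the contributions of $\lambda\Div(\delta\zeta\,\nabla h)$ and of the source $\Div(a\,\delta\omega\,c_1+a\,\omega_2(c_1-c_2))$, with $c_i:=-\alpha(\Psi+v_i)+\beta(\Psi+v_i)^\bot$, are absorbed by Young's inequality --- the $\delta\omega$-contribution being estimated by duality $\int f\,\delta\omega\le\|f\|_{\dot H^1}\|\delta\omega\|_{\dot H^{-1}}$ with $f=ac_1\cdot\nabla(-\triangle)^{-1}\delta\zeta$ and $\|f\|_{\dot H^1}\le C\sqrt{\mathcal E}+C\|\delta\zeta\|_{\Ld^2}$. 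Thus $\tfrac{d}{dt}\mathcal E\le C\mathcal E$, and Gronwall concludes. For~(iii) ($\lambda=\beta=0$) there is no smoothing and $\zeta$ is on the same footing as $\omega$ --- which is why the class requires $\omega_i=\curl w_i\in W^{1,\infty}$. Here I would use the stronger energy $\mathcal E(t):=\tfrac12(\|w^t\|_{\Ld^2}^2+\|\delta\omega^t\|_{\Ld^2}^2+\|\delta\zeta^t\|_{\Ld^2}^2)$ and differentiate the $v$-equation $\partial_t w=-\alpha\,\omega_2\,w-\alpha(\Psi+v_1)\,\delta\omega$ together with its curl and weighted divergence; every term carrying a derivative of $\delta\omega$ or $\delta\zeta$ is expanded by the Leibniz rule (not by an integration by parts that would raise the order), using $\Div(w^\bot)=-\delta\omega$ and $\Div(aw)=\delta\zeta$ so that the top-order factor always gets multiplied by $\nabla\omega_2\in\Ld^\infty$ or by one of the bounded fields $\omega_i,\Psi,h$; the remaining terms are quadratic in $(w,\delta\omega,\delta\zeta)$ with bounded coefficients. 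One obtains $\tfrac{d}{dt}\mathcal E\le C(t)\,\mathcal E$, and Gronwall again gives $\mathcal E\equiv0$.

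The main obstacle is part~(i): making the argument rigorous at the low regularity of weak solutions through the DiPerna--Lions regularization, and, above all, assembling the sharp Calder\'on--Zygmund and weighted elliptic estimates for the inhomogeneous Biot--Savart operator $\omega\mapsto a^{-1}\nabla^\bot(\Div a^{-1}\nabla)^{-1}\omega$ with the explicit linear-in-$p$ growth of the constants required to close the Osgood inequality, checking that the weight $a$ does not spoil this. Parts~(ii) and~(iii) are comparatively routine once the elliptic recovery estimates and the (easier) Gronwall bookkeeping are in place; the one genuine subtlety there is the careful treatment, via the reference field $\bar v^\circ$, of the non-$\Ld^2$ background fields $\Psi$ and $\bar v^\circ$ appearing in the various integrals.
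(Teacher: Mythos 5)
Your overall strategy (energy/stability estimates plus Gr\"onwall or Osgood) matches the paper's, but the implementation differs in interesting ways, and part~(iii) has a genuine gap.

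For part~(i), you propose an Eulerian, Yudovich-style argument in the weighted $\dot H^{-1}$-norm $\mathcal E=\tfrac12\int a|w|^2$, closing an Osgood inequality via $\|\nabla v_1\|_{\Ld^p}\lesssim p$ and interpolation of $\|\nabla\psi\|_{\Ld^{2p'}}$. The paper instead works in the Lagrangian/Wasserstein framework \`a la Loeper and Serfaty--V\'azquez: it first proves that $v_1,v_2$ are log-Lipschitz (via the flat/weighted decomposition $v_i=\nabla^\bot\triangle^{-1}\omega_i+\nabla\triangle^{-1}\Div v_i$ and the Zygmund estimates of Lemma~\ref{lem:pottheoryCsHs}), then controls $Q^t=\int|\psi_1^t-\psi_2^t|^2\omega^\circ$ using the Loeper inequality for the first term and Jensen with the log-Lipschitz bound for the second. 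Both routes close, and the technical point you flag --- whether the weighted Biot--Savart operator inherits the sharp linear-in-$p$ Calder\'on--Zygmund constant --- can indeed be resolved by the same flat/weighted decomposition: $\nabla^\bot\triangle^{-1}\omega_1$ gets the classical $Cp\|\omega_1\|_{\Ld^1\cap\Ld^\infty}$ bound, while the correction $\nabla\triangle^{-1}(\nabla h\cdot v_1)$ is uniformly bounded in $W^{1,p}$ since $\nabla h\cdot v_1\in\Ld^{p_0}\cap\Ld^\infty$. So your approach is viable; it trades the paper's Lagrangian geometry for explicit constant-tracking. For part~(ii), you track $\|\delta\omega\|_{\dot H^{-1}}^2+\|\delta\zeta\|_{\dot H^{-1}}^2$ via the vorticity/divergence system, whereas the paper tracks the equivalent quantity $\int a|\delta v|^2$ directly from the $v$-equation and then applies the Delort identity to handle $\delta\omega\,\delta v$. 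Same mechanism, different bookkeeping.

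For part~(iii), however, including $\|\delta\zeta\|_{\Ld^2}^2$ in the energy introduces an estimate that does not close. Taking $\Div(a\,\cdot)$ of $\partial_tw=-\alpha\omega_2w-\alpha(\Psi+v_1)\delta\omega$ produces
\begin{equation*}
\partial_t\delta\zeta=-\alpha\big(\nabla(a\omega_2)\cdot w+\omega_2\delta\zeta-a\omega_2\nabla h\cdot w\big)-\alpha\big(a(\Psi+v_1)\cdot\nabla\delta\omega+\delta\omega\,\Div(a(\Psi+v_1))\big).
\end{equation*}
After testing against $\delta\zeta$, the term $-\alpha\int\delta\zeta\,a(\Psi+v_1)\cdot\nabla\delta\omega$ is not controlled by $\mathcal E$: the coefficient $a(\Psi+v_1)$ is only bounded, it is not a derivative that integrates by parts to give a sign (unlike $\delta v^\bot$ in the $\delta\omega$-equation), and moving the derivative off $\delta\omega$ yields $\nabla\delta\zeta$, which is equally absent from the energy. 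There is a second problem: $\Div(a(\Psi+v_1))=a\Div\Psi+\nabla a\cdot(\Psi+v_1)+\zeta_1-a\nabla h\cdot v_1$, and for the non-smooth solution $v_1$ the divergence $\zeta_1$ is only in $\Ld^2$ by Definition~\ref{defin:sol}(a), not in $\Ld^\infty$, so the coefficient of $\delta\omega$ is not bounded. The fix is simply to drop $\delta\zeta$ from the energy: the paper tracks only $\int(|\delta v|^2+|\delta\omega|^2)$, tests the $v$-equation against $\delta v$ (where $-\int\omega_1|\delta v|^2\le0$ by non-negativity) and the vorticity equation against $\delta\omega$, and exploits the structural cancellation $\int\nabla|\delta\omega|^2\cdot\delta v^\bot=\int|\delta\omega|^2\,\curl\delta v=\int|\delta\omega|^2(\omega_1-\omega_2)$, which together with $-2\int|\delta\omega|^2\omega_1$ gives $-\int|\delta\omega|^2(\omega_1+\omega_2)\le0$. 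No $\delta\zeta$-equation is needed at all.
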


Finally, in Appendix~\ref{sec:degenerate} jointly written with Julian Fischer, we establish the following global well-posedness result for the degenerate parabolic case $\lambda=\beta=0$, $\alpha>0$.
The proof is of a very different nature from the other cases, exploiting the explicit scalar structure of the solution $v$.

\begin{theor}[Degenerate parabolic compressible case]\label{th:deg-case+JF}
Let $\lambda=\beta=0$, $\alpha=1$, let $v^\circ,\Psi\in W^{1,\infty}(\R^2)^2$ with $\curl v^\circ\in\Pc(\R^2)$.
Then there exists a global strong solution $ v\in \Ld^\infty_\loc(\R^+;\Ld^\infty(\R^2))\cap\Ld^\infty_\loc(\R^+; v^\circ+\Ld^1(\R^2))$ of~\eqref{eq:limeqn2} on $\R^+\times\R^2$ with initial data~$ v^\circ$ and with $\curl v\in\Ld^\infty_\loc(\R^+;\Pc\cap\Ld^\infty(\R^2))$. This solution $v$ is unique in the class of all $w$'s in $\Ld^\infty_\loc(\R^+\times\R^2)$ with $\curl w\in\Ld_\loc^\infty(\R^+;\Pc\cap\Ld^\infty_\loc(\R^2))$.\\
If in addition for some $s\ge0$ we have $ v^\circ,\Psi\in W^{1\vee s,\infty}(\R^2)^2$ and $\curl v^\circ,\curl\Psi\in W^{s,\infty}(\R^2)$, then $ v\in W^{1,\infty}_\loc(\R^+;W^{s,\infty}(\R^2)^2)$. If for some $s\ge1$ we further have $ v^\circ,\Psi\in W^{s,\infty}(\R^2)^2$, $\curl v^\circ\in H^s\cap W^{s,\infty}(\R^2)$, and $\curl\Psi\in W^{s,\infty}(\R^2)$, then $ v\in\Ld^\infty_\loc(\R^+; v^\circ+H^s\cap W^{s,\infty}(\R^2)^2)$.
\end{theor}

\subsection{Roadmap to the proof of the main results}

To ease the presentation, various independent PDE results needed in the proofs are isolated in Section~\ref{chap:prelim}, including general a priori estimates for transport and transport-diffusion equations, some global elliptic regularity results, as well as critical potential theory estimates. The interest of such estimates for our purposes should be already clear from a quick look at the vorticity formulation~\eqref{eq:limeqn1VF}--\eqref{eq:limeqn2VF}.
To the best of our knowledge, most of these PDE results
cannot be found in this form in the literature, and proofs are included in Appendix~\ref{app:proofs-prel}.

We start in Section~\ref{chap:local} with the local existence of smooth solutions, summarized in Theorem~\ref{th:mainloc} above. In the non-degenerate case $\lambda>0$, the proof follows from a standard iterative scheme as in~\cite[Appendix~B]{Serfaty-15}. It is performed here in Sobolev spaces, but could be done in Hölder spaces as well. In the degenerate parabolic case $\lambda=\beta=0$, $\alpha>0$, a similar argument holds, but requires a more careful analysis of the iterative scheme.

In Section~\ref{chap:global} we then turn to global existence. In order to pass from local to global existence, we prove estimates for the Sobolev and Hölder norms of solutions through the norm of their initial data. As shown in Section~\ref{chap:propagation}, these estimates essentially follow from an a priori control of the vorticity in $\Ld^\infty(\R^2)$.
In the work by Lin and Zhang~\cite{Lin-Zhang-00} on the simpler model~\eqref{eq:LZh}, such an a priori estimate for the vorticity is achieved by a direct ODE argument, using that for~\eqref{eq:LZh} the evolution of the vorticity along characteristics can be integrated explicitly. This explicit structure is lost in the more sophisticated models~\eqref{eq:limeqn1} and~\eqref{eq:limeqn2}, but in the parabolic case we still manage to design suitable ODE type arguments (cf.\@ Lemma~\ref{lem:Lpest}(iii)).
This leads to the nice boundedness effect~\eqref{eq:flat1} for the vorticity (depending on the initial mass $\int\omega^\circ=1$ only!), which of course differs from~\cite{Lin-Zhang-00} by the additional exponential growth due to the forcing $\Psi$, and which is at the core of our existence result for vortex-sheet initial data.
In the mixed-flow case for the incompressible model~\eqref{eq:limeqn1}, such ODE arguments are no longer available, and only a weaker estimate is obtained, controlling for all $1\le q\le\infty$ the $\Ld^q$-norm of the solution (as well as its space-time $\Ld^{q+1}$-norm if $\alpha>0$) by the $\Ld^q$-norm of the initial data (cf.\@ Lemma~\ref{lem:Lpvort}). This is proven by a careful examination of the evolution of $\Ld^q$-norms of the vorticity.

In order to handle rough initial data, we regularize the data and then pass to the limit in the equation, using the compactness given by the available a priori estimates. As already noticed, for $h$ non-constant, the usual Delort's argument~\cite{Delort-91} fails (due to the first right-hand side term in~\eqref{eq:delort1}), so that stronger compactness is needed to pass to the limit in the nonlinearity $\omega v$ than in the simpler case of the 2D Euler equation.
While energy estimates only give bounds for $ v$ in $\bar  v^\circ+\Ld^2(\R^2)^2$ and for $\zeta$ in $\Ld^2(\R^2)$ (cf.\@ Lemma~\ref{lem:aprioriest}),
the additional estimates for the vorticity in $\Ld^q(\R^2)$, $q>1$, turn out to be crucial. 
As in~\cite{Lin-Zhang-00}, we need to make use of some compactness result due to Lions~\cite{Lions-98} in the context of the compressible Navier-Stokes equations. The model~\eqref{eq:limeqn1} in the conservative case $\alpha=0$ is however more subtle because of a lack of strong enough a priori estimates: only very weak solutions are then expected and obtained (for initial vorticity in $\Ld^q(\R^2)$ with $q>1$), and compactness is in that case carefully proven by hand, which is one of the main achievements in this paper (cf.\@ Proposition~\ref{prop:globexist3}(iv)).

Uniqueness issues are addressed in Section~\ref{chap:unique}.
Similarly as in~\cite[Appendix~B]{Serfaty-15}, weak-strong uniqueness principles for both~\eqref{eq:limeqn1} and~\eqref{eq:limeqn2} are established by energy methods in the non-degenerate case $\lambda>0$.
In the degenerate parabolic case $\lambda=\beta=0$, $\alpha>0$, these energy methods fail: an additional term needs to be added to the usual energy, and on this basis a different weak-strong uniqueness principle is obtained.
Following the modulated energy strategy developed by Serfaty~\cite{Serfaty-15}, these weak-strong principles are the key to the mean-field limit results for Ginzburg-Landau vortices in the companion paper~\cite{DS-16}.
For the incompressible model~\eqref{eq:limeqn1}, uniqueness in the class of bounded vorticity is further obtained using the approach by Serfaty and V\'azquez~\cite{Serfaty-Vazquez-14} for the simpler model~\eqref{eq:LZh}, which consists in adapting the corresponding uniqueness result for the 2D Euler equation due to Yudovich~\cite{Yudovich-63} together with a transport argument à la Loeper~\cite{Loeper-06}.

Finally, in Appendix~\ref{sec:degenerate} jointly written with Julian Fischer, a global well-posedness result is established for the degenerate parabolic case $\lambda=\beta=0$, $\alpha>0$. The proof consists in exploiting the scalar structure of the solution $ v$ to reduce the equation to a Burgers type equation with additional quadratic damping and forcing terms, and with unit initial data. Suitable ODE type arguments then allow to explicitly integrate this equation, and the desired properties of the solution easily follow.

\subsubsection*{Notation}
We use the notation $C$ for (unless explicitly stated) universal constants that may vary from line to line. We write $\lesssim$ and $\gtrsim$ for $\le$ and $\ge$ up to such multiplicative constants $C$, and we use the notation $\simeq$ if both relations $\lesssim$ and $\gtrsim$ hold. We add a subscript in order to indicate the dependence on other parameters. 
However, as we need to keep track of the dependence on various 
controlled quantities, and as subscripts would quickly become unreadable, we usually do not use any subscript and simply indicate in the beginning of each statement or proof on what quantities constants are allowed to depend.

For any vector field $F=(F_1,F_2)$ on $\R^2$, we denote $F^\bot=(-F_2,F_1)$, $\curl F=\partial_1F_2-\partial_2F_1$, and also as usual $\Div F=\partial_1F_1+\partial_2F_2$. Given two linear operators $A,B$ on some function space, we denote by $[A,B]:=AB-BA$ their commutator. For any exponent $1\le p\le\infty$, we denote its Hölder conjugate by $p':=p/(p-1)$. We denote by $B(x,r)$ the ball of radius $r$ centered at $x$ in $\R^d$, and we set $B_r:=B(0,r)$ and $B(x):=B(x,1)$. We use the notation $a\wedge b=\min\{a,b\}$ and $a\vee b=\max\{a,b\}$ for all $a,b\in\R$. Given a function $f:\R^d\to\R$, we denote its positive and negative parts by $f^+(x):=0\vee f(x)$ and $f^-(x):=0\vee(-f)(x)$, respectively.
The space of Lebesgue-measurable functions on $\R^d$ is denoted by $\Mes(\R^d)$, the set of Borel probability measures on $\R^d$ is denoted by $\Pc(\R^d)$, and for all $\sigma>0$, $C_b^\sigma(\R^d)$ stands as usual for the Banach space $C_b^{\lfloor\sigma\rfloor,\sigma-\lfloor\sigma\rfloor}(\R^d)$ of bounded Hölder functions. For $\sigma\in(0,1)$, we denote by $|\cdot|_{C^\sigma}$ the usual Hölder seminorm, and by $\|\cdot\|_{C^\sigma}:=|\cdot|_{C^\sigma}+\|\cdot\|_{\Ld^\infty}$ the corresponding norm. We denote by $\Ld^p_{\uloc}(\R^d)$ the Banach space of functions that are uniformly locally $\Ld^p$-integrable, with norm $\|f\|_{\Ld^p_\uloc}:=\sup_x\|f\|_{\Ld^p(B(x))}$. Given a Banach space $X\subset \Mes(\R^d)$ and $t>0$, we use the notation $\|\cdot\|_{\Ld^p_tX}$ for the usual norm in $\Ld^p([0,t];X)$.

\section{Preliminary results}\label{chap:prelim}
In this section, we establish various PDE results that are needed in the sequel and are of independent interest. As most of them do not depend on the choice of space dimension $2$, they are stated here in general dimension $d\ge1$.
We first recall the following useful proxy for a fractional Leibniz rule, which is essentially due to Kato and Ponce~\cite{Kato-Ponce-88} based on ideas by Coifman and Meyer~\cite{Coifman-Meyer-78,Coifman-Meyer-86} (see e.g.\@ \cite[Theorem~1.4]{Gulisashvili-Kon-96}).

\begin{lem}[Kato-Ponce inequality]\label{lem:katoponce-1}
Let $d\ge1$, $s\ge0$, $p\in(1,\infty)$, and let $\frac1{p_i}+\frac1{q_i}=\frac1p$ with $p_i,q_i\in(1,\infty]$ for~$i=1,2$. Then for $f,g\in C^\infty_c(\R^d)$ we have
\[\|fg\|_{W^{s,p}}\lesssim \|f\|_{\Ld^{p_1}}\|g\|_{W^{s,q_1}}+\|g\|_{\Ld^{p_2}}\|f\|_{W^{s,q_2}}.\qedhere\]
\end{lem}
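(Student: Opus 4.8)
The plan is to reduce the claimed estimate to the standard Kato–Ponce commutator/product estimates available in the literature, most conveniently via a Littlewood–Paley (paraproduct) decomposition. First I would write $fg = \pi(f,g)+\pi(g,f)+\rho(f,g)$, the usual splitting into the two paraproducts (high–low and low–high) and the resonant (high–high) term, using a dyadic partition of unity $1=\sum_j \Delta_j$. For the paraproduct $\pi(f,g)=\sum_j S_{j-1}f\,\Delta_j g$, each frequency block is localized at frequency $\sim 2^j$, so applying $(I-\triangle)^{s/2}$ costs a factor $\sim 2^{js}$; summing the square (or, for general $p$, using the Littlewood–Paley square function characterization of $W^{s,p}=H^{s,p}$ valid for $1<p<\infty$) together with Hölder's inequality in the form $\|S_{j-1}f\,\Delta_j g\|_{\Ld^p}\le \|S_{j-1}f\|_{\Ld^{p_1}}\|\Delta_j g\|_{\Ld^{q_1}}$ and the maximal-function bound $\|S_{j-1}f\|_{\Ld^{p_1}}\lesssim\|f\|_{\Ld^{p_1}}$ yields $\|\pi(f,g)\|_{W^{s,p}}\lesssim\|f\|_{\Ld^{p_1}}\|g\|_{W^{s,q_1}}$. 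The symmetric paraproduct $\pi(g,f)$ gives the second term $\|g\|_{\Ld^{p_2}}\|f\|_{W^{s,q_2}}$ in the same way.

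The one genuinely delicate piece is the resonant term $\rho(f,g)=\sum_j \Delta_j f\,\widetilde\Delta_j g$ (sum over comparable frequencies), since after applying $(I-\triangle)^{s/2}$ the output frequency can be much smaller than $2^j$, so one cannot simply pull the $2^{js}$ onto one factor. Here I would use that for $s\ge0$ one has $\|(I-\triangle)^{s/2}\rho(f,g)\|_{\Ld^p}\lesssim \big\|\big(\sum_j 2^{2js}|\Delta_j f|^2\big)^{1/2}\big\|_{\Ld^{q_2}}\|g\|_{\Ld^{p_2}}$ up to summing a convergent geometric series in the low-frequency shift — this is the standard point where $s\ge 0$ (rather than $s>0$) suffices because after the derivative lands on the high-frequency factor the remaining sum over output frequencies $\le 2^j$ is harmless — and then bound it by $\|f\|_{W^{s,q_2}}\|g\|_{\Ld^{p_2}}$, which is of the second type. (Symmetrically one could put the derivative on $g$ and obtain the first type; either works.) The needed vector-valued/maximal inequalities are exactly those underlying the Fefferman–Stein and Littlewood–Paley theory, valid since all the exponents $p,p_i,q_i$ lie in $(1,\infty)$ except possibly the $\Ld^\infty$ endpoints, which only appear on the $\Ld^{p_i}$ factors and are handled trivially by $\|S_{j-1}f\|_{\Ld^\infty}\lesssim\|f\|_{\Ld^\infty}$ and $\|\Delta_j f\|_{\Ld^\infty}\lesssim\|f\|_{\Ld^\infty}$.

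Alternatively, and even more economically, I would simply invoke the statement as already proven: the inequality is precisely \cite[Theorem~1.4]{Gulisashvili-Kon-96} (a refinement of the original Kato–Ponce estimate \cite{Kato-Ponce-88}), whose hypotheses are exactly $s\ge0$, $p\in(1,\infty)$, and the Hölder relations $1/p_i+1/q_i=1/p$ with $p_i,q_i\in(1,\infty]$; since $f,g\in C^\infty_c(\R^d)$ all terms are finite and the density argument is immediate. The main obstacle, should one want a self-contained proof, is thus purely the resonant-term bound described above; everything else is routine paraproduct bookkeeping.
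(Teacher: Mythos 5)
The paper offers no proof of this lemma at all: it simply introduces it as a known result and cites \cite[Theorem~1.4]{Gulisashvili-Kon-96}, which is exactly the route you take in your final paragraph. In that sense your proposal matches the paper's treatment.

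The paraproduct sketch you add is a genuine extra, and its structure is sound: Bony decomposition, Littlewood--Paley characterization of $W^{s,p}=F^s_{p,2}$ for $1<p<\infty$, Fefferman--Stein for the low-frequency cut-offs $S_{j-1}$, and a geometric-series argument for the resonant term. Two small points worth flagging if you ever wanted to make this self-contained. First, the resonant-term geometric series $\sum_{j\gtrsim k}2^{(k-j)s}$ converges only for $s>0$; the case $s=0$ is covered not by the dyadic bookkeeping but simply by H\"older's inequality (and then $W^{0,p}=\Ld^p$), so it is cleaner to split those two cases explicitly rather than claim the same argument handles both. Second, the hypothesis allows $q_1$ or $q_2$ to equal $\infty$, in which case the factor $\|g\|_{W^{s,q_1}}$ (or $\|f\|_{W^{s,q_2}}$) sits in $W^{s,\infty}$; the square-function characterization is not available at $p=\infty$, so at that endpoint one should instead run the argument with $\|\Delta_j g\|_{\Ld^\infty}\lesssim 2^{-js}\|g\|_{W^{s,\infty}}$ directly (or observe that the H\"older constraint forces the companion exponent $p_1$ to equal $p<\infty$, so the $\Ld^p$-summation is still on the good side). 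Neither issue is deep, but both would need to be addressed in a fully written-out proof; the citation sidesteps them, which is presumably why the paper chose that route.
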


The following gives a general estimate for the evolution of the Sobolev norms of the solutions of transport equations (see also~\cite[equation~(7)]{Lin-Zhang-00} for a simpler version), which will be useful in the sequel since the vorticity~$\omega$ indeed satisfies an equation of this form~\eqref{eq:limeqn1VF}. The proof is postponed to Appendix~\ref{app:proofs-prel}.

\begin{lem}[A priori estimate for transport equations]\label{lem:katoponce}
Let $d\ge1$, $s\ge0$, $T>0$. Given a vector field $w\in \Ld^\infty_\loc([0,T);W^{1,\infty}(\R^d)^d)$ with $w-W\in \Ld^\infty_\loc([0,T);H^{s+1}(\R^d)^d)$ for some reference map $W\in W^{s+1,\infty}(\R^d)^d$, let $\rho\in \Ld^\infty_\loc([0,T);H^s(\R^d))$ satisfy the transport equation $\partial_t\rho=\Div(\rho w)$ in the distributional sense on $[0,T)\times\R^d$.
Then for all $t\in[0,T)$ we have
\begin{multline}\label{eq:katoponcecom}
\partial_t\|\rho^t\|_{H^s}\lesssim_s \|(\nabla w^t,\nabla W)\|_{\Ld^{\infty}}\|\rho^t\|_{H^s}+\|W\|_{W^{s+1,\infty}}\|\rho^t\|_{\Ld^2}\\
+\min\big\{\|\rho^t\|_{\Ld^\infty}\|\Div(w^t-W)\|_{H^s}+\|\rho^t\|_{W^{1,\infty}}\|w^t-W\|_{H^{s}}\,;\,\|\rho^t\|_{\Ld^{\infty}}\|w^t-W\|_{H^{s+1}}\big\},
\end{multline}
where we use the notation $\|(\nabla w^t,\nabla W)\|_{\Ld^{\infty}}:=\|\nabla w^t\|_{W^{1,\infty}}\vee\|\nabla W\|_{W^{1,\infty}}$. Moreover, for all $t\in[0,T)$,
\begin{align}\label{eq:tsph-1}
\|\rho^t-\rho^\circ\|_{\dot H^{-1}}\le\|\rho\|_{\Ld^\infty_t\Ld^2}\|w\|_{\Ld^1_t\Ld^\infty}.
\end{align}
\end{lem}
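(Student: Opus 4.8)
The plan is to prove the two estimates separately, the first by differentiating the $H^s$-norm in time using the equation, and the second by a simple duality/integration argument. For \eqref{eq:katoponcecom}, I would work formally (justifying by mollification of $\rho$ and $w$, or by the fact that the right-hand side controls the relevant quantities) and compute
\[
\frac12\partial_t\|\rho^t\|_{H^s}^2=\langle\rho^t,\partial_t\rho^t\rangle_{H^s}=\langle\rho^t,\Div(\rho^t w^t)\rangle_{H^s}=\langle\langle\nabla\rangle^s\rho^t,\langle\nabla\rangle^s\Div(\rho^t w^t)\rangle_{\Ld^2},
\]
where $\langle\nabla\rangle^s=(1-\triangle)^{s/2}$. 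Writing $w^t=(w^t-W)+W$ and splitting $\Div(\rho^t w^t)=w^t\cdot\nabla\rho^t+\rho^t\Div w^t$, the dangerous term is $w^t\cdot\nabla\rho^t$, which at top order would cost a derivative of $\rho$; this is handled by the standard commutator trick: write $\langle\nabla\rangle^s(w^t\cdot\nabla\rho^t)=w^t\cdot\nabla\langle\nabla\rangle^s\rho^t+[\langle\nabla\rangle^s,w^t\cdot\nabla]\rho^t$, integrate the first piece by parts to gain $\int|\langle\nabla\rangle^s\rho^t|^2\Div w^t$, and bound the commutator by the Kato–Ponce/Kenig–Ponce–Vega commutator estimate $\|[\langle\nabla\rangle^s,f\cdot\nabla]g\|_{\Ld^2}\lesssim\|\nabla f\|_{\Ld^\infty}\|g\|_{H^s}+\|f\|_{\dot H^{s+1}\cap\dot H^1}\|g\|_{\Ld^\infty}$ — or more elementarily, using Lemma~\ref{lem:katoponce-1} on the pieces involving $w^t-W$ (which lies in $H^{s+1}$) and a direct estimate on the smooth reference part $W$. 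The term $\rho^t\Div w^t$ is estimated via Lemma~\ref{lem:katoponce-1} again, splitting $\Div w^t=\Div(w^t-W)+\Div W$: the first gives $\|\rho^t\|_{\Ld^\infty}\|\Div(w^t-W)\|_{H^s}$ plus $\|\Div(w^t-W)\|_{\Ld^\infty}\|\rho^t\|_{H^s}$ (absorbed into the $\|\nabla w^t\|_{\Ld^\infty}$ term), and the second gives $\|\rho^t\|_{\Ld^2}\|\Div W\|_{W^{s,\infty}}$ plus $\|\Div W\|_{\Ld^\infty}\|\rho^t\|_{H^s}$. Collecting terms and dividing by $\|\rho^t\|_{H^s}$ (or arguing with $(\varepsilon^2+\|\rho^t\|_{H^s}^2)^{1/2}$ and letting $\varepsilon\to0$ to avoid dividing by zero) yields \eqref{eq:katoponcecom}, after checking that all the commutator and product constants combine into the stated coefficients $2$ and $\tfrac12$.

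For \eqref{eq:tsph-1}, I would simply integrate the equation in time: $\rho^t-\rho^\circ=\int_0^t\Div(\rho^s w^s)\,ds$ in the distributional sense, so for any test function $\varphi$,
\[
\langle\rho^t-\rho^\circ,\varphi\rangle=-\int_0^t\langle\rho^s w^s,\nabla\varphi\rangle\,ds,
\]
and taking $\varphi=(-\triangle)^{-1}(\rho^t-\rho^\circ)$ (so $\|\nabla\varphi\|_{\Ld^2}=\|\rho^t-\rho^\circ\|_{\dot H^{-1}}$) gives
\[
\|\rho^t-\rho^\circ\|_{\dot H^{-1}}^2\le\int_0^t\|\rho^s\|_{\Ld^2}\|w^s\|_{\Ld^\infty}\,ds\;\|\nabla\varphi\|_{\Ld^2}\le\|\rho\|_{\Ld^\infty_t\Ld^2}\|w\|_{\Ld^1_t\Ld^\infty}\|\rho^t-\rho^\circ\|_{\dot H^{-1}},
\]
and dividing through yields the claim. (One should note that $\Div(\rho w)$ lies in $\Ld^1_t\dot H^{-1}$ under the stated hypotheses, so the integral identity and the choice of $\varphi$ are legitimate after a density argument.)

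The main obstacle is the top-order commutator estimate in \eqref{eq:katoponcecom} and, more precisely, keeping track of the numerical constants so that the final coefficients are exactly $2\|(\nabla w^t,\nabla W)\|_{\Ld^\infty}$ and $\tfrac12\|\Div w^t\|_{\Ld^\infty}$ as stated, rather than some larger universal multiple. The cleanest route is to decompose $w^t=(w^t-W)+W$ from the start: for the $H^{s+1}$ perturbation $w^t-W$ one applies Lemma~\ref{lem:katoponce-1} directly to $\langle\nabla\rangle^s\Div(\rho^t(w^t-W))$ with the splitting of $\Ld^p$ exponents $(p_1,q_1)=(\infty,2)$ and $(p_2,q_2)=(2,\infty)$ (noting $\|\nabla\langle\nabla\rangle^{s-1}\,\cdot\,\|\lesssim\|\,\cdot\,\|_{H^s}$ for $s\ge0$, and using $\|w^t-W\|_{H^{s+1}}\sim\|\Div(w^t-W)\|_{H^s}+\|\curl(w^t-W)\|_{H^s}$ in 2D — or avoiding this by a slightly different bookkeeping), while for the bounded-Lipschitz reference part $W$ the only contributions are the first-order transport term, handled by the integration-by-parts identity $\int(w^t\cdot\nabla\langle\nabla\rangle^s\rho^t)\langle\nabla\rangle^s\rho^t=-\tfrac12\int|\langle\nabla\rangle^s\rho^t|^2\Div w^t$, and lower-order terms bounded by $\|\nabla W\|_{\Ld^\infty}\|\rho^t\|_{H^s}$ and $\|\Div W\|_{W^{s,\infty}}\|\rho^t\|_{\Ld^2}$. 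Everything else is routine product estimates and a standard mollification argument to justify the formal computation.
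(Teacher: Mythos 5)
Your overall plan is the same as the paper's: compute $\partial_t\|\rho^t\|_{H^s}^2$, split off the top-order transport term via a commutator, integrate by parts to produce the $\tfrac12\Div w^t$ contribution, and control the remainder by decomposing $w^t=(w^t-W)+W$; for \eqref{eq:tsph-1} the paper does the same duality computation, just carried out in Fourier variables with an $(\e+|\xi|)^{-2}$ regularization rather than by testing against $(-\triangle)^{-1}(\rho^t-\rho^\circ)$. The one genuine structural difference is the choice of commutator: the paper works with the packaged operator $[\langle\nabla\rangle^s\Div,w^t]$, for which it invokes a refinement of Kato--Ponce in which the high-order factor on $w^t$ enters only through $\|\Div w^t\|_{W^{s,\cdot}}$; this is what produces the term $\|\rho^t\|_{\Ld^\infty}\|\Div(w^t-W)\|_{H^s}$ in \eqref{eq:katoponcecom} exactly as stated. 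You instead split $\Div(\rho w)=w\cdot\nabla\rho+\rho\Div w$ and use $[\langle\nabla\rangle^s,w\cdot\nabla]$; that route is fine, but the generic commutator estimate you quote places $\|w-W\|_{\dot H^{s+1}}$ (i.e.\ the \emph{full} gradient, including $\curl(w-W)$) against $\|\rho\|_{\Ld^\infty}$, so it yields a slightly weaker right-hand side than \eqref{eq:katoponcecom}. For the applications in this paper (where $w$ is built from $v$ and $v^\bot$, so $\Div w$ and $\curl w$ are comparable) this makes no difference, but you should be aware it does not literally reproduce the stated bound.

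The one real gap is your ``more elementary'' alternative, namely applying Lemma~\ref{lem:katoponce-1} \emph{directly} to $\langle\nabla\rangle^s\Div(\rho^t(w^t-W))$. That cannot work: Lemma~\ref{lem:katoponce-1} is a Leibniz rule for a product, and any such estimate applied to $\rho^t(w^t-W)$ at the $H^{s+1}$ level will produce a term of the form $\|w^t-W\|_{\Ld^\infty}\|\rho^t\|_{H^{s+1}}$, which is one derivative too many on $\rho$. The whole point of the commutator step is to first subtract $w^t\cdot\nabla\langle\nabla\rangle^s\rho^t$ (the part that gets killed by the integration by parts) before estimating; only the difference $[\langle\nabla\rangle^s\Div,w^t]\rho^t$ is of the right order. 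So your option (a) is the correct route, and option (b) should be discarded. Two smaller remarks: your concern about reproducing the exact numerical coefficients $2$ and $\tfrac12$ is moot, since the paper's own commutator estimate is only stated with $\lesssim$; and your $(\varepsilon^2+\|\rho^t\|_{H^s}^2)^{1/2}$ device to justify dividing by $\|\rho^t\|_{H^s}$ is a reasonable technical point that the paper leaves implicit.
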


As the evolution of the divergence $\zeta$ in the compressible model~\eqref{eq:limeqn2} is given by the transport-diffusion equation~\eqref{eq:limeqn2VF}, the following parabolic regularity results will be needed.
While items~(i) and~(ii) are classical, item~(iii) is less standard (see however~\cite[Section~3.4]{BCD-11} for a variant of this estimate), and a complete proof is included in Appendix~\ref{app:proofs-prel}.

\begin{lem}[A priori estimates for transport-diffusion equations]\label{lem:parreg+tsp}
Let $d\ge1$, $T>0$. Let $g\in\Ld^1_\loc([0,T)\times\R^d)^d$, and let $w$ satisfy $\partial_tw-\triangle w+\Div(w\nabla h)=\Div g$ in the distributional sense on $[0,T)\times\R^d$ with initial data $w^\circ$. The following hold:
\begin{enumerate}[(i)]
\item for all $s\ge0$, if $\nabla h\in W^{s,\infty}(\R^d)^d$, $w\in \Ld^\infty_\loc([0,T);H^s(\R^d))$, and $g\in \Ld^2_\loc([0,T);H^s(\R^d)^d)$, then we have for all $t\in[0,T)$,
\[\|w^t\|_{H^s}\le Ce^{Ct}(\|w^\circ\|_{H^s}+\|g\|_{\Ld^2_tH^s}),\]
where the constant $C$ depends only on an upper bound on $s$ and $\|\nabla h\|_{W^{s,\infty}}$;
\item if $\nabla h\in\Ld^\infty(\R^d)$, $w^\circ\in \Ld^2(\R^d)$, $w\in \Ld^\infty_\loc([0,T); \Ld^2(\R^d))$, and $g\in\Ld^2_\loc([0,T);\Ld^2(\R^d))$, then we have for all $t\in[0,T)$,
\[\|w^t-w^\circ\|_{\dot H^{-1}\cap \Ld^2}\le Ce^{Ct}(\|w^\circ\|_{\Ld^2}+\|g\|_{\Ld^2_t\Ld^2}),\]
where the constant $C$ depends only on an upper bound on $\|\nabla h\|_{\Ld^\infty}$;
\item for all $1\le p,q\le\infty$, and all $\frac{dq}{d+q}< s\le q$, $s\ge1$, if $\nabla h\in\Ld^\infty(\R^d)$, $w\in\Ld^p_\loc([0,T);\Ld^q(\R^d))$, and $g\in\Ld^p_\loc([0,T);\Ld^s(\R^d))$, then we have for all $t\in[0,T)$,
\begin{align*}
\|w\|_{\Ld^p_t\Ld^q}&\le C(\|w^\circ\|_{\Ld^q}+\kappa^{-1}t^\kappa\|g\|_{\Ld^p_t\Ld^s})\exp\Big(\inf_{2<r<\infty}r^{-1}\big(1+(r-2)^{-r/2}\big)(Ct)^{r/2}\Big).
\end{align*}
where $\kappa:=\frac d2(\frac1d+\frac1q-\frac1s)>0$, and where the constant $C$ depends only on an upper bound on $\|\nabla h\|_{\Ld^\infty}$.\qedhere
\end{enumerate}
\end{lem}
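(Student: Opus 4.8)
The plan is to prove the three items by classical parabolic energy and semigroup arguments; throughout, the formal energy identities below are to be justified by a standard spatial mollification of the equation (the Friedrichs commutator errors vanishing in the limit), or alternatively after upgrading the a priori regularity of $w$ by parabolic smoothing. For item~(i) I would run an $H^s$ energy estimate: applying $\langle\nabla\rangle^s$ to the equation and pairing with $\langle\nabla\rangle^s w^t$ in $\Ld^2$, the diffusion term produces the coercive quantity $\|\nabla\langle\nabla\rangle^s w^t\|_{\Ld^2}^2$, and, moving one derivative onto $\langle\nabla\rangle^s w^t$, one is led to
\[\tfrac12\partial_t\|w^t\|_{H^s}^2+\|\nabla\langle\nabla\rangle^s w^t\|_{\Ld^2}^2=\int\nabla\langle\nabla\rangle^s w^t\cdot\langle\nabla\rangle^s(w^t\nabla h-g^t).\]
The fractional Leibniz rule of Lemma~\ref{lem:katoponce-1} (with the H\"older pairs $(2,\infty)$ and $(\infty,2)$) gives $\|w^t\nabla h\|_{H^s}\lesssim\|\nabla h\|_{\Ld^\infty}\|w^t\|_{H^s}+\|\nabla h\|_{W^{s,\infty}}\|w^t\|_{\Ld^2}$, so by Young's inequality the coercive term absorbs the right-hand side up to $C\|w^t\|_{H^s}^2+C\|g^t\|_{H^s}^2$ with $C=C(s,\|\nabla h\|_{W^{s,\infty}})$, and Gr\"onwall's inequality then yields the claimed bound.

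Item~(ii) would then follow quickly: taking $s=0$ in item~(i) already gives the $\Ld^2$ bound $\|w^t\|_{\Ld^2}\le Ce^{Ct}(\|w^\circ\|_{\Ld^2}+\|g\|_{\Ld^2_t\Ld^2})$, and, integrating the $s=0$ energy identity in time, also $\|\nabla w\|_{\Ld^2_t\Ld^2}\le Ce^{Ct}(\|w^\circ\|_{\Ld^2}+\|g\|_{\Ld^2_t\Ld^2})$. For the $\dot H^{-1}$ bound I would integrate the equation in time, $w^t-w^\circ=\Div\int_0^t(\nabla w^\tau-w^\tau\nabla h+g^\tau)\,d\tau$, and then use $\|\Div F\|_{\dot H^{-1}}\le\|F\|_{\Ld^2}$ together with the Cauchy--Schwarz inequality in time (an $\e$-regularization as in Step~2 of the proof of Lemma~\ref{lem:katoponce} making this manipulation rigorous), combining with the two bounds just obtained.

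The substance is item~(iii), for which I would use the Duhamel representation relative to the heat semigroup, treating both drift and source as forcing,
\[w^t=e^{t\triangle}w^\circ+\int_0^te^{(t-\tau)\triangle}\Div(g^\tau-w^\tau\nabla h)\,d\tau,\]
and invoke the smoothing bounds $\|e^{\sigma\triangle}\Div F\|_{\Ld^q}\lesssim\sigma^{-1/2}\|F\|_{\Ld^q}$ and, for $1\le s\le q$, $\|e^{\sigma\triangle}\Div F\|_{\Ld^q}\lesssim\sigma^{-1/2-\frac d2(1/s-1/q)}\|F\|_{\Ld^s}$, noting that here $\tfrac12+\tfrac d2(1/s-1/q)=1-\kappa<1$ precisely because $s>\tfrac{dq}{d+q}$. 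This turns the estimate into a weakly singular Volterra inequality for $f(t):=\|w^t\|_{\Ld^q}$, namely $f(t)\le\|w^\circ\|_{\Ld^q}+C\int_0^t(t-\tau)^{\kappa-1}\|g^\tau\|_{\Ld^s}\,d\tau+C\|\nabla h\|_{\Ld^\infty}\int_0^t(t-\tau)^{-1/2}f(\tau)\,d\tau$. Taking the $\Ld^p([0,t])$ norm, Young's convolution inequality turns the source integral into $\lesssim\kappa^{-1}t^\kappa\|g\|_{\Ld^p_t\Ld^s}$, and one then closes by a generalized (fractional) Gr\"onwall argument: iterating the inequality generates the convolution powers of $\tau^{-1/2}$, whose $\Ld^1$ norms form a convergent series bounded by the stated exponential factor (equivalently, one slices $[0,t]$ into short subintervals on which the singular kernel has small $\Ld^1$ mass and multiplies the resulting short-time bounds, the infimum over $r$ encoding the freedom in this bookkeeping).

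I expect item~(iii) to be the only genuine difficulty. Besides the fractional-Gr\"onwall bookkeeping, one must keep careful track of the admissible exponent range $\tfrac{dq}{d+q}<s\le q$, $s\ge1$: the upper bound $s\le q$ is exactly what the smoothing estimate requires, the lower bound is the positivity $\kappa>0$ that makes the source kernel integrable, and $s\ge1$ is needed for Young's convolution inequality (and for $\|\cdot\|_{\Ld^s}$ to be a norm). One must also handle the endpoints $p=\infty$ and/or $q=\infty$ separately, replacing the relevant integrations by supremum bounds. Items~(i)--(ii) are, by contrast, routine once the mollification step is in place.
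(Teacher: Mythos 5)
Your proposal is correct and follows essentially the same route as the paper: item~(iii) is by the same Duhamel-plus-heat-kernel-smoothing argument leading to a weakly singular Volterra inequality closed by an optimized Gr\"onwall iteration, and items~(i)--(ii) are the same Kato--Ponce energy estimates. The only cosmetic difference is in items~(i)--(ii), where the paper tests the Fourier-side equation against $(\e+|\xi|)^{-2}\langle\xi\rangle^{2s}\partial_t\hat w$ (resp.\ $(\e+|\xi|)^{-2}(\hat w^t-\hat w^\circ)$) rather than running the standard $\langle\nabla\rangle^s$-energy pairing against $w$ and the time-integrated formula; both yield the stated bounds.
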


Another ingredient that we need is the following string of critical potential theory estimates. The Sobolev embedding for $W^{1,d}(\R^d)$ gives that $\|\nabla\triangle^{-1}w\|_{\Ld^\infty}$ is {\it almost} bounded by the $\Ld^d(\R^d)$-norm of $w$, while the Calderón-Zygmund theory gives that $\|\nabla^2\triangle^{-1}w\|_{\Ld^\infty}$ is {\it almost} bounded by the $\Ld^\infty(\R^d)$-norm of $w$. The following result makes these assertions precise in a quantitative way in the spirit of Brézis and Gallouët~\cite{Brezis-Gallouet-80}. Item~(iii) can be found e.g.\@ in~\cite[Appendix]{Lin-Zhang-00} in a slightly different form, but we were unable to find items~(i) and~(ii) in the literature. The proof is postponed to Appendix~\ref{app:proofs-prel}. (By $-\triangle^{-1}$ we henceforth mean the convolution with the Green's kernel.)

\begin{lem}[Potential estimates in $\Ld^\infty$]\label{lem:singint-1}
Let $d\ge2$. For all $w\in C^\infty_c(\R^d)$ the following hold:\footnote{A direct adaptation of the proof further shows that in parts~(i) and~(ii) the $\Ld^\infty$-norms in the left-hand sides could be replaced by Hölder $C^\e$-norms with $\e\in[0,1)$: the exponents $d$ in the right-hand sides would then need to be all replaced by $(1-\e)^{-1}d>d$, and an additional multiplicative prefactor $(1-\e)^{-1}$ is further needed.}
\begin{enumerate}[(i)]
\item for all $1\le p<d<q\le\infty$, choosing $\theta\in(0,1)$ such that $\frac1d=\frac\theta p+\frac{1-\theta}q$, we have
\begin{align*}
\|\nabla\triangle^{-1}w\|_{\Ld^\infty}&\lesssim \big((1-\tfrac dq)\wedge(1-\tfrac pd)\big)^{-1+\frac1d}\,\|w\|_{\Ld^d}\bigg(1+\log\frac{\|w\|_{\Ld^p}^{\theta}\|w\|_{\Ld^q}^{1-\theta}}{\|w\|_{\Ld^d}}\bigg)^{1-\frac1d};
\end{align*}
\item if $ w=\Div\xi$ for $\xi\in C^\infty_c(\R^d)^d$, then, for all $d<q\le\infty$ and $1\le p<\infty$, we have
\begin{align*}
\|\nabla\triangle^{-1} w\|_{\Ld^\infty}&\lesssim (1-\tfrac dq)^{-1+\frac1d}\| w\|_{\Ld^{d}}\bigg(1+\log^+\frac{\| w\|_{\Ld^q}}{\| w\|_{\Ld^{d}}}\bigg)^{1-\frac1d}+p\|\xi\|_{\Ld^p};
\end{align*}
\item for all $0<s\le1$ and $1\le p<\infty$, we have
\begin{align*}
\|\nabla^2\triangle^{-1} w\|_{\Ld^\infty}&\lesssim s^{-1}\| w\|_{\Ld^\infty}\bigg(1+\log\frac{\|w\|_{C^s}}{\| w\|_{\Ld^\infty}}\bigg)+p\| w\|_{\Ld^p}.\qedhere
\end{align*}
\end{enumerate}
\end{lem}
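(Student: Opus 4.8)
The plan is to prove all three estimates by the same device: split the singular convolution kernel at a scale $R>0$ to be optimized, treat the near-field piece by a H\"older or $\Ld^p$ bound on $w$ (resp. $\xi$), and the far-field piece by interpolating the kernel's tail against the available $\Ld^p$, $\Ld^d$, $\Ld^q$ norms of $w$. For item~(iii), which is the model case already essentially in~\cite{Lin-Zhang-00}, the kernel of $\nabla^2\triangle^{-1}$ is a Calder\'on--Zygmund kernel $K$ homogeneous of degree $-d$ with vanishing mean on spheres; write $\nabla^2\triangle^{-1}w(x)=\int_{|y|<R}K(y)(w(x-y)-w(x))\,dy+\int_{R<|y|<1}K(y)w(x-y)\,dy+\int_{|y|>1}K(y)w(x-y)\,dy$. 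The first integral is bounded by $|w|_{C^s}\int_{|y|<R}|y|^{s-d}\,dy\lesssim s^{-1}R^s|w|_{C^s}$ using the cancellation; the second by $\|w\|_{\Ld^\infty}\int_{R<|y|<1}|y|^{-d}\,dy\lesssim \|w\|_{\Ld^\infty}\log(1/R)$; the third by H\"older against the $\Ld^{p'}$-norm of $K\mathds1_{|y|>1}$, which is $\lesssim (p')^{-1/p'}\lesssim 1$ up to the factor $p$ appearing in the statement once one tracks $\|\,|y|^{-d}\mathds1_{|y|>1}\|_{\Ld^{p'}}$. Choosing $R=\min\{1,(\|w\|_{\Ld^\infty}/\|w\|_{C^s})^{1/s}\}$ balances the first two terms and yields the stated bound.

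For item~(i), the kernel of $\nabla\triangle^{-1}$ is $c_d\,y/|y|^d$, homogeneous of degree $1-d$, hence locally $\Ld^1$ but not $\Ld^d$-dual-integrable at infinity. Split $\nabla\triangle^{-1}w(x)=\int_{|y|<R}+\int_{R<|y|<1}+\int_{|y|>1}$. Bound $\int_{|y|<R}$ by $\|w\|_{\Ld^\infty}$?—no: here we have no $\Ld^\infty$ control, so instead estimate the near and intermediate pieces by H\"older against $w\in\Ld^d$: $\int_{|y|<\rho}|y|^{1-d}|w(x-y)|\,dy\le \|w\|_{\Ld^d}\big(\int_{|y|<\rho}|y|^{(1-d)d'}\,dy\big)^{1/d'}$, and since $(1-d)d'=-d$ exactly (as $d'=d/(d-1)$), this borderline integral is logarithmically divergent—precisely the source of the log factor. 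One therefore splits at two scales $\rho_1<\rho_2$, using $\|w\|_{\Ld^p}$ on $|y|<\rho_1$ (where $|y|^{1-d}\in\Ld^{p'}$ since $p<d$, giving the prefactor $(1-p/d)^{-1+1/d}$ via $\big(\int_{|y|<\rho_1}|y|^{(1-d)p'}\big)^{1/p'}$), $\|w\|_{\Ld^d}$ on $\rho_1<|y|<\rho_2$ (producing $\|w\|_{\Ld^d}\log(\rho_2/\rho_1)$), and $\|w\|_{\Ld^q}$ on $|y|>\rho_2$ (where $|y|^{1-d}\in\Ld^{q'}$ since $q>d$, giving $(1-d/q)^{-1+1/d}$). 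Optimizing $\rho_1,\rho_2$—set $\rho_1$ so that the $\Ld^p$ and $\Ld^d$ contributions at scale $\rho_1$ match, and $\rho_2$ so the $\Ld^d$ and $\Ld^q$ contributions match—converts $\log(\rho_2/\rho_1)$ into $\log\big(\|w\|_{\Ld^p}^\theta\|w\|_{\Ld^q}^{1-\theta}/\|w\|_{\Ld^d}\big)$ with the stated $\theta$, and raising to the power $1-1/d$ comes from the fact that the $\Ld^d$ estimate on an annulus of ratio $\mu$ is really $\lesssim \|w\|_{\Ld^d}\,(\log\mu)^{1/d'}=\|w\|_{\Ld^d}(\log\mu)^{1-1/d}$ once one keeps the $\Ld^{d'}$-norm of the kernel on that annulus rather than crudely pulling out $\|w\|_{\Ld^\infty}$.

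For item~(ii), one integrates by parts to move the derivative off the kernel onto $\xi$ only on the far-field piece: write $\nabla\triangle^{-1}\Div\xi = \nabla^2\triangle^{-1}\cdot\xi$ conceptually, but split as $\int_{|y|<1}\nabla\triangle^{-1}\text{-kernel}(y)\,\Div\xi(x-y)\,dy$ handled as in~(i) with $w=\Div\xi$ contributing the $\|w\|_{\Ld^d}(1+\log^+(\|w\|_{\Ld^q}/\|w\|_{\Ld^d}))^{1-1/d}$ term (now only two scales are needed since there is no $\Ld^p$-of-$w$ input near zero—the inner cutoff at a fixed scale $1$ suffices, the $\Ld^p$ input enters only at infinity), and $\int_{|y|>1}\nabla^2\triangle^{-1}\text{-kernel}(y)\cdot\xi(x-y)\,dy$ after one integration by parts, bounded by $\|\xi\|_{\Ld^p}\big\|\,|y|^{-d}\mathds1_{|y|>1}\big\|_{\Ld^{p'}}\lesssim p\,\|\xi\|_{\Ld^p}$ since $\big(\int_{|y|>1}|y|^{-dp'}\,dy\big)^{1/p'}\simeq (dp'-d)^{-1/p'}=(d(p'-1))^{-1/p'}\lesssim p^{1/p'}\le p$.

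The main obstacle is bookkeeping the constants: getting the exact exponents $-1+1/d$ on the $\big((1-d/q)\wedge(1-p/d)\big)$ prefactors and the exponent $1-1/d$ on the logarithm requires carefully using the $\Ld^{d'}$ (not $\Ld^\infty\times\Ld^1$) duality on the critical annulus, and choosing the two interpolation scales $\rho_1,\rho_2$ simultaneously so that both the $\Ld^p$/$\Ld^d$ and the $\Ld^d$/$\Ld^q$ balances hold at once—this is where the specific interpolation exponent $\theta$ with $\frac1d=\frac\theta p+\frac{1-\theta}q$ is forced. The rest is routine homogeneity computations for the $\Ld^r$-norms of $|y|^{a}\mathds1_{E}$ on balls and annuli.
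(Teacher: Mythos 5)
Your overall plan coincides with the paper's: decompose the convolution into a near-field ball, a critical annulus, and a far-field region, apply H\"older with the available $\Ld^p$, $\Ld^d$, $\Ld^q$ norms, track the $\Ld^{d'}$-norm of the kernel on the annulus to get the $(\log)^{1-1/d}$ factor, and optimize the two cut-off radii. Items~(ii) and~(iii) are essentially correct and match the paper's argument (in~(ii) the paper uses a smooth cut-off $\chi_\Gamma$ for the integration by parts at infinity, but that is a cosmetic detail).

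However, there is a genuine error in your treatment of item~(i): you have the roles of $p$ and $q$ reversed. Recall $1\le p<d<q\le\infty$. Near the origin we need the kernel $|y|^{1-d}$ to lie in the dual Lebesgue space, and $\int_{|y|<\rho_1}|y|^{(1-d)r}\,dy<\infty$ iff $r<d'=d/(d-1)$, i.e.\ iff the dual exponent is~$>d$. Hence on $|y|<\rho_1$ you must pair the kernel in $\Ld^{q'}$ with $\|w\|_{\Ld^q}$ (using $q>d$, so $q'<d'$), \emph{not} with $\|w\|_{\Ld^p}$: with $p<d$ we have $p'>d'$ and $\int_{|y|<\rho_1}|y|^{(1-d)p'}\,dy=+\infty$, so the integral you invoke diverges. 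Symmetrically, at infinity $\int_{|y|>\rho_2}|y|^{(1-d)r}\,dy<\infty$ iff $r>d'$, i.e.\ iff the dual exponent is~$<d$; so on $|y|>\rho_2$ you must use $\|w\|_{\Ld^p}$ (with $p<d$, hence $p'>d'$), not $\|w\|_{\Ld^q}$. As written, both your near- and far-field bounds fail. The paper assigns $\Ld^q$ ($q>d$) to $|x-y|<\gamma$ with prefactor $(q'(1-d/q))^{-1/q'}\lesssim(1-d/q)^{-1+1/d}$, and $\Ld^p$ ($p<d$) to $|x-y|>\Gamma$ with prefactor $(p'(d/p-1))^{-1/p'}\lesssim(1-p/d)^{-1+1/d}$, then chooses $\gamma^{1-d/q}=\|w\|_{\Ld^d}/\|w\|_{\Ld^q}$ and $\Gamma^{d/p-1}=\|w\|_{\Ld^p}/\|w\|_{\Ld^d}$, and uses interpolation of $\Ld^d$ between $\Ld^p$ and $\Ld^q$ to verify $\gamma\le\Gamma$. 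Notably, your own argument for item~(ii) makes the correct assignment (taking $\Ld^q$ on $|y|<\gamma$ and moving the $\Ld^p$ control of $\xi$ to the far field after integration by parts), so your writeup of~(i) is internally inconsistent with~(ii).
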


In addition to the Sobolev regularity of solutions of~\eqref{eq:limeqn1}--\eqref{eq:limeqn2}, we study in the sequel their Hölder regularity as well, in the framework of the Besov spaces $C^s_*(\R^d):=B^s_{\infty,\infty}(\R^d)$ (see e.g.~\cite{BCD-11}). These spaces actually coincide with the usual Hölder spaces $C_b^s(\R^d)$ only for non-integer $s\ge0$ (for integer $s\ge0$ they are strictly larger than $W^{s,\infty}(\R^d)\supset C^s_b(\R^d)$ and coincide with the corresponding Zygmund spaces). The following potential theory estimates are then needed both in Sobolev and in Hölder-Zygmund spaces. As we were unable to find item~(ii) stated in the literature, a short proof is included in Appendix~\ref{app:proofs-prel}.

\begin{lem}[Potential estimates in Sobolev and Hölder-Zygmund spaces]\label{lem:pottheoryCsHs}
Let $d\ge2$. For all $w\in C^\infty_c(\R^d)$, the following hold:
\begin{enumerate}[(i)]
\item for all $s\ge0$,
\[\|\nabla\triangle^{-1}w\|_{H^s}\lesssim \|w\|_{\dot H^{-1}\cap H^{s-1}},\qquad\|\nabla^2\triangle^{-1}w\|_{H^s}\lesssim\|w\|_{H^s};\]
\item for all $s\in\R$,
\[\|\nabla\triangle^{-1}w\|_{C^s_*}\lesssim_s\|w\|_{\dot H^{-1}\cap C_*^{s-1}},\qquad\|\nabla^2\triangle^{-1}w\|_{C^s_*}\lesssim_{s}\|w\|_{\dot H^{-1}\cap C^{s}_*},\]
and for all $1\le p<d$ and $1\le q<\infty$,
\[\|\nabla\triangle^{-1}w\|_{C^s_*}\lesssim_{p,s}\|w\|_{\Ld^p\cap\Ld^\infty\cap C^{s-1}_*},\qquad \|\nabla^2\triangle^{-1}w\|_{C^s_*}\lesssim_{q,s}\|w\|_{\Ld^q\cap C^{s}_*},\]
\end{enumerate}
where the subscripts $s,p,q$ indicate the additional dependence of the multiplicative constants on an upper bound on $s$, $(d-p)^{-1}$, and $q$, respectively.
\end{lem}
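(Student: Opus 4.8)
The plan is to prove part~(i) by plain Plancherel on the Fourier side, and part~(ii) by a Littlewood--Paley decomposition, estimating the high-frequency dyadic blocks by Bernstein-type inequalities and handling the single low-frequency block by a separate elementary argument. For part~(i), I would note that $\nabla\triangle^{-1}$ and $\nabla^2\triangle^{-1}$ are, up to absolute constants, the Fourier multipliers with symbols $m(\xi)=\xi/|\xi|^2$ and $M(\xi)=\xi\otimes\xi/|\xi|^2$. Since the matrix $M(\xi)$ has operator norm $\le1$, Plancherel gives immediately $\|\nabla^2\triangle^{-1}w\|_{H^s}=\big(\int\langle\xi\rangle^{2s}|M(\xi)\hat w(\xi)|^2d\xi\big)^{1/2}\le\|w\|_{H^s}$. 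For $m$ one has $|m(\xi)|=|\xi|^{-1}$, so splitting $\|\nabla\triangle^{-1}w\|_{H^s}^2=\int\langle\xi\rangle^{2s}|\xi|^{-2}|\hat w(\xi)|^2d\xi$ over $\{|\xi|\le1\}$ and $\{|\xi|>1\}$, and using $\langle\xi\rangle^{2s}\lesssim1$ on the first region and $|\xi|^{-2}\lesssim\langle\xi\rangle^{-2}$ on the second, bounds the two pieces by $\|w\|_{\dot H^{-1}}^2$ and $\|w\|_{H^{s-1}}^2$ respectively.

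For part~(ii), fix a Littlewood--Paley partition $\Id=\sum_{j\ge-1}\Delta_j$ with symbols $\phi_j$, where $\Delta_j$ localizes the Fourier support to the dyadic annulus $\{2^{j-1}\le|\xi|\le2^{j+1}\}$ for $j\ge0$ and to the ball $\{|\xi|\le2\}$ for $j=-1$, so that $\|f\|_{C^s_*}\simeq\sup_{j\ge-1}2^{js}\|\Delta_jf\|_{\Ld^\infty}$. By scaling, for $j\ge0$ the operator $\Delta_j\nabla\triangle^{-1}$ (resp.\ $\Delta_j\nabla^2\triangle^{-1}$) is convolution against a kernel of $\Ld^1$-norm $\lesssim2^{-j}$ (resp.\ $\lesssim1$), since its symbol equals $2^{-j}$ (resp.\ $1$) times a fixed $C^\infty_c$ function evaluated at $2^{-j}\xi$ (the symbols being smooth away from the origin). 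Hence, after the standard enlargement of the projectors, for all $j\ge0$ we obtain $2^{js}\|\Delta_j\nabla\triangle^{-1}w\|_{\Ld^\infty}\lesssim 2^{j(s-1)}\|\Delta_jw\|_{\Ld^\infty}\le\|w\|_{C^{s-1}_*}$ and $2^{js}\|\Delta_j\nabla^2\triangle^{-1}w\|_{\Ld^\infty}\lesssim\|w\|_{C^s_*}$, which controls the high-frequency part of the $C^s_*$-norm in all four estimates of~(ii) with exactly the right-hand sides claimed there.

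It then remains to estimate the block $j=-1$. For the two $\dot H^{-1}$-versions: $\Delta_{-1}\nabla\triangle^{-1}w$ and $\Delta_{-1}\nabla^2\triangle^{-1}w$ have Fourier transforms $\phi_{-1}(\xi)m(\xi)\hat w(\xi)$ and $\phi_{-1}(\xi)M(\xi)\hat w(\xi)$ with $\phi_{-1}\in C^\infty_c$, so Fourier inversion and Cauchy--Schwarz give $\|\Delta_{-1}\nabla\triangle^{-1}w\|_{\Ld^\infty}\le\|\phi_{-1}(\xi)|\xi|^{-1}\hat w\|_{\Ld^1}\le\|\phi_{-1}\|_{\Ld^2}\,\||\xi|^{-1}\hat w\|_{\Ld^2}=\|\phi_{-1}\|_{\Ld^2}\|w\|_{\dot H^{-1}}$, and likewise $\|\Delta_{-1}\nabla^2\triangle^{-1}w\|_{\Ld^\infty}\le\|\,|\xi|\phi_{-1}\|_{\Ld^2}\|w\|_{\dot H^{-1}}$. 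For the $\Ld^p$/$\Ld^q$-versions: writing $\Delta_{-1}\nabla\triangle^{-1}w=\nabla\triangle^{-1}\Delta_{-1}w$ and using that the Coulomb-gradient kernel is $\lesssim|x|^{1-d}$, split the integral at $|x|=1$; the inner part lies in $\Ld^1$ and pairs with $\|\Delta_{-1}w\|_{\Ld^\infty}\lesssim\|w\|_{\Ld^\infty}$, while for $p<d$ the outer part lies in $\Ld^{p'}$ with norm $\lesssim(d-p)^{-1}$ and pairs with $\|\Delta_{-1}w\|_{\Ld^p}\lesssim\|w\|_{\Ld^p}$. For $\nabla^2\triangle^{-1}$ the full Calder\'on--Zygmund kernel is not locally integrable, so instead we keep the low-passed kernel $K$ defined by $\hat K=\phi_{-1}M$: it is bounded near the origin (since $\hat K\in\Ld^1$, as $|M|\le1$) and decays like $|x|^{-d}$ at infinity, hence $K\in\Ld^{q'}$ for every $q<\infty$ with $\|K\|_{\Ld^{q'}}\lesssim q$, so that $\|\Delta_{-1}\nabla^2\triangle^{-1}w\|_{\Ld^\infty}=\|K\ast w\|_{\Ld^\infty}\le\|K\|_{\Ld^{q'}}\|w\|_{\Ld^q}$. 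Combining these $j=-1$ bounds with the high-frequency bounds (the $2^{-s}$ weight carried by the $j=-1$ term in the $C^s_*$-norm accounting for the $s$-dependence of the constants) yields all four estimates of part~(ii).

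The one genuinely delicate point I expect is the description of the low-passed kernel $K$ in the last step: one must know that removing the high frequencies from $\nabla^2\triangle^{-1}$ leaves a kernel that is bounded near the origin and decays like $|x|^{-d}$ at infinity, and similarly (but more easily) for $\nabla\triangle^{-1}$. This reflects the homogeneity of the singularity of $M$ (resp.\ $m$) at $\xi=0$, and can be made precise by combining the pointwise bounds $\lesssim2^{jd}(1+2^j|x|)^{-N}$ on the kernels of the dyadic blocks, summed over $j\ge0$, with the integrability bound $\hat K\in\Ld^1$ for the behaviour near the origin. Everything else --- the Bernstein estimates, Young's and Hölder's inequalities, the enlargement of the projectors, the Littlewood--Paley characterization of $C^s_*=B^s_{\infty,\infty}$, and the bookkeeping of the blow-up of constants as $p\uparrow d$ and $q\to\infty$ (which is precisely the recorded dependence on $(d-p)^{-1}$ and $q$) --- is routine.
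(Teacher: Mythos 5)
Your argument is correct. Part~(i) you do exactly as the paper (Plancherel on the Fourier side). For part~(ii), your high/low frequency split is the same strategy as the paper, which writes $\Id=(1-\chi(\nabla))+\chi(\nabla)$ with $\chi$ a single low-pass cutoff. The difference is mostly in presentation. For the high-frequency part, the paper invokes a black-box Fourier-multiplier estimate on Besov spaces (\cite[Proposition~2.78]{BCD-11}, applied to $(1-\chi(\nabla))\nabla\triangle^{-1}$, whose symbol is smooth and homogeneous of degree $-1$ away from the origin), while you give a self-contained proof of exactly that fact by rescaling the dyadic blocks $\Delta_j\nabla\triangle^{-1}$ and $\Delta_j\nabla^2\triangle^{-1}$ and bounding their convolution kernels in $\Ld^1$; these are the same mathematics. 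For the low-frequency part, the paper reduces $\|\chi(\nabla)\,\cdot\,\|_{C^s_*}$ to finitely many $\Ld^\infty$-norms $\|\nabla^j\chi(\nabla)\,\cdot\,\|_{\Ld^\infty}$ (with $0\le j\le\lceil s\rceil$) and then argues convolution-kernel by convolution-kernel, exactly as you do for the single block $\Delta_{-1}$: the $\dot H^{-1}$ bound by Cauchy--Schwarz/Plancherel, and the $\Ld^p$ bound by splitting the Coulomb-gradient kernel at $|x-y|=1$. For the last estimate ($\nabla^2\triangle^{-1}$ versus $\Ld^q$), the paper only gestures at adapting Lemma~\ref{lem:singint-1}(iii), whereas you construct the cut-off kernel $K$ with $\hat K=\phi_{-1}M$ and prove directly $K\in\Ld^\infty$ near the origin and $|K(x)|\lesssim|x|^{-d}$ at infinity, yielding $\|K\|_{\Ld^{q'}}\lesssim q$; this is arguably tidier and more explicit than the paper's hint. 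You also correctly flag the only nontrivial step (the $|x|^{-d}$ decay of the low-passed CZ kernel), which indeed follows from a dyadic decomposition of the symbol near $\xi=0$ together with integration by parts. Minor bookkeeping notes: the constant depending on $s$ enters via the factor $2^{-s}$ weighting the $j=-1$ block (you mention this), and when passing to $\|\Delta_jw\|_{\Ld^\infty}$ you should strictly write $\|\tilde\Delta_jw\|_{\Ld^\infty}$ for a slightly enlarged projector (you mention this too). No gaps.
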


We now state some global elliptic regularity results for the operator $-\Div(b\nabla)$ on the whole plane $\R^2$. Considering both the case of a right-hand side $f$ and that of a right-hand side in divergence form $\Div g$, we compare the properties of the corresponding solutions in terms of assumptions on $(f,g)$.
As no reference was found in the literature for this 2D setting, a detailed proof is included in Appendix~\ref{app:proofs-prel}.

\begin{lem}[2D global elliptic regularity]\label{lem:globellreg}
Let $b\in W^{1,\infty}(\R^2)^{2\times 2}$ be uniformly elliptic, that is, $\Id\le b\le \Lambda\Id$ for some $\Lambda<\infty$.
Given $f\in C^\infty_c(\R^2)$ and $g\in C^\infty_c(\R^2)^2$, we consider the decaying solutions $u$ and $v$ of the following equations in $\R^2$,
\[-\Div (b\nabla u)=f,\qquad\text{and}\qquad -\Div (b\nabla v)=\Div g.\]
The following properties hold.
\begin{enumerate}[(i)]
\item \emph{Meyers type estimates:}
There exists $2< p_0,q_0,r_0<\infty$ (depending only on an upper bound on $\Lambda$) such that for all $2< p\le p_0$, all $q_0\le q<\infty$, and all $r_0'\le r\le r_0$ with $\frac1{r_0}+\frac{1}{r_0'}=1$,
\[\|\nabla u\|_{\Ld^p}\le C_p\|f\|_{\Ld^{2p/(p+2)}},\qquad \|v\|_{\Ld^q}\le C_q\|g\|_{\Ld^{2q/(q+2)}},\qquad \|\nabla v\|_{\Ld^r}\le C\|g\|_{\Ld^r},\]
for some constant $C$ depending only on an upper bound on $\Lambda$, and for constants $C_p$ and $C_q$ further depending on an upper bound on $(p-2)^{-1}$ and $q$, respectively.
\item \emph{Sobolev regularity:}
For all $s\ge0$ we have
\[\|\nabla u\|_{H^s}\le C_s\|f\|_{\dot H^{-1}\cap H^{s-1}},\qquad \|\nabla v\|_{H^s}\le C_s\|g\|_{H^s},\]
where the constant $C_s$ depends only on an upper bound on $s$ and on $\|b\|_{W^{s,\infty}}$.
\item \emph{Schauder type estimate:} For all $s\in(0,1)$ we have
\[|\nabla u|_{C^s}\le C_s\|f\|_{\Ld^{2/(1-s)}},\qquad |v|_{C^s}\le C_s'\|g\|_{\Ld^{2/(1-s)}},\]
where the constant $C_s$ (resp.\@ $C_s'$) depends only on $s$ and on an upper bound on $\|b\|_{W^{s,\infty}}$ (resp.\@ on $s$ and on the modulus of continuity of $b$).
\end{enumerate}
In particular, we have
\[\|\nabla u\|_{\Ld^\infty}\le C\|f\|_{\Ld^1\cap\Ld^\infty},\qquad\|v\|_{\Ld^\infty}\le C'\|g\|_{\Ld^1\cap\Ld^\infty},\] where the constant $C$ (resp.~$C'$) depends only on an upper bound on $\|b\|_{W^{1,\infty}}$ (resp.~$\Lambda$).
\end{lem}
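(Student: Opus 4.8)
The strategy is to bootstrap everything from two elementary energy estimates, reducing the assertions on $u$ to those on $v$ whenever possible. Testing $-\Div(b\nabla u)=f$ against $u$ and $-\Div(b\nabla v)=\Div g$ against $v$, and using only the lower ellipticity bound $\Id\le b$, yields at once $\|\nabla u\|_{\Ld^2}\le\|f\|_{\dot H^{-1}}$ and $\|\nabla v\|_{\Ld^2}\le\|g\|_{\Ld^2}$; these are the $p=2$, $s=0$ endpoints and supply the $\dot H^{-1}$-part of~(ii). Moreover, writing $f=\Div g_0$ with $g_0:=-\nabla\triangle^{-1}f$ recasts the $u$-equation as $-\Div(b\nabla u)=\Div g_0$, and the potential estimates of Lemmas~\ref{lem:singint-1}--\ref{lem:pottheoryCsHs} bound $g_0$ by $f$ in the relevant norms --- e.g. $\|g_0\|_{\Ld^{2p/(p+2)}}\lesssim\|f\|_{\Ld^{2p/(p+2)}}$ by Hardy--Littlewood--Sobolev in dimension~$2$ (since $\nabla\triangle^{-1}\colon\Ld^{2p/(p+2)}\to\Ld^p$), and $\|g_0\|_{H^s}\lesssim\|f\|_{\dot H^{-1}\cap H^{s-1}}$ by a Fourier computation. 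This reduces the $\Ld^p$- and $H^s$-bounds on $\nabla u$ to the corresponding ones on $\nabla v$ (the $C^s$-bound on $\nabla u$ is a genuinely higher-order statement, treated separately), so I would then concentrate on $v$.

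For the Meyers estimates~(i) the crux is a reverse H\"older inequality. Testing the equation for $v$ against $\eta^2(v-\bar v_{B_{2R}})$ with $\eta$ a cutoff of $B_R$ in $B_{2R}$ gives the Caccioppoli inequality $\int_{B_R}|\nabla v|^2\lesssim R^{-2}\int_{B_{2R}}|v-\bar v_{B_{2R}}|^2+\int_{B_{2R}}|g|^2$ (constant depending on $\Lambda$), and combining it with the Sobolev--Poincar\'e inequality $\|v-\bar v_{B_{2R}}\|_{\Ld^2(B_{2R})}\lesssim\|\nabla v\|_{\Ld^1(B_{2R})}$ --- valid because the Sobolev conjugate $d/(d-1)$ of~$1$ equals exactly~$2$ when $d=2$ --- produces $\fint_{B_R}|\nabla v|^2\lesssim\big(\fint_{B_{2R}}|\nabla v|\big)^2+\fint_{B_{2R}}|g|^2$. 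A global Gehring-type self-improvement then gives $\|\nabla v\|_{\Ld^p}\le C_p\|g\|_{\Ld^p}$ for $2\le p\le r_0$, for some $r_0=r_0(\Lambda)>2$; the range $r_0'\le r<2$ follows by duality, since for $\phi\in C^\infty_c$ and $w$ solving $-\Div(b^T\nabla w)=\Div\phi$ (with $b^T$ of the same ellipticity) one has $\int\nabla v\cdot\phi=\int g\cdot\nabla w\le\|g\|_{\Ld^r}\|\nabla w\|_{\Ld^{r'}}\le C_{r'}\|g\|_{\Ld^r}\|\phi\|_{\Ld^{r'}}$. From the low-exponent version and the embedding $\dot W^{1,2q/(q+2)}(\R^2)\hookrightarrow\Ld^q$ one then gets $\|v\|_{\Ld^q}\le C_q\|g\|_{\Ld^{2q/(q+2)}}$ for $q\ge q_0$ (with $2q_0/(q_0+2)=r_0'$), while $\|\nabla u\|_{\Ld^p}\le C_p\|f\|_{\Ld^{2p/(p+2)}}$ for $2<p\le p_0:=r_0$ follows from the reduction above, the constant blowing up as $p\downarrow2$ through the Hardy--Littlewood--Sobolev constant.

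For the Sobolev estimate~(ii) I would argue by induction on $\lfloor s\rfloor$, interpolating for non-integer values. The base case $s=0$ is the energy estimate; for the step one differentiates the equation $k:=\lceil s\rceil$ times, tests the equation for $\nabla^k v$ against $\nabla^k v$, and uses coercivity $\int b\,\nabla\varphi\cdot\nabla\varphi\ge\|\nabla\varphi\|_{\Ld^2}^2$: the only top-order commutator term $\int(\nabla b)(\nabla^k v)\,\nabla^{k+1}v$ is split by Young's inequality, absorbing half of $\|\nabla^{k+1}v\|_{\Ld^2}^2$ on the left, the remainder plus the genuinely lower-order terms (carrying $\|b\|_{W^{k,\infty}}$ and $\|\nabla v\|_{H^{k-1}}$, $\|g\|_{H^{k-1}}$) being controlled by the inductive hypothesis; interpolating the linear map $g\mapsto\nabla v$ between consecutive integers, and keeping $\|f\|_{\dot H^{-1}}$ at the bottom for $u$, finishes~(ii). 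For the Schauder estimate~(iii) a simple perturbation off $b=\Id$ fails since $\|b-\Id\|_{\Ld^\infty}$ need not be small, so one would work with Campanato seminorms and freeze the coefficient at the centre of each ball: on $B_r(x_0)$ one compares $v$ (resp.\ $\nabla u$) with the solution of the constant-coefficient equation with coefficient $b(x_0)$, bounds the defect via the Caccioppoli/energy estimates, the smallness of $|b(\cdot)-b(x_0)|$ on $B_r$, and the Morrey-type bound $\int_{B_r}|g|^2\lesssim r^{2s}\|g\|_{\Ld^{2/(1-s)}}^2$, and iterates the resulting excess-decay inequality; the constant-coefficient building block is the H\"older endpoint of Hardy--Littlewood--Sobolev, $\nabla\triangle^{-1}\colon\Ld^{2/(1-s)}(\R^2)\to\dot C^s$. (For the $v$-estimate one may instead invoke De Giorgi--Nash--Moser, which needs only bounded measurable $b$.) Finally, the ``in particular'' assertion follows by combining~(i) and~(iii): since $\Ld^1\cap\Ld^\infty\subset\Ld^{2p/(p+2)}\cap\Ld^{2/(1-s)}$, one has $\nabla u\in\Ld^p(\R^2)$ for some $p>2$ and $|\nabla u|_{C^s}<\infty$ for $s\in(0,1)$, and any $\Ld^p(\R^2)$ function with finite homogeneous $C^s$-seminorm is bounded; likewise for $v$.

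The two delicate points I expect are the \emph{global} Meyers estimate on $\R^2$ --- running the Gehring self-improvement so as to obtain a genuinely global $\Ld^p$-bound with $r_0$ quantitatively controlled by $\Lambda$, rather than the usual interior version --- and the Schauder estimate~(iii), where the freezing-of-coefficients iteration has to be carried out with some care precisely because the coefficients are not close to constant; these are exactly the statements for which no ready reference is available and which is why a detailed proof is included.
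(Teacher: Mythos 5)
Your proposal is correct in outline but takes a genuinely different route from the paper's, and the difference is most pronounced in item~(i). You derive the Meyers estimate by running the Caccioppoli inequality through Sobolev--Poincar\'e to obtain a reverse-H\"older inequality, invoking (a global version of) Gehring's lemma for the self-improvement, and then reaching the sub-$2$ range by duality against the adjoint operator; the paper instead runs Meyers' original perturbation-off-the-Laplacian argument, writing $-\triangle v=\Div(g+(b-\Id)\nabla v)$ and absorbing via the Calder\'on--Zygmund operator, for which the key quantitative input is $\limsup_{p\to2}K_p\le K_2=1$. The paper's route is shorter and yields a global bound on $\R^2$ at one stroke, whereas --- as you yourself flag --- the Gehring route on the whole plane needs a separate limiting argument to pass from balls $B_M$ to $\R^2$; both ultimately give $r_0$ depending only on $\Lambda$. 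You also reduce the $u$-estimate to the $v$-estimate by writing $f=\Div g_0$ with $g_0=-\nabla\triangle^{-1}f$, whereas the paper must treat $\nabla u$ separately (this is in fact where most of the work in Step~1 of the paper goes: an adapted Meyers argument combining Riesz potential theory with Calder\'on--Zygmund); your reduction is a nice simplification, but note that it silently assumes that the $\Ld^2$-solution $u$ and the solution of the divergence-form problem $-\Div(b\nabla\tilde v)=\Div g_0$ have the same gradient, which requires a uniqueness/consistency step since $g_0\notin\Ld^2(\R^2)$ in general, and you should also correct the typo $\|g_0\|_{\Ld^{2p/(p+2)}}$ to $\|g_0\|_{\Ld^p}$.

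For item~(ii) the paper does not interpolate: for non-integer $s$ it uses the finite-difference characterization of $H^s$, applying a difference quotient to the equation and testing against the difference of solutions, which has the advantage of giving the precise constant dependence on $\|b\|_{W^{s,\infty}}$ (essentially $|b|_{C^{s-\lfloor s\rfloor}}$ at the top order). Your plan to interpolate the operator $g\mapsto\nabla v$ between $H^{\lfloor s\rfloor}$ and $H^{\lceil s\rceil}$ is correct but gives a constant depending on $\|b\|_{W^{\lceil s\rceil,\infty}}$, which is strictly stronger than the $\|b\|_{W^{s,\infty}}$ claimed; this is harmless for the applications in the paper (where $b$ is smooth) but technically weakens the statement of the lemma. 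For item~(iii) both you and the paper freeze the coefficient at the centre and run a Campanato-type excess decay, so the approaches coincide there; your remark that the $v$-estimate can be obtained with only a modulus of continuity on $b$ correctly explains the distinction between $C_s$ and $C_s'$ in the statement. The ``in particular'' assertion is handled the same way in both.
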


The interaction force $ v$ in equation~\eqref{eq:limeqn1VF} is defined by the values of $\curl v$ and $\Div (a v)$. The following result shows how $ v$ is controlled by such specifications. The proof is postponed to Appendix~\ref{app:proofs-prel}.

\begin{lem}\label{lem:reconstr}
Let $a,a^{-1}\in \Ld^{\infty}(\R^2)$. For all $\delta\omega,\delta\zeta\in\dot H^{-1}(\R^2)$, there exists a unique $\delta v\in\Ld^2(\R^2)^2$ such that $\curl\delta v=\delta\omega$ and $\Div(a\delta v)=\delta\zeta$. Moreover, for all $s\ge0$, if $a,a^{-1}\in W^{s,\infty}(\R^2)$ and $\delta\omega,\delta\zeta\in\dot H^{-1}\cap H^{s-1}(\R^2)$, we have
\[\|\delta v\|_{H^s}\le C\|\delta\omega\|_{\dot H^{-1}\cap H^{s-1}}+C\|\delta\zeta\|_{\dot H^{-1}\cap H^{s-1}},\]
where the constant $C$ depends only on an upper bound on $s$ and $\|(a,a^{-1})\|_{W^{s,\infty}}$.
\end{lem}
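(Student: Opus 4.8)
The plan is to build $\delta v$ through an $a$-weighted Hodge-type decomposition. Precisely, I look for $\delta v$ of the form
\[\delta v=a^{-1}\nabla^\bot u+\nabla w,\]
and I record the algebraic identities $\curl\nabla w=0$, $\Div(a(a^{-1}\nabla^\bot u))=\Div(\nabla^\bot u)=0$, and $\curl(a^{-1}\nabla^\bot u)=\Div(a^{-1}\nabla u)$. Hence, if $u,w$ solve the two scalar elliptic equations
\[\Div(a^{-1}\nabla u)=\delta\omega,\qquad\Div(a\nabla w)=\delta\zeta,\]
then the field $\delta v$ above satisfies $\curl\delta v=\delta\omega$ and $\Div(a\delta v)=\delta\zeta$, as required. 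So the existence half of the lemma reduces to solving these two scalar equations.

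For the existence of $u$ and $w$, I would invoke the Lax--Milgram theorem on the homogeneous Sobolev space $\dot H^1(\R^2)$ (understood modulo additive constants, which in dimension $2$ is the only mildly delicate point): the bilinear forms $(\varphi,\psi)\mapsto\int\nabla\varphi\cdot a^{-1}\nabla\psi$ and $(\varphi,\psi)\mapsto\int\nabla\varphi\cdot a\nabla\psi$ are continuous and coercive on $\dot H^1(\R^2)$ since $a=e^h$ and $a^{-1}$ are both bounded above and, being positive with bounded inverse, also bounded below, while the data $\delta\omega,\delta\zeta\in\dot H^{-1}(\R^2)=(\dot H^1(\R^2))^*$ define bounded linear functionals. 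This yields $\nabla u,\nabla w\in\Ld^2(\R^2)^2$, hence $\delta v\in\Ld^2(\R^2)^2$. For uniqueness, if $\delta v\in\Ld^2(\R^2)^2$ satisfies $\curl\delta v=0$ and $\Div(a\delta v)=0$, then $\curl\delta v=0$ on the simply connected $\R^2$ forces $\delta v=\nabla\phi$ for some $\phi$ with $\nabla\phi\in\Ld^2$ (seen e.g. on the Fourier side, where $\widehat{\delta v}(\xi)$ must be parallel to $\xi$); the identity $\Div(a\nabla\phi)=0$ then gives $\int a|\nabla\phi|^2=0$ — rigorously either by the uniqueness part of Lax--Milgram, or concretely by testing against $\chi_R^2(\phi-c_R)$ with a cut-off $\chi_R$ localizing to $B_R$, a well-chosen constant $c_R$, a Poincar\'e inequality on the annulus $B_{2R}\setminus B_R$, and $\nabla\phi\in\Ld^2(\R^2)$ to make the error vanish as $R\to\infty$ — whence $\delta v=\nabla\phi=0$ since $a>0$.

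For the quantitative bound, I would apply the global elliptic regularity estimate of Lemma~\ref{lem:globellreg}(ii) to the two scalar equations above (rescaling $a^{-1}$ and $a$ to fit the normalization $\Id\le b\le\Lambda\Id$), which gives
\[\|\nabla u\|_{H^s}\le C\|\delta\omega\|_{\dot H^{-1}\cap H^{s-1}},\qquad\|\nabla w\|_{H^s}\le C\|\delta\zeta\|_{\dot H^{-1}\cap H^{s-1}},\]
with $C$ depending only on an upper bound on $s$ and on $\|(a,a^{-1})\|_{W^{s,\infty}}$. It then only remains to control the product $a^{-1}\nabla^\bot u$ in $H^s$: by the Kato--Ponce inequality of Lemma~\ref{lem:katoponce-1} (with exponents $(p_1,q_1)=(\infty,2)$ and $(p_2,q_2)=(2,\infty)$), together with a routine truncation and mollification to reduce to compactly supported functions, one obtains $\|a^{-1}\nabla^\bot u\|_{H^s}\lesssim\|a^{-1}\|_{W^{s,\infty}}\|\nabla u\|_{H^s}$, and the stated estimate follows from $\|\delta v\|_{H^s}\le\|a^{-1}\nabla^\bot u\|_{H^s}+\|\nabla w\|_{H^s}$ and the two elliptic bounds.

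I expect the only genuine difficulty to be the functional-analytic bookkeeping in dimension $2$: the space $\dot H^1(\R^2)$ must be taken modulo constants for Lax--Milgram to apply and for $\dot H^{-1}(\R^2)$ to be its dual, and, relatedly, one must be slightly careful that an $\Ld^2$ curl-free vector field on $\R^2$ is genuinely a gradient $\nabla\phi$ with $\nabla\phi\in\Ld^2$ even though $\phi$ itself need not lie in $\Ld^2$ or be controlled at low frequencies. The algebraic decoupling of the system, the elliptic estimates, and the fractional Leibniz rule are all either immediate or already recorded in the preceding lemmas.
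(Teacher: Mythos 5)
Your proposal is correct and follows essentially the same route as the paper: the same splitting $\delta v=a^{-1}\nabla^\bot u+\nabla w$ via the two scalar elliptic problems $\Div(a^{-1}\nabla u)=\delta\omega$ and $\Div(a\nabla w)=\delta\zeta$, solved by Lax--Milgram on $\dot H^1$, with the quantitative bound then coming from Lemma~\ref{lem:globellreg}(ii) and the Kato--Ponce inequality of Lemma~\ref{lem:katoponce-1}. The only cosmetic difference is in the uniqueness step, where the paper performs a Hodge decomposition of $a\delta v$ and kills both potentials, while you use curl-freeness to write $\delta v=\nabla\phi$ directly and then test $\Div(a\nabla\phi)=0$ with a cutoff; both arguments are equivalent in spirit and equally valid.
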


As emphasized in Remark~\ref{rem:sol}(i), weak solutions of the incompressible model~\eqref{eq:limeqn1} are rather defined via the vorticity formulation~\eqref{eq:limeqn1VF} in order to avoid compactness issues related to the pressure $ P$. Although this will not be used in the sequel, we quickly check that under mild regularity assumptions a weak solution $ v$ of~\eqref{eq:limeqn1} automatically also satisfies equation~\eqref{eq:limeqn1} in the distributional sense on $[0,T)\times\R^2$ for some pressure~$P$. The proof is postponed to Appendix~\ref{app:proofs-prel}.

\begin{lem}[Control on the pressure]\label{lem:pressure}
Let $\alpha,\beta\in\R$, $T>0$, $h\in W^{1,\infty}(\R^2)$, and $\Psi,\bar  v^\circ\in \Ld^\infty(\R^2)^2$. There exists some $2<q_0\lesssim1$ large enough (depending only on an upper bound on $\|h\|_{\Ld^\infty}$) such that the following holds: If~$ v\in\Ld^\infty_\loc([0,T);\bar  v^\circ+\Ld^2(\R^2)^2)$ is a weak solution of~\eqref{eq:limeqn1} on $[0,T)\times\R^2$ with $\omega:=\curl v\in\Ld^\infty_\loc([0,T);\Pc\cap\Ld^{q_0}(\R^2))$, then~$ v$ satisfies~\eqref{eq:limeqn1} in the distributional sense on $[0,T)\times\R^2$ for some pressure $ P\in\Ld^\infty_\loc([0,T);\Ld^{q_0}(\R^2))$.
\end{lem}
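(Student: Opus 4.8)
Applying $\Div(a\,\cdot)$ to~\eqref{eq:limeqn1} and using $\Div(av)=0$ shows that any pressure for~\eqref{eq:limeqn1} must solve the elliptic equation $-\Div(a\nabla P)=\Div(af)$, where $f:=\beta(\Psi+v)^\bot\omega-\alpha(\Psi+v)\omega$ denotes the pressure-free part of the right-hand side of~\eqref{eq:limeqn1}, so that~\eqref{eq:limeqn1} reads $\partial_tv=\nabla P+f$. The plan is therefore: (1) show $f\in\Ld^\infty_\loc([0,T);\Ld^m(\R^2)^2)$ for a suitably chosen $m\in(1,2)$; (2) define $P$ from that elliptic equation and read off its regularity from Lemma~\ref{lem:globellreg}; (3) check that $g:=\partial_tv-\nabla P-f$ has vanishing spatial curl and vanishing $\Div(a\,\cdot)$, and conclude $g=0$.

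\textbf{Choice of $q_0$ and steps~(1)--(2).}
Since $h\in W^{1,\infty}(\R^2)$, the scalar weight $a=e^h$ is uniformly elliptic with $a,a^{-1}\in W^{1,\infty}(\R^2)$ and ellipticity ratio bounded in terms of $\|h\|_{\Ld^\infty}$, so Lemma~\ref{lem:globellreg} applies to $-\Div(a\nabla\cdot)$. Fix $m\in(1,2)$ in the (lower) Meyers range of this operator, depending only on an upper bound on $\|h\|_{\Ld^\infty}$, and set $q_0:=\tfrac{2m}{2-m}>2$, so that $\tfrac{2q_0}{q_0+2}=m$ and $\tfrac1m=\tfrac12+\tfrac1{q_0}$. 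Writing $v=\bar v^\circ+(v-\bar v^\circ)$ with $v-\bar v^\circ\in\Ld^\infty_\loc([0,T);\Ld^2(\R^2)^2)$, we get $(|\Psi|+|\bar v^\circ|)\,\omega\in\Ld^\infty_\loc([0,T);\Ld^1\cap\Ld^{q_0}(\R^2))\subseteq\Ld^\infty_\loc([0,T);\Ld^m(\R^2))$ by interpolation, and $|v-\bar v^\circ|\,\omega\in\Ld^\infty_\loc([0,T);\Ld^m(\R^2))$ by Hölder; hence $f\in\Ld^\infty_\loc([0,T);\Ld^m(\R^2)^2)$, with a bound of the announced form. Let then $P^t$ be the decaying solution of $-\Div(a\nabla P^t)=\Div(af^t)$; by Lemma~\ref{lem:globellreg}(i), $\|P^t\|_{\Ld^{q_0}}\lesssim\|af^t\|_{\Ld^m}\lesssim\|f^t\|_{\Ld^m}$ and $\|\nabla P^t\|_{\Ld^m}\lesssim\|f^t\|_{\Ld^m}$, so $P\in\Ld^\infty_\loc([0,T);\Ld^{q_0}(\R^2))$ and $\nabla P\in\Ld^\infty_\loc([0,T);\Ld^m(\R^2)^2)$.

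\textbf{Step~(3).}
With $g:=\partial_tv-\nabla P-f$, the elementary identity $\curl(\beta G^\bot\omega-\alpha G\omega)=\Div(\omega(\alpha G^\bot+\beta G))$ with $G=\Psi+v$ --- which holds in $\mathcal D'$ since all the products lie in $\Ld^1_\loc([0,T)\times\R^2)$ --- together with the vorticity formulation~\eqref{eq:limeqn1VF} satisfied by $v$ gives $\curl f=\partial_t\omega=\curl\partial_tv$, hence $\curl g=0$; likewise $\Div(a\partial_tv)=\partial_t\Div(av)=0$ and $\Div(a\nabla P)=-\Div(af)$ by construction, hence $\Div(ag)=0$. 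Thus $g$ is a vector distribution on $[0,T)\times\R^2$ with $\curl_xg=0$ and $\Div_x(ag)=0$. Testing against $\eta(t)\phi(x)$ with $\eta\in C^\infty_c((0,T))$ and $\phi\in C^\infty_c(\R^2)$, and using $\int\eta'=0$, the slice $g_\eta:=-\int_0^T\eta'(t)(v^t-\bar v^\circ)\,dt-\int_0^T\eta(t)(\nabla P^t+f^t)\,dt$ is an honest function in $\Ld^2(\R^2)^2+\Ld^m(\R^2)^2$ with $\curl g_\eta=0$ and $\Div(ag_\eta)=0$, and it suffices to show that any such field vanishes. Since $\R^2$ is simply connected, $g_\eta=\nabla\Phi$ with $\Div(a\nabla\Phi)=0$ and $\nabla\Phi\in\Ld^2+\Ld^m$. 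Decomposing $\nabla\Phi=\nabla p+\nabla^\bot q$ (Hodge, bounded on $\Ld^2$ and on $\Ld^m$), the field $\nabla q$ is harmonic and, by the mean-value property, bounded on $\R^2$, hence constant; but a nonzero constant does not belong to $\Ld^2(\R^2)+\Ld^m(\R^2)$ when $m<2$, so $\nabla^\bot q=0$. The remaining $\nabla\Phi=\nabla p$ is $a$-harmonic with $\nabla p\in\Ld^2+\Ld^m$, and a Caccioppoli estimate with quadratic cut-offs on dyadic annuli --- subtracting the annular mean of $\Phi$ and using Poincaré on annuli to absorb the critical logarithmic boundary term --- forces $\nabla\Phi=0$. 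Hence $g_\eta=0$ for all $\eta$, i.e. $\partial_tv=\nabla P+f$ on $(0,T)\times\R^2$; combined with the weak continuity of $\omega$ at $t=0$ and the reconstruction Lemma~\ref{lem:reconstr} (which yields $v^t\to v^\circ$ in $\mathcal D'(\R^2)$ as $t\downarrow0$), this gives~\eqref{eq:limeqn1} in $\mathcal D'([0,T)\times\R^2)$ with the pressure $P\in\Ld^\infty_\loc([0,T);\Ld^{q_0}(\R^2))$.

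\textbf{Main difficulty.}
Steps~(1)--(2) are routine once Lemma~\ref{lem:globellreg} is in place; the crux is the uniqueness in step~(3), namely extending the uniqueness part of Lemma~\ref{lem:reconstr} from $\Ld^2$ to the natural class $\Ld^2+\Ld^m$ in which $g$ lives --- and, because $\R^2$ is critical for $\dot H^1$, doing so requires the annulus cut-off (with mean subtracted) rather than a plain energy identity, which would leave a logarithmically divergent term. A minor point is that pairing $\Div(ag)=0$ against test fields is legitimate despite $a$ being only Lipschitz, since $g+\nabla P+f=\partial_tv$ with $v\in\Ld^2_\loc$ has order at most one.
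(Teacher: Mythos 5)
Your Steps (1)--(2) coincide with what the paper does: the pressure is defined identically, via $P:=(-\Div a\nabla)^{-1}\Div(af)$ with $f=\omega(-\alpha(\Psi+v)+\beta(\Psi+v)^\bot)$, and the Meyers estimates of Lemma~\ref{lem:globellreg}(i) give $P\in\Ld^\infty_\loc([0,T);\Ld^{q_0}(\R^2))$ once one chooses exponents so that $2q_0/(q_0+2)=m$ lies in the lower Meyers range. The divergence is entirely in Step (3), where your route differs from the paper's and, as written, leaves a genuine gap.

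Your plan is to show that $g:=\partial_tv-\nabla P-f$ has vanishing $\curl$ and vanishing $\Div(a\,\cdot)$, and then invoke a Liouville-type theorem in the class $\Ld^2+\Ld^m$ ($m<2$). The difficulty is that the Liouville statement you need --- $\Div(a\nabla\Phi)=0$, $\nabla\Phi\in\Ld^2(\R^2)^2+\Ld^m(\R^2)^2$ implies $\nabla\Phi=0$ --- is not an easy extension of Lemma~\ref{lem:reconstr}, and the annular-Caccioppoli sketch does not close as stated. Two concrete obstructions: (a)~the candidate test function $\chi_R^2(\Phi-c_R)$ is not admissible in the weak formulation, because a priori $\nabla\Phi\in\Ld^m_\loc$ only (with $m<2$), so $\chi_R^2\nabla\Phi\notin\Ld^{m'}_\loc$; upgrading to $\nabla\Phi\in\Ld^2_\loc$ requires a separate interior regularity argument (a Meyers-type ``very-weak'' to ``weak'' step) that you don't supply. (b)~Even granting $\nabla\Phi\in\Ld^2_\loc$, the annular Caccioppoli with Poincar\'e gives $\int_{B_R}a|\nabla\Phi|^2\lesssim\int_{B_{2R}\setminus B_R}|\nabla\Phi|^2$, and the right-hand side need not tend to zero: the $\Ld^m$-component of $\nabla\Phi$ controls annular $\Ld^m$-norms but not annular $\Ld^2$-norms, and a hole-filling iteration would require an a priori growth bound on $\int_{B_R}|\nabla\Phi|^2$ that you don't have. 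So the crux of your argument is precisely the piece you label as ``the main difficulty'' and only sketch.

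The paper sidesteps the Liouville theorem entirely. Its key move is to observe that the constraint $\Div(av)=0$ together with the Helmholtz identity~\eqref{eq:Helmholtz} gives $v=a^{-1}\nabla^\bot(\Div a^{-1}\nabla)^{-1}\omega$, and then Lemma~\ref{lem:globellreg}(i) upgrades $v$ to $\Ld^\infty_\loc([0,T);\Ld^{p_0}(\R^2)^2)$ for some $p_0>2$. Choosing $q_0$ large enough so that $\tfrac1{p_0}+\tfrac1{q_0}\le\tfrac12$, one then tests the assumed weak vorticity formulation~\eqref{eq:limeqn1VF} against $(\Div a^{-1}\nabla)^{-1}\curl(a^{-1}\phi)$, $\phi\in C^\infty_c$, integrates by parts, and reads off $\partial_tv=\nabla P+f$ in $\mathcal D'$ directly; no uniqueness lemma beyond Lemma~\ref{lem:reconstr} in $\Ld^2$ is needed. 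In fact, this same observation would also repair your Step (3): with $v\in\Ld^\infty_\loc([0,T);\Ld^{p_0}(\R^2)^2)$ and $\omega\in\Ld^\infty_\loc([0,T);\Ld^1\cap\Ld^{q_0}(\R^2))$, one has $f=\omega(-\alpha(\Psi+v)+\beta(\Psi+v)^\bot)\in\Ld^\infty_\loc([0,T);\Ld^1\cap\Ld^2(\R^2)^2)$ and hence $\nabla P\in\Ld^\infty_\loc([0,T);\Ld^2)$, so your $g_\eta$ lands in $\Ld^2(\R^2)^2$ rather than $\Ld^2+\Ld^m$, and the uniqueness part of Lemma~\ref{lem:reconstr} applies verbatim --- no new Liouville theorem, no annular cut-offs.
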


\section{Local-in-time existence of smooth solutions}\label{chap:local}

In this section, we prove the local-in-time existence of smooth solutions of~\eqref{eq:limeqn1}--\eqref{eq:limeqn2} as summarized in Theorem~\ref{th:mainloc}. Note that we choose to work here in the framework of Sobolev spaces, but the results could easily be adapted to Hölder spaces (compare indeed with Lemma~\ref{lem:conservHold}). We start with the non-degenerate case $\lambda>0$, using a standard iterative scheme as e.g.\@ in~\cite[Appendix~B]{Serfaty-15}.

\begin{prop}[Local existence, non-degenerate case]\label{prop:locexist}
Let $\alpha,\beta\in\R$, $\lambda>0$. Let $s>1$, and let $h,\Psi,\bar  v^\circ\in W^{s+1,\infty}(\R^2)^2$.
Let $ v^\circ\in\bar  v^\circ+H^{s+1}(\R^2)^2$ with $\omega^\circ:=\curl v^\circ$, $\bar\omega^\circ:=\curl \bar  v^\circ\in H^s(\R^2)$, and with either $\Div(a v^\circ)=\Div(a\bar  v^\circ)=0$ in the case~\eqref{eq:limeqn1}, or $\zeta^\circ:=\Div(a v^\circ)$, $\bar\zeta^\circ:=\Div(a\bar  v^\circ)\in H^s(\R^2)$ in the case~\eqref{eq:limeqn2}. Then there exists $T>0$ and a weak solution $ v\in\Ld^\infty([0,T);\bar  v^\circ+H^{s+1}(\R^2)^2)$ of~\eqref{eq:limeqn1} or of~\eqref{eq:limeqn2} on $[0,T)\times\R^2$ with initial data $ v^\circ$. Moreover, $T$ depends only on an upper bound on $|\alpha|$, $|\beta|$, $\lambda$, $\lambda^{-1}$, $s$, $(s-1)^{-1}$, $\|(h,\Psi,\bar  v^\circ)\|_{W^{s+1,\infty}}$, $\| v^\circ-\bar  v^\circ\|_{H^{s+1}}$, $\|(\omega^\circ,\bar\omega^\circ,\zeta^\circ,\bar\zeta^\circ)\|_{H^s}$.
\end{prop}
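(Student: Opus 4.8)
The plan is to construct the solution by a Picard iteration run on the vorticity pair $(\omega,\zeta)$, using the linear a priori estimates of Section~\ref{chap:prelim} to propagate regularity and the reconstruction Lemma~\ref{lem:reconstr} to pass between $v$ and $(\omega,\zeta)$; this works uniformly for~\eqref{eq:limeqn1} (where one sets $\zeta\equiv0$) and for~\eqref{eq:limeqn2} (where $\lambda>0$). Concretely, I would put $v^{(0)}(t,\cdot):=v^\circ$ and, given $v^{(n)}$ with $v^{(n)}-\bar v^\circ\in\Ld^\infty([0,T);H^{s+1}(\R^2)^2)$, write $\omega^{(n)}:=\curl v^{(n)}$, $\zeta^{(n)}:=\Div(av^{(n)})$, $w^{(n)}:=\alpha(\Psi+v^{(n)})^\bot+\beta(\Psi+v^{(n)})$, then let $\omega^{(n+1)}$ solve the linear transport equation $\partial_t\omega^{(n+1)}=\Div(\omega^{(n+1)}w^{(n)})$ with datum $\omega^\circ$, let $\zeta^{(n+1)}$ solve the linear parabolic equation $\partial_t\zeta^{(n+1)}-\lambda\triangle\zeta^{(n+1)}+\lambda\Div(\zeta^{(n+1)}\nabla h)=\Div g^{(n)}$, $g^{(n)}:=a\omega^{(n)}(-\alpha(\Psi+v^{(n)})+\beta(\Psi+v^{(n)})^\bot)$, with datum $\zeta^\circ$ (set $\zeta^{(n+1)}\equiv0$ in the case~\eqref{eq:limeqn1}), and finally let $v^{(n+1)}$ be the field furnished by Lemma~\ref{lem:reconstr} with $\curl(v^{(n+1)}-\bar v^\circ)=\omega^{(n+1)}-\bar\omega^\circ$ and $\Div(a(v^{(n+1)}-\bar v^\circ))=\zeta^{(n+1)}-\bar\zeta^\circ$. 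Here $a=e^h,a^{-1}=e^{-h}\in W^{s+1,\infty}$, and $\Div(av^{(n+1)})=0$ is automatic in the case~\eqref{eq:limeqn1} since $\bar\zeta^\circ=0$; the linear sub-problems are uniquely solvable in the natural classes by classical transport theory and, as $\lambda>0$, parabolic theory.

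Next I would prove a uniform bound. Using the embeddings $H^{s+1}(\R^2)\hookrightarrow W^{1,\infty}(\R^2)$ and $H^s(\R^2)\hookrightarrow\Ld^\infty(\R^2)$, valid precisely because $s>1$, the claim is that there exist a radius $R$ — a fixed multiple of $\|v^\circ-\bar v^\circ\|_{H^{s+1}}$ and $\|(\omega^\circ,\bar\omega^\circ,\zeta^\circ,\bar\zeta^\circ)\|_{H^s}$ — and a time $T>0$, depending only on $R$ and on an upper bound on $|\alpha|$, $|\beta|$, $\lambda$, $\lambda^{-1}$, $s$, $(s-1)^{-1}$, $\|(h,\Psi,\bar v^\circ)\|_{W^{s+1,\infty}}$, such that $\|v^{(n)}-\bar v^\circ\|_{\Ld^\infty_TH^{s+1}}\le R$ for all $n$. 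For $\omega^{(n+1)}$ this uses the transport estimate~\eqref{eq:katoponcecom} with reference field $W:=\alpha(\Psi+\bar v^\circ)^\bot+\beta(\Psi+\bar v^\circ)\in W^{s+1,\infty}(\R^2)^2$, so that $w^{(n)}-W\in H^{s+1}$, together with $\|\omega^{(n+1)}\|_{\Ld^\infty}\lesssim\|\omega^{(n+1)}\|_{H^s}$; Grönwall gives $\|\omega^{(n+1),t}\|_{H^s}\le\|\omega^\circ\|_{H^s}\exp\big(C\int_0^t(1+\|v^{(n),u}-\bar v^\circ\|_{H^{s+1}})\,du\big)$, which is $\le2\|\omega^\circ\|_{H^s}$ once $CT(1+R)\le\log2$, while~\eqref{eq:tsph-1} controls $\|\omega^{(n+1),t}-\omega^\circ\|_{\dot H^{-1}}$ by a quantity small for $T$ small. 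For $\zeta^{(n+1)}$ one rescales time by $\lambda$ (whence $\lambda,\lambda^{-1}$ in $T$) and applies Lemma~\ref{lem:parreg+tsp}(i)--(ii), with $\|g^{(n)}\|_{\Ld^2_TH^s}\lesssim\sqrt T\,(1+R)^2$ by the Kato--Ponce product rule of Lemma~\ref{lem:katoponce-1}. Feeding these bounds through Lemma~\ref{lem:reconstr} and summing, the ball of radius $R$ is stable provided $R$ is fixed first and $T$ is then taken small in terms of $R$; this also confirms that the differences reconstructed in the first step lie in $\dot H^{-1}\cap H^s$.

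I would then establish a contraction one derivative below. Shrinking $T$ if necessary, the map $v^{(n)}\mapsto v^{(n+1)}$ should be a contraction of $\{w:\|w-\bar v^\circ\|_{\Ld^\infty_TH^{s+1}}\le R\}$ for the topology of $\Ld^\infty_T(\bar v^\circ+H^s(\R^2)^2)$. Indeed $\omega^{(n+1)}-\omega^{(n)}$ solves the transport equation with velocity $w^{(n)}$ and an extra source $\Div(\omega^{(n)}(w^{(n)}-w^{(n-1)}))$, vanishing at $t=0$; running the commutator argument behind~\eqref{eq:katoponcecom} at regularity $s-1\in(0,s]$ and allowing for this source — whose $H^{s-1}$-norm is, by the product rule and $\bar\omega^\circ\in W^{s,\infty}\cap H^s$, bounded by $\lesssim(R+\|\bar\omega^\circ\|_{W^{s,\infty}})\|v^{(n)}-v^{(n-1)}\|_{H^s}$ (the point being that the high-norm bound $R$ is carried by $\omega^{(n)}$, not by the difference, so that no derivative is lost on $v^{(n)}-v^{(n-1)}$) — together with~\eqref{eq:tsph-1}, the analogous estimate for $\zeta^{(n+1)}-\zeta^{(n)}$ from Lemma~\ref{lem:parreg+tsp}, and Lemma~\ref{lem:reconstr} at regularity $s$, yields $\|v^{(n+1)}-v^{(n)}\|_{\Ld^\infty_TH^s}\le C(R)\,Te^{C(R)T}\,\|v^{(n)}-v^{(n-1)}\|_{\Ld^\infty_TH^s}$, a genuine contraction for $T$ small. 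Passing to the limit: $v^{(n)}\to v$ in $\Ld^\infty_T(\bar v^\circ+H^s(\R^2)^2)$, while the uniform bound and weak-$*$ lower semicontinuity give $v\in\Ld^\infty_T(\bar v^\circ+H^{s+1}(\R^2)^2)$ and $\omega^{(n)}\to\curl v$, $\zeta^{(n)}\to\Div(av)$; strong $\Ld^\infty_TH^s$ convergence with $s>1$ is ample to pass to the limit, against a fixed test function, in every bilinear term of the distributional formulations~\eqref{eq:limeqn1VF} and~\eqref{eq:limeqn2VF}, so $\omega:=\curl v$ solves~\eqref{eq:limeqn1VF} and, in the case~\eqref{eq:limeqn2}, $\zeta:=\Div(av)$ solves~\eqref{eq:limeqn2VF}. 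To recover~\eqref{eq:limeqn2} for $v$ itself in the latter case, it suffices to note that its right-hand side has the same $\curl$ and $\Div(a\,\cdot)$ as $\partial_tv$ — using $\Div(a\nabla(a^{-1}\zeta))=\triangle\zeta-\Div(\zeta\nabla h)$ — and to invoke the uniqueness in the Hodge decomposition underlying Lemma~\ref{lem:reconstr} (cf.~\eqref{eq:Helmholtz}); in the case~\eqref{eq:limeqn1}, \eqref{eq:limeqn1VF} is itself the defining weak formulation and $\Div(av)=0$ holds by construction. Tracking constants through the three steps gives the asserted dependence of $T$.

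The one genuinely delicate point is the bookkeeping of these steps: the fixed point must be run simultaneously at two regularity levels — a uniform bound at level $H^{s+1}$ for $v$ (equivalently $H^s$ for $\omega$, $\zeta$) and a contraction one derivative below — and the Grönwall exponential $e^{C(1+R)T}$ must be tamed by first choosing $R$ as a multiple of the data norms and only afterwards choosing $T$ small in terms of $R$. Everything else is routine, exactly as in~\cite[Appendix~B]{Serfaty-15}.
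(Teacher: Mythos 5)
Your scheme and your uniform $H^{s+1}$-bound are exactly the paper's (same iterates, same use of Lemma~\ref{lem:katoponce} with $W:=\alpha(\Psi+\bar v^\circ)^\bot+\beta(\Psi+\bar v^\circ)$, Lemma~\ref{lem:parreg+tsp}(i)--(ii), and Lemma~\ref{lem:reconstr}, plus the Hodge argument to recover~\eqref{eq:limeqn2} from the vorticity formulation). The one genuine difference is how you pass to the limit. The paper does \emph{not} prove a contraction: it observes that $(\partial_t\omega_n)_n$ is bounded in $\Ld^\infty([0,T_0];H^{s-1})$ and invokes the Aubin--Simon lemma to extract a subsequence with $\omega_n\to\omega$ strongly in $C^0([0,T_0];H^{s-1}_\loc)$, which is enough to pass to the limit in $\omega_nv_n$. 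You instead close by a Banach fixed-point argument one derivative below, which is the other classical route; it yields convergence of the whole sequence (no extraction) but is slightly more delicate.

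The delicate point is the one you gloss over with ``running the commutator argument behind~\eqref{eq:katoponcecom} at regularity $s-1$''. As stated, the estimate~\eqref{eq:katoponcecom} at level $H^{s-1}$ produces the factor $\|\rho^t\|_{\Ld^\infty}$, and since $s-1$ may lie in $(0,1]$, the embedding $H^{s-1}(\R^2)\hookrightarrow\Ld^\infty(\R^2)$ is false. To make your contraction at $H^{s-1}$ for $\delta\omega:=\omega^{(n+1)}-\omega^{(n)}$ honest, you must re-optimize the Kato--Ponce exponents in the proof of Lemma~\ref{lem:katoponce}, replacing the pair $(\|\rho\|_{\Ld^\infty},\|\Div(w-W)\|_{H^{s-1}})$ by $(\|\rho\|_{\Ld^{2/(2-s)}},\|\Div(w-W)\|_{W^{s-1,2/(s-1)}})$ (when $1<s<2$) and then invoking $H^{s-1}\hookrightarrow\Ld^{2/(2-s)}$ and $H^{s}\hookrightarrow W^{s-1,2/(s-1)}$ in $\R^2$ to close. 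Alternatively, and more cheaply, you could do the contraction at the lowest level ($\dot H^{-1}\cap\Ld^2$ for $\delta\omega,\delta\zeta$, hence $H^1$ for $\delta v$), which only needs elementary energy estimates and the product rule with $s>1$; the strong $H^s$-convergence of $v^{(n)}$ then follows by interpolating the $\Ld^2$-Cauchy property against the uniform $H^{s+1}$ bound. Either way the argument works; the paper's compactness route simply sidesteps this exponent bookkeeping at the price of passing to a subsequence.
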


\begin{proof}
We focus on the compressible case~\eqref{eq:limeqn2}, the situation being similar and simpler in the incompressible case~\eqref{eq:limeqn1}. Let $s>1$. We set up the following iterative scheme: let $ v_0:= v^\circ$, $\omega_0:=\omega^\circ=\curl v^\circ$ and $\zeta_0:=\zeta^\circ=\Div(a v^\circ)$, and for all $n\ge0$ given $ v_n$, $\omega_n:=\curl v_n$, and $\zeta_n:=\Div(a v_n)$ we let $\omega_{n+1}$ and $\zeta_{n+1}$ solve on $\R^+\times\R^2$ the linear equations
\begin{gather}\label{eq:iterativescheme-1}
\partial_t\omega_{n+1}=\Div(\omega_{n+1}(\alpha(\Psi+ v_n)^\bot+\beta(\Psi+ v_n))),\quad \omega_{n+1}|_{t=0}=\omega^\circ,\\
\partial_t\zeta_{n+1}=\lambda\triangle\zeta_{n+1}-\lambda\Div(\zeta_{n+1}\nabla h)+\Div(a\omega_n(-\alpha(\Psi+ v_n)+\beta(\Psi+ v_n)^\bot)),\quad \zeta_{n+1}|_{t=0}=\zeta^\circ,\label{eq:iterativescheme-2}
\end{gather}
and we let $ v_{n+1}$ satisfy $\curl v_{n+1}=\omega_{n+1}$ and $\Div(a v_{n+1})=\zeta_{n+1}$. For all $n\ge0$, let also
\[t_n:=\sup\Big\{t\ge0:\|(\omega_n^t,\zeta_n^t)\|_{H^s}+\| v_n^t-\bar  v^\circ\|_{H^{s+1}}\le C_0\Big\},\]
for some $C_0\ge1$ to be suitably chosen (depending on the initial data), and let $T_0:=\inf_n t_n$. We show that this iterative scheme is well-defined with $T_0>0$, and that it converges to a solution of equation~\eqref{eq:limeqn2} on $[0,T_0)\times\R^2$.

We split the proof into two steps. In this proof, we use the notation $\lesssim$ for $\le$ up to a constant $C>0$ that depends only on an upper bound on $|\alpha|$, $|\beta|$, $\lambda$, $\lambda^{-1}$, $s$, $(s-1)^{-1}$, $\|(h,\Psi,\bar  v^\circ)\|_{W^{s+1,\infty}}$, $\| v^\circ-\bar  v^\circ\|_{H^{s+1}}$, $\|(\zeta^\circ,\bar\zeta^\circ)\|_{H^s}$, and $\|(\omega^\circ,\bar\omega^\circ)\|_{H^s}$.

\medskip
\noindent\step1 The iterative scheme is well-defined.

In this step, we show that for all $n\ge0$ the system~\eqref{eq:iterativescheme-1}--\eqref{eq:iterativescheme-2} admits a unique solution $(\omega_{n+1},\zeta_{n+1}, v_{n+1})$ with $\omega_{n+1}\in\Ld^\infty_\loc(\R^+; H^s(\R^2))$, $\zeta_{n+1}\in\Ld^\infty_\loc(\R^+;H^s(\R^2))$, and $ v_{n+1}\in \Ld^\infty_\loc(\R^+;\bar  v^\circ+H^{s+1}(\R^2)^2)$, and that moreover for a suitable choice of $1\le C_0\lesssim1$ we have $T_0\ge C_0^{-4}>0$.
We argue by induction. Let $n\ge0$ be fixed, and assume that $(\omega_{n},\zeta_{n}, v_n)$ is well-defined with $\omega_n\in\Ld^\infty_\loc(\R^+; H^s(\R^2))$, $\zeta_n\in\Ld^\infty_\loc(\R^+;H^s(\R^2))$, and $ v_n\in \Ld^\infty_\loc(\R^+;\bar  v^\circ+H^{s+1}(\R^2)^2)$. (For $n=0$, this is indeed trivial by assumption.)

We first study the equation for $\omega_{n+1}$. By the Sobolev embedding with $s>1$, $ v_n$ is Lipschitz-continuous, and by assumption $\Psi$ is also Lipschitz-continuous, hence the transport equation~\eqref{eq:iterativescheme-1} admits a unique continuous solution $\omega_{n+1}$, which automatically belongs to $\Ld^\infty_\loc(\R^+;\omega^\circ+\dot H^{-1}\cap H^s(\R^2))$ by Lemma~\ref{lem:katoponce}. More precisely, for all $t\ge0$, Lemma~\ref{lem:katoponce} together with the Sobolev embedding for $s>1$ yields
\begin{align*}
\partial_t\|\omega_{n+1}^t\|_{H^s}&\le C(1+\| v_n^t\|_{W^{1,\infty}})\|\omega_{n+1}^t\|_{H^s}+C \|\omega_{n+1}^t\|_{\Ld^\infty}\| v_n^t-\bar  v^\circ\|_{H^{s+1}}\\
&\le C(1+\| v_n^t-\bar  v^\circ\|_{H^{s+1}})\|\omega_{n+1}^t\|_{H^s}.
\end{align*}
Hence, for all $t\in[0,t_n]$, we obtain $\partial_t\|\omega_{n+1}^t\|_{H^s}\le CC_0\|\omega_{n+1}^t\|_{H^s}$, which proves
\[\|\omega_{n+1}^t\|_{H^s}\le e^{CC_0t}\|\omega^\circ\|_{H^s}\le Ce^{CC_0t}.\]
Noting that
\[\|\omega^\circ-\bar\omega^\circ\|_{\dot H^{-1}}\le \| v^\circ-\bar  v^\circ\|_{\Ld^2}\le C,\]
Lemma~\ref{lem:katoponce} together with the Sobolev embedding for $s>1$ also gives for all $t\ge0$,
\begin{align*}
\|\omega_{n+1}^t-\bar \omega^\circ\|_{\dot H^{-1}}\le C+\|\omega_{n+1}^t-\omega^\circ\|_{\dot H^{-1}}&\le C+Ct\|\omega_{n+1}\|_{\Ld^\infty_t\Ld^2}(1+\| v_n\|_{\Ld^\infty_t\Ld^\infty})\\
&\le C+Ct\|\omega_{n+1}\|_{\Ld^\infty_tH^s}(1+\| v_n-\bar  v^\circ\|_{\Ld^\infty_tH^s}),
\end{align*}
and hence, for all $t\in[0,t_n]$,
\[\|\omega_{n+1}^t-\bar\omega^\circ\|_{\dot H^{-1}}\le C(1+tC_0)e^{CC_0t}.\]

We now turn to $\zeta_{n+1}$. Equation~\eqref{eq:iterativescheme-2} (with $\lambda>0$) is a transport-diffusion equation and admits a unique solution $\zeta_{n+1}$, which belongs to $\Ld^\infty_\loc(\R^+;\zeta^\circ+\dot H^{-1}\cap H^s(\R^2))$ by Lemma~\ref{lem:parreg+tsp}(i)--(ii). More precisely, for all $t\ge0$, Lemma~\ref{lem:parreg+tsp}(i) yields for $s>1$
\begin{align}\label{eq:boundzetan1alm}
\|\zeta_{n+1}^t\|_{H^s}&\le Ce^{Ct}\big(\|\zeta^\circ\|_{H^s}+\|a\omega_n(\alpha(\Psi+ v_n)^\bot+\beta(\Psi+ v_n))\|_{\Ld^2_tH^s}\big)\nonumber\\
&\le Ce^{Ct}\big(1+t^{1/2}\|\omega_n\|_{\Ld^\infty_tH^s}(1+\| v_n-\bar  v^\circ\|_{\Ld^\infty_tH^s})\big),
\end{align}
where we have used Lemma~\ref{lem:katoponce-1} together with the Sobolev embedding to estimate the terms. Noting that
\[\|\zeta^\circ-\bar\zeta^\circ\|_{\dot H^{-1}}\le\|a v^\circ-a\bar  v^\circ\|_{\Ld^2}\le C,\]
Lemma~\ref{lem:parreg+tsp}(ii) together with the Sobolev embedding for $s>1$ also gives for all $t\ge0$,
\begin{align*}
\|\zeta^t_{n+1}-\bar\zeta^\circ\|_{\dot H^{-1}}\le C+\|\zeta^t_{n+1}-\zeta^\circ\|_{\dot H^{-1}}&\le C+Ce^{Ct}(\|\zeta^\circ\|_{\Ld^2}+\|a\omega_n(\alpha(\Psi+ v_n)^\bot+\beta(\Psi+ v_n))\|_{\Ld^2_t\Ld^2})\\
&\le Ce^{Ct}(1+t^{1/2}\|\omega_n\|_{\Ld^\infty_tH^s}(1+\| v_n-\bar  v^\circ\|_{\Ld^\infty_tH^s}).
\end{align*}
Combining this with~\eqref{eq:boundzetan1alm} yields for all $t\in[0,t_n]$,
\[\|\zeta_{n+1}^t\|_{H^s}+\|\zeta_{n+1}^t-\bar\zeta^\circ\|_{\dot H^{-1}}\le Ce^{Ct}\big(1+t^{1/2}C_0(1+C_0)\big)\le C(1+t^{1/2}C_0^2)e^{Ct}.\]

We finally turn to $ v_{n+1}$. By the above properties of $\omega_{n+1}$ and $\zeta_{n+1}$, Lemma~\ref{lem:reconstr} ensures that $ v_{n+1}$ is uniquely defined in $\Ld^\infty_\loc(\R^+;\bar  v^\circ+H^{s+1}(\R^2)^2)$ with $\curl( v_{n+1}^t-\bar  v^\circ)=\omega_{n+1}^t-\bar\omega^\circ$ and $\Div(a( v_{n+1}^t-\bar  v^\circ))=\zeta_{n+1}^t-\bar\zeta^\circ$ for all $t\ge0$. More precisely, Lemma~\ref{lem:reconstr} gives for all $t\in[0,t_n]$,
\begin{align*}
\| v_{n+1}^t-\bar  v^\circ\|_{H^{s+1}}&\le C\|\omega_{n+1}^t-\bar \omega^\circ\|_{\dot H^{-1}\cap H^s}+C\|\zeta_{n+1}^t-\bar \zeta^\circ\|_{\dot H^{-1}\cap H^s}\\
&\le C+C\|\omega_{n+1}^t-\bar \omega^\circ\|_{\dot H^{-1}}+C\|\omega_{n+1}^t\|_{H^s}+C\|\zeta_{n+1}^t-\bar\zeta^\circ\|_{\dot H^{-1}}+C\|\zeta_{n+1}^t\|_{H^s}\\
&\le C(1+tC_0+t^{1/2}C_0^2)e^{CC_0t}.
\end{align*}

Hence, we have proven that $(\omega_{n+1},\zeta_{n+1}, v_{n+1})$ is well-defined in the correct space, and moreover, combining all the previous estimates, we find for all $t\in[0,t_n]$,
\[\|(\omega_{n+1}^t,\zeta_{n+1}^t)\|_{H^s}+\| v_{n+1}^t-\bar  v^\circ\|_{H^{s+1}}\le C(1+tC_0+t^{1/2}C_0^2)e^{CC_0t}.\]
Therefore, choosing $C_0=1+3Ce^C\lesssim1$, we obtain for all $t\le t_{n}\wedge C_0^{-4}$,
\[\|(\omega_{n+1}^t,\zeta_{n+1}^t)\|_{H^s}+\| v_{n+1}^t-\bar  v^\circ\|_{H^{s+1}}\le C_0,\]
and thus $t_{n+1}\ge t_n\wedge C_0^{-4}$. The result follows by induction.

\medskip
\noindent\step2 Passing to the limit in the scheme.

In this step, we show that up to an extraction the iterative scheme $(\omega_n,\zeta_n, v_n)$ converges to a weak solution of equation~\eqref{eq:limeqn2} on $[0,T_0)\times\R^2$.

By Step~1, the sequences $(\omega_n)_n$ and $(\zeta_n)_n$ are bounded in $\Ld^\infty([0,T_0];H^s(\R^2)^2)$, and the sequence $( v_n)_n$ is bounded in $\Ld^\infty([0,T_0];\bar  v^\circ+H^{s+1}(\R^2)^2)$. Up to an extraction, we thus have $\omega_n\cvf{*}\omega$, $\zeta_n\cvf{*}\zeta$ in $\Ld^\infty([0,T_0];H^s(\R^2))$, and $ v_n\cvf{*} v$ in $\Ld^\infty([0,T_0];\bar  v^\circ+H^{s+1}(\R^2)^2)$. Comparing with equation~\eqref{eq:iterativescheme-1}, we deduce that $(\partial_t\omega_n)_n$ is bounded in $\Ld^{\infty}([0,T_0];H^{s-1}(\R^2))$.
Since by the Rellich theorem the space $H^{s}(U)$ is compactly embedded in $H^{s-1}(U)$ for any bounded domain $U\subset\R^2$, the Aubin-Simon lemma ensures that we have $\omega_n\to\omega$ strongly in $C^0([0,T_0];H^{s-1}_\loc(\R^2))$. This implies in particular $\omega_n v_n\to \omega v$ in the distributional sense, and hence we may pass to the limit in the weak formulation of equations~\eqref{eq:iterativescheme-1}--\eqref{eq:iterativescheme-2}, which yields $\curl v=\omega$, $\Div(a v)=\zeta$, with $\omega$ and $\zeta$ satisfying in the distributional sense on $[0,T_0)\times\R^2$,
\begin{gather*}
\partial_t\omega=\Div(\omega(\alpha(\Psi+ v)^\bot+\beta(\Psi+ v))),\quad \omega|_{t=0}=\omega^\circ,\\
\partial_t\zeta=\lambda\triangle\zeta-\lambda\Div(\zeta\nabla h)+\Div(a\omega(-\alpha(\Psi+ v)+\beta(\Psi+ v)^\bot)),\quad \zeta|_{t=0}=\zeta^\circ,
\end{gather*}
that is, the vorticity formulation~\eqref{eq:limeqn1VF}--\eqref{eq:limeqn2VF}. Let us quickly deduce that $ v$ is a weak solution of~\eqref{eq:limeqn2}.
From the above equations, we deduce $\partial_t\omega\in \Ld^\infty([0,T_0];\dot H^{-1}\cap H^{s-1}(\R^2))$ and $\partial_t\zeta\in \Ld^\infty([0,T_0];\dot H^{-1}\cap H^{s-2}(\R^2))$. Lemma~\ref{lem:reconstr} then implies $\partial_t v\in\Ld^\infty([0,T_0];H^{s-1}(\R^2)^2)$. We may then deduce that the quantity
\[V:=\partial_t v-\lambda\nabla(a^{-1}\zeta)+\alpha(\Psi+ v)\omega-\beta(\Psi+ v)^\bot\omega\]
belongs to $\Ld^\infty([0,T_0];\Ld^2(\R^2)^2)$ and satisfies $\curl V=\Div(aV)=0$ in the distributional sense. Using the Hodge decomposition in $\Ld^2(\R^2)^2$, we easily conclude $V=0$, hence $ v\in\Ld^\infty([0,T_0];\bar  v^\circ+H^{s+1}(\R^2)^2)$ is indeed a weak solution of~\eqref{eq:limeqn2} on $[0,T_0)\times\R^2$.
\end{proof}

We turn to the local-in-time existence of smooth solutions of~\eqref{eq:limeqn2} in the degenerate case $\lambda=0$.
The analysis of the iterative scheme needs to be carefully adapted in this case: in particular, $\omega$ and $ v$ are now on an equal footing with regard to regularity. Note that the proof only holds in the parabolic regime $\beta=0$.

\begin{prop}[Local existence, degenerate case]\label{prop:locexistdeg}
Let $\alpha\in\R$, $\beta=\lambda=0$. Let $s>2$, and let $h\in W^{s,\infty}(\R^2)$, $\Psi,\bar  v^\circ\in W^{s+1,\infty}(\R^2)^2$. Let $ v^\circ\in \bar  v^\circ+H^{s}(\R^2)^2$ with $\omega^\circ:=\curl v^\circ$, $\bar \omega^\circ:=\curl \bar  v^\circ\in H^s(\R^2)$ and $\zeta^\circ:=\Div(a v^\circ)$, $\bar \zeta^\circ:=\Div(a\bar  v^\circ)\in H^{s-1}(\R^2)$. Then, there exists $T>0$ and a weak solution $ v\in \Ld^\infty([0,T);\bar  v^\circ+ H^s(\R^2)^2)$ of~\eqref{eq:limeqn2} on $[0,T)\times\R^2$, with initial data $ v^\circ$. Moreover, $T$ depends only on an upper bound on $|\alpha|$, $s$, $(s-2)^{-1}$, $\|h\|_{W^{s,\infty}}$, $\|(\Psi,\bar  v^\circ)\|_{W^{s+1,\infty}}$, $\| v^\circ-\bar  v^\circ\|_{H^s}$, $\|(\omega^\circ,\bar \omega^\circ)\|_{H^s}$, and $\|(\zeta^\circ,\bar \zeta^\circ)\|_{H^{s-1}}$.
\end{prop}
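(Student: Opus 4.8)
The plan is to run the same iterative scheme as in the proof of Proposition~\ref{prop:locexist}, with the bookkeeping adapted to the loss of parabolic regularization: when $\lambda=\beta=0$ the equation~\eqref{eq:limeqn2VF} for $\zeta$ degenerates to the pure time-integration $\zeta^t=\zeta^\circ-\alpha\int_0^t\Div(a\omega(\Psi+v))$, so no smoothing of $\zeta$ is available and $v$ ends up at the same Sobolev level as $\omega$. Accordingly I would track, at each step, $\omega_n$ in $H^s$, $\zeta_n$ in $H^{s-1}$, and $v_n-\bar v^\circ$ in $H^s$ (plus the $\dot H^{-1}$-norms of $\omega_n-\bar\omega^\circ$ and $\zeta_n-\bar\zeta^\circ$ required by Lemma~\ref{lem:reconstr}). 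Starting from $v_0:=v^\circ$, $\omega_0:=\omega^\circ$, $\zeta_0:=\zeta^\circ$, I let $\omega_{n+1}$ solve the linear transport equation $\partial_t\omega_{n+1}=\alpha\Div(\omega_{n+1}(\Psi+v_n)^\bot)$ with datum $\omega^\circ$, set $\zeta_{n+1}^t:=\zeta^\circ-\alpha\int_0^t\Div(a\omega_n(\Psi+v_n))$, and reconstruct $v_{n+1}$ from $(\omega_{n+1},\zeta_{n+1})$ via Lemma~\ref{lem:reconstr}; as in Proposition~\ref{prop:locexist} I set $t_n:=\sup\{t\ge0:\|\omega_n^t\|_{H^s}+\|\zeta_n^t\|_{H^{s-1}}+\|v_n^t-\bar v^\circ\|_{H^s}\le C_0\}$ for a suitable $C_0\ge1$ and $T_0:=\inf_n t_n$.

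\emph{Step 1: the scheme is well-defined and $T_0>0$.} For $\omega_{n+1}$ I would apply Lemma~\ref{lem:katoponce} with reference field $W:=\alpha(\Psi+\bar v^\circ)^\bot$. What makes the $H^s$-estimate close is the identity $\Div(F^\bot)=-\curl F$: it gives $\Div(w_n-W)=-\alpha(\omega_n-\bar\omega^\circ)$, which is controlled in $H^s$ precisely because $\omega_n$ is carried as an independent unknown in $H^s$ rather than being recovered one derivative below from $v_n$. Together with $\|\nabla w_n\|_{\Ld^\infty}\lesssim 1+\|v_n-\bar v^\circ\|_{H^s}$ (using the embedding $H^{s-1}\hookrightarrow\Ld^\infty$, valid for $s>2$) and $\|\Div W\|_{W^{s,\infty}}=\alpha\|\curl(\Psi+\bar v^\circ)\|_{W^{s,\infty}}<\infty$ (using $\Psi,\bar v^\circ\in W^{s+1,\infty}$), Grönwall's inequality yields $\|\omega_{n+1}^t\|_{H^s}\le e^{C(1+C_0)t}\|\omega^\circ\|_{H^s}$ on $[0,t_n]$, while~\eqref{eq:tsph-1} and $\|\omega^\circ-\bar\omega^\circ\|_{\dot H^{-1}}\le\|v^\circ-\bar v^\circ\|_{\Ld^2}$ control $\|\omega_{n+1}^t-\bar\omega^\circ\|_{\dot H^{-1}}$. (The hypothesis $w_n-W\in H^{s+1}$ of Lemma~\ref{lem:katoponce} is only qualitative and is recovered by a routine mollification of the data, which I suppress throughout.) For $\zeta_{n+1}$, I would estimate $\|\zeta_{n+1}^t\|_{H^{s-1}}\le\|\zeta^\circ\|_{H^{s-1}}+\alpha\int_0^t\|a\omega_n^u(\Psi+v_n^u)\|_{H^s}du$ and bound $\|a\omega_n(\Psi+v_n)\|_{H^s}\lesssim(1+\|v_n-\bar v^\circ\|_{H^s})\|\omega_n\|_{H^s}$ via Lemma~\ref{lem:katoponce-1}, the algebra property of $H^s$ ($s>1$) and $a=e^h\in W^{s,\infty}$; the $\dot H^{-1}$-norm of $\zeta_{n+1}-\bar\zeta^\circ$ is controlled the same way through $\|\Div G\|_{\dot H^{-1}}\le\|G\|_{\Ld^2}$. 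Feeding these bounds into Lemma~\ref{lem:reconstr} (at exponent $s$, which only requires $\omega_{n+1}-\bar\omega^\circ,\zeta_{n+1}-\bar\zeta^\circ\in\dot H^{-1}\cap H^{s-1}$) bounds $\|v_{n+1}^t-\bar v^\circ\|_{H^s}$. Collecting everything, all three norms are bounded on $[0,t_n]$ by an expression of the form $C(1+t\,P(C_0))e^{CtP(C_0)}$ with $P$ a fixed polynomial; choosing first $C_0$ large depending on the data and then a small $\tau>0$ gives $t_{n+1}\ge t_n\wedge\tau$, hence $T_0\ge\tau>0$ by induction.

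\emph{Step 2: passing to the limit.} The uniform bounds give, up to extraction, $\omega_n\cvf{*}\omega$ in $\Ld^\infty([0,T_0);H^s)$, $\zeta_n\cvf{*}\zeta$ in $\Ld^\infty([0,T_0);H^{s-1})$ and $v_n\cvf{*}v$ in $\Ld^\infty([0,T_0);\bar v^\circ+H^s)$. The transport equation shows $(\partial_t\omega_n)_n$ bounded in $\Ld^\infty([0,T_0);H^{s-1})$, so by Rellich and the Aubin--Simon lemma $\omega_n\to\omega$ strongly in $C^0([0,T_0);H^{s-1}_\loc)$ (and similarly $v_n\to v$), which lets me pass to the limit in the products $\omega_n(\Psi+v_n)^\bot$ and $a\omega_n(\Psi+v_n)$ in the distributional sense. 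This yields $\curl v=\omega$, $\Div(av)=\zeta$, and the vorticity formulation~\eqref{eq:limeqn1VF}--\eqref{eq:limeqn2VF} with $\lambda=\beta=0$. To upgrade to a weak solution of~\eqref{eq:limeqn2}, I proceed as in Proposition~\ref{prop:locexist}: since $\partial_t\omega=\alpha\Div(\omega(\Psi+v)^\bot)$ and $\partial_t\zeta=-\alpha\Div(a\omega(\Psi+v))$ both lie in $\dot H^{-1}\cap H^{s-1}$, Lemma~\ref{lem:reconstr} applied to $\partial_t v$ gives $\partial_t v\in\Ld^\infty([0,T_0);H^s)$; then $V:=\partial_t v+\alpha(\Psi+v)\omega$ satisfies $\curl V=\Div(aV)=0$ in the distributional sense, hence $V=0$ by the Hodge decomposition in $\Ld^2(\R^2)^2$, i.e. $v$ solves~\eqref{eq:limeqn2} and, the remaining integrability conditions being immediate, is a weak solution in the sense of Definition~\ref{defin:sol}.

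\emph{Main obstacle.} The one genuinely new difficulty --- and the reason for the strengthened hypotheses $s>2$ and $\Psi,\bar v^\circ\in W^{s+1,\infty}$ --- is closing the $H^s$-transport estimate for $\omega_{n+1}$ when the transport velocity $\alpha(\Psi+v_n)^\bot$ is only as smooth as $v_n\in\bar v^\circ+H^s$, that is, one derivative short of what a naive $H^s$-transport estimate would want. This is handled by keeping $\omega_n$ as an independent unknown in $H^s$, so that $\Div$ of the velocity --- which by $\Div(F^\bot)=-\curl F$ equals $-\alpha(\omega_n-\bar\omega^\circ)$ modulo the fixed reference --- stays in $H^s$, together with the embedding $H^{s-1}\hookrightarrow\Ld^\infty$ that forces $s>2$. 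The rest is the lengthy but routine propagation of the three coupled norms through the stopping-time argument, following the template of Proposition~\ref{prop:locexist}.
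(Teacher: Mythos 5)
Your proposal is correct and takes essentially the same approach as the paper: the same iterative scheme, the same stopping-time bookkeeping with $\omega_n\in H^s$, $\zeta_n\in H^{s-1}$, $v_n-\bar v^\circ\in H^s$, the same use of the identity $\Div(F^\bot)=-\curl F$ (which is why $\beta=0$ is required) to keep the divergence of the transport velocity in $H^s$ via the independently tracked $\omega_n$, the same Sobolev embedding $H^{s-1}\hookrightarrow\Ld^\infty$ forcing $s>2$, and the same Aubin--Simon and Hodge-decomposition arguments in the limit. Your explicit remark that Lemma~\ref{lem:katoponce}'s hypothesis $w-W\in H^{s+1}$ is only qualitative --- since the estimate~\eqref{eq:katoponcecom} only uses $\|\Div(w-W)\|_{H^s}$, which is controlled here by $\|\omega_n-\bar\omega^\circ\|_{H^s}$ --- is a valid observation that the paper passes over silently, but it does not constitute a different route.
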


\begin{proof}
We consider the same iterative scheme $(\omega_n,\zeta_n, v_n)$ as in the proof of Proposition~\ref{prop:locexist}, but with $\lambda=\beta=0$. Let $s>2$. For all $n\ge0$, let
\[t_n:=\sup\Big\{t\ge0:\|\omega_n^t\|_{H^s}+\|\zeta_n^t\|_{H^{s-1}}+\| v_n^t-\bar  v^\circ\|_{H^{s}}\le C_0\Big\},\]
for some $C_0\ge1$ to be suitably chosen (depending on initial data), and let $T_0:=\inf_n t_n$. In this proof, we use the notation $\lesssim$ for $\le$ up to a constant $C>0$ that depends only on an upper bound on $|\alpha|$, $s$, $(s-2)^{-1}$, $\|h\|_{W^{s,\infty}}$, $\|(\Psi,\bar  v^\circ)\|_{W^{s+1,\infty}}$, $\| v^\circ-\bar  v^\circ\|_{H^s}$, $\|(\zeta^\circ,\bar\zeta^\circ)\|_{H^{s-1}}$, and $\|(\omega^\circ,\bar\omega^\circ)\|_{H^s}$.

Just as in the proof of Proposition~\ref{prop:locexist}, we first need to show that this iterative scheme is well-defined and that $T_0>0$. We proceed by induction: let $n\ge0$ be fixed, and assume that $(\omega_n,\zeta_n, v_n)$ is well-defined with $\omega_n\in\Ld^\infty_\loc(\R^+; H^s(\R^2))$, $\zeta_n\in \Ld^\infty_\loc(\R^+;H^{s-1}(\R^2))$, and $ v_n\in\Ld^\infty_\loc(\R^+;\bar  v^\circ+H^{s}(\R^2)^2)$. (For $n=0$ this is indeed trivial by assumption.)

We first study $\zeta_{n+1}$. As $\lambda=0$, equation~\eqref{eq:iterativescheme-2} takes the form $\partial_t\zeta_{n+1}=-\alpha\Div(a\omega_n(\Psi+ v_n))$. Integrating this equation in time then yields
\begin{align*}
\|\zeta^t_{n+1}\|_{H^{s-1}}&\le\|\zeta^\circ\|_{H^{s-1}}+|\alpha|\int_0^t\|\omega^u_n(\Psi+ v^u_n)\|_{H^{s}}du\lesssim 1+t(1+\| v_n-\bar  v^\circ\|_{\Ld^\infty_tH^{s}})\|\omega_n\|_{\Ld^\infty_{t}H^{s}}.
\end{align*}
where we have used Lemma~\ref{lem:katoponce-1} together with the Sobolev embedding to estimate the last term. Similarly, noting that $\|\zeta^\circ-\bar\zeta^\circ\|_{\dot H^{-1}}\le \|a v^\circ-a\bar  v^\circ\|_{\Ld^2}\le C$, we find for $s>1$,
\begin{align*}
\|\zeta^t_{n+1}-\bar\zeta^\circ\|_{\dot H^{-1}}\le C+\|\zeta^t_{n+1}-\zeta^\circ\|_{\dot H^{-1}}&\le 
\|\zeta^\circ\|_{H^{s-1}}+|\alpha|\int_0^t\|\omega^u_n(\Psi+ v^u_n)\|_{\Ld^2}du\\
&\lesssim 1+t(1+\| v_n-\bar  v^\circ\|_{\Ld^\infty_tH^{s}})\|\omega_n\|_{\Ld^\infty_{t}H^{s}}.
\end{align*}
Hence we obtain for all $t\in[0,t_n]$,
\begin{align*}
\|\zeta^t_{n+1}\|_{H^{s-1}}+\|\zeta^t_{n+1}-\bar\zeta^\circ\|_{\dot H^{-1}}&\le C+ Ct(1+C_0)C_0\le C(1+tC_0^2).
\end{align*}

We now turn to the study of $\omega_{n+1}$. As $\beta=0$, equation~\eqref{eq:iterativescheme-1} takes the form $\partial_t\omega_{n+1}=\alpha\Div(\omega_{n+1}(\Psi+ v_n)^\bot)$. For all $t\ge0$, Lemma~\ref{lem:katoponce} together with the Sobolev embedding for $s>2$ then yields (here the choice $\beta=0$ is crucial, since otherwise the higher norm $\| v_n^t-\bar  v^\circ\|_{H^{s+1}}$ would appear in the right-hand side!)
\begin{align*}
\partial_t\|\omega_{n+1}^t\|_{H^s}&\lesssim (1+\| v_n^t\|_{W^{1,\infty}})\|\omega_{n+1}^t\|_{H^s}+\|\omega_{n+1}^t\|_{\Ld^\infty}\|\curl( v_n^t-\bar  v^\circ)\|_{H^s}+\|\omega_{n+1}^t\|_{W^{1,\infty}}\| v_n^t-\bar  v^\circ\|_{H^s}\nonumber\\
&\lesssim (1+\|\omega_n^t\|_{H^s}+\| v_n^t-\bar  v^\circ\|_{H^s})\|\omega_{n+1}^t\|_{H^s}.
\end{align*}
For all $t\in[0,t_n]$, this implies $\partial_t\|\omega_{n+1}^t\|_{H^s}\le C(1+2C_0)\|\omega_{n+1}^t\|_{H^s}$, and thus
\[\|\omega_{n+1}^t\|_{H^s}\le \|\omega^\circ\|_{H^s}e^{C(1+2C_0)t}\le Ce^{CC_0t}.\]
Moreover, noting that $\|\omega^\circ-\bar\omega^\circ\|_{\dot H^{-1}}\le\| v^\circ-\bar  v^\circ\|_{\Ld^2}\le C$, and applying Lemma~\ref{lem:katoponce} together with the Sobolev embedding, we obtain
\begin{align*}
\|\omega_{n+1}^t-\bar\omega^\circ\|_{\dot H^{-1}}&\le C+\|\omega_{n+1}^t-\omega^\circ\|_{\dot H^{-1}}\\
&\le C+Ct(1+\| v_n\|_{\Ld^\infty_t\Ld^\infty})\|\omega_{n+1}\|_{\Ld^\infty_t\Ld^2}\\
&\le C+Ct(1+\| v_n-\bar  v^\circ\|_{\Ld^\infty_tH^s})\|\omega_{n+1}\|_{\Ld^\infty_t\Ld^2},
\end{align*}
hence for all $t\in[0,t_n]$
\begin{align*}
\|\omega_{n+1}^t-\bar\omega^\circ\|_{\dot H^{-1}}&\le C+Ct(1+C_0)\|\omega_{n+1}\|_{\Ld^\infty_t\Ld^2}\le C+CC_0te^{CC_0t}.
\end{align*}

We finally turn to $ v_{n+1}$. By the above properties of $\omega_{n+1}$ and $\zeta_{n+1}$, Lemma~\ref{lem:reconstr} ensures that $ v_{n+1}$ is uniquely defined in $\Ld^\infty_\loc(\R^+;\bar  v^\circ+H^s(\R^2)^2)$, and for all $t\in[0,t_n]$ we have
\begin{align*}
\| v_{n+1}^t-\bar  v^\circ\|_{H^s}&\le C\|\omega_{n+1}^t-\bar\omega^\circ\|_{\dot H^{-1}\cap H^{s-1}}+C\|\zeta_{n+1}^t-\bar \zeta^\circ\|_{\dot H^{-1}\cap H^{s-1}}\\
&\le C+C\|\omega_{n+1}^t-\bar \omega^\circ\|_{\dot H^{-1}}+C\|\omega_{n+1}^t\|_{H^s}+C\|\zeta_{n+1}^t-\bar\zeta^\circ\|_{\dot H^{-1}}+C\|\zeta_{n+1}^t\|_{H^{s-1}}\\
&\le C(1+tC_0^2)e^{CC_0t}.
\end{align*}

Hence, we have proven that $(\omega_{n+1},\zeta_{n+1}, v_{n+1})$ is well-defined in the correct space, and moreover, combining all the previous estimates, we find for all $t\in[0,t_n]$
\[\|\omega_{n+1}^t\|_{H^s}+\|\zeta_{n+1}^t\|_{H^{s-1}}+\| v_{n+1}^t-\bar  v^\circ\|_{H^s}\le C(1+tC_0^2)e^{CC_0t}.\]
Therefore, choosing $C_0=1+2Ce^C\lesssim1$, we obtain for all $t\le t_n\wedge C_0^{-2}$
\[\|\omega_{n+1}^t\|_{H^s}+\|\zeta_{n+1}^t\|_{H^{s-1}}+\| v_{n+1}^t-\bar  v^\circ\|_{H^s}\le C_0,\]
and thus $t_{n+1}\ge t_n\wedge C_0^{-2}$. The conclusion now follows just as in the proof of Proposition~\ref{prop:locexist}.
\end{proof}

\section{Global existence}\label{chap:global}

As local existence is proven above in the framework of Sobolev spaces, the strategy for global existence consists in looking for a priori estimates on Sobolev norms. Since we are also interested in Hölder regularity of solutions, we establish a priori estimates on Hölder-Zygmund norms as well. As will be seen, the key ingredient is given by some a priori estimates for the vorticity $\omega$ in $\Ld^p(\R^2)$ with $p>1$.

\subsection{A priori estimates}\label{chap:apriori}

We start with the following elementary energy estimates. Note that in the degenerate case $\lambda=0$, the a priori estimate for $\zeta$ in $\Ld^2_\loc(\R^+;\Ld^2(\R^2))$ disappears, which is the main difficulty to establish a global result in that case.
Although we stick in the sequel to the framework of item~(iii), a priori estimates in slightly more general spaces are obtained in item~(ii) for the compressible model~\eqref{eq:limeqn2}.

\begin{lem}[Energy estimates]\label{lem:aprioriest}
Let $\lambda\ge0$, $\alpha\ge0$, $\beta\in\R$, $T>0$ and $\Psi\in W^{1,\infty}(\R^2)$. Let $ v^\circ\in\Ld^2_\loc(\R^2)^2$ be such that $\omega^\circ:=\curl v^\circ\in \Pc\cap\Ld^2_\loc(\R^2)$, and such that either $\Div(a v^\circ)=0$ in the case~\eqref{eq:limeqn1}, or $\zeta^\circ:=\Div(a v^\circ)\in\Ld^2_\loc(\R^2)$ in the case~\eqref{eq:limeqn2}. Let $ v\in\Ld^2_\loc([0,T)\times\R^2)^2$ be a weak solution of~\eqref{eq:limeqn1} or of~\eqref{eq:limeqn2} on $[0,T)\times\R^2$ with initial data $ v^\circ$. Set $\zeta:=0$ in the case~\eqref{eq:limeqn1}. Then the following properties hold.
\begin{enumerate}[(i)]
\item For all $t\in[0,T)$, we have $\omega^t\in\Pc(\R^2)$.
\item \emph{Localized energy estimate for~\eqref{eq:limeqn2}:} If $ v\in\Ld^2_\loc([0,T);\Ld^2_\uloc(\R^2)^2)$ is such that $\omega\in \Ld^\infty_\loc([0,T);\Ld^\infty(\R^2))$ and $\zeta\in\Ld^2_\loc([0,T);\Ld^2_\uloc(\R^2))$, then we have for all $t\in[0,T)$,
\begin{multline*}
\| v^t\|_{\Ld^2_\uloc}^2+\alpha\|| v|^2\omega\|_{\Ld^1_t\Ld^1_\uloc}+\lambda\|\zeta\|_{\Ld^2_t\Ld^2_\uloc}^2\le \begin{cases}Ce^{C(1+\lambda^{-1})t}\| v^\circ\|_{\Ld^2_\uloc}^2,&\text{if $\alpha=0$, $\lambda>0$};\\
C\alpha^{-1}\lambda^{-1}(e^{\lambda t}-1)+Ce^{\lambda t}\| v^\circ\|_{\Ld^2_\uloc}^2,&\text{if $\alpha>0$, $\lambda>0$;}\\
C\alpha^{-1}t+C\| v^\circ\|_{\Ld^2_\uloc}^2,&\text{if $\alpha>0$, $\lambda=0$;}
\end{cases}
\end{multline*}
where the constant $C$ depends only on an upper bound on $\alpha$, $|\beta|$, $\lambda$, $\|h\|_{W^{1,\infty}}$, $\|\Psi\|_{\Ld^\infty}$, and additionally on $\|\nabla\Psi\|_{\Ld^{\infty}}$ in the case $\alpha=0$.
\item \emph{Relative energy estimate for~\eqref{eq:limeqn1} and~\eqref{eq:limeqn2}:} If there is some $\bar  v^\circ\in W^{1,\infty}(\R^2)^2$ such that $ v^\circ\in\bar  v^\circ+\Ld^2(\R^2)^2$, $\bar\omega^\circ:=\curl \bar  v^\circ\in \Ld^2(\R^2)$, and such that either $\Div(a\bar  v^\circ)=0$ in the case~\eqref{eq:limeqn1}, or $\bar\zeta^\circ:=\Div(a\bar  v^\circ)\in \Ld^2(\R^2)$ in the case~\eqref{eq:limeqn2}, and if $ v\in\Ld^\infty_\loc([0,T);\bar  v^\circ+\Ld^2(\R^2))$, $\omega\in\Ld^\infty_\loc([0,T);\Ld^\infty(\R^2))$, $\zeta\in\Ld^2_\loc([0,T);\Ld^2(\R^2))$, then we have for all $t\in[0,T)$,
\begin{multline*}
\int_{\R^2}a| v^t-\bar  v^\circ|^2+\alpha\int_0^tdu\int_{\R^2} a| v^u-\bar  v^\circ|^2\omega^u+\lambda\int_0^tdu\int_{\R^2} a^{-1}|\zeta^u|^2\\
\le \begin{cases}Ct(1+\alpha^{-1})+\int_{\R^2} a| v^\circ-\bar  v^\circ|^2,&\text{in both cases~\eqref{eq:limeqn1} and~\eqref{eq:limeqn2}, with $\alpha>0$;}\\
e^{Ct}\big(1+\int_{\R^2} a| v^\circ-\bar  v^\circ|^2\big),&\text{in the case~\eqref{eq:limeqn1}, with $\alpha=0$}\\
C(e^{C(1+\lambda^{-1})t}-1)+e^{C(1+\lambda^{-1})t}\int_{\R^2} a| v^\circ-\bar  v^\circ|^2,&\text{in the case~\eqref{eq:limeqn2}, with $\alpha=0$, $\lambda>0$};\end{cases}
\end{multline*}
where the constant $C$ depends only on an upper bound on $\alpha$, $|\beta|$, $\lambda$, $\|h\|_{W^{1,\infty}}$, $\|(\Psi, \bar  v^\circ)\|_{\Ld^\infty}$, $\|\bar\zeta^\circ\|_{\Ld^2}$, and additionally on $\|\bar\omega^\circ\|_{\Ld^2}$ and $\|(\nabla\Psi,\nabla\bar  v^\circ)\|_{\Ld^{\infty}}$ in the case $\alpha=0$.\qedhere
\end{enumerate}
\end{lem}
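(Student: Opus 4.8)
The plan is to obtain part~(i) directly from the vorticity formulation~\eqref{eq:limeqn1VF}, and parts~(ii)--(iii) from one weighted energy identity closed by a dichotomy on $\alpha$.

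For part~(i), non-negativity of $\omega^t$ is built into the solution class, so only conservation of total mass is at stake. I would test~\eqref{eq:limeqn1VF} against a cutoff $\chi_R:=\chi(\cdot/R)$, $\chi\in C^\infty_c(\R^2)$ with $\chi\equiv1$ on $B_1$; since $|\nabla\chi_R|\lesssim R^{-1}\mathds1_{B_{2R}}$ and the flux density $|\omega(\alpha(\Psi+v)^\bot+\beta(\Psi+v))|$ is controlled by $\omega+|v|^2\omega\in\Ld^1_\loc([0,T);\Ld^1(\R^2))$ (using $\Psi\in\Ld^\infty$ and the integrability built into Definition~\ref{defin:sol}), the increment $\int\chi_R\omega^t-\int\chi_R\omega^\circ$ is $O(R^{-1})$ after a short continuity bootstrap in $t$ to propagate the mass bound; letting $R\to\infty$ gives $\int\omega^t=\int\omega^\circ=1$.

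For parts~(ii)--(iii), in case~(ii) I test~\eqref{eq:limeqn2} against $a\chi^2v$ for a fixed cutoff $\chi$, and in case~(iii) I test~\eqref{eq:limeqn1} or~\eqref{eq:limeqn2} against $a\,u$ with $u:=v-\bar v^\circ$, $\omega=\curl u+\bar\omega^\circ$, $\zeta=\Div(au)+\bar\zeta^\circ$ (all legitimate after mollification, or for the regularized solutions of Section~\ref{chap:local}; in case~\eqref{eq:limeqn1} the pressure drops since $\Div(au)=0$). Integrating the $\lambda$-term by parts yields the dissipation $-\lambda\int\chi^2a^{-1}\zeta^2$ (resp.\ $-\lambda\int a^{-1}\zeta^2$), plus a term $\lesssim\lambda\int|\bar\zeta^\circ||\zeta|$ and, in case~(ii), a $\nabla\chi^2$-localized term, both absorbed by Young's inequality against half the dissipation. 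The $\alpha$-term gives $-\alpha\int\chi^2a|v|^2\omega$ (resp.\ $-\alpha\int a|u|^2\omega$) plus $-\alpha\int\chi^2a\,(v\cdot\Psi)\,\omega$ (resp.\ $-\alpha\int a\,u\cdot(\Psi+\bar v^\circ)\,\omega$). Since $v\cdot v^\bot=0$ (resp.\ $u\cdot u^\bot=0$), the $\beta$-term collapses to $\beta\int\chi^2a\,(v\cdot\Psi^\bot)\,\omega$ (resp.\ $\beta\int a\,u\cdot(\Psi+\bar v^\circ)^\bot\,\omega$). When $\alpha>0$ this closes at once: every remaining term is of the form $\int(\text{bounded field})|v|\,\omega\le\frac\alpha4\int a|v|^2\omega+C\alpha^{-1}\int a\omega$ with $\int a\omega\le\|a\|_{\Ld^\infty}$ by part~(i), so all errors are swallowed by a fraction of the $\alpha$-dissipation, leaving an $O(\alpha^{-1})$ constant source (plus an $O(\lambda)\|v\|_{\Ld^2_\uloc}^2$ term from the cutoff in case~(ii) with $\lambda>0$), and Gr\"onwall gives the stated bounds.

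The main obstacle is the conservative regime $\alpha=0$: there is no $\alpha$-dissipation to absorb the surviving $\beta$-term, which carries a full power of $\omega$ that cannot be controlled by the $\Ld^2$-energy (integration by parts merely reproduces it). I would resolve this via the inhomogeneous Delort-type identity~\eqref{eq:delort1}, in the form $a\,\omega v=-\big(\Div(aS_v)\big)^\bot-\frac12a|v|^2\nabla^\bot h+\zeta v^\bot$ with $S_v=v\otimes v-\frac12\Id|v|^2$, applied to $v$ in case~(ii) and — after splitting $\omega=\curl u+\bar\omega^\circ$, $S_v$ into $S_u$, $S_{\bar v^\circ}$ and a cross term, and $\Div(au)=\zeta-\bar\zeta^\circ$ — to $u$ in case~(iii), the reference pieces (e.g.\ $\int a\,\bar\omega^\circ\,u\cdot(\Psi+\bar v^\circ)^\bot$) being handled directly via $\bar\omega^\circ\in\Ld^2$ and part~(i); the splitting is essential since $aS_{\bar v^\circ}\notin\Ld^1(\R^2)$ whereas $aS_u\in\Ld^1(\R^2)$. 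Using $w^\bot\cdot z^\bot=w\cdot z$ and an integration by parts, $\beta\int\chi^2a\,(v\cdot\Psi^\bot)\,\omega$ becomes $\beta\int aS_v:\nabla(\chi^2\Psi)-\frac\beta2\int\chi^2a|v|^2\,\Psi\cdot\nabla h+\beta\int\chi^2\zeta\,(\Psi\cdot v)$ (and analogously in case~(iii)): the first two terms are $\lesssim(\|\nabla\Psi\|_{\Ld^\infty}+\|\nabla h\|_{\Ld^\infty})\|v\|_{\Ld^2_\uloc}^2$-type quantities — whence the extra dependence on $\|\nabla\Psi\|_{\Ld^\infty}$ in the $\alpha=0$ constants — while the last is absorbed by a fraction of the $\lambda$-dissipation at the price of an $O(\lambda^{-1})$ factor, which is exactly why $\lambda>0$ is needed here and why the constants degenerate as $\lambda\downarrow0$. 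Collecting terms gives $\frac{d}{dt}E+c\lambda\|\zeta\|_{\Ld^2}^2\le C(1+\lambda^{-1})E+C$, with $E$ the localized (resp.\ relative) weighted energy and the constant source absent in the homogeneous incompressible case (where also $\lambda=0$ and $\frac{d}{dt}E\le CE$); the three displayed bounds follow from Gr\"onwall's inequality.
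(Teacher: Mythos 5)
Your overall strategy matches the paper's: a weighted energy identity, Young's inequality against the $\alpha$- and $\lambda$-dissipations when $\alpha>0$, and the inhomogeneous Delort identity~\eqref{eq:delort1} to rescue the conservative case $\alpha=0$, with the surviving $\zeta v^\perp$ piece absorbed by the $\lambda$-dissipation (hence the $\lambda^{-1}$-degeneration). The integration-by-parts manipulations you sketch for the Delort term are exactly those in Steps~1--2 of the paper's proof.

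The one point where your sketch as written would fail is the cutoff in part~(ii). You propose a ``fixed cutoff $\chi$'' and assert that the $\nabla\chi^2$-localized term from the integration by parts of $\lambda\nabla(a^{-1}\zeta)$ is ``absorbed by Young's inequality against half the dissipation.'' For a compactly supported bump this does not work: Young's gives $\tfrac\lambda2\int\chi^2a^{-1}\zeta^2+C\lambda\int|\nabla\chi|^2a|v|^2$, and the second term lives precisely on the annulus where $\chi$ is small, so it is not dominated by $\int\chi^2a|v|^2$ and cannot be fed into a Gr\"onwall for $\int\chi^2a|v|^2$ alone. The paper's device is the exponential weight $\chi(x)=e^{-|x-x_0|}$, which satisfies $|\nabla\chi|\le\chi$, so that $|\nabla\chi^2|\le2\chi^2$ and the error is $\lesssim\lambda\int\chi^2|\zeta||v|$, genuinely absorbable pointwise in $x_0$; the $\Ld^2_\uloc$-norm is then recovered via $\|f\|_{\Ld^p_\uloc}^p\simeq\sup_{x_0}\int e^{-|x-x_0|}|f|^p$. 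You could salvage compact cutoffs by first integrating in time and then taking a supremum over centers before applying an integral Gr\"onwall, but that is not the ``absorb against dissipation'' argument you describe, and the exponential weight is the cleaner and intended tool here.

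Two smaller remarks. For item~(iii) in the incompressible case~\eqref{eq:limeqn1}, the paper does not test the velocity equation against $a(v-\bar v^\circ)$ (which would require the pressure to exist); instead it works directly with the vorticity formulation, using $\int a|v-\bar v^\circ|^2=\int(\omega-\bar\omega^\circ)(-\Div a^{-1}\nabla)^{-1}(\omega-\bar\omega^\circ)$ and computing $\partial_t$ of the right-hand side from~\eqref{eq:limeqn1VF}; your route is equivalent under the mollification caveat you flag, but the paper's route avoids the pressure entirely. Finally, for~\eqref{eq:limeqn1} with $\alpha=0$ the constant source term is not absent: the reference pieces involving $\bar\omega^\circ$ (which is why $\|\bar\omega^\circ\|_{\Ld^2}$ enters the constants) produce an additive constant, consistent with the stated bound $e^{Ct}\big(1+\int a|v^\circ-\bar v^\circ|^2\big)$ rather than $e^{Ct}\int a|v^\circ-\bar v^\circ|^2$.
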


\begin{proof}
Item~(i) is a standard consequence of the fact that $\omega$ satisfies a transport equation~\eqref{eq:limeqn1VF}.
It thus remains to check items (ii) and (iii). We split the proof into three steps.

\medskip
\noindent\step1 Proof of~(ii).

Let $ v$ be a weak solution of the compressible equation~\eqref{eq:limeqn2} as in the statement, and let also $C>0$ denote any constant as in the statement. We prove more precisely, for all $t\in[0,T)$ and $x_0\in\R^2$,
\begin{align}\label{eq:it2apriori}
&\int ae^{-|x-x_0|}| v^t|^2+\alpha\int_0^tdu\int ae^{-|x-x_0|}| v^u|^2\omega^u+\lambda\int_0^tdu\int a^{-1}e^{-|x-x_0|}|\zeta^u|^2\\
&\hspace{5cm}\le \begin{cases}e^{C(1+\lambda^{-1})t}\int ae^{-|x-x_0|}| v^\circ|^2,&\text{if $\alpha=0$, $\lambda>0$};\\
C\alpha^{-1}\lambda^{-1}(e^{\lambda t}-1)+e^{\lambda t}\int ae^{-|x-x_0|}| v^\circ|^2,&\text{if $\alpha>0$, $\lambda>0$;}\\
C\alpha^{-1}t+\int ae^{-|x-x_0|}| v^\circ|^2,&\text{if $\alpha>0$, $\lambda=0$.}
\end{cases}\nonumber
\end{align}
Item~(ii) directly follows from this, noting that
\[\|f\|_{\Ld^p_\uloc}^p\simeq\sup_{x_0\in\R^2}\int e^{-|x-x_0|}|f(x)|^pdx\]
holds for all $1\le p<\infty$. So it suffices to prove~\eqref{eq:it2apriori}.
Let $x_0\in\R^2$ be fixed, and denote by $\chi(x):=e^{-|x-x_0|}$ the exponential cut-off function centered at $x_0$.
From equation~\eqref{eq:limeqn2} we compute the following time derivative
\begin{align*}
\partial_t\int a\chi| v^t|^2&=2\int a\chi \big(\lambda\nabla(a^{-1}\zeta^t)-\alpha(\Psi+ v^t)\omega^t+\beta(\Psi+ v^t)^\bot\omega^t\big)\cdot  v^t,
\end{align*}
and hence, by integration by parts with $|\nabla\chi|\le\chi$,
\begin{align}
\partial_t\int a\chi | v^t|^2&=-2\lambda\int a^{-1}\chi |\zeta^t|^2-2\lambda\int \nabla\chi \cdot v^t\zeta^t-2\alpha\int a\chi | v^t|^2\omega^t+2\int a\chi (-\alpha\Psi+\beta\Psi^\bot)\cdot v^t\omega^t\nonumber\\
&\le-2\lambda\int a^{-1}\chi |\zeta^t|^2+2\lambda\int \chi |\zeta^t|| v^t|-2\alpha\int a\chi | v^t|^2\omega^t+2\int a\chi (-\alpha\Psi+\beta\Psi^\bot)\cdot v^t\omega^t.\label{eq:identityderenergy}
\end{align}
First consider the case $\alpha>0$. We may then bound the terms as follows, using the inequality $2xy\le x^2+y^2$,
\begin{align*}
\partial_t\int a\chi | v^t|^2&\le-2\lambda\int a^{-1}\chi |\zeta^t|^2+2\lambda \int\chi |\zeta^t| | v^t|-2\alpha\int a\chi | v^t|^2\omega^t+2C\int a\chi | v^t|\omega^t\\
&\le-\lambda\int a^{-1}\chi|\zeta^t|^2+\lambda\int a\chi| v^t|^2-\alpha\int a\chi| v^t|^2\omega^t+C\alpha^{-1}\underbrace{\int a\chi\omega^t}_{\le C}.
\end{align*}
As $\omega^t$ is nonnegative by item~(i), the first and third right-hand side terms are nonpositive, and the Grönwall inequality yields $\int a\chi| v^t|^2\le C\alpha^{-1}\lambda^{-1}(e^{\lambda t}-1)+e^{\lambda t}\int a\chi| v^\circ|^2$ (or $\int a\chi| v^t|^2\le C\alpha^{-1}t+\int a\chi| v^\circ|^2$ if $\lambda=0$). The above estimate may then be rewritten as follows,
\begin{align*}
\alpha\int a\chi| v^t|^2\omega^t+\lambda\int a^{-1}\chi|\zeta^t|^2&\le C\alpha^{-1}+\lambda\int a\chi| v^t|^2-\partial_t\int a\chi| v^t|^2\\
&\le C\alpha^{-1}e^{\lambda t}+\lambda e^{\lambda t}\int a\chi| v^\circ|^2-\partial_t\int a\chi| v^t|^2.
\end{align*}
Integrating in time yields
\begin{align*}
\alpha\int_0^tdu\int a\chi| v^t|^2\omega^u+\lambda\int_0^tdu\int a^{-1}\chi|\zeta^u|^2\le C\alpha^{-1}\lambda^{-1}(e^{-\lambda t}-1)+e^{\lambda t}\int a\chi| v^\circ|^2-\int a\chi| v^t|^2,
\end{align*}
so that~\eqref{eq:it2apriori} is proven for $\alpha>0$.
We now turn to the case $\alpha=0$, $\lambda>0$.
In that case, using the following Delort type identity, which holds here in $\Ld^\infty_\loc([0,T);W^{-1,1}_\loc(\R^2)^2)$,
\begin{gather*}
\omega v=a^{-1}\zeta v^\bot-\frac12| v|^2\nabla^\bot h-a^{-1}(\Div (aS_{ v}))^\bot,\qquad S_{ v}:= v\otimes v-\frac12 | v|^2\Id,
\end{gather*}
the estimate~\eqref{eq:identityderenergy} becomes, by integration by parts with $|\nabla\chi|\le\chi$,
\begin{multline*}
\partial_t\int a\chi| v^t|^2\le-2\lambda\int a^{-1}\chi|\zeta^t|^2+2\lambda\int\chi|\zeta^t|| v^t|-2\alpha\int a\chi| v^t|^2\omega^t+2\int\chi(-\alpha\Psi+\beta\Psi^\bot)\cdot ( v^t)^\bot\zeta^t\\
-\int a\chi(-\alpha\Psi+\beta\Psi^\bot)\cdot \nabla^\bot h| v^t|^2+2\int a\chi(\alpha\nabla\Psi^\bot+\beta\nabla\Psi):S_{ v^t}+2\int a\chi|\alpha\Psi^\bot+\beta\Psi||S_{ v^t}|,
\end{multline*}
and hence, noting that $|S_{ v^t}|\le C| v^t|^2$, and using the inequality $2xy\le x^2+y^2$,
\begin{align*}
\partial_t\int a\chi| v^t|^2&\le-2\lambda\int a^{-1}\chi|\zeta^t|^2+2C\int \chi|\zeta^t|| v^t|-2\alpha\int a\chi| v^t|^2\omega^t+C\int a\chi| v^t|^2\\
&\le-\lambda\int a^{-1}\chi|\zeta^t|^2+C(1+\lambda^{-1})\int a\chi| v^t|^2.
\end{align*}
The Grönwall inequality yields $\int a\chi| v^t|^2\le e^{C(1+\lambda^{-1})t}\int a\chi| v^\circ|^2$. The above estimate may then be rewritten as follows,
\begin{align*}
\lambda\int a^{-1}\chi|\zeta^t|^2&\le C(1+\lambda^{-1})\int a\chi| v^t|^2-\partial_t\int a\chi| v^t|^2\\
&\le C(1+\lambda^{-1})e^{C(1+\lambda^{-1})t}\int a\chi| v^\circ|^2-\partial_t\int a\chi| v^t|^2.
\end{align*}
Integrating in time, the result~\eqref{eq:it2apriori} is proven for $\alpha=0$. (Note that this proof cannot be adapted to the incompressible case~\eqref{eq:limeqn1}, due to the lack of a sufficiently good control on the pressure $ P$ in~\eqref{eq:limeqn1} in general.)

\medskip
\noindent\step2 Proof of~(iii) for~\eqref{eq:limeqn2}.

We denote by $C$ any positive constant as in the statement of item~(iii).
From equation~\eqref{eq:limeqn2}, we compute the following time derivative,
\begin{align*}
\partial_t\int a| v^t-\bar  v^\circ|^2&=2\int a(\lambda\nabla(a^{-1}\zeta^t)-\alpha(\Psi+ v^t)\omega^t+\beta(\Psi+ v^t)^\bot\omega^t)\cdot( v^t-\bar  v^\circ),
\end{align*}
or equivalently, integrating by parts and suitably regrouping the terms,
\begin{multline}\label{eq:decompapest}
\partial_t\int a| v^t-\bar  v^\circ|^2=-2\lambda\int a^{-1}|\zeta^t|^2+2\lambda\int a^{-1}\zeta^t\bar\zeta^\circ-2\alpha\int a| v^t-\bar  v^\circ|^2\omega^t\\
+2\int a(-\alpha(\Psi+\bar  v^\circ)+\beta(\Psi+\bar  v^\circ)^\bot)\cdot( v^t-\bar  v^\circ)\omega^t.
\end{multline}
First consider the case $\alpha>0$. We may then bound the terms as follows, using the inequality $2xy\le x^2+y^2$,
\begin{align*}
\partial_t\int a| v^t-\bar  v^\circ|^2&\le-2\lambda\int a^{-1}|\zeta^t|^2+2\lambda\int a^{-1}\zeta^t\bar\zeta^\circ-2\alpha\int a| v^t-\bar  v^\circ|^2\omega^t+2C\int a| v^t-\bar  v^\circ|\omega^t\\
&\le-\lambda\int a^{-1}|\zeta^t|^2+\lambda\int a^{-1}|\bar\zeta^\circ|^2-\alpha\int a| v^t-\bar  v^\circ|^2\omega^t+C\alpha^{-1},
\end{align*}
and the result of item~(iii) in the case $\alpha>0$ follows by integration.
We now turn to the case $\alpha=0$, $\lambda>0$. In that case, we rather rewrite~\eqref{eq:decompapest} in the form
\begin{multline*}
\partial_t\int a| v^t-\bar  v^\circ|^2=-2\lambda\int a^{-1}|\zeta^t|^2+2\lambda\int a^{-1}\zeta^t\bar\zeta^\circ-2\alpha\int a| v^t-\bar  v^\circ|^2\omega^t\\
+2\int a(-\alpha(\Psi+\bar  v^\circ)+\beta(\Psi+\bar  v^\circ)^\bot)\cdot( v^t-\bar  v^\circ)(\omega^t-\bar\omega^\circ)+2\int a(-\alpha(\Psi+\bar  v^\circ)+\beta(\Psi+\bar  v^\circ)^\bot)\cdot( v^t-\bar  v^\circ)\bar\omega^\circ,
\end{multline*}
so that, using the following Delort type identity, which holds here in $\Ld^\infty_\loc([0,T);W^{-1,1}_\loc(\R^2)^2)$,
\begin{gather*}
(\omega-\bar\omega^\circ)( v-\bar  v^\circ)=a^{-1}(\zeta-\bar\zeta^\circ)( v-\bar  v^\circ)^\bot-\frac12| v-\bar  v^\circ|^2\nabla^\bot h-a^{-1}(\Div (aS_{v-\bar  v^\circ}))^\bot,
\end{gather*}
we find by integration by parts
\begin{multline*}
\partial_t\int a| v^t-\bar  v^\circ|^2=-2\lambda\int a^{-1}|\zeta^t|^2+2\lambda\int a^{-1}\zeta^t\bar\zeta^\circ-2\alpha\int a| v^t-\bar  v^\circ|^2\omega^t\\
+2\int (-\alpha(\Psi+\bar  v^\circ)+\beta(\Psi+\bar  v^\circ)^\bot)\cdot( v^t-\bar  v^\circ)^\bot(\zeta^t-\bar\zeta^\circ)-\int a(-\alpha(\Psi+\bar  v^\circ)+\beta(\Psi+\bar  v^\circ)^\bot)\cdot\nabla^\bot h| v^t-\bar  v^\circ|^2\\
+2\int a\nabla(\alpha(\Psi+\bar  v^\circ)^\bot+\beta(\Psi+\bar  v^\circ)): S_{ v^t-\bar  v^\circ}+2\int a(-\alpha(\Psi+\bar  v^\circ)+\beta(\Psi+\bar  v^\circ)^\bot)\cdot( v^t-\bar  v^\circ)\,\bar\omega^\circ.
\end{multline*}
We may then bound the terms as follows, using the inequality $2xy\le x^2+y^2$,
\begin{align*}
\partial_t\int a| v^t-\bar  v^\circ|^2&\le-2\lambda\int a^{-1}|\zeta^t|^2+2\lambda\int a^{-1}|\zeta^t||\bar \zeta^\circ|-2\alpha\int a| v^t-\bar  v^\circ|^2\omega^t\\
&\qquad+C\int | v^t-\bar  v^\circ|\,|\zeta^t|+C\int | v^t-\bar  v^\circ|\,|\bar \zeta^\circ|+C\int a| v^t-\bar  v^\circ|^2+C\int a| v^t-\bar  v^\circ|\bar \omega^\circ\\
&\le-\lambda\int a^{-1}|\zeta^t|^2+C\int a^{-1}|\bar \zeta^\circ|^2+C\int |\bar\omega^\circ|^2+C(1+\lambda^{-1})\int a | v^t-\bar  v^\circ|^2.
\end{align*}
Item~(iii) in the case $\alpha=0$ then easily follows from the Grönwall inequality.

\medskip
\noindent\step3 Proof of~(iii) for~\eqref{eq:limeqn1}.

We denote by $C$ any positive constant as in the statement of item~(iii).
Noting that the identity $ v-\bar  v^\circ=a^{-1}\nabla^\bot(\Div a^{-1}\nabla)^{-1}(\omega-\bar\omega^\circ)$ follows from~\eqref{eq:Helmholtz} together with the constraint $\Div(a v)=\Div(a\bar v^\circ)=0$, and recalling that by assumption $ v-\bar  v^\circ\in\Ld^2_\loc([0,T);\Ld^2(\R^2)^2)$, we deduce
$\omega-\bar\omega^\circ\in\Ld^2_\loc([0,T);\dot H^{-1}(\R^2))$ and $(\Div a^{-1}\nabla)^{-1}(\omega-\bar\omega^\circ)\in\Ld^2_\loc([0,T);\dot H^{1}(\R^2))$.
In particular, this implies by integration by parts
\begin{align}\label{eq:ippomegav}
\int a| v-\bar  v^\circ|^2=\int a^{-1}|\nabla(\Div a^{-1}\nabla)^{-1}(\omega-\bar\omega^\circ)|^2=\int (\omega-\bar\omega^\circ) (-\Div a^{-1}\nabla)^{-1}(\omega-\bar\omega^\circ).
\end{align}
From equation~\eqref{eq:limeqn1VF}, we compute the following time derivative
\begin{eqnarray*}
\lefteqn{\partial_t\int(\omega-\bar \omega^\circ)(-\Div a^{-1}\nabla)^{-1}(\omega-\bar \omega^\circ)}\\
&=&2\int \nabla(\Div a^{-1}\nabla)^{-1}(\omega-\bar \omega^\circ)\cdot(\alpha(\Psi+ v)^\bot+\beta(\Psi+ v))\omega\\
&=&-2\int a( v-\bar  v^\circ)^\bot\cdot\Big(\alpha( v-\bar  v^\circ)^\bot+\beta( v-\bar  v^\circ)+\alpha(\Psi+\bar  v^\circ)^\bot+\beta(\Psi+\bar  v^\circ)\Big)\omega\\
&=&-2\alpha\int a| v-\bar  v^\circ|^2\omega-2\int a\omega( v-\bar  v^\circ)^\bot\cdot(\alpha(\Psi+\bar  v^\circ)^\bot+\beta(\Psi+\bar  v^\circ)).
\end{eqnarray*}
Combining this with identity~\eqref{eq:ippomegav}, we are now in position to conclude exactly as in Step~2 after equation~\eqref{eq:decompapest} (but with here $\zeta,\bar\zeta^\circ=0$).
\end{proof}

The energy estimates given by Lemma~\ref{lem:aprioriest} above are not strong enough to deduce global existence, and the key is to find an additional a priori $\Ld^p$-estimate for the vorticity $\omega$ with $p>1$.
We start with the following new result, based on a careful examination of the evolution of $\Ld^p$-norms of the vorticity. The argument can unfortunately not be adapted to the mixed-flow compressible case (that is, \eqref{eq:limeqn2} with $\alpha\ge0$, $\beta\ne0$), as it would require a too strong additional control on the norm $\|\zeta^t\|_{\Ld^{p+1}}$; this is why this case is excluded from our global results in Theorem~\ref{th:main}.

\begin{lem}[$\Ld^p$-estimates for vorticity]\label{lem:Lpvort}
Let $\lambda,\alpha\ge0$, $\beta\in\R$, $T>0$, $h,\Psi\in W^{1,\infty}(\R^2)$, $\bar  v^\circ\in \Ld^{\infty}(\R^2)^2$, and $ v^\circ\in\bar  v^\circ+\Ld^2(\R^2)^2$, with $\omega^\circ:=\curl v^\circ\in\Pc(\R^2)$, $\bar\omega^\circ:=\curl\bar  v^\circ\in\Pc\cap\Ld^\infty(\R^2)$. In the case~\eqref{eq:limeqn1}, also assume $\Div(a v^\circ)=\Div(a\bar  v^\circ)=0$. Let $ v\in\Ld^\infty_\loc([0,T);\bar  v^\circ+\Ld^2\cap\Ld^\infty(\R^2)^2)$ be a weak solution of~\eqref{eq:limeqn1} or of~\eqref{eq:limeqn2} on $[0,T)\times\R^2$ with initial data $ v^\circ$, and with $\omega:=\curl v\in\Ld^\infty_\loc([0,T);\Pc\cap\Ld^{\infty}(\R^2))$. For all $1<p\le\infty$ and $t\in[0,T)$,
\begin{enumerate}[(i)]
\item in the case~\eqref{eq:limeqn1} with $\alpha>0$, $\beta\in\R$, we have
\begin{align}\label{eq:boundomegaLp1p}
\bigg(\frac{\alpha(p-1)}2\bigg)^{1/p}\|\omega\|_{\Ld^{p+1}_t\Ld^{p+1}}^{1+1/p}+\|\omega^t\|_{\Ld^p}&\le \|\omega^\circ\|_{\Ld^p}+C_p,
\end{align}
where the constant $C_p$ depends only on an upper bound on $(p-1)^{-1}$, $\alpha$, $\alpha^{-1}$, $|\beta|$, $T$, $\|(h,\Psi)\|_{W^{1,\infty}}$, $\|(\bar  v^\circ,\bar\omega^\circ)\|_{\Ld^\infty}$, and on $\| v^\circ-\bar  v^\circ\|_{\Ld^2}$;
\item in both cases~\eqref{eq:limeqn1} and~\eqref{eq:limeqn2} with $\alpha\ge0$, $\beta=0$, $\lambda\ge0$, the same estimate~\eqref{eq:boundomegaLp1p} holds, where the constant $C_p=C$ depends only on an upper bound on $\alpha$, $T$, and on $\|(\curl\Psi)_-\|_{\Ld^{\infty}}$.
\qedhere
\end{enumerate}
\end{lem}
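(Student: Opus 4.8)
The plan is to run an $\Ld^p$ energy estimate for $\omega=\curl v$, using that it solves the transport equation \eqref{eq:limeqn1VF}, i.e. $\partial_t\omega=\Div(\omega b)$ with $b:=\alpha(\Psi+v)^\bot+\beta(\Psi+v)$, together with the conservation of mass $\int\omega^t=1$ from item~(i) of Lemma~\ref{lem:aprioriest}. We may assume $\omega^\circ\in\Ld^p(\R^2)$, the bound being void otherwise; and since $\omega\in\Ld^\infty_\loc([0,T);\Ld^\infty(\R^2))$ by hypothesis, testing the equation against $p(\omega^t)^{p-1}$ is legitimate after a routine truncation. Integrating by parts gives $\frac{d}{dt}\int(\omega^t)^p=(p-1)\int(\omega^t)^p\Div b^t$, and a direct computation yields $\Div b^t=-\alpha\curl\Psi-\alpha\omega^t+\beta\Div\Psi+\beta\Div v^t$, where in the incompressible case \eqref{eq:limeqn1} the constraint $\Div(av)=0$ forces $\Div v^t=-v^t\cdot\nabla h$. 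Isolating the good term $-\alpha(p-1)\|\omega^t\|_{\Ld^{p+1}}^{p+1}$ one is left with
\[\frac{d}{dt}\|\omega^t\|_{\Ld^p}^p+\alpha(p-1)\|\omega^t\|_{\Ld^{p+1}}^{p+1}=(p-1)\int(\omega^t)^p\big(-\alpha\curl\Psi+\beta\Div\Psi-\beta v^t\cdot\nabla h\big),\]
the $v^t\cdot\nabla h$-term being absent when $\beta=0$.

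In case~(ii) only the first term on the right survives, $-\alpha(p-1)\int(\omega^t)^p\curl\Psi\le \alpha(p-1)\|(\curl\Psi)^-\|_{\Ld^\infty}\|\omega^t\|_{\Ld^p}^p$, and crucially $\zeta$ never enters --- precisely because for $\beta=0$ the field $b$ has only its $(\cdot)^\bot$-component, so $\Div b$ does not see $\Div v$; this is why (ii) also covers the compressible model \eqref{eq:limeqn2}. Using the interpolation $\|\omega^t\|_{\Ld^p}\le\|\omega^t\|_{\Ld^1}^{1-\theta}\|\omega^t\|_{\Ld^{p+1}}^{\theta}$ with $\theta=(p^2-1)/p^2$ --- equivalently $\|\omega^t\|_{\Ld^{p+1}}^{p+1}\ge\|\omega^t\|_{\Ld^p}^{p^2/(p-1)}$ since $\int\omega^t=1$ --- the function $\psi:=\|\omega^t\|_{\Ld^p}^p$ satisfies a Bernoulli-type inequality $\psi'\le\alpha(p-1)\big(M\psi-\psi^{p/(p-1)}\big)$ with $M:=\|(\curl\Psi)^-\|_{\Ld^\infty}$, whose solutions all stay below $\max(\psi(0),M^{p-1})$; this yields the $\Ld^p$-part of \eqref{eq:boundomegaLp1p} with a $p$-independent constant, and feeding it back into the displayed identity and integrating in time bounds $\int_0^t\|\omega^u\|_{\Ld^{p+1}}^{p+1}du$, hence the space-time term after a $p$-th root.

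For case~(i) ($\beta\in\R$, model \eqref{eq:limeqn1}, $\alpha>0$), the term $\beta(p-1)\int(\omega^t)^p\Div\Psi$ is handled exactly as above with $\|\nabla\Psi\|_{\Ld^\infty}$, and the only genuine addition is $-\beta(p-1)\int(\omega^t)^p v^t\cdot\nabla h$. I would split $v^t=\bar v^\circ+(v^t-\bar v^\circ)$: the $\bar v^\circ$-part is again absorbed with a constant $|\beta|(p-1)\|\bar v^\circ\|_{\Ld^\infty}\|\nabla h\|_{\Ld^\infty}$, while for the remainder I bound $\int(\omega^t)^p|v^t-\bar v^\circ|$ by H\"older, controlling $\|v^t-\bar v^\circ\|_{\Ld^q}$ for a suitable $q\ge p+1$ through the Meyers-type estimate of Lemma~\ref{lem:globellreg}(i) applied to $v^t-\bar v^\circ=-a^{-1}\nabla^\bot(\Div a^{-1}\nabla)^{-1}(\omega^t-\bar\omega^\circ)$ --- and through the $\Ld^2$-bound on $v^t-\bar v^\circ$ from Lemma~\ref{lem:aprioriest}(iii) when $q$ is close to $2$ --- and then interpolating the resulting $\|\omega^t-\bar\omega^\circ\|_{\Ld^r}$, $r<2$, between $\Ld^1$ and $\Ld^{p+1}$ (the $\bar\omega^\circ$-contribution being controlled since $\bar\omega^\circ\in\Pc\cap\Ld^\infty$). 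Every term thus produced is a power of $\|\omega^t\|_{\Ld^{p+1}}$ strictly below $p+1$, hence absorbable by Young's inequality into $\frac12\alpha(p-1)\|\omega^t\|_{\Ld^{p+1}}^{p+1}$, leaving a Gr\"onwall-type inequality whose integration --- as in case~(ii) --- gives \eqref{eq:boundomegaLp1p}; the dependences of $C_p$ listed in the statement (in particular on $\|v^\circ-\bar v^\circ\|_{\Ld^2}$, via the energy estimate) are exactly what these absorptions produce. The case $p=\infty$ is recovered by letting $p\to\infty$.

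The delicate point is the bookkeeping in case~(i): the H\"older/interpolation/Meyers chain must be organized so that $\|\omega^t\|_{\Ld^\infty}$ --- finite by hypothesis, but barred from the constants --- genuinely drops out, and so that the Young absorption always eats a fixed fraction of the dissipation, uniformly in the data. It is also worth recording why the method stops here: in the mixed-flow compressible case (\eqref{eq:limeqn2} with $\beta\ne0$) one has instead $\Div v^t=a^{-1}\zeta^t-v^t\cdot\nabla h$, so the offending term becomes $\beta(p-1)\int(\omega^t)^p a^{-1}\zeta^t$, and absorbing it would demand control of $\|\zeta^t\|_{\Ld^{p+1}}$, whereas the energy estimates supply only $\zeta\in\Ld^2_\loc([0,T);\Ld^2(\R^2))$ --- hence the exclusion of that regime.
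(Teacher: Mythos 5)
Your Step 1 — the $\Ld^p$ energy computation, the extraction of the good term $-\alpha(p-1)\|\omega^t\|_{\Ld^{p+1}}^{p+1}$, and the observation that $\Div b^t$ collapses for $\beta=0$ — matches the paper's Step 1 exactly, and your Bernoulli ODE argument for case~(ii) is a correct (and slightly more explicit) version of what the paper does. Case~(ii) is fine.

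For case~(i), however, your mechanism for controlling $\|v^t-\bar v^\circ\|_{\Ld^q}$ has a genuine gap. You propose to use the Meyers-type estimate of Lemma~\ref{lem:globellreg}(i) applied to $v^t-\bar v^\circ=-a^{-1}\nabla^\bot(\Div a^{-1}\nabla)^{-1}(\omega^t-\bar\omega^\circ)$, but this is the \emph{gradient} of the solution to $-\Div(a^{-1}\nabla u)=\omega^t-\bar\omega^\circ$, and for that quantity Lemma~\ref{lem:globellreg}(i) only gives $\|\nabla u\|_{\Ld^q}\lesssim\|\omega^t-\bar\omega^\circ\|_{\Ld^{2q/(q+2)}}$ in the \emph{bounded} window $2<q\le p_0$, where $p_0$ is a fixed Meyers exponent determined by the ellipticity $\|h\|_{\Ld^\infty}$. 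Your H\"older/interpolation chain needs $q\ge p+1$ (so that $\|\omega^t\|_{\Ld^{pq'}}$ can be interpolated between $\Ld^1$ and $\Ld^{p+1}$), and for $p+1>p_0$ no choice of $q$ is both $\ge p+1$ and within Meyers' reach. Equivalently: Meyers, being a compactness-free perturbation of Calder\'on--Zygmund at exponent $2$, simply does not see large exponents, yet the lemma requires a constant $C_p$ that stays bounded as $p\to\infty$ (it is only allowed to degenerate as $p\downarrow 1$, through $(p-1)^{-1}$), and indeed the $p=\infty$ case is obtained as a limit. Trying to repair this with Calder\'on--Zygmund on the free Laplacian at exponent $p+1$ runs into the opposite problem: the CZ constant at exponent $p+1$ blows up linearly in $p$, which is again not permitted.

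What the paper does instead, and what your sketch is missing, is to bound $\|v^t\|_{\Ld^\infty}$ directly, \emph{not} $\|v^t\|_{\Ld^q}$ for finite $q$. One decomposes $v^t-\bar v^\circ=\nabla^\bot\triangle^{-1}(\omega^t-\bar\omega^\circ)+\nabla\triangle^{-1}(\theta^t-\bar\theta^\circ)$ with $\theta=\Div v$ (so the \emph{free} Laplacian appears, not $(\Div a^{-1}\nabla)^{-1}$), and applies the Brezis--Gallou\"et type potential estimates of Lemma~\ref{lem:singint-1}(i)--(ii). This gives
\[\|v^t\|_{\Ld^\infty}\lesssim 1+(1-2/q)^{-1/2}\,\|\omega^t\|_{\Ld^q}^{q'/2}\,\log^{1/2}\!\big(2+\|\omega^t\|_{\Ld^q}\big),\]
valid for \emph{all} $2<q\le\infty$, with a constant independent of $q$ away from $q=2$. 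The crucial feature is the fractional power $q'/2<1$: taking $q=p+1$ one has $q'/2=\tfrac12(1+1/p)$, and the log can be absorbed into a slightly larger but still sub-critical power $\tfrac34(1+1/p)$. Then $\|v^t\|_{\Ld^\infty}\|\omega^t\|_{\Ld^{p+1}}^{p-1/p}$ has total exponent $p+\tfrac34-\tfrac1{4p}<p+1$, so Young absorption into the dissipation works uniformly in $p$, and one reads off the claimed $C_p$ dependence. This is not a bookkeeping refinement of your Meyers argument; it is a different and essential ingredient — it is exactly what Lemma~\ref{lem:singint-1} is designed for, and the reason that lemma appears in the paper at all.

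Your final remark explaining why the mixed-flow compressible case $\beta\neq0$, $\lambda>0$ is excluded (the $\zeta$-term would require $\|\zeta^t\|_{\Ld^{p+1}}$) is correct and consistent with the paper's discussion.
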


\begin{proof}
It is sufficient to prove the result for all $1<p<\infty$. In this proof, we use the notation $\lesssim$ for $\le$ up to a constant $C>0$ as in the statement but independent of $p$.
As explained at the end of Step~1, we may focus on item~(i), the other being much simpler. Set $\bar\theta^\circ:=\Div\bar  v^\circ$, $\theta:=\Div v$.
We repeatedly use the a priori estimate of Lemma~\ref{lem:aprioriest}(i) in the following interpolated form: for all $s\le q$ and $t\in[0,T)$,
\begin{align}\label{eq:interpolom}
\|\omega^t\|_{\Ld^s}\le \|\omega^t\|_{\Ld^q}^{q'/s'}\|\omega^t\|_{\Ld^1}^{1-q'/s'}=\|\omega^t\|_{\Ld^q}^{q'/s'}.
\end{align}
We split the proof into three steps.

\medskip
\noindent\step1 Preliminary estimate for $\omega$ (in case~(i)): for all $1<p<\infty$ and all $t\in[0,T)$,
\begin{align}\label{eq:apomegLp}
\alpha(p-1)\|\omega\|_{\Ld^{p+1}_t\Ld^{p+1}}^{p+1}+\|\omega^t\|_{\Ld^p}^p\le \|\omega^\circ\|_{\Ld^p}^p+C(p-1)(t^{1/p}+\| v\|_{\Ld^p_t\Ld^\infty})\|\omega\|_{\Ld^{p+1}_t\Ld^{p+1}}^{p-1/p}.
\end{align}

Using equation~\eqref{eq:limeqn1VF} and integrating by parts we may compute
\begin{align*}
\partial_t\int(\omega^t)^p&=p\int(\omega^t)^{p-1}\Div(\omega^t(\alpha(\Psi+ v^t)^\bot+\beta(\Psi+ v^t)))\\
&=-p(p-1)\int(\omega^t)^{p-1}\nabla\omega^t\cdot(\alpha(\Psi+ v^t)^\bot+\beta(\Psi+ v^t))\\
&=-(p-1)\int\nabla(\omega^t)^p\cdot(\alpha(\Psi+ v^t)^\bot+\beta(\Psi+ v^t))\\
&=(p-1)\int(\omega^t)^p\Div(\alpha(\Psi+ v^t)^\bot+\beta(\Psi+ v^t)).
\end{align*}
In case~(i), using the constraint $\Div(a v)=0$ to compute $\Div(\alpha v^\bot+\beta v)=-\alpha\omega+\beta\Div v=-\alpha\omega-\beta\nabla h\cdot v$, we find
\begin{align*}
(p-1)^{-1}\partial_t\int(\omega^t)^p&\le -\alpha\int(\omega^t)^{p+1}+C\int(\omega^t)^p(1+| v^t|)\le -\alpha\int(\omega^t)^{p+1}+C(1+\| v^t\|_{\Ld^\infty})\int(\omega^t)^p.
\end{align*}
By interpolation~\eqref{eq:interpolom}, we obtain
\begin{align*}
\alpha\int(\omega^t)^{p+1}+(p-1)^{-1}\partial_t\int(\omega^t)^p&\le C(1+\| v^t\|_{\Ld^\infty})\|\omega^t\|_{\Ld^{p+1}}^{p-1/p},
\end{align*}
and the result~\eqref{eq:apomegLp} directly follows by integration with respect to $t$ and by the Hölder inequality.
In case~(ii) we rather have $\Div(\alpha(\Psi+ v)^\bot+\beta(\Psi+ v))=-\alpha(\curl\Psi+\omega)$, and hence
\[\alpha\int(\omega^t)^{p+1}+(p-1)^{-1}\partial_t\int(\omega^t)^p\le \alpha\|(\curl\Psi)_-\|_{\Ld^\infty}\int(\omega^t)^p\le \alpha\|(\curl\Psi)_-\|_{\Ld^\infty}\Big(\int(\omega^t)^{p+1}\Big)^{1-1/p},\]
from which the conclusion~(ii) already follows.

\medskip
\noindent\step2 Preliminary estimate for $ v$ (in case~(i)): for all $2<q\le\infty$ and $t\in[0,T)$,
\begin{align}\label{eq:boundvincompr}
\| v^t\|_{\Ld^\infty}&\lesssim1+(1-2/q)^{-1/2}\|\omega^t\|_{\Ld^q}^{q'/2}\log^{1/2}(2+\|\omega^t\|_{\Ld^q}).
\end{align}

Let $2<q\le\infty$.
Note that $ v^t-\bar  v^\circ=\nabla^\bot\triangle^{-1}(\omega^t-\omega^\circ)+\nabla\triangle^{-1}(\theta^t-\bar \theta^\circ)$.
By Lemma~\ref{lem:singint-1}(i) for $w:=\omega^t-\bar\omega^\circ$ and Lemma~\ref{lem:singint-1}(ii) for $w:=\theta^t-\bar\theta^\circ=\Div( v^t-\bar  v^\circ)$, we find
\begin{align*}
\| v^t\|_{\Ld^\infty}&\le \|\bar  v^\circ\|_{\Ld^\infty}+\|\nabla\triangle^{-1}(\omega^t-\bar\omega^\circ)\|_{\Ld^\infty}+\|\nabla\triangle^{-1}(\theta^t-\bar \theta^\circ)\|_{\Ld^\infty}\\
&\lesssim1+(1-2/q)^{-1/2}\|\omega^t-\bar\omega^\circ\|_{\Ld^2}\log^{1/2}(2+\|\omega^t-\bar\omega^\circ\|_{\Ld^1\cap\Ld^q})\\
&\qquad+\|\theta^t-\bar \theta^\circ\|_{\Ld^2}\log^{1/2}(2+\|\theta^t-\bar \theta^\circ\|_{\Ld^2\cap\Ld^\infty})+\| v^t-\bar  v^\circ\|_{\Ld^2}.
\end{align*}
Noting that $\theta^t-\bar\theta^\circ=-\nabla h\cdot ( v^t-\bar  v^\circ)$, using interpolation~\eqref{eq:interpolom} in the form $\|\omega^t\|_{\Ld^2}\lesssim\|\omega^t\|_{\Ld^q}^{q'/2}$, and using the a priori estimates of Lemma~\ref{lem:aprioriest} in the form $\| v^t-\bar  v^\circ\|_{\Ld^2}+\|\omega^t\|_{\Ld^1}\lesssim1$, we obtain
\begin{align*}
\| v^t\|_{\Ld^\infty}&\lesssim(1-2/q)^{-1/2}\|\omega^t\|_{\Ld^q}^{q'/2}\log^{1/2}(2+\|\omega^t\|_{\Ld^q})+\log^{1/2}(2+\| v^t-\bar  v^\circ\|_{\Ld^\infty}),
\end{align*}
and the result follows, absorbing in the left-hand side the last norm of $ v$.

\medskip
\noindent\step3 Conclusion.

Let $1<p<\infty$. From~\eqref{eq:boundvincompr} with $q=p+1$, we deduce in particular
\[\| v^t\|_{\Ld^\infty}\lesssim1+(1-1/p)^{-1/2}\|\omega^t\|_{\Ld^{p+1}}^{\frac12(1+1/p)}\log^{1/2}(2+\|\omega^t\|_{\Ld^{p+1}})\lesssim (1-1/p)^{-1/2}\big(1+\|\omega^t\|_{\Ld^{p+1}}^{\frac34(1+1/p)}\big),\]
and hence, integrating with respect to $t$ and combining with~\eqref{eq:apomegLp},
\begin{align*}
\alpha(p-1)\|\omega\|_{\Ld^{p+1}_t\Ld^{p+1}}^{p+1}+\|\omega^t\|_{\Ld^p}^p&\le \|\omega^\circ\|_{\Ld^p}^p+Cp\big(1+\|\omega\|_{\Ld^{p+1}_t\Ld^{p+1}}^{\frac34(1+1/p)}\big)\|\omega\|_{\Ld^{p+1}_t\Ld^{p+1}}^{p-1/p}\\
&\le \|\omega^\circ\|_{\Ld^p}^p+Cp\|\omega\|_{\Ld^{p+1}_t\Ld^{p+1}}^{p-1/p}+Cp\|\omega\|_{\Ld^{p+1}_t\Ld^{p+1}}^{p+\frac34}.
\end{align*}
We may now absorb in the left-hand side the last two terms, to the effect of
\begin{align*}
\frac{\alpha(p-1)}2\|\omega\|_{\Ld^{p+1}_t\Ld^{p+1}}^{p+1}+\|\omega^t\|_{\Ld^p}^p&\le \|\omega^\circ\|_{\Ld^p}^p+C_p^p,
\end{align*}
where the constant $C_p$ further depends on an upper bound on $(p-1)^{-1}$, and the conclusion follows.
\end{proof}

The following result partially improves and completes the results of Lemma~\ref{lem:Lpvort} above in the case~\eqref{eq:limeqn1} with either $\alpha=0$ or $h$ constant (cf.\@ item~(ii) below), and in both cases~\eqref{eq:limeqn1} and~\eqref{eq:limeqn2} with $\alpha>0$, $\beta=0$ (cf.\@ item~(iii) below).
For that purpose, inspired by the work of Lin and Zhang~\cite{Lin-Zhang-00}, we exploit by ODE arguments the very particular structure of the transport equation~\eqref{eq:limeqn1VF}. In the parabolic case $\alpha>0$, $\beta=0$, note that we establish in item~(iii) an a priori $\Ld^p$-estimate for the vorticity $\omega$ through its initial $\Ld^1$-norm only,
which is the key for global existence results with vortex-sheet initial data.
While in~\cite{Lin-Zhang-00} for the simpler model~\eqref{eq:LZh} such an a priori estimate is achieved by explicitly integrating the evolution of the vorticity along characteristics, this explicit structure is lost for the more sophisticated models~\eqref{eq:limeqn1} and~\eqref{eq:limeqn2}, and a more subtle argument is required.

\begin{lem}[$\Ld^p$-estimates for vorticity, cont'd]\label{lem:Lpest}
Let $\lambda\ge0$, $\alpha\ge0$, $\beta\in\R$, $T>0$, and $h,\Psi, v^\circ\in W^{1,\infty}(\R^2)^2$, with $\omega^\circ:=\curl v^\circ\in \Pc\cap C^0(\R^2)$. Set $\zeta^\circ:=\Div(a v^\circ)$, and in the case~\eqref{eq:limeqn1} assume that $\Div(a v^\circ)=0$. Let $ v\in W^{1,\infty}_\loc([0,T);W^{1,\infty}(\R^2)^2)$ be a weak solution of~\eqref{eq:limeqn1} or of~\eqref{eq:limeqn2} on $[0,T)\times\R^2$ with initial data $ v^\circ$.
For all $1\le p\le \infty$ and $t\in[0,T)$, the following properties hold,
\begin{enumerate}[(i)]
\item in both cases~\eqref{eq:limeqn1} and~\eqref{eq:limeqn2}, without restriction on the parameters,
\begin{multline*}
\|\omega^t\|_{\Ld^p}\le\|\omega^\circ\|_{\Ld^p}\min\bigg\{\exp\Big(\frac{p-1}p\big(Ct+C|\beta|\|\zeta\|_{\Ld^1_t\Ld^\infty}+C|\beta|\|\nabla h\|_{\Ld^\infty}\| v\|_{\Ld^1_t\Ld^\infty}\big)\Big);\\
\exp\Big(\frac{p-1}p\big(C+Ct+C|\beta|\|\zeta\|_{\Ld^1_t\Ld^\infty}+C\alpha\|\nabla h\|_{\Ld^\infty}\| v\|_{\Ld^1_t\Ld^\infty}\big)\Big)\bigg\};
\end{multline*}
\item in the case~\eqref{eq:limeqn1} with either $\beta=0$ or $\alpha=0$ or $h$ constant, and in the case~\eqref{eq:limeqn2} with $\beta=0$, we have
\[\|\omega^t\|_{\Ld^p}\le Ce^{Ct}\|\omega^\circ\|_{\Ld^p};\]
\item given $\alpha>0$, in the case~\eqref{eq:limeqn1} with either $\beta=0$ or $h$ constant, and in the case~\eqref{eq:limeqn2} with $\beta=0$, we have
\[\|\omega^t\|_{\Ld^p}\le \Big((\alpha t)^{-1}+C\alpha^{-1}e^{Ct}\Big)^{1-1/p};\]
\end{enumerate}
where the constant $C$ depends only on an upper bound on $\alpha$, $|\beta|$, and on $\|(h,\Psi)\|_{W^{1,\infty}}$.
\end{lem}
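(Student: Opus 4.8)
The plan is to exploit that $\omega=\curl v$ solves the linear transport equation in~\eqref{eq:limeqn1VF}, which I rewrite as $\partial_t\omega-b\cdot\nabla\omega=\omega\,\Div b$ with the velocity field $b:=\alpha(\Psi+v)^\bot+\beta(\Psi+v)$. Since $v\in W^{1,\infty}_\loc([0,T);W^{1,\infty}(\R^2)^2)$ and $h,\Psi\in W^{1,\infty}(\R^2)$, this field is continuous and uniformly Lipschitz in space on each $[0,t]$, so the backward flow $\Phi_t$ defined by $\dot\Phi_t=-b(t,\Phi_t)$, $\Phi_0=\Id$, is a well-defined bi-Lipschitz diffeomorphism, and by uniqueness for the transport equation the weak solution $\omega$ must coincide with the one given by the method of characteristics: $\omega(t,\Phi_t(y))=\omega^\circ(y)\exp\big(\int_0^t(\Div b)(s,\Phi_s(y))\,ds\big)$. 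Moreover the Jacobian satisfies $\det D\Phi_t(y)=\exp\big(-\int_0^t(\Div b)(s,\Phi_s(y))\,ds\big)>0$, so changing variables gives, for all $1\le p<\infty$, the exact identity
\[\|\omega^t\|_{\Ld^p}^p=\int_{\R^2}\omega^\circ(y)^p\exp\Big((p-1)\int_0^t(\Div b)(s,\Phi_s(y))\,ds\Big)\,dy,\]
the endpoint $p=\infty$ being immediate from the pointwise formula. Everything thus reduces to bounding $\int_0^t(\Div b)(s,\Phi_s(y))\,ds$ from above.

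Next I would compute $\Div b=-\alpha\curl\Psi-\alpha\omega+\beta\Div\Psi+\beta\Div v$, and use $\zeta=\Div(av)$ to write $\Div v=a^{-1}\zeta-\nabla h\cdot v$ (with $\zeta\equiv0$ in case~\eqref{eq:limeqn1}). Along a characteristic, $-\alpha\int_0^t\omega(s,\Phi_s(y))\,ds\le0$ by nonnegativity of $\omega$ (Lemma~\ref{lem:aprioriest}(i)), and the terms $-\alpha\int_0^t\curl\Psi(s,\Phi_s)\,ds+\beta\int_0^t\Div\Psi(s,\Phi_s)\,ds$ contribute at most $Ct$. For the remaining piece $\int_0^t\beta\Div v(s,\Phi_s)\,ds=\int_0^t\beta a^{-1}\zeta(s,\Phi_s)\,ds-\int_0^t\beta\nabla h\cdot v(s,\Phi_s)\,ds$ I would proceed in two ways. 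Crudely bounding $|\beta\nabla h\cdot v^s|\le|\beta|\|\nabla h\|_{\Ld^\infty}\|v^s\|_{\Ld^\infty}$ and $|\beta a^{-1}\zeta^s|\lesssim|\beta|\|\zeta^s\|_{\Ld^\infty}$ yields the first bound in~(i). Alternatively, the ``integration by parts along characteristics'' identity $\tfrac{d}{ds}h(\Phi_s(y))=-\nabla h\cdot b(s,\Phi_s)=-\alpha\nabla h\cdot(\Psi+v)^\bot-\beta\nabla h\cdot\Psi-\beta\nabla h\cdot v$ gives
\[-\int_0^t\beta\nabla h\cdot v(s,\Phi_s)\,ds=h(\Phi_t(y))-h(y)+\alpha\int_0^t\nabla h\cdot(\Psi+v)^\bot(s,\Phi_s)\,ds+\beta\int_0^t\nabla h\cdot\Psi(s,\Phi_s)\,ds,\]
where $h(\Phi_t(y))-h(y)$ is absorbed into an additive constant $C$ and the $\beta$-prefactor in front of $\|v\|_{\Ld^1_t\Ld^\infty}$ is traded for an $\alpha$-prefactor; this yields the second bound in~(i). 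Inserting these estimates into the change-of-variables identity and taking $p$-th roots proves~(i). Item~(ii) is the special case where, thanks to $\beta=0$, or $\alpha=0$, or $\nabla h=0$, all $v$-dependent contributions vanish or telescope, so that $\int_0^t(\Div b)(s,\Phi_s)\,ds\le C+Ct$ with $C$ independent of $v$.

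For item~(iii) I would keep the dissipative term. In the regimes considered there, $\Div v$ either drops out or vanishes, so $\phi(s):=\omega(s,\Phi_s(y))$ solves the Bernoulli ODE $\dot\phi=-\alpha\phi^2+g(s)\phi$ with $g:=-\alpha\curl\Psi+\beta\Div\Psi$, $\|g\|_{\Ld^\infty}\le C$ with $C$ independent of $v$. Taking $C>0$ without loss of generality and setting $\phi=e^{Cs}u$ gives $\dot u\le-\alpha e^{Cs}u^2$, hence (for $\phi(0)>0$, the case $\phi(0)=0$ being trivial) $\tfrac{d}{ds}(1/u)\ge\alpha e^{Cs}$, so $1/u(t)\ge\tfrac{\alpha}{C}(e^{Ct}-1)$ and therefore $\phi(t)\le\tfrac{Ce^{Ct}}{\alpha(e^{Ct}-1)}\le\tfrac{e^{Ct}}{\alpha t}\le(\alpha t)^{-1}+C\alpha^{-1}e^{Ct}$, using $e^{Ct}-1\ge Ct$ and $1-e^{-Ct}\le Ct$. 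This bound is independent of $\phi(0)$, so $\|\omega^t\|_{\Ld^\infty}\le(\alpha t)^{-1}+C\alpha^{-1}e^{Ct}$, and interpolating with the conserved mass $\|\omega^t\|_{\Ld^1}=1$ (Lemma~\ref{lem:aprioriest}(i)) gives the stated $\Ld^p$-bound. The only genuinely delicate point is the integration-by-parts-along-characteristics identity underlying the sharp second bound in~(i) and the $\alpha=0$ regime of~(ii); the rest is a routine flow computation and an elementary ODE comparison.
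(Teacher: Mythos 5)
Your proposal reproduces the paper's argument essentially verbatim: the characteristic flow, the Liouville--Ostrogradski formula for $|\det\nabla\psi^t|$, the change-of-variables identity $\|\omega^t\|_{\Ld^p}^p=\int|\omega^\circ|^p\exp\big((p-1)\int_0^t\Div b(\psi^u)\,du\big)$, the decomposition $\Div v=a^{-1}\zeta-\nabla h\cdot v$, and the integration-along-characteristics identity for $h(\psi^t)$ to trade the $|\beta|$-prefactor on $\|v\|_{\Ld^1_t\Ld^\infty}$ for an $\alpha$-prefactor are all exactly the paper's Step~1. In item~(iii) the paper bounds $f_x(t)=\omega^t(\psi^t(x))$ below by a comparison solution $g_x$ of $\dot g_x=-\alpha g_x^2-Cg_x$ and inserts $\int_0^t g_x$ into the $\Ld^p$ identity, whereas you first derive a pointwise $\Ld^\infty$ bound (via the substitution $\phi=e^{Cs}u$) and then interpolate with the conserved $\Ld^1$ mass; the two are minor reorganizations of the same Riccati comparison and both yield the stated dispersive estimate.
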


\begin{rem}
In the context of item~(iii),
if we further assume $\Psi\equiv0$ (i.e. no forcing),
then the constant~$C$ in Step~2 of the proof below may then be set to $0$, so that we simply obtain, for all $1\le p<\infty$ and all $t>0$,
\[\|\omega^t\|_{\Ld^p}\le\bigg(\int|\omega^\circ|^p(1+\alpha t\omega^\circ)^{1-p}\bigg)^{1/p}\le (\alpha t)^{-(1-1/p)},\]
without additional exponential growth.
\end{rem}

\begin{proof}
We split the proof into two steps, and we use the notation $\lesssim$ for $\le$ up to a constant $C>0$ as in the statement.

\medskip
\noindent\step1 General bounds.

In this step, we prove (i) (from which (ii) directly follows, noting that choosing $a$ constant implies $\nabla h\equiv0$).
Let us consider the flow
\[\partial_t\psi^t(x)=-\alpha(\Psi+ v^t)^\bot(\psi^t(x))-\beta(\Psi+ v^t)(\psi^t(x)),\qquad \psi^t(x)|_{t=0}=x.\]
The Lipschitz assumptions ensure that $\psi$ is well-defined in $W_\loc^{1,\infty}([0,T);W^{1,\infty}(\R^2)^2)$. As $\omega$ satisfies the transport equation~\eqref{eq:limeqn1VF} with initial data $\omega^\circ\in C^0(\R^2)$, the method of propagation along characteristics yields
\[\omega^t(x)=\omega^\circ((\psi^{t})^{-1}(x))|\det\nabla(\psi^{t})^{-1}(x)|=\omega^\circ((\psi^{t})^{-1}(x))|\det\nabla\psi^{t}((\psi^t)^{-1}(x))|^{-1},\]
and hence for all $1\le p<\infty$ we have
\begin{align}\label{eq:rewriteLpnormdetpsi}
\int|\omega^t|^p&=\int |\omega^\circ((\psi^{t})^{-1}(x))|^p|\det\nabla\psi^t((\psi^{t})^{-1}(x))|^{-p}dx=\int |\omega^\circ(x)|^p|\det\nabla\psi^t(x)|^{1-p}dx,
\end{align}
while for $ P=\infty$,
\[\|\omega^t\|_{\Ld^\infty}\le\|\omega^\circ\|_{\Ld^\infty}\|(\det\nabla\psi^{t})^{-1}\|_{\Ld^\infty}.\]
Now let us examine this determinant more closely. By the Liouville-Ostrogradski formula,
\begin{align}\label{eq:detdevLiouv}
|\det\nabla \psi^{t}(x)|^{-1}&=\exp\bigg(\int_0^t\Div\Big(\alpha(\Psi+ v^u)^\bot+\beta(\Psi+ v^u)\Big)(\psi^u(x))du\bigg).
\end{align}
A simple computation gives
\begin{align}\label{eq:rewritedivchpvect}
\Div(\alpha( v^t)^\bot+\beta v^t)&=-\alpha\curl v^t+\beta \Div v^t=-\alpha\omega^t+\beta a^{-1}\zeta^t-\beta \nabla h\cdot v^t,
\end{align}
hence by non-negativity of $\omega$,
\begin{align*}
\Div(\alpha( v^t)^\bot+\beta v^t)&\le|\beta|\|a^{-1}\|_{\Ld^\infty}\|\zeta^t\|_{\Ld^\infty}+|\beta|\|\nabla h\|_{\Ld^\infty}\| v^t\|_{\Ld^\infty}.
\end{align*}
We then deduce from~\eqref{eq:detdevLiouv},
\begin{align*}
|\det\nabla \psi^{t}(x)|^{-1}&\le\exp\big(t\alpha\|\curl\Psi\|_{\Ld^\infty}+t|\beta|\|\Div\Psi\|_{\Ld^\infty}+|\beta|\|a^{-1}\|_{\Ld^\infty}\|\zeta\|_{\Ld^1_t\Ld^\infty}+|\beta|\|\nabla h\|_{\Ld^\infty}\| v\|_{\Ld^1_t\Ld^\infty}\big),
\end{align*}
and thus, combined with~\eqref{eq:rewriteLpnormdetpsi}, for all $1\le p\le\infty$,
\begin{multline}\label{eq:estomegaLp-1}
\|\omega^t\|_{\Ld^p}\le\|\omega^\circ\|_{\Ld^p}\exp\bigg(\frac{p-1}p\big(t\alpha\|\curl\Psi\|_{\Ld^\infty}+t|\beta|\|\Div\Psi\|_{\Ld^\infty}\\
+|\beta|\|a^{-1}\|_{\Ld^\infty}\|\zeta\|_{\Ld^1_t\Ld^\infty}+|\beta|\|\nabla h\|_{\Ld^\infty}\| v\|_{\Ld^1_t\Ld^\infty}\big)\bigg).
\end{multline}
On the other hand, noting that
\[\partial_t h(\psi^t(x))=-\nabla h(\psi^t(x))\cdot(\alpha(\Psi+ v^t)^\bot+\beta(\Psi+ v^t))(\psi^t(x)),\]
we may alternatively rewrite
\begin{align*}
\Div(\alpha( v^t)^\bot+\beta  v^t)(\psi^t(x))&=\big(-\alpha\omega^t+\beta a^{-1}\zeta^t-\beta \nabla h\cdot v^t\big)(\psi^t(x))\\
&=\partial_t h(\psi^t(x))+\big(-\alpha\omega^t+\beta a^{-1}\zeta^t-\alpha\nabla^\bot h\cdot v^t+\nabla h\cdot(\alpha\Psi^\bot+\beta\Psi)\big)(\psi^t(x)).
\end{align*}
Integrating this identity with respect to $t$ and using again the same formula for $|\det\nabla\psi^t|^{-1}$, we obtain
\begin{multline}\label{eq:estomegaLp-2}
\|\omega^t\|_{\Ld^p}\le\|\omega^\circ\|_{\Ld^p}\exp\bigg(\frac{p-1}p\big(t\alpha\|\curl\Psi\|_{\Ld^\infty}+t|\beta|\|\Div\Psi\|_{\Ld^\infty}+|\beta|\|a^{-1}\|_{\Ld^\infty}\|\zeta\|_{\Ld^1_t\Ld^\infty}\\
+2\|h\|_{\Ld^\infty}+t(\alpha+|\beta|)\|\nabla h\|_{\Ld^\infty}\|\Psi\|_{\Ld^\infty}+\alpha\|\nabla h\|_{\Ld^\infty}\|v\|_{\Ld^1_t\Ld^\infty}\big)\bigg).
\end{multline}
Combining~\eqref{eq:estomegaLp-1} and~\eqref{eq:estomegaLp-2}, the conclusion~(i) follows.

\medskip
\noindent\step2 Proof of~(iii).

It suffices to prove the result for any $1<p<\infty$. Let such a $p$ be fixed.
Assuming either $\beta=0$, or $\zeta\equiv0$ and $a$ constant, we deduce from~\eqref{eq:rewriteLpnormdetpsi}, \eqref{eq:detdevLiouv}, and~\eqref{eq:rewritedivchpvect},
\begin{align}\label{eq:rewriteLpnormdetpsiLiouv}
\int|\omega^t|^p&=\int |\omega^\circ(x)|^p\exp\Big((p-1)\int_0^t\Div\big(\alpha(\Psi+ v^u)^\bot+\beta(\Psi+ v^u)\big)(\psi^u(x))du\Big)dx\nonumber\\
&\le e^{C(p-1)t}\int |\omega^\circ(x)|^p\exp\Big(-\alpha(p-1)\int_0^t \omega^u(\psi^u(x))du\Big)dx.
\end{align}
Let $x$ be momentarily fixed, and set $f_x(t):=\omega^t(\psi^t(x))$. We need to estimate the integral $\int_0^tf_x(u)du$. For that purpose, we first compute $\partial_t f_x$: again using~\eqref{eq:rewritedivchpvect} (with either $\beta=0$, or $\zeta\equiv0$ and $a$ constant), we find
\begin{align*}
\partial_t f_x(t)&=\Div\big(\omega^t(\alpha(\Psi+ v^t)^\bot+\beta(\Psi+ v^t))\big)(\psi^t(x))-\nabla\omega^t(\psi^t(x))\cdot\big(\alpha(\Psi+ v^t)^\bot+\beta(\Psi+ v^t)\big)(\psi^t(x))\\
&=\omega^t(\psi^t(x))\Div\big(\alpha(\Psi+ v^t)^\bot+\beta(\Psi+ v^t))\big)(\psi^t(x))\\
&=-\alpha(\omega^t(\psi^t(x)))^2+\big(\!-\alpha\omega^t\curl \Psi+\beta\omega^t\Div\Psi\big)(\psi^t(x)),
\end{align*}
and hence
\[\partial_tf_x\ge -\alpha f_x^2-Cf_x.\]
We may then deduce $f_x\ge g_x$ pointwise, where $g_x$ satisfies
\[\partial_t g_x= -\alpha g_x^2- C g_x,\qquad g_x(0)=f_x(0)=\omega^\circ(x).\]
A direct computation yields
\[g_x(t)=\frac{Ce^{-Ct}\omega^\circ(x)}{C+\alpha(1-e^{-Ct})\omega^\circ(x)},\]
and hence
\[\int_0^tf_x(u)du\ge\int_0^tg_x(u)du=\alpha^{-1}\log\Big(1+\alpha C^{-1}(1-e^{-Ct})\omega^\circ(x)\Big).\]
Inserting this into~\eqref{eq:rewriteLpnormdetpsiLiouv}, we obtain for all $t>0$
\begin{align*}
\int|\omega^t|^p&\le e^{C(p-1)t}\int |\omega^\circ(x)|^p\Big(1+\alpha C^{-1}(1-e^{-Ct})\omega^\circ(x)\Big)^{1-p}dx\\
&\le \bigg(\frac{C\alpha^{-1}e^{Ct}}{1-e^{-Ct}}\bigg)^{p-1}\int |\omega^\circ(x)|dx=\bigg(\frac{C\alpha^{-1}e^{Ct}}{1-e^{-Ct}}\bigg)^{p-1}.
\end{align*}
The result~(iii) then follows from the obvious inequality $e^{Ct}(1-e^{-Ct})^{-1}\le e^{Ct}+1+(Ct)^{-1}$ for all $t>0$.
\end{proof}

The previous two lemmas establish uniform bounds on the vorticity $\omega$ in various regimes. As a preliminary to the propagation of regularity, we now show that any uniform bound on $\omega$ implies similar bounds on $ v$ and on the divergence $\zeta$.
In the incompressible case of equation~\eqref{eq:limeqn1}, this already follows from Step~2 of the proof of Lemma~\ref{lem:Lpvort} above, but more analysis is needed in the compressible case~\eqref{eq:limeqn2}.

\begin{lem}[Relative $\Ld^p$-estimates]\label{lem:Lpestdiv}
Let $\lambda>0$, $\alpha\ge0$, $\beta\in\R$, $T>0$, $h,\Psi,\bar  v^\circ\in W^{1,\infty}(\R^2)^2$, and $ v^\circ\in \bar  v^\circ+\Ld^2(\R^2)^2$, with $\omega^\circ:=\curl v^\circ\in\Pc(\R^2)$, $\bar\omega^\circ:=\curl \bar  v^\circ\in\Pc\cap\Ld^\infty(\R^2)$, and with either $\Div(a v^\circ)=\Div(a\bar  v^\circ)=0$ in the case~\eqref{eq:limeqn1}, or $\zeta^\circ:=\Div(a v^\circ)$, $\bar\zeta^\circ:=\Div(a\bar  v^\circ)\in\Ld^2\cap\Ld^\infty(\R^2)$ in the case~\eqref{eq:limeqn2}. Let $ v\in\Ld_\loc^\infty([0,T);\bar  v^\circ+\Ld^2(\R^2)^2)$ be a weak solution of~\eqref{eq:limeqn1} or of~\eqref{eq:limeqn2} on $[0,T)\times\R^2$ with initial data $ v^\circ$, and with $\omega:=\curl v\in\Ld^\infty([0,T];\Ld^\infty(\R^2))$. Then we have for all $t\in[0,T)$
\[\|\zeta^t\|_{\Ld^2\cap\Ld^\infty}\le C, \qquad\|\Div( v^t-\bar  v^\circ)\|_{\Ld^2\cap\Ld^\infty}\le C,\qquad\| v^t\|_{\Ld^\infty}\le C,\]
where the constant $C$ depends only on an upper bound on $\alpha$, $|\beta|$, $\lambda$, $\lambda^{-1}$, $T$, $\|h\|_{W^{1,\infty}}$, $\|(\Psi,\bar  v^\circ)\|_{\Ld^\infty}$, $\| v^\circ-\bar  v^\circ\|_{\Ld^2}$, $\|\bar\omega^\circ\|_{\Ld^1\cap\Ld^\infty}$, $\|(\zeta^\circ,\bar\zeta^\circ)\|_{\Ld^2\cap\Ld^\infty}$, $\|\omega\|_{\Ld^\infty_T\Ld^\infty}$, and additionally on $\|(\nabla\Psi,\nabla\bar  v^\circ)\|_{\Ld^\infty}$ (resp.\@ on $\alpha^{-1}$) in the case $\alpha=0$ (resp.\@ $\alpha>0$).
\end{lem}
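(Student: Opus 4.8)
The plan is to treat the incompressible case directly and the compressible case by a two-stage bootstrap, devised to break the circular dependence between $\|v\|_{\Ld^\infty}$ and $\|\zeta\|_{\Ld^\infty}$. In the incompressible case~\eqref{eq:limeqn1} we have $\zeta\equiv0$, and the constraints $\Div(av)=\Div(a\bar v^\circ)=0$ together with the Helmholtz identity~\eqref{eq:Helmholtz} yield $v^t-\bar v^\circ=a^{-1}\nabla^\bot(\Div a^{-1}\nabla)^{-1}(\omega^t-\bar\omega^\circ)$. Since $\omega^t\in\Pc(\R^2)$ by Lemma~\ref{lem:aprioriest}(i) and $\omega\in\Ld^\infty_T\Ld^\infty$ by assumption, $\omega^t-\bar\omega^\circ$ is bounded in $\Ld^1\cap\Ld^\infty(\R^2)$, so the $\Ld^\infty$-part of Lemma~\ref{lem:globellreg} gives $\|v^t-\bar v^\circ\|_{\Ld^\infty}\le C$, hence $\|v^t\|_{\Ld^\infty}\le C$; and $\Div(v^t-\bar v^\circ)=-\nabla h\cdot(v^t-\bar v^\circ)$, which is then bounded in $\Ld^2\cap\Ld^\infty$ using in addition the energy estimate Lemma~\ref{lem:aprioriest}(iii).

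For the compressible case~\eqref{eq:limeqn2}, write the right-hand side of~\eqref{eq:limeqn2VF} as $\Div g$ with $g=g_1+g_2$, where $g_1:=a\omega(-\alpha(\Psi+\bar v^\circ)+\beta(\Psi+\bar v^\circ)^\bot)$ and $g_2:=a\omega(-\alpha+\beta(\cdot)^\bot)(v-\bar v^\circ)$. Using that $\omega\in\Ld^\infty_T(\Ld^1\cap\Ld^\infty)$ (Lemma~\ref{lem:aprioriest}(i) and the assumption) and that $\|v^t-\bar v^\circ\|_{\Ld^2}\le C$ by Lemma~\ref{lem:aprioriest}(iii), one checks $\|g\|_{\Ld^\infty_T\Ld^2}\le C$. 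Now $\zeta$ solves a linear transport--diffusion equation with source $\Div g$ (after rescaling time by $\lambda$ to match the normalization of Lemma~\ref{lem:parreg+tsp}, the factors $\lambda,\lambda^{-1}$ being absorbed into the constants; we also assume $v$ and $\zeta$ regular enough for the manipulations below, the general case following from the approximation procedure of Section~\ref{chap:global}). Then Lemma~\ref{lem:parreg+tsp}(i) with $s=0$ gives $\|\zeta^t\|_{\Ld^2}\le C$, while Lemma~\ref{lem:parreg+tsp}(iii) with $p=\infty$, $s=2$ and $q\in[2,\infty)$ arbitrary (so $\kappa=1/q$), the multiplicative constant being independent of $q$, yields the key intermediate bound $\|\zeta^t\|_{\Ld^q}\le Cq$ for all $t\in[0,T)$ and all $q\in[2,\infty)$.

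Next I would upgrade $v-\bar v^\circ$ to $\Ld^\infty$. Writing $v^t-\bar v^\circ=\nabla^\bot\triangle^{-1}(\omega^t-\bar\omega^\circ)+\nabla\triangle^{-1}\Div(v^t-\bar v^\circ)$, the first term is controlled in $\Ld^\infty$ by Lemma~\ref{lem:singint-1}(i) (using $\omega^t-\bar\omega^\circ\in\Ld^1\cap\Ld^\infty$), and the second by Lemma~\ref{lem:singint-1}(ii) applied with $w=\Div(v^t-\bar v^\circ)$, $\xi=v^t-\bar v^\circ$, $p=2$ and $q=4$. Since $\Div(v^t-\bar v^\circ)=a^{-1}(\zeta^t-\bar\zeta^\circ)-\nabla h\cdot(v^t-\bar v^\circ)$, the bounds $\|\zeta^t\|_{\Ld^2\cap\Ld^4}\le C$ (from the first stage) and $\|v^t-\bar v^\circ\|_{\Ld^2}\le C$, together with the interpolation $\|v^t-\bar v^\circ\|_{\Ld^4}\lesssim\|v^t-\bar v^\circ\|_{\Ld^2}^{1/2}\|v^t-\bar v^\circ\|_{\Ld^\infty}^{1/2}$, lead to an estimate of the form
\[\|v^t-\bar v^\circ\|_{\Ld^\infty}\le C+C\big(1+\log(2+\|v^t-\bar v^\circ\|_{\Ld^\infty})\big)^{1/2},\]
whose sublinear right-hand side forces $\|v^t-\bar v^\circ\|_{\Ld^\infty}\le C$, hence $\|v^t\|_{\Ld^\infty}\le C$. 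Now that $v-\bar v^\circ\in\Ld^\infty_T\Ld^\infty$, one gets $\|g^t\|_{\Ld^\infty}\le C$, and a last application of Lemma~\ref{lem:parreg+tsp}(iii) with $p=q=s=\infty$ (so $\kappa=1/2$) gives $\|\zeta^t\|_{\Ld^\infty}\le C$. Combining with $\|\zeta^t\|_{\Ld^2}\le C$ and with the identity for $\Div(v^t-\bar v^\circ)$ once more, the three stated estimates follow.

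The main obstacle is precisely this circularity. Bounding $\zeta$ in $\Ld^\infty$ through the parabolic smoothing of~\eqref{eq:limeqn2VF} requires the source $g$, hence $v$, to be bounded --- and $g\in\Ld^2$ does not suffice, since the Duhamel kernel $\nabla\Gamma^\tau$ is $\Ld^1_\tau$-integrable near $\tau=0$ only when paired with $\Ld^s$, $s>2$ --- whereas bounding $v$ in $\Ld^\infty$ requires control on $\zeta$, which is a priori known only in $\Ld^2$, with no $\Ld^1$-decay at infinity available to invoke the usual Meyers/Riesz route. The escape is the intermediate bound $\|\zeta^t\|_{\Ld^q}\lesssim q$, which needs only $g\in\Ld^2$ (hence only the energy estimate for $v-\bar v^\circ$) and is then fed into the logarithmic potential estimate Lemma~\ref{lem:singint-1}(ii); the delicate point is to keep the constant in $\|\zeta^t\|_{\Ld^q}\lesssim q$ uniform in $q$, which is why the explicit $q$-dependence in Lemma~\ref{lem:parreg+tsp}(iii) must be tracked.
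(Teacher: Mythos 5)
Your proof is correct and passes through the same ingredients (Lemma~\ref{lem:parreg+tsp}, Lemma~\ref{lem:singint-1}, the energy estimate of Lemma~\ref{lem:aprioriest}), but the bootstrap structure is genuinely different from the paper's. The paper's proof of this lemma runs a \emph{coupled} absorption: Step~1 there bounds $\|v^t\|_{\Ld^\infty}$ in terms of $\|\theta^t-\bar\theta^\circ\|_{\Ld^2\cap\Ld^\infty}$ (via Lemma~\ref{lem:singint-1}(ii) with $q=\infty$), Step~2 bounds $\|\zeta^t\|_{\Ld^\infty}$ in terms of $\|v\|_{\Ld^\infty_t\Ld^\infty}$ (via Lemma~\ref{lem:parreg+tsp}(iii) with $p=q=s=\infty$), and substituting one into the other yields $\|\theta\|_{\Ld^\infty_t\Ld^\infty}\lesssim1+\log^{1/2}(2+\|\theta\|_{\Ld^\infty_t\Ld^\infty})$, which is then absorbed. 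You instead \emph{decouple} the circularity: you first extract $\|\zeta^t\|_{\Ld^4}\lesssim1$ from the source $g\in\Ld^\infty_T\Ld^2$ alone (Lemma~\ref{lem:parreg+tsp}(iii) with $s=2$, a sub-endpoint choice the paper never makes), then feed that into Lemma~\ref{lem:singint-1}(ii) with a \emph{finite} $q=4$ and close an $\Ld^\infty$ estimate on $v$ by a self-contained sublinear-log absorption; only afterwards do you get $\zeta\in\Ld^\infty$. You also dispatch the incompressible case directly through the global elliptic estimate of Lemma~\ref{lem:globellreg} applied to $v-\bar v^\circ=a^{-1}\nabla^\bot(\Div a^{-1}\nabla)^{-1}(\omega-\bar\omega^\circ)$, rather than the $\triangle^{-1}$ decomposition that the paper reuses for both cases. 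One small remark: your intermediate bound $\|\zeta^t\|_{\Ld^q}\lesssim q$ for arbitrary $q\in[2,\infty)$, and the attendant care about uniformity in $q$, are not actually needed --- a single fixed $q=4$ (or any $q>2$) suffices for the application of Lemma~\ref{lem:singint-1}(ii), so you can simply state that $\|\zeta^t\|_{\Ld^4}\lesssim1$.
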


\begin{proof}
In this proof, we use the notation $\lesssim$ for $\le$ up to a constant $C>0$ as in the statement, and we also set $\theta:=\Div v$ and $\bar\theta^\circ:=\Div\bar  v^\circ$. In the incompressible case~\eqref{eq:limeqn1} the conclusion follows from Step~2 of the proof of Lemma~\ref{lem:Lpvort} together with the identity $\Div v=-\nabla h\cdot v$.
We may thus focus on the case of the compressible equation~\eqref{eq:limeqn2}. We split the proof into three steps.

\medskip
\noindent\step1 Preliminary estimate for $ v$: for all $t\in[0,T)$,
\begin{align}
\| v^t\|_{\Ld^\infty}&\lesssim1+\|\theta^t-\bar\theta^\circ\|_{\Ld^2}\log^{1/2}(2+\|\theta^t-\bar\theta^\circ\|_{\Ld^2\cap\Ld^\infty}).\label{eq:v^tL^infty0}
\end{align}

Note that $ v^t-\bar  v^\circ=\nabla^\bot\triangle^{-1}(\omega^t-\bar \omega^\circ)+\nabla\triangle^{-1}(\theta^t-\bar \theta^\circ)$. By Lemma~\ref{lem:singint-1}(i)--(ii),
we may then estimate
\begin{multline*}
\| v^t-\bar  v^\circ\|_{\Ld^\infty}\le \|\nabla\triangle^{-1}(\omega^t-\bar \omega^\circ)\|_{\Ld^\infty}+\|\nabla\triangle^{-1}(\theta^t-\bar \theta^\circ)\|_{\Ld^\infty}\\
\lesssim\|\omega^t-\bar \omega^\circ\|_{\Ld^2}\log^{1/2}(2+\|\omega^t-\bar \omega^\circ\|_{\Ld^1\cap\Ld^\infty})+\|\theta^t-\bar \theta^\circ\|_{\Ld^2}\log^{1/2}(2+\|\theta^t-\bar \theta^\circ\|_{\Ld^2\cap\Ld^\infty})+\| v^t-\bar  v^\circ\|_{\Ld^2},
\end{multline*}
so that~\eqref{eq:v^tL^infty0} follows from the a priori estimates of Lemma~\ref{lem:aprioriest} (in the form $\| v^t-\bar  v^\circ\|_{\Ld^2}+\|\omega^t\|_{\Ld^1}\lesssim1$) and from the boundedness assumption $\|\omega\|_{\Ld_T^\infty\Ld^\infty}\lesssim1$.

\medskip
\noindent\step2 Boundedness of $\theta$: we prove $\|\theta^t-\bar\theta^\circ\|_{\Ld^2\cap\Ld^\infty}\lesssim1$ for all $t\in[0,T)$.

We start with the $\Ld^2$-estimate. As $\zeta$ satisfies the transport-diffusion equation~\eqref{eq:limeqn2VF}, Lemma~\ref{lem:parreg+tsp}(i) with $s=0$ leads to
\begin{align*}
\|\zeta^t\|_{\Ld^2}&\lesssim \|\zeta^\circ\|_{\Ld^2}+\|a\omega(-\alpha(\Psi+ v)+\beta(\Psi+ v)^\bot)\|_{\Ld^2_t\Ld^2}\lesssim 1+\|\omega\|_{\Ld^2_t\Ld^\infty}\| v-\bar  v^\circ\|_{\Ld^\infty_t\Ld^2}+\|\omega\|_{\Ld^2_t\Ld^2}\|(\Psi,\bar  v^\circ)\|_{\Ld^\infty},
\end{align*}
and hence $\|\zeta^t\|_{\Ld^2}\lesssim1$ follows from the a priori estimates of Lemma~\ref{lem:aprioriest} (in the form $\| v^t-\bar  v^\circ\|_{\Ld^2}+\|\omega^t\|_{\Ld^1}\lesssim1$) and the boundedness assumption for $\omega$. Similarly, for $\theta^t=a^{-1}\zeta^t-\nabla h\cdot v^t$, we deduce $\|\theta^t-\bar \theta^\circ\|_{\Ld^2}\lesssim1$. We now turn to the $\Ld^\infty$-estimate.
Lemma~\ref{lem:parreg+tsp}(iii) with $ P=q=s=\infty$ gives
\begin{align}
\|\zeta^t\|_{\Ld^\infty}&\lesssim \|\zeta^\circ\|_{\Ld^\infty}+\|a\omega(-\alpha(\Psi+ v)+\beta(\Psi+ v)^\bot)\|_{\Ld^\infty_t\Ld^\infty}\lesssim 1+\|\omega\|_{\Ld^\infty_t\Ld^\infty}(1+\| v\|_{\Ld^\infty_t\Ld^\infty}),\label{eq:estzetaLinftyfin}
\end{align}
or alternatively, for $\theta^t=a^{-1}\zeta^t-\nabla h\cdot v^t$,
\begin{align*}
\|\theta^t\|_{\Ld^\infty}&\lesssim 1+\| v^t\|_{\Ld^\infty}+\|\omega\|_{\Ld^\infty_t\Ld^\infty}(1+\|v\|_{\Ld^\infty_t\Ld^\infty}).
\end{align*}
Combining this estimate with the result of Step~1 yields
\begin{multline*}
\|\theta^t\|_{\Ld^\infty}\lesssim 1+\|\theta^t-\bar \theta^\circ\|_{\Ld^2}\log^{1/2}(2+\|\theta^t-\bar \theta^\circ\|_{\Ld^2\cap\Ld^\infty})\\
+\|\omega\|_{\Ld^\infty_t\Ld^\infty}(1+\|\theta-\bar \theta^\circ\|_{\Ld^\infty_t\Ld^2}\log^{1/2}(2+\|\theta-\bar \theta^\circ\|_{\Ld^\infty_t(\Ld^2\cap\Ld^\infty)})).
\end{multline*}
Now the boundedness assumption on $\omega$ and the $\Ld^2$-estimate for $\theta$ proven above reduce this expression to
\begin{align*}
\|\theta^t\|_{\Ld^\infty}&\lesssim \log^{1/2}(2+\|\theta\|_{\Ld^\infty_t\Ld^\infty}).
\end{align*}
Taking the supremum with respect to $t$, we may then conclude $\|\theta^t\|_{\Ld^\infty}\lesssim1$ for all $t\in[0,T)$.

\medskip
\noindent\step3 Conclusion.

By the result of Step~2, the estimate~\eqref{eq:v^tL^infty0} of Step~1 takes the form $\| v^t\|_{\Ld^\infty}\lesssim1$. The estimate~\eqref{eq:estzetaLinftyfin} of Step~2 then yields $\|\zeta^t\|_{\Ld^\infty}\lesssim1$, while the $\Ld^2$-estimate for $\zeta$ is already established in Step~2.
\end{proof}

\subsection{Propagation of regularity}\label{chap:propagation}

Since local existence is established in Section~\ref{chap:local} only for smooth enough data, it is necessary for the global existence result to first prove propagation of regularity along the flow. In this section, we show that propagation of regularity is a consequence of the boundedness of the vorticity $\omega$, which has itself been proven to hold in various regimes in Lemmas~\ref{lem:Lpvort} and~\ref{lem:Lpest} above.
We start with the propagation of Sobolev $H^s$-regularity.

\begin{lem}[Sobolev regularity]\label{lem:conservSob}
Let $s>1$. Let $\lambda>0$, $\alpha\ge0$, $\beta\in\R$, $T>0$, $h,\Psi,\bar  v^\circ\in W^{s+1,\infty}(\R^2)^2$, and $ v^\circ\in\bar  v^\circ+\Ld^2(\R^2)^2$, with $\omega^\circ:=\curl v^\circ,\bar\omega^\circ:=\curl \bar  v^\circ\in \Pc\cap H^s(\R^2)$, and with either $\Div(a v^\circ)=\Div(a\bar  v^\circ)=0$ in the case~\eqref{eq:limeqn1}, or $\zeta^\circ:=\Div(a v^\circ),\bar\zeta^\circ:=\Div(a\bar  v^\circ)\in H^s(\R^2)$ in the case~\eqref{eq:limeqn2}. Let $ v\in\Ld^\infty([0,T];\bar  v^\circ+H^{s+1}(\R^2)^2)$ be a weak solution of~\eqref{eq:limeqn1} or of~\eqref{eq:limeqn2} on $[0,T)\times\R^2$ with initial data $ v^\circ$. Then for all $t\in[0,T)$ we have
\[\|\omega^t\|_{H^s}\le C,\qquad\|\zeta^t\|_{H^s}\le C,
\qquad\| v^t-\bar  v^\circ\|_{H^{s+1}}\le C,\qquad\|\nabla v^t\|_{\Ld^\infty}\le C,\]
where the constant $C$ depends only on an upper bound on $s$, $(s-1)^{-1}$, $\alpha$, $|\beta|$, $\lambda$, $\lambda^{-1}$, $T$, $\|(h,\Psi,\bar  v^\circ)\|_{W^{s+1,\infty}}$, $\| v^\circ-\bar  v^\circ\|_{\Ld^2}$, $\|(\omega^\circ,\bar\omega^\circ,\zeta^\circ,\bar\zeta^\circ)\|_{H^s}$, $\|\omega\|_{\Ld^\infty_T\Ld^\infty}$, and additionally on $\alpha^{-1}$ in the case $\alpha>0$.
\end{lem}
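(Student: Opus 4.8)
The plan is to bootstrap from the hypothesis that $\omega$ is bounded in $\Ld^\infty_T\Ld^\infty$ up to all the higher‑order norms, arranging that the final constants depend on the solution only through $\|\omega\|_{\Ld^\infty_T\Ld^\infty}$. I treat the compressible case~\eqref{eq:limeqn2}; the incompressible case~\eqref{eq:limeqn1} is analogous and simpler, $\zeta$ being absent. Since the solution is already given in $\Ld^\infty([0,T);\bar v^\circ+H^{s+1})$, the three quantities $\|\omega^t\|_{H^s}$, $\|\zeta^t\|_{H^s}$, $\|v^t-\bar v^\circ\|_{H^{s+1}}$ are a priori finite on $[0,T)$ (by a solution‑dependent amount); I will use this only to legitimize the differential inequalities below. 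As $H^s(\R^2)\hookrightarrow\Ld^2\cap\Ld^\infty(\R^2)$ for $s>1$, the hypotheses of Lemmas~\ref{lem:aprioriest}(iii) and~\ref{lem:Lpestdiv} are met, and I may therefore use freely, for all $t\in[0,T)$, the bounds $\|v^t-\bar v^\circ\|_{\Ld^2}\le C$, $\|v^t\|_{\Ld^\infty}\le C$ and $\|\zeta^t\|_{\Ld^2\cap\Ld^\infty}\le C$, hence also $\|\Div v^t\|_{\Ld^\infty}\le C$ through $\Div v=a^{-1}\zeta-\nabla h\cdot v$.

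The key step is a Beale--Kato--Majda type logarithmic bound for $\|\nabla v^t\|_{\Ld^\infty}$. Fix some $s'\in(0,1)$ with $s'\le s-1$. Using the (constant‑coefficient) Helmholtz decomposition of the $\Ld^2$‑perturbation, $v^t-\bar v^\circ=\nabla^\bot\triangle^{-1}(\omega^t-\bar\omega^\circ)+\nabla\triangle^{-1}(\theta^t-\bar\theta^\circ)$ with $\theta:=\Div v$, one sees that $\nabla(v^t-\bar v^\circ)$ equals $\nabla^2\triangle^{-1}$ applied to $\omega^t-\bar\omega^\circ$ and to $\theta^t-\bar\theta^\circ$, which are bounded in $\Ld^2\cap\Ld^\infty$ uniformly on $[0,T)$ by the previous step. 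Lemma~\ref{lem:singint-1}(iii) (with exponents $s'$ and $p=2$) then yields $\|\nabla v^t\|_{\Ld^\infty}\le C(1+\log^+\|\omega^t-\bar\omega^\circ\|_{C^{s'}}+\log^+\|\theta^t-\bar\theta^\circ\|_{C^{s'}})$. By the Sobolev embedding $H^{s'+1}(\R^2)\hookrightarrow C^{s'}(\R^2)$, the identity $\theta^t-\bar\theta^\circ=a^{-1}(\zeta^t-\bar\zeta^\circ)-\nabla h\cdot(v^t-\bar v^\circ)$, and Lemma~\ref{lem:reconstr} — which bounds $\|v^t-\bar v^\circ\|_{H^{s+1}}\le C(1+\|\omega^t\|_{H^s}+\|\zeta^t\|_{H^s})$, once one observes that $\|\omega^t-\bar\omega^\circ\|_{\dot H^{-1}}+\|\zeta^t-\bar\zeta^\circ\|_{\dot H^{-1}}\lesssim\|v^t-\bar v^\circ\|_{\Ld^2}\le C$ — this gives
\[\|\nabla v^t\|_{\Ld^\infty}\le C\big(1+\log(2+\|\omega^t\|_{H^s}+\|\zeta^t\|_{H^s})\big).\]

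Next I derive differential inequalities for the two $H^s$ norms. Applying Lemma~\ref{lem:katoponce} to the transport equation~\eqref{eq:limeqn1VF} for $\omega$, with reference field $W:=\alpha(\Psi+\bar v^\circ)^\bot+\beta(\Psi+\bar v^\circ)\in W^{s+1,\infty}$ (so $w-W\in H^{s+1}$ and $\Div(w-W)=-\alpha(\omega-\bar\omega^\circ)+\beta(\theta-\bar\theta^\circ)$), and using the $\Ld^\infty$‑bounds above — which give $\|\omega^t\|_{\Ld^1\cap\Ld^\infty}\le C$, $\|\Div w^t\|_{\Ld^\infty}\le C(1+\|\omega^t\|_{\Ld^\infty})\le C$, and $\|\Div(w^t-W)\|_{H^s}\le C(1+\|\omega^t\|_{H^s}+\|\zeta^t\|_{H^s})$ via the formula for $\theta^t-\bar\theta^\circ$ and Lemma~\ref{lem:reconstr} — together with the logarithmic estimate above, I obtain
\[\partial_t\big(\|\omega^t\|_{H^s}^2\big)\le C\big(1+\log(2+\|\omega^t\|_{H^s}+\|\zeta^t\|_{H^s})\big)\big(1+\|\omega^t\|_{H^s}^2+\|\zeta^t\|_{H^s}^2\big).\]
For $\zeta$, a direct $H^s$‑energy estimate for the transport--diffusion equation~\eqref{eq:limeqn2VF} (as in the proof of Lemma~\ref{lem:parreg+tsp}(i), the diffusion being favourable), combined with the Kato--Ponce inequality (Lemma~\ref{lem:katoponce-1}) and the Sobolev embedding $H^s\hookrightarrow\Ld^\infty$ ($s>1$) to bound the source $\|a\omega^t(-\alpha(\Psi+v^t)+\beta(\Psi+v^t)^\bot)\|_{H^s}\le C(1+\|\omega^t\|_{H^s}+\|\zeta^t\|_{H^s})$ — by splitting off the $\bar v^\circ$‑reference part and using $\|v^t-\bar v^\circ\|_{\Ld^\infty}+\|\omega^t\|_{\Ld^\infty}\le C$ and Lemma~\ref{lem:reconstr} — gives $\partial_t(\|\zeta^t\|_{H^s}^2)\le C(1+\|\omega^t\|_{H^s}^2+\|\zeta^t\|_{H^s}^2)$.

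Adding these, $\mathcal W(t):=1+\|\omega^t\|_{H^s}^2+\|\zeta^t\|_{H^s}^2$ satisfies $\mathcal W'(t)\le C(1+\log\mathcal W(t))\mathcal W(t)$ with $\mathcal W(0)=1+\|\omega^\circ\|_{H^s}^2+\|\zeta^\circ\|_{H^s}^2\le C$; since $\int_2^\infty\frac{dr}{r(1+\log r)}=\infty$, this Osgood‑type inequality integrates to $\mathcal W(t)\le\exp(-1+(1+\log\mathcal W(0))e^{Ct})$, which is finite and of the required form (the a priori finiteness of $\mathcal W$ on $[0,T)$ making the computation rigorous). This bounds $\|\omega^t\|_{H^s}$ and $\|\zeta^t\|_{H^s}$; Lemma~\ref{lem:reconstr} then bounds $\|v^t-\bar v^\circ\|_{H^{s+1}}$, and the second paragraph bounds $\|\nabla v^t\|_{\Ld^\infty}$. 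The hard part is the second paragraph: a merely polynomial control of $\|\nabla v\|_{\Ld^\infty}$ by the $H^s$ norms would force a quadratic, hence finite‑time singular, bound in the $\omega$‑equation, so one genuinely needs the logarithmic gain of the critical potential estimate Lemma~\ref{lem:singint-1}(iii), and one must keep the dependence on $\|\omega\|_{H^s}$, $\|\zeta\|_{H^s}$ exactly linear — which is where the reconstruction Lemma~\ref{lem:reconstr} is essential — for the Osgood closure to go through.
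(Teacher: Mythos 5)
Your proof is correct and follows essentially the same route as the paper's: the same three pillars — the critical potential estimate of Lemma~\ref{lem:singint-1}(iii) for the logarithmic bound on $\|\nabla v^t\|_{\Ld^\infty}$, the Kato--Ponce commutator estimate (Lemma~\ref{lem:katoponce}) for the transport equation of $\omega$, the parabolic energy estimate of Lemma~\ref{lem:parreg+tsp} for $\zeta$ — and closure by an Osgood/Gr\"onwall argument. The only difference is organizational: you carry a differential inequality for $\|\zeta^t\|_{H^s}^2$ in parallel with the one for $\|\omega^t\|_{H^s}^2$ and sum them before integrating, whereas the paper first produces the time-uniform bound $\|\theta^t-\bar\theta^\circ\|_{H^s}\lesssim 1+\|\omega\|_{\Ld^\infty_tH^s}$ (Step~3, with a small induction on the regularity index to remove the $\|\zeta\|_{H^{s-1}}$ term appearing from Lemma~\ref{lem:reconstr}), then passes to $\sup_{[0,t]}$ in the $\omega$-inequality. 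Your coupled-system variant slightly streamlines this by dispensing with that induction, at the cost of having to state a differential rather than integrated form of the parabolic estimate, but both rest on the identical cancellations, and the resulting constants depend on exactly the data listed in the statement.
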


\begin{proof}
We set $\theta:=\Div v$, $\bar\theta^\circ:=\Div\bar  v^\circ$. In this proof, we use the notation $\lesssim$ for $\le$ up to a constant $C>0$ as in the statement.
We focus on the compressible case~\eqref{eq:limeqn2}, the other case being similar and simpler. We split the proof into four steps.

\medskip
\noindent\step1 Time derivative of $\|\omega\|_{H^s}$: for all $s\ge0$ and $t\in[0,T)$,
\begin{align*}
\partial_t\|\omega^t\|_{H^s}&\lesssim(1+\|\nabla v^t\|_{\Ld^\infty})(1+\|\omega^t\|_{H^s})+\|\theta^t-\bar\theta^\circ\|_{H^s}.
\end{align*}

Lemma~\ref{lem:katoponce} with $\rho=\omega$, $w=\alpha(\Psi+ v)^\bot+\beta(\Psi+ v)$, and $W=\alpha(\Psi+\bar  v^\circ)^\bot+\beta(\Psi+\bar  v^\circ)$ yields
\begin{align}\label{eq:pre-step1-lem-conservSob}
\partial_t\|\omega^t\|_{H^s}&\lesssim(1+\|\nabla v^t\|_{\Ld^{\infty}})\|\omega^t\|_{H^s}+\| v^t-\bar v^\circ\|_{H^{s+1}}\|\omega^t\|_{\Ld^\infty}.
\end{align}
Using Lemma~\ref{lem:reconstr}, noting that $\|(\omega^t-\bar\omega^\circ,\theta^t-\bar\theta^\circ)\|_{\dot H^{-1}}\lesssim\| v^t-\bar  v^\circ\|_{\Ld^2}$, and using Lemma~\ref{lem:aprioriest}(iii) in the form $\| v^t-\bar  v^\circ\|_{\Ld^2}\lesssim1$, we obtain
\[\| v^t-\bar v^\circ\|_{H^{s+1}}\lesssim \|\omega^t-\bar\omega^\circ\|_{\dot H^{-1}\cap H^s}+\|\theta^t-\bar\theta^\circ\|_{\dot H^{-1}\cap H^s}\lesssim1+\|\omega^t-\bar\omega^\circ\|_{H^s}+\|\theta^t-\bar\theta^\circ\|_{H^s}.\]
Injecting this into~\eqref{eq:pre-step1-lem-conservSob}, the claim follows from Lemma~\ref{lem:Lpestdiv} and the boundedness assumption $\|\omega\|_{\Ld^\infty_T\Ld^\infty}\lesssim1$.

\medskip
\noindent\step2 Lipschitz estimate for $ v$: for all $s>1$ and $t\in[0,T)$,
\begin{align}
\|\nabla v^t\|_{\Ld^\infty}&\lesssim\log(2+\|\omega^t\|_{H^s}+\|\theta^t-\bar\theta^\circ\|_{H^s}).\label{eq:v^tnablaL^infty0}
\end{align}

Since $ v^t-\bar  v^\circ=\nabla^\bot\triangle^{-1}(\omega^t-\bar\omega^\circ)+\nabla\triangle^{-1}(\theta^t-\bar\theta^\circ)$, Lemma~\ref{lem:singint-1}(iii) yields, together with the Sobolev embedding of $H^s$ into a Hölder space for all $s>1$,
\begin{align*}
\|\nabla ( v^t-\bar  v^\circ)\|_{\Ld^\infty}&\le\|\nabla^2\triangle^{-1}(\omega^t-\bar\omega^\circ)\|_{\Ld^\infty}+\|\nabla^2\triangle^{-1}(\theta^t-\bar\theta^\circ)\|_{\Ld^\infty}\\
&\lesssim\|\omega^t-\bar\omega^\circ\|_{\Ld^\infty}\log(2+{\|\omega^t-\bar\omega^\circ\|_{H^s}})+\|\omega^t-\bar\omega^\circ\|_{\Ld^1}\\
&\hspace{3cm}+\|\theta^t-\bar\theta^\circ\|_{\Ld^\infty}\log(2+\|\theta^t-\bar\theta^\circ\|_{H^s})+\|\theta^t-\bar\theta^\circ\|_{\Ld^2},
\end{align*}
and the claim~\eqref{eq:v^tnablaL^infty0} then follows from Lemma~\ref{lem:aprioriest}(i), Lemma~\ref{lem:Lpestdiv}, and the boundedness assumption on $\omega$.

\medskip
\noindent\step3 Sobolev estimate for $\theta$: for all $s\ge0$ and $t\in[0,T)$,
\begin{align}\label{eq:v^tnablaL^infty00bis}
\|\theta^t-\bar\theta^\circ\|_{H^{s}}\lesssim1+\|\omega\|_{\Ld^\infty_tH^s}.
\end{align}

As $\zeta$ satisfies the transport-diffusion equation~\eqref{eq:limeqn2VF}, Lemma~\ref{lem:parreg+tsp}(i) gives for all $s\ge0$,
\begin{align*}
\|\zeta^t\|_{H^{s}}&\lesssim \|\zeta^\circ\|_{H^{s}}+\|a\omega(-\alpha(\Psi+ v)+\beta(\Psi+ v)^\bot)\|_{\Ld^2_tH^{s}}.
\end{align*}
Using Lemma~\ref{lem:katoponce-1} to estimate the right-hand side, we find for all $s\ge0$,
\begin{align*}
\|\zeta^t\|_{H^{s}}&\lesssim1+\|a\omega(-\alpha( v-\bar  v^\circ)+\beta( v-\bar  v^\circ)^\bot)\|_{\Ld^2_tH^{s}}+\|a\omega(-\alpha(\Psi+\bar  v^\circ)+\beta(\Psi+\bar  v^\circ)^\bot)\|_{\Ld^2_tH^{s}}\\
&\lesssim1+\|\omega\|_{\Ld^\infty_t\Ld^\infty}\| v-\bar  v^\circ\|_{\Ld^2_tH^s}+\|\omega\|_{\Ld^2_tH^s}\| v-\bar  v^\circ\|_{\Ld^\infty_t\Ld^\infty}\\
&\hspace{3cm}+\|\omega\|_{\Ld^2_t\Ld^2}(1+\|\bar  v^\circ\|_{W^{s,\infty}})+\|\omega\|_{\Ld^2_tH^s}(1+\|\bar  v^\circ\|_{\Ld^\infty}),
\end{align*}
and hence, by Lemma~\ref{lem:Lpestdiv} and the boundedness assumption on $\omega$,
\begin{align}\label{eq:v^tnablaL^infty0ter}
\|\zeta^t\|_{H^s}&\lesssim1+\|\omega\|_{\Ld^\infty_tH^s}+\| v-\bar  v^\circ\|_{\Ld^\infty_tH^s}.
\end{align}
Lemma~\ref{lem:reconstr} then yields for all $s\ge0$,
\begin{align*}
\|\zeta^t\|_{H^s}&\lesssim1+\|\omega\|_{\Ld^\infty_tH^s}+\|\omega-\bar\omega^\circ\|_{\Ld^\infty_t(\dot H^{-1}\cap H^{s-1})}+\|\zeta-\bar\zeta^\circ\|_{\Ld^\infty_t(\dot H^{-1}\cap H^{s-1})}.
\end{align*}
Noting that $\|(\omega-\bar\omega^\circ,\zeta-\bar\zeta^\circ)\|_{\dot H^{-1}}\lesssim\| v-\bar  v^\circ\|_{\Ld^2}$, and using Lemma~\ref{lem:aprioriest}(iii) in the form $\| v-\bar  v^\circ\|_{\Ld^2}\lesssim1$, we deduce
\begin{align*}
\|\zeta^t\|_{H^s}&\lesssim1+\|\omega\|_{\Ld^\infty_tH^s}+\|\zeta\|_{\Ld^\infty_tH^{s-1}}.
\end{align*}
Taking the supremum in time, we find by induction $\|\zeta\|_{\Ld^\infty_tH^s}\lesssim1+\|\omega\|_{\Ld^\infty_tH^s}+\|\zeta\|_{\Ld^\infty_t\Ld^2}$ for all $s\ge0$.
Recalling that Lemma~\ref{lem:Lpestdiv} gives $\|\theta^t-\bar \theta^\circ\|_{\Ld^2}\lesssim1$, and using the identity $\theta^t=a^{-1}\zeta^t-\nabla h\cdot v^t$, the claim~\eqref{eq:v^tnablaL^infty00bis} directly follows.

\medskip
\noindent\step4 Conclusion.
\nopagebreak

Combining the results of the three previous steps yields, for all $s>1$,
\begin{align*}
\partial_t\|\omega^t\|_{H^s}&\lesssim (1+\|\omega^t\|_{H^s})\log(2+\|\omega^t\|_{H^s}+\|\theta^t-\bar \theta^\circ\|_{H^s})+\|\theta^t-\bar \theta^\circ\|_{H^s}\\
&\lesssim(1+\|\omega\|_{\Ld^\infty_tH^s}) \log(2+\|\omega\|_{\Ld^\infty_tH^s}),
\end{align*}
hence
\begin{align*}
\partial_t\|\omega\|_{\Ld^\infty_tH^s}\le\sup_{[0,t]}\partial_t\|\omega\|_{H^s}\lesssim (1+\|\omega\|_{\Ld^\infty_tH^s})\log(2+\|\omega\|_{\Ld^\infty_tH^s}),
\end{align*}
and the Grönwall inequality then gives $\|\omega\|_{\Ld^\infty_tH^s}\lesssim1$. Combining this with~\eqref{eq:v^tnablaL^infty0}, \eqref{eq:v^tnablaL^infty00bis} and~\eqref{eq:v^tnablaL^infty0ter}, and recalling the identity $ v^t-\bar  v^\circ=\nabla^\bot\triangle^{-1}(\omega^t-\bar \omega^\circ)+\nabla\triangle^{-1}(\theta^t-\bar \theta^\circ)$, the conclusion follows.
\end{proof}

We now turn to the propagation of Hölder regularity. More precisely, we consider the Besov spaces $C^s_*(\R^2):=B^s_{\infty,\infty}(\R^2)$. Recall that these spaces coincide with the usual Hölder spaces $C^{s}_b(\R^2)$ only for non-integer $s\ge0$ (for integer $s>0$, they are strictly larger and coincide with the corresponding Zygmund spaces).

\begin{lem}[Hölder-Zygmund regularity]\label{lem:conservHold}
Let $s>0$. Let $\lambda>0$, $\alpha\ge0$, $\beta\in\R$, $T>0$, and $h,\Psi, v^\circ\in C^{s+1}_*(\R^2)^2$ with $\omega^\circ:=\curl v^\circ\in \Pc(\R^2)$, and with either $\Div(a v^\circ)=0$ in the case~\eqref{eq:limeqn1}, or $\zeta^\circ:=\Div(a v^\circ)\in \Ld^2(\R^2)$ in the case~\eqref{eq:limeqn2}. Let $ v\in\Ld^\infty([0,T];C^{s+1}_*(\R^2)^2)$ be a weak solution of~\eqref{eq:limeqn1} or of~\eqref{eq:limeqn2} on $[0,T)\times\R^2$ with initial data $ v^\circ$.
Then we have for all $t\in[0,T)$,
\[\|\omega^t\|_{C_*^s}\le C,\qquad \|\zeta^t\|_{C_*^{s}}\le C,
\qquad\| v^t\|_{C_*^{s+1}}\le C,\]
where the constant $C$ depends only on an upper bound on
$s$, $s^{-1}$, $\alpha$, $|\beta|$, $\lambda$, $\lambda^{-1}$, $T$, $\|(h,\Psi, v^\circ)\|_{C^{s+1}_*}$, $\|\zeta^\circ\|_{\Ld^2}$, $\|\omega\|_{\Ld^\infty_T\Ld^\infty}$, and additionally on $\alpha^{-1}$ in the case $\alpha>0$.
\end{lem}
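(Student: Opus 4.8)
The plan is to mirror, almost step for step, the proof of Lemma~\ref{lem:conservSob}, replacing every Sobolev norm $H^s$ by the H\"older--Zygmund (Besov) norm $C^s_*=B^s_{\infty,\infty}$ and the Sobolev embedding by the inclusions $C^{s+1}_*\hookrightarrow W^{1,\infty}$ and $C^s_*\hookrightarrow\Ld^\infty$, both valid since $s>0$. Three things are available at the outset: $\omega^t$ is bounded in $\Ld^\infty$ uniformly on $[0,T)$ by hypothesis; $\omega^\circ\in\Pc$ forces $\|\omega^t\|_{\Ld^1}=1$ by Lemma~\ref{lem:aprioriest}(i), hence $\|\omega\|_{\Ld^\infty_T(\Ld^1\cap\Ld^\infty)}\le C$; and $\zeta^\circ=\Div(av^\circ)\in C^s_*\subset\Ld^\infty$ as a product of $C^{s+1}_*$ fields, so in particular $\zeta^\circ\in\Ld^2\cap\Ld^\infty$. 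Setting $\theta:=\Div v$, the four steps are: (1) an a priori bound for $\partial_t\|\omega^t\|_{C^s_*}$; (2) a logarithmic Lipschitz bound for $v$ and a linear $C^{s+1}_*$-bound for $v$; (3) a $C^s_*$-bound for $\zeta$ (hence for $\theta=a^{-1}\zeta-\nabla h\cdot v$) in terms of $\|\omega\|_{\Ld^\infty_tC^s_*}$; and (4) a superlinear Gr\"onwall argument.

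For Step~1 I would write~\eqref{eq:limeqn1VF} as $\partial_t\omega+w\cdot\nabla\omega=\omega\Div w$ with $w=\alpha(\Psi+v)^\bot+\beta(\Psi+v)$ and $\Div w=-\alpha(\curl\Psi+\omega)+\beta(\Div\Psi+\theta)$, and invoke the Besov transport/commutator estimate (the $B^s_{\infty,\infty}$ analogue of Lemma~\ref{lem:katoponce}, see~\cite{BCD-11}) together with the product rule $\|fg\|_{C^s_*}\lesssim\|f\|_{C^s_*}\|g\|_{\Ld^\infty}+\|f\|_{\Ld^\infty}\|g\|_{C^s_*}$; using $\|\omega^t\|_{\Ld^\infty}\le C$ this gives
\[\partial_t\|\omega^t\|_{C^s_*}\lesssim(1+\|\nabla v^t\|_{\Ld^\infty})\|\omega^t\|_{C^s_*}+\|v^t\|_{C^{s+1}_*}+\|\theta^t\|_{C^s_*},\]
the $\|v^t\|_{C^{s+1}_*}$ term arising because the commutator gains one derivative that falls on $w$ while $\omega$ is only needed in $\Ld^\infty$. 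For Step~2, from the reconstruction $v^t=a^{-1}\nabla^\bot(\Div a^{-1}\nabla)^{-1}\omega^t+\nabla(\Div a\nabla)^{-1}\zeta^t$ of Lemma~\ref{lem:reconstr} (its Biot--Savart part being bounded for a probability measure, its divergence part controlled by the elliptic $\Ld^p$-estimates of Lemma~\ref{lem:globellreg}(i)), together with the borderline estimate Lemma~\ref{lem:singint-1}(iii) applied to $\nabla v^t=\nabla\nabla^\bot\triangle^{-1}\omega^t+\nabla^2\triangle^{-1}\theta^t$ (up to lower-order $\nabla h$ errors) and the $\Ld^p$-based potential estimates of Lemma~\ref{lem:pottheoryCsHs}(ii), one obtains
\[\|\nabla v^t\|_{\Ld^\infty}\lesssim\log\bigl(2+\|\omega^t\|_{C^s_*}+\|\theta^t\|_{C^s_*}\bigr),\qquad\|v^t\|_{C^{s+1}_*}\lesssim1+\|\omega^t\|_{C^s_*}+\|\theta^t\|_{C^s_*}.\]
Along the way one also records, as the H\"older analogue of Lemma~\ref{lem:Lpestdiv}, the bounds $\|\zeta^t\|_{\Ld^2\cap\Ld^\infty}+\|v^t\|_{\Ld^\infty}\lesssim1$, using Lemma~\ref{lem:parreg+tsp}(i),(iii), Lemma~\ref{lem:singint-1}(i)--(ii) and Lemma~\ref{lem:aprioriest}.

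For Step~3 I would treat the transport--diffusion equation~\eqref{eq:limeqn2VF} by Duhamel against the heat semigroup, using $\|e^{\tau\lambda\triangle}\Div g\|_{C^s_*}\lesssim\tau^{-1/2}\|g\|_{C^s_*}$ and absorbing the drift $\lambda\Div(\zeta\nabla h)$ by Gr\"onwall, to get
\[\|\zeta^t\|_{C^s_*}\lesssim\|\zeta^\circ\|_{C^s_*}+\int_0^t(t-u)^{-1/2}\|a\omega^u(-\alpha(\Psi+v^u)+\beta(\Psi+v^u)^\bot)\|_{C^s_*}\,du;\]
then, expanding the product in $C^s_*$, feeding in $\|v^u\|_{C^s_*}\le\|v^u\|_{C^{s+1}_*}$ via the potential estimates applied to the reconstruction formula, and bootstrapping on $s$ exactly as in the proof of Lemma~\ref{lem:conservSob}, one obtains $\|\theta^t\|_{C^s_*}\lesssim\|\zeta^t\|_{C^s_*}\lesssim1+\|\omega\|_{\Ld^\infty_tC^s_*}$. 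Combining Steps~1--3 yields, for $t\in[0,T)$,
\[\partial_t\|\omega\|_{\Ld^\infty_tC^s_*}\lesssim\bigl(1+\|\omega\|_{\Ld^\infty_tC^s_*}\bigr)\log\bigl(2+\|\omega\|_{\Ld^\infty_tC^s_*}\bigr);\]
the associated log-type (Osgood) Gr\"onwall inequality gives $\|\omega\|_{\Ld^\infty_tC^s_*}\le C$ on $[0,T)$, and substituting back into Steps~2--3 gives the stated bounds for $\zeta$ and for $v\in C^{s+1}_*$.

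The hard part will be Step~3 together with the global-in-space bookkeeping. Unlike in Lemma~\ref{lem:conservSob}, here $v$ does not decay in $\Ld^2$ (only like $|x|^{-1}$), so the $\dot H^{-1}$-based reconstruction bounds of Lemma~\ref{lem:reconstr} are not available and one must work throughout with the $\Ld^p$- and $\Ld^\infty$-based potential estimates (Lemmas~\ref{lem:singint-1}, \ref{lem:pottheoryCsHs}(ii), \ref{lem:globellreg}(i)), keeping track that $\zeta$ and $\theta$ are controlled only in $\Ld^2\cap\Ld^\infty$, respectively $\Ld^2+\Ld^\infty$, rather than in $\Ld^1$; correspondingly the Besov maximal-regularity estimate for~\eqref{eq:limeqn2VF} must be arranged without any derivative loss, the $\tau^{-1/2}$ singularity in the Duhamel formula being exactly time-integrable. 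The concluding Gr\"onwall step is standard Osgood type once these estimates are in hand.
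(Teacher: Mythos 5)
Your four-step skeleton (transport/transport--diffusion estimates in Besov norms, borderline potential estimates, bootstrap in $s$, Osgood--Gr\"onwall) matches the paper's proof of Lemma~\ref{lem:conservHold}, and Steps~1, 3, 4 are essentially the same argument as in the paper; the $\Ld^\infty$-bound on $\omega$ and the inclusion $C^{s-1}_*\hookrightarrow\Ld^\infty$ for $s\le 1$ are exactly what drive the induction and the log-type Gr\"onwall. However, the closing paragraph contains a genuine misconception that colours Step~2. You assert that because $v\notin\Ld^2(\R^2)$, the $\dot H^{-1}$-based reconstruction bounds are unavailable, and propose to fall back on the $\Ld^p/\Ld^\infty$-based potential estimates throughout. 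But this worry does not arise in the paper: the relative energy estimate of Lemma~\ref{lem:aprioriest}(iii), with $\bar v^\circ:=v^\circ$ (permissible since $v^\circ\in C^{s+1}_*\subset W^{1,\infty}$ and $\omega^\circ\in\Pc\cap\Ld^\infty\subset\Ld^2$), yields $\|v^t-v^\circ\|_{\Ld^2}\lesssim 1$, hence $\|(\omega^t-\omega^\circ,\theta^t-\theta^\circ)\|_{\dot H^{-1}}\lesssim 1$. The paper therefore applies the \emph{first} pair of estimates in Lemma~\ref{lem:pottheoryCsHs}(ii) to the decomposition $v^t-v^\circ=\nabla^\bot\triangle^{-1}(\omega^t-\omega^\circ)+\nabla\triangle^{-1}(\theta^t-\theta^\circ)$, and the $\dot H^{-1}$-terms are harmless constants. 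This is both simpler and sharper than working with the $\Ld^p$-based alternatives.

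A related slip: in Step~2 you invoke the formula $v^t=a^{-1}\nabla^\bot(\Div a^{-1}\nabla)^{-1}\omega^t+\nabla(\Div a\nabla)^{-1}\zeta^t$ ``of Lemma~\ref{lem:reconstr}''. As you yourself note later, $\omega^t$ is merely a probability measure, not a priori in $\dot H^{-1}(\R^2)$, so $(\Div a^{-1}\nabla)^{-1}\omega^t$ is not defined in the sense of that lemma; this expression cannot be used as written. The fix is again to work with differences: $v^t-v^\circ$ satisfies $\curl(v^t-v^\circ)=\omega^t-\omega^\circ\in\dot H^{-1}$ and $\Div(v^t-v^\circ)=\theta^t-\theta^\circ\in\dot H^{-1}$, so one decomposes $v^t-v^\circ$ (not $v^t$) via $\triangle^{-1}$, and reconstructs all $C^{s+1}_*$-bounds and the log-Lipschitz bound on $\nabla v^t$ from Lemma~\ref{lem:pottheoryCsHs}(ii) applied to these differences. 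Your route via the second pair of estimates in Lemma~\ref{lem:pottheoryCsHs}(ii) (with $\omega^t\in\Ld^1\cap\Ld^\infty$ and $\theta^t-\theta^\circ\in\Ld^2\cap\Ld^\infty$ by Lemma~\ref{lem:Lpestdiv}) could presumably also be made to work, at the cost of tracking extra $\Ld^p$-bounds and of ensuring the reconstruction formula is taken in a well-defined form, but the paper's difference trick renders all of that unnecessary.
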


\begin{proof}
We set $\theta:=\Div v$. In this proof, we use the notation $\lesssim$ for $\le$ up to a constant $C>0$ as in the statement.
We may focus on the compressible equation~\eqref{eq:limeqn2}, the other case being similar and simpler.
We split the proof into four steps, and make a systematic use of the standard Besov machinery as presented in~\cite{BCD-11}.

\medskip
\noindent\step1 Time derivative of $\|\omega^t\|_{C^s_*}$: for all $s>0$ and $t\in[0,T)$,
\begin{align*}
\partial_t\|\omega^t\|_{C^s_*}&\lesssim (1+\|\omega^t\|_{C^s_*})(1+\|\nabla v^t\|_{\Ld^\infty\cap C^{s-1}_*})+\|\theta^t\|_{C^s_*}.
\end{align*}

The transport equation~\eqref{eq:limeqn1VF} has the form $\partial_t\omega^t=\Div(\omega^tw^t)$ with $w^t=\alpha(\Psi+ v^t)^\bot+\beta(\Psi+ v^t)$. Arguing as in~\cite[Chapter~3.2]{BCD-11} (that is, similarly as in the proof of Lemma~\ref{lem:katoponce}, but using the corresponding commutator estimates in Besov spaces~\cite[Lemma~2.100]{BCD-11}), we obtain for all $s>0$,
\begin{align*}
\partial_t\|\omega^t\|_{C^s_*}\lesssim \|\omega^t\|_{C^s_*}\|\nabla w^t\|_{\Ld^\infty\cap C^{s-1}_*}+\|\omega^t\Div w^t\|_{C^s_*}.
\end{align*}
Using the usual product rules~\cite[Corollary~2.86]{BCD-11} for all $s>0$,
\begin{align*}
\partial_t\|\omega^t\|_{C^s_*}&\lesssim \|\omega^t\|_{C^s_*}\|\nabla w^t\|_{\Ld^\infty\cap C^{s-1}_*}+\|\omega^t\|_{\Ld^\infty}\|\Div w^t\|_{C^s_*}+\|\omega^t\|_{C^s_*}\|\Div w^t\|_{\Ld^\infty}\\
&\lesssim \|\omega^t\|_{C^s_*}(1+\|\nabla v^t\|_{\Ld^\infty\cap C^{s-1}_*})+\|\omega^t\|_{\Ld^\infty}(1+\|\omega^t\|_{C^s_*}+\|\theta^t\|_{C^s_*}),
\end{align*}
and the result follows from the boundedness assumption $\|\omega\|_{\Ld^\infty_T\Ld^\infty}\lesssim1$.

\medskip
\noindent\step2 Lipschitz estimate for $ v$: for all $s>0$ and $t\in[0,T)$,
\[\|\nabla v^t\|_{\Ld^\infty\cap C_*^{s-1}}\lesssim\|\omega^t\|_{C_*^{s-1}}+\|\theta^t\|_{C_*^{s-1}}+\log(2+\|\omega^t\|_{C^s_*}+\|\theta^t\|_{C^s_*}).\]

Since $ v^t- v^\circ=\nabla^\bot\triangle^{-1}(\omega^t-\omega^\circ)+\nabla\triangle^{-1}(\theta^t-\theta^\circ)$, Lemma~\ref{lem:pottheoryCsHs}(ii) yields for all $s\in\R$,
\begin{align*}
\|\nabla v^t\|_{C_*^{s-1}}\lesssim 1+\|\omega^t-\omega^\circ\|_{\dot H^{-1}\cap C_*^{s-1}}+\|\theta^t-\theta^\circ\|_{\dot H^{-1}\cap C_*^{s-1}},
\end{align*}
and thus, noting that $\|(\omega-\omega^\circ,\theta-\theta^\circ)\|_{\dot H^{-1}}\lesssim\| v- v^\circ\|_{\Ld^2}$, and using Lemma~\ref{lem:aprioriest}(iii) in the form $\| v- v^\circ\|_{\Ld^2}\lesssim1$,
\begin{align*}
\|\nabla v^t\|_{C_*^{s-1}}\lesssim 1+\|\omega^t\|_{C_*^{s-1}}+\|\theta^t\|_{C_*^{s-1}}.
\end{align*}
Arguing as in Step~2 of the proof of Lemma~\ref{lem:conservSob} further yields for all $s>0$,
\begin{align*}
\|\nabla v^t\|_{\Ld^\infty}\lesssim \log(2+\|\omega^t\|_{C^s_*}+\|\theta^t-\theta^\circ\|_{C^s_*}),
\end{align*}
and the result follows.

\medskip
\noindent\step3 Estimate for $\theta$: for all $s>0$ and $t\in[0,T)$,
\begin{align*}
\|\theta^t\|_{C^{s}_*}\lesssim1+\|\omega\|_{\Ld^\infty_tC^{s-1}_*}.
\end{align*}

As $\zeta$ satisfies the transport-diffusion equation~\eqref{eq:limeqn2VF}, we obtain for all $s>0$, arguing as in~\cite[Chapter~3.4]{BCD-11},
\begin{align*}
\|\zeta^t\|_{C^s_*}
&\lesssim\|\zeta^\circ\|_{C^s_*}+\|a\omega(-\alpha(\Psi+ v)+\beta(\Psi+ v)^\bot)\|_{\Ld^\infty_tC^{s-1}_*},
\end{align*}
and thus, by the usual product rules~\cite[Corollary~2.86]{BCD-11}, the boundedness assumption on $\omega$, and Lemma~\ref{lem:Lpestdiv}, we deduce for all $s>0$,
\begin{align*}
\|\zeta^t\|_{C^s_*}&\lesssim1+\|\omega\|_{\Ld^\infty_t(\Ld^\infty\cap C^{s-1}_*)}(1+\| v\|_{\Ld^\infty_t\Ld^\infty})+\|\omega\|_{\Ld^\infty_t\Ld^\infty}(1+\| v\|_{\Ld^\infty_t(\Ld^\infty\cap C^{s-1}_*)})\\
&\lesssim1+\|\omega\|_{\Ld^\infty_tC^{s-1}_*}+\| v\|_{\Ld^\infty_t C^{s-1}_*},
\end{align*}
or alternatively, in terms of $\theta^t=a^{-1}\zeta^t-\nabla h\cdot v^t$,
\begin{align*}
\|\theta^t\|_{C^s_*}&\lesssim\|\zeta^t\|_{\Ld^\infty\cap C^s_*}+\| v^t\|_{\Ld^\infty\cap C^s_*}\lesssim 1+\|\omega\|_{\Ld^\infty_tC^{s-1}_*}+\| v\|_{\Ld^\infty_t C^s_*}.
\end{align*}
Decomposing $ v^t- v^\circ=\nabla^\bot\triangle^{-1}(\omega^t-\omega^\circ)+\nabla\triangle^{-1}(\theta^t-\theta^\circ)$, using Lemma~\ref{lem:pottheoryCsHs}(ii), and again Lemma~\ref{lem:aprioriest}(iii) in the form $\|(\omega-\omega^\circ,\theta-\theta^\circ)\|_{\dot H^{-1}}\lesssim\| v- v^\circ\|_{\Ld^2}\lesssim1$, we find
\begin{align*}
\| v^t\|_{C_*^{s}}\lesssim 1+\|\omega^t-\omega^\circ\|_{\dot H^{-1}\cap C_*^{s-1}}+\|\theta^t-\theta^\circ\|_{\dot H^{-1}\cap C_*^{s-1}}\lesssim1+\|\omega^t\|_{C_*^{s-1}}+\|\theta^t\|_{C_*^{s-1}},
\end{align*}
and hence
\begin{align*}
\|\theta\|_{\Ld^\infty_tC^s_*}&\lesssim1+\|\omega\|_{\Ld^\infty_tC^{s-1}_*}+\|\theta\|_{\Ld^\infty_tC_*^{s-1}}.
\end{align*}
If $s\le1$, then we have $\|\cdot\|_{C^{s-1}_*}\lesssim\|\cdot\|_{\Ld^\infty}$, so that the above estimate, the boundedness assumption on $\omega$, and Lemma~\ref{lem:Lpestdiv} yield $\|\theta\|_{\Ld^\infty_t C^s_*}\lesssim1$. The result for $s>1$ then follows by induction.

\medskip
\noindent\step4 Conclusion.

Combining the results of the three previous steps yields, for all $s>0$,
\begin{align*}
\partial_t\|\omega\|_{\Ld^\infty_t C^s_*}\le\sup_{[0,t]}\partial_t\|\omega\|_{C^s_*}&\lesssim(1+ \|\omega\|_{\Ld^\infty_tC^s_*})\big(\|\omega\|_{\Ld^\infty_tC_*^{s-1}}+\|\theta\|_{\Ld^\infty_tC_*^{s-1}}+\log(2+\|\omega^t\|_{C^s_*}+\|\theta^t\|_{C^s_*})\big)+\|\theta\|_{\Ld^\infty_tC^s_*}\\
&\lesssim (1+\|\omega\|_{\Ld^\infty_tC^s_*})\big(\|\omega\|_{\Ld^\infty_tC_*^{s-1}}+\log(2+\|\omega\|_{\Ld^\infty_tC^s_*})\big).
\end{align*}
If $s\le1$, then we have $\|\cdot\|_{C^{s-1}_*}\lesssim\|\cdot\|_{\Ld^\infty}$, so that the above estimate and the boundedness assumption on $\omega$ yield $\partial_t\|\omega\|_{\Ld^\infty_t C^s_*}\lesssim (1+\|\omega\|_{\Ld^\infty_tC^s_*})\log(2+\|\omega\|_{\Ld^\infty_tC^s_*})$, hence $\|\omega\|_{\Ld^\infty_tC^s_*}\lesssim1$ by the Grönwall inequality. The conclusion for $s>1$ then follows by induction.
\end{proof}

\subsection{Global existence of solutions}\label{chap:globalpr}

With Lemma~\ref{lem:conservSob} at hand, together with the a priori bounds of Lemmas~\ref{lem:Lpvort} and~\ref{lem:Lpest} on the vorticity, it is now straightforward to deduce the following global existence result from the local existence statement of Proposition~\ref{prop:locexist}.

\begin{cor}[Global existence of smooth solutions]\label{cor:globexist1}
Let $s>1$. Let $\lambda>0$, $\alpha\ge0$, $\beta\in\R$, $h,\Psi,\bar  v^\circ\in W^{s+1,\infty}(\R^2)^2$, and $ v^\circ\in\bar  v^\circ+\Ld^2(\R^2)^2$, with $\omega^\circ:=\curl v^\circ,\bar\omega^\circ:=\curl \bar  v^\circ\in \Pc\cap H^{s}(\R^2)$, and with either $\Div(a v^\circ)=\Div(a\bar  v^\circ)=0$ in the case~\eqref{eq:limeqn1}, or $\zeta^\circ:=\Div(a v^\circ),\bar\zeta^\circ:=\Div(a\bar  v^\circ)\in H^s(\R^2)$ in the case~\eqref{eq:limeqn2}. Then,
\begin{enumerate}[(i)]
\item there exists a global weak solution $ v\in\Ld^\infty_\loc(\R^+;\bar  v^\circ+H^{s+1}(\R^2)^2)$ of~\eqref{eq:limeqn1} on $\R^+\times\R^2$ with initial data $ v^\circ$, and with $\omega:=\curl v\in \Ld^\infty_\loc(\R^+;\Pc\cap H^{ s}(\R^2))$;
\item if $\beta=0$, there exists a global weak solution $ v\in\Ld^\infty_\loc(\R^+;\bar  v^\circ+H^{ s+1}(\R^2)^2)$ of~\eqref{eq:limeqn2} on $\R^+\times\R^2$ with initial data $ v^\circ$, and with $\omega:=\curl v\in \Ld^\infty_\loc(\R^+;\Pc\cap H^{ s}(\R^2))$ and $\zeta:=\Div(a v)\in \Ld^\infty_\loc(\R^+; H^{ s}(\R^2))$.
\qedhere
\end{enumerate}
\end{cor}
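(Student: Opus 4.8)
The plan is to combine the local existence statement of Proposition~\ref{prop:locexist} with the a priori bounds of Section~\ref{chap:apriori} in a standard continuation argument; the point is that Lemmas~\ref{lem:Lpvort}, \ref{lem:Lpest} and~\ref{lem:conservSob} all furnish bounds that are uniform on any bounded time interval. First, Proposition~\ref{prop:locexist} gives some $T_0>0$ and a weak solution $v\in\Ld^\infty([0,T_0);\bar v^\circ+H^{s+1}(\R^2)^2)$ of~\eqref{eq:limeqn1} (respectively, of~\eqref{eq:limeqn2} when $\beta=0$) with initial data $v^\circ$. Since $s>1$ we have $H^{s+1}(\R^2)\hookrightarrow W^{1,\infty}(\R^2)$, so that $\omega:=\curl v\in\Ld^\infty_\loc([0,T_0);\Ld^\infty(\R^2))$, with $\omega^t\in\Pc(\R^2)$ by Lemma~\ref{lem:aprioriest}(i), and $\zeta:=\Div(av)\in\Ld^\infty_\loc([0,T_0);\Ld^2\cap\Ld^\infty(\R^2))$ in case~\eqref{eq:limeqn2}; moreover the equations control $\partial_tv$ in $\Ld^\infty_\loc$ of a Sobolev space, which together with $s>1$ places $v$ in $C^0_\loc([0,T_0);W^{1,\infty}(\R^2)^2)$. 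Let $T^*\in(0,\infty]$ be the supremum of all $T$ for which a solution with this regularity exists on $[0,T)$; I want to show $T^*=\infty$.

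Fix $T<T^*$. On $[0,T]$ the solution has the regularity just described, so the hypotheses of Lemmas~\ref{lem:Lpvort} and~\ref{lem:Lpest} hold, and I would extract a bound $\|\omega\|_{\Ld^\infty([0,T];\Ld^\infty)}\le C(T)$, with $C(T)$ depending only on the fixed data, by running through the admissible regimes: in case~\eqref{eq:limeqn1} with $\alpha>0$ and $\beta\neq0$, Lemma~\ref{lem:Lpvort}(i) at $p=\infty$ gives $\|\omega^t\|_{\Ld^\infty}\le\|\omega^\circ\|_{\Ld^\infty}+C$; in case~\eqref{eq:limeqn1} with $\beta=0$ or $h$ constant and in case~\eqref{eq:limeqn2} with $\beta=0$ (any $\alpha\ge0$), Lemma~\ref{lem:Lpest}(ii) gives $\|\omega^t\|_{\Ld^\infty}\le Ce^{Ct}\|\omega^\circ\|_{\Ld^\infty}$ (and when $\alpha>0$, Lemma~\ref{lem:Lpest}(iii) even yields the dispersive bound~\eqref{eq:flat1} for $t>0$); finally in case~\eqref{eq:limeqn1} with $\alpha=0$, Lemma~\ref{lem:Lpest}(ii) again applies. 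In each situation the resulting bound is independent of $T^*$.

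With $\|\omega\|_{\Ld^\infty_T\Ld^\infty}\le C(T)$ in hand, and using $\lambda>0$, Lemma~\ref{lem:conservSob} yields for all $t\in[0,T]$ the bound
\[
\|\omega^t\|_{H^s}+\|\zeta^t\|_{H^s}+\|v^t-\bar v^\circ\|_{H^{s+1}}\le C(T),
\]
again uniformly over $[0,T^*)$. Hence the ``initial norms'' $\|v^{t_0}-\bar v^\circ\|_{H^{s+1}}$, $\|\omega^{t_0}\|_{H^s}$, $\|\zeta^{t_0}\|_{H^s}$ of the solution at any time $t_0\in[0,T^*)$ are bounded independently of $t_0$, while $\|(\bar\omega^\circ,\bar\zeta^\circ)\|_{H^s}$ and $\|(h,\Psi,\bar v^\circ)\|_{W^{s+1,\infty}}$ are fixed. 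Assuming $T^*<\infty$, Proposition~\ref{prop:locexist} applied with initial datum $v^{t_0}$ (and reference map $\bar v^\circ$) then produces a solution on $[t_0,t_0+\delta)$ with $\delta>0$ bounded below uniformly in $t_0<T^*$; by the weak--strong uniqueness of Theorem~\ref{th:mainunique} (available since $\lambda>0$) this solution agrees with $v$ on $[t_0,T^*)$, so it extends $v$ past $T^*$ once $t_0$ is chosen with $T^*-t_0<\delta$, contradicting maximality. Therefore $T^*=\infty$, giving the asserted global solutions; the membership $\omega\in\Ld^\infty_\loc(\R^+;\Pc\cap H^s(\R^2))$ and $\zeta\in\Ld^\infty_\loc(\R^+;H^s(\R^2))$ follows from the same uniform bounds.

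The hard part is not really analytic — the estimates are already in place — but organizational: one must verify that for \emph{every} admissible $(\alpha,\beta,\lambda)$ in the statement at least one of Lemmas~\ref{lem:Lpvort}--\ref{lem:Lpest} delivers a time-uniform $\Ld^\infty$-bound on $\omega$; this is precisely why the mixed-flow \emph{compressible} regime ($\beta\neq0$ in~\eqref{eq:limeqn2}) is excluded, no such bound being available there. The only other point requiring a little care is checking that the smooth local solution of Proposition~\ref{prop:locexist} has enough regularity, in space and in time, to legitimately serve as input to Lemmas~\ref{lem:Lpvort}--\ref{lem:conservSob} — which is guaranteed by $s>1$ together with the regularity of $\partial_tv$ read off from the equations.
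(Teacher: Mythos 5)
Your proof is correct and follows essentially the same path as the paper's: local existence (Proposition~\ref{prop:locexist}), an $\Ld^\infty$ bound on the vorticity from Lemmas~\ref{lem:Lpvort}--\ref{lem:Lpest}, propagation of Sobolev regularity via Lemma~\ref{lem:conservSob}, and a continuation argument. The one place where you are actually more explicit than the paper (which dispatches item~(i) with ``the first item being completely similar'') is the mixed-flow incompressible regime $\alpha>0$, $\beta\neq0$, $h$ non-constant: there Lemma~\ref{lem:Lpest}(ii) does not apply, and one must fall back on Lemma~\ref{lem:Lpvort}(i) at $p=\infty$, as you correctly do; the constant $C_p$ there depends on $T$, which is harmless for the continuation argument since the bound stays finite on every bounded time interval. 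Two small cosmetic remarks: the Lipschitz-in-time hypothesis of Lemma~\ref{lem:Lpest} is a sufficient condition for the characteristics to be well-defined, and the $C^0_\loc([0,T);W^{1,\infty})$ regularity you obtain from interpolating $v\in\Ld^\infty_\loc H^{s+1}$ against $\partial_tv\in\Ld^\infty_\loc \Ld^2$ is enough in practice; and the weak-strong uniqueness you invoke for gluing is Theorem~\ref{th:mainunique}(i) in the incompressible case, which does not involve $\lambda$ at all -- your parenthetical ``available since $\lambda>0$'' is only pertinent to the compressible case~(ii).
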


\begin{proof}
We may focus on item~(ii), the first item being completely similar.
In this proof we use the notation $\simeq$ and $\lesssim$ for $=$ and $\le$ up to positive constants that depend only on an upper bound on $\alpha$, $\alpha^{-1}$, $|\beta|$, $\lambda$, $\lambda^{-1}$, $s$, $(s-1)^{-1}$, $\|(h,\Psi,\bar  v^\circ)\|_{W^{s+1,\infty}}$, $\| v^\circ-\bar  v^\circ\|_{\Ld^2}$, $\|(\omega^\circ,\bar\omega^\circ,\zeta^\circ,\bar\zeta^\circ)\|_{H^{s}}$.

Given $\bar  v^\circ\in W^{s+1,\infty}(\R^2)^2$ and $ v^\circ\in\bar  v^\circ+\Ld^2(\R^2)^2$ with $\omega^\circ,\bar\omega^\circ\in \Pc\cap H^{s}(\R^2)$ and $\zeta^\circ,\bar\zeta^\circ\in H^{s}(\R^2)$, Proposition~\ref{prop:locexist} gives a time $T>0$, $T\simeq1$, such that there exists a weak solution $ v\in\Ld^\infty([0,T);\bar  v^\circ+H^s(\R^2)^2)$ of~\eqref{eq:limeqn2} on $[0,T)\times\R^2$ with initial data $ v^\circ$. For all $t\in[0,T)$, Lemma~\ref{lem:Lpest}(ii) (with $\beta=0$) then gives $\|\omega^t\|_{\Ld^\infty}\lesssim1$, which implies by Lemma~\ref{lem:conservSob},
\begin{align*}
\|\omega^t\|_{H^s}+\|\zeta^t\|_{H^s}+\| v^t-\bar  v^\circ\|_{H^{s+1}}\lesssim1,
\end{align*}
and moreover by Lemma~\ref{lem:aprioriest}(i) we have $\omega^t\in\Pc(\R^2)$ for all $t\in[0,T)$. These a priori estimates show that the solution $ v$ can be extended globally in time.
\end{proof}

We now extend this global existence result beyond the setting of smooth initial data. We start with the following result for $\Ld^2$-data, which is easily deduced by approximation.

\begin{cor}[Global existence for $\Ld^2$-data]\label{cor:globexist2}
Let $\lambda>0$, $\alpha\ge0$, $\beta\in\R$, $h,\Psi\in W^{1,\infty}(\R^2)^2$. Let $\bar  v^\circ\in W^{1,\infty}(\R^2)^2$ be some reference map with $\bar \omega^\circ:=\curl\bar  v^\circ\in \Pc\cap H^{s}(\R^2)$ for some $s>1$, and with either $\Div(a\bar  v^\circ)=0$ in the case~\eqref{eq:limeqn1}, or $\bar\zeta^\circ:=\Div(a\bar  v^\circ)\in H^s(\R^2)$ in the case~\eqref{eq:limeqn2}. Let $ v^\circ\in \bar  v^\circ+\Ld^2(\R^2)^2$, with $\omega^\circ:=\curl v^\circ\in\Pc\cap\Ld^2(\R^2)$, and with either $\Div(a v^\circ)=0$ in the case~\eqref{eq:limeqn1}, or $\zeta^\circ:=\Div(a v^\circ)\in\Ld^2(\R^2)$ in the case~\eqref{eq:limeqn2}. Then,
\begin{enumerate}[(i)]
\item there exists a global weak solution $ v\in\Ld^\infty_\loc(\R^+;\bar  v^\circ+\Ld^2(\R^2)^2)$ of~\eqref{eq:limeqn1} on $\R^+\times\R^2$ with initial data $ v^\circ$, and with $ v\in\Ld^2_\loc(\R^+;\bar  v^\circ+H^1(\R^2)^2)$ and $\omega:=\curl v\in \Ld^\infty_\loc(\R^+;\Pc\cap \Ld^2(\R^2))$;
\item if $\beta=0$, there exists a global weak solution $ v\in\Ld^\infty_\loc(\R^+;\bar  v^\circ+\Ld^2(\R^2)^2)$ of~\eqref{eq:limeqn2} on $\R^+\times\R^2$ with initial data $ v^\circ$, and with $ v\in\Ld^2_\loc(\R^+;\bar  v^\circ+H^1(\R^2)^2)$, $\omega:=\curl v\in \Ld^\infty_\loc(\R^+;\Pc\cap \Ld^2(\R^2))$ and $\zeta:=\Div(a v)\in \Ld^2_\loc(\R^+;\Ld^2(\R^2))$.
\qedhere
\end{enumerate}
\end{cor}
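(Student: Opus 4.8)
The plan is to regularize the initial data so as to enter the scope of Corollary~\ref{cor:globexist1}, and then to pass to the limit in the weak formulation using $\e$-uniform a priori bounds. Fix a standard mollifier $\rho_\e$ and keep the reference map $\bar v^\circ$ unchanged; this is precisely why the hypotheses $\bar\omega^\circ\in\Pc\cap H^{s}$, $\bar\zeta^\circ\in H^{s}$ ($s>1$) are imposed, since there is no obvious way to approximate $v^\circ$ itself while preserving the constraint $\zeta^\circ=\Div(av^\circ)$. Concretely, set $\omega^\circ_\e:=\rho_\e\ast\omega^\circ$ and, in the case~\eqref{eq:limeqn2}, $\zeta^\circ_\e:=\rho_\e\ast\zeta^\circ$ (with $\zeta^\circ_\e:=0$ in the case~\eqref{eq:limeqn1}); then $\omega^\circ_\e\in\Pc\cap C^\infty\cap H^{s}(\R^2)$ and $\zeta^\circ_\e\in H^{s}(\R^2)$. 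Since $\omega^\circ-\bar\omega^\circ=\curl(v^\circ-\bar v^\circ)$ and $\zeta^\circ-\bar\zeta^\circ=\Div(a(v^\circ-\bar v^\circ))$ with $v^\circ-\bar v^\circ\in\Ld^2$, both lie in $\dot H^{-1}(\R^2)$, and the same holds for $\omega^\circ_\e-\bar\omega^\circ$, $\zeta^\circ_\e-\bar\zeta^\circ$, which moreover lie in $H^{s-1}$ and converge to $\omega^\circ-\bar\omega^\circ$, $\zeta^\circ-\bar\zeta^\circ$ in $\dot H^{-1}$. Lemma~\ref{lem:reconstr} then produces a unique $v^\circ_\e\in\bar v^\circ+H^{s+1}(\R^2)^2$ with $\curl v^\circ_\e=\omega^\circ_\e$, $\Div(av^\circ_\e)=\zeta^\circ_\e$, with $v^\circ_\e\to v^\circ$, $\omega^\circ_\e\to\omega^\circ$, $\zeta^\circ_\e\to\zeta^\circ$ in $\Ld^2$ and $\|\omega^\circ_\e\|_{\Ld^2}\le\|\omega^\circ\|_{\Ld^2}$, $\|\zeta^\circ_\e\|_{\Ld^2}\le\|\zeta^\circ\|_{\Ld^2}$, $\|v^\circ_\e-\bar v^\circ\|_{\Ld^2}$ bounded uniformly in $\e$. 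Corollary~\ref{cor:globexist1} gives, for each $\e>0$, a global smooth solution $v_\e\in\Ld^\infty_\loc(\R^+;\bar v^\circ+H^{s+1}(\R^2)^2)$ of~\eqref{eq:limeqn1} (resp.~\eqref{eq:limeqn2} when $\beta=0$) with datum $v^\circ_\e$, $\omega_\e:=\curl v_\e\in\Ld^\infty_\loc(\R^+;\Pc\cap H^{s})$ and $\zeta_\e:=\Div(av_\e)\in\Ld^\infty_\loc(\R^+;H^{s})$.

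Next I would collect $\e$-uniform bounds on each interval $[0,T]$. The relative energy estimate of Lemma~\ref{lem:aprioriest}(iii) gives $\sup_{[0,T]}\|v_\e^t-\bar v^\circ\|_{\Ld^2}\le C_T$, and in the case~\eqref{eq:limeqn2} also $\|\zeta_\e\|_{\Ld^2_T\Ld^2}\le C_T$, with $C_T$ independent of $\e$ (its constants do not involve $\|\omega_\e\|_{\Ld^\infty}$). For the vorticity, a bound $\sup_{[0,T]}\|\omega_\e^t\|_{\Ld^2}\le C_T$ follows from Lemma~\ref{lem:Lpvort}(i) with $p=2$ in the case~\eqref{eq:limeqn1} with $\alpha>0$, from Lemma~\ref{lem:Lpvort}(ii) with $p=2$ in the cases with $\beta=0$, and from Lemma~\ref{lem:Lpest}(ii) in the remaining case~\eqref{eq:limeqn1} with $\alpha=0$; all these constants depend on $\|\omega^\circ_\e\|_{\Ld^2}\le\|\omega^\circ\|_{\Ld^2}$ but not otherwise on $\e$. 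Feeding these back into Lemma~\ref{lem:reconstr} (via $\|\omega_\e^t-\bar\omega^\circ\|_{\dot H^{-1}}+\|\zeta_\e^t-\bar\zeta^\circ\|_{\dot H^{-1}}\lesssim\|v_\e^t-\bar v^\circ\|_{\Ld^2}$) bounds $v_\e-\bar v^\circ$ uniformly in $\Ld^2_\loc(\R^+;H^1(\R^2)^2)$ (and even in $\Ld^\infty_\loc(\R^+;H^1)$ in the incompressible case). Up to a diagonal extraction we then have $v_\e\cvf{*}v$ in $\Ld^\infty_\loc(\R^+;\bar v^\circ+\Ld^2)$ and $v_\e\rightharpoonup v$ in $\Ld^2_\loc(\R^+;\bar v^\circ+H^1)$, $\omega_\e\cvf{*}\omega$ in $\Ld^\infty_\loc(\R^+;\Ld^2)$ with $\omega=\curl v$, and (in the case~\eqref{eq:limeqn2}) $\zeta_\e\rightharpoonup\zeta$ in $\Ld^2_\loc(\R^+;\Ld^2)$ with $\zeta=\Div(av)$.

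The only term requiring real care — and the main obstacle — is the nonlinear transport term $\omega_\e v_\e$. By the H\"older inequality, the uniform bounds ($\omega_\e$ in $\Ld^\infty_T\Ld^2$, $v_\e-\bar v^\circ$ in $\Ld^2_TH^1\subset\Ld^2_T\Ld^p$ for every $p<\infty$) show that $\omega_\e v_\e$ is bounded in $\Ld^2_\loc(\R^+;\Ld^r_\loc)$ for some $r>1$; writing the vorticity equation as $\partial_t\omega_\e=\Div\!\big(\omega_\e(\alpha(\Psi+v_\e)^\bot+\beta(\Psi+v_\e))\big)$, this bounds $\partial_t\omega_\e$ in $\Ld^2_\loc(\R^+;W^{-1,r}_\loc(\R^2))$. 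Since $\Ld^2_\loc\hookrightarrow\hookrightarrow H^{-\delta}_\loc\hookrightarrow W^{-1,r}_\loc$ for small $\delta>0$, the Aubin--Simon lemma yields $\omega_\e\to\omega$ strongly in $C^0_\loc(\R^+;H^{-\delta}_\loc(\R^2))$; combined with $v_\e\rightharpoonup v$ in $\Ld^2_\loc(\R^+;H^1)$ and the local compact embedding $H^1\hookrightarrow\hookrightarrow H^{\delta}$ ($\delta<1$), the product of the two converging sequences gives $\omega_\e v_\e\to\omega v$ in $\mathcal D'(\R^+\times\R^2)$. All remaining terms in the weak formulations of Definition~\ref{defin:sol}(a)--(b) are linear in $(\omega_\e,\zeta_\e,v_\e)$ and pass to the limit by the weak convergences above, and the initial data pass since $\omega^\circ_\e\to\omega^\circ$, $v^\circ_\e\to v^\circ$, $\zeta^\circ_\e\to\zeta^\circ$ in $\Ld^2$; hence $v$ is a global weak solution with datum $v^\circ$, with the claimed regularity, and Lemma~\ref{lem:aprioriest}(i) gives $\omega^t\in\Pc(\R^2)$ for all $t$. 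The difficulty is exactly the interplay just used: one needs the $\Ld^q$-control on $\omega_\e$ ($q=2$ here) to make $\omega_\e v_\e$ strictly better than $\Ld^1$ in space, and simultaneously the transport structure to upgrade weak to strong compactness of $\omega_\e$, so that the product converges to the product of the limits.
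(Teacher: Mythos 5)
Your overall plan (mollify, invoke Corollary~\ref{cor:globexist1}, extract $\e$-uniform a priori bounds, upgrade the compactness of $\omega_\e$ via Aubin--Simon so as to pass to the limit in $\omega_\e v_\e$) is structurally the right one and matches the paper's strategy, and your compactness argument (strong convergence of $\omega_\e$ in $C^0_\loc(\R^+;H^{-\delta}_\loc)$ paired with weak convergence of $v_\e$ in $\Ld^2_\loc(\R^+;H^1_\loc)$) is a valid variant of the paper's $\Ld^2_\loc(\R^+;H^{-1}_\loc)\times\Ld^2_\loc(\R^+;H^1_\loc)$ pairing. However, there is one genuine gap in the setup that the rest of the argument cannot recover from.

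You mollify only $\omega^\circ$ and $\zeta^\circ$ and explicitly keep $\bar v^\circ$ (and implicitly $a$, $\Psi$) unchanged. But Corollary~\ref{cor:globexist1} requires $h,\Psi,\bar v^\circ\in W^{s+1,\infty}(\R^2)$ for some $s>1$, whereas in the present statement these are only assumed in $W^{1,\infty}(\R^2)$. So Corollary~\ref{cor:globexist1} simply does not apply to your regularized problem: it is the coefficients and the reference map, not just the initial vorticity/divergence, that lack regularity. Relatedly, your claim that Lemma~\ref{lem:reconstr} yields $v^\circ_\e\in\bar v^\circ+H^{s+1}(\R^2)^2$ cannot hold with $a=e^h$ only in $W^{1,\infty}$: Lemma~\ref{lem:reconstr} returns $\delta v\in H^\sigma$ only under the hypothesis $a,a^{-1}\in W^{\sigma,\infty}$, so with $a\in W^{1,\infty}$ the best you get is $\delta v\in H^1$. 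The paper's proof resolves this by simultaneously mollifying $a_\e:=\rho_\e\ast a$, $\Psi_\e:=\rho_\e\ast\Psi$, $\bar\omega^\circ_\e:=\rho_\e\ast\bar\omega^\circ$, $\bar\zeta^\circ_\e:=\rho_\e\ast\bar\zeta^\circ$, and then constructing both $v^\circ_\e$ and a mollified reference map $\bar v^\circ_\e$ via Lemma~\ref{lem:reconstr} relative to $a_\e$. This restores the smoothness needed for Corollary~\ref{cor:globexist1}, while the uniform bounds you cite (Lemmas~\ref{lem:aprioriest}, \ref{lem:Lpvort}, \ref{lem:Lpest}) still hold $\e$-uniformly because their constants depend only on $W^{1,\infty}$-type norms of $(h,\Psi,\bar v^\circ)$, which are controlled by the unmollified data. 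Once you incorporate that additional coefficient regularization, the rest of your argument (energy and vorticity estimates, reconstruction to bound $v_\e-\bar v^\circ_\e$ in $\Ld^2_\loc H^1$, Aubin--Simon, and passage to the limit) goes through as you wrote it.

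One smaller remark: for the incompressible case~(i) with $\alpha>0$, $\beta\ne0$, $h$ non-constant, you correctly point to Lemma~\ref{lem:Lpvort}(i), but note that its constants depend on time $T$; that is harmless here since all you need is an estimate uniform in $\e$ for fixed $T$, but it is worth saying explicitly, as otherwise a reader could worry about the time dependence.
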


\begin{proof}
We may focus on the case~(ii) (with $\beta=0$), the other case being exactly similar. In this proof we use the notation $\lesssim$ for $\le$ up to a positive constant that depends only on an upper bound on $\alpha$, $\alpha^{-1}$, $\lambda$, $(s-1)^{-1}$, $\|(h,\Psi,\bar  v^\circ)\|_{W^{1,\infty}}$, $\|(\bar\omega^\circ,\bar\zeta^\circ)\|_{H^s}$, $\| v^\circ-\bar  v^\circ\|_{\Ld^2}$, and $\|(\omega^\circ,\zeta^\circ)\|_{\Ld^2}$. We use the notation $\lesssim_t$ if it further depends on an upper bound on time $t$.

Let $\rho\in C^\infty_c(\R^2)$ with $\rho\ge0$, $\int\rho=1$, and $\rho(0)=1$. Define $\rho_\e(x):=\e^{-d}\rho(x/\e)$ for all $\e>0$, and set $\omega^\circ_\e:=\rho_\e\ast \omega^\circ$, $\bar\omega^\circ_\e:=\rho_\e\ast \bar\omega^\circ$, $\zeta^\circ_\e:=\rho_\e\ast \zeta^\circ$, $\bar\zeta^\circ_\e:=\rho_\e\ast \bar\zeta^\circ$, $a_\e:=\rho_\e\ast a$ and $\Psi_\e:=\rho_\e\ast\Psi$. For all $\e>0$, we have $\omega^\circ_\e$, $\bar\omega^\circ_\e\in\Pc\cap H^\infty(\R^2)$, $\zeta_\e^\circ$, $\bar \zeta_\e^\circ\in H^\infty(\R^2)$, and $a_\e$, $a_\e^{-1}$, $\Psi_\e\in C^\infty_b(\R^2)^2$. By construction, we have $a_\e\to a$, $a_\e^{-1}\to a^{-1}$, $\Psi_\e\to\Psi$ in $W^{1,\infty}(\R^2)$, $\bar\omega^\circ_\e-\bar\omega^\circ$, $\bar\zeta^\circ_\e-\bar\zeta^\circ\to0$ in $\dot H^{-1}\cap H^s(\R^2)$, and $\omega^\circ_\e-\omega^\circ$, $\zeta^\circ_\e-\zeta^\circ\to0$ in $\dot H^{-1}\cap \Ld^2(\R^2)$. The additional convergence in $\dot H^{-1}(\R^2)$ indeed follows from the following computation with Fourier transforms,
\begin{align*}
\|\omega_\e^\circ-\omega^\circ\|_{\dot H^{-1}}^2&=\int|\xi|^{-2}|\hat\rho(\e\xi)-1|^2|\hat\omega^\circ(\xi)|^2d\xi\le\e^2\|\nabla\hat\rho\|_{\Ld^\infty}^2\|\omega^\circ\|_{\Ld^2}^2,
\end{align*}
and similarly for $\bar\omega_\e^\circ$, $\zeta_\e^\circ$, and $\bar\zeta_\e^\circ$.
Lemma~\ref{lem:reconstr} then gives a unique $ v_\e^\circ\in  v^\circ+H^1(\R^2)^2$ and a unique $\bar  v_\e^\circ\in \bar  v^\circ+H^{s+1}(\R^2)^2$ such that $\curl v_\e^\circ=\omega_\e^\circ$, $\curl \bar  v_\e^\circ=\bar\omega_\e^\circ$, $\Div(a_\e v_\e^\circ)=\zeta_\e^\circ$, $\Div(a_\e \bar  v_\e^\circ)=\bar\zeta_\e^\circ$, and we have $ v_\e^\circ- v^\circ\to0$ in $H^1(\R^2)^2$ and $\bar  v_\e^\circ-\bar  v^\circ\to0$ in $H^{s+1}(\R^2)^2$. In particular, the assumption $\bar  v^\circ\in W^{1,\infty}(\R^2)^2$ yields by the Sobolev embedding with $s>1$, for $\e>0$ small enough,
\[\|\bar  v_\e^\circ\|_{W^{1,\infty}}\lesssim \|\bar  v_\e^\circ-\bar  v^\circ\|_{H^{s+1}}+\|\bar  v^\circ\|_{W^{1,\infty}}\lesssim1,\]
and the assumption $ v^\circ-\bar  v^\circ\in\Ld^2(\R^2)^2$ implies
\[\| v_\e^\circ-\bar  v_\e^\circ\|_{\Ld^2}\le \| v_\e^\circ-  v^\circ\|_{\Ld^2}+\| v^\circ-\bar  v^\circ\|_{\Ld^2}+\|\bar  v_\e^\circ-\bar  v^\circ\|_{\Ld^2}\lesssim1.\]

Corollary~\ref{cor:globexist1} then gives a solution $ v_\e\in \Ld^\infty_\loc(\R^+;\bar  v_\e^\circ+H^\infty(\R^2)^2)$ of~\eqref{eq:limeqn2} on $\R^+\times\R^2$ with initial data $ v^\circ_\e$, and with $(a,\Psi)$ replaced by $(a_\e,\Psi_\e)$.
Lemma~\ref{lem:aprioriest}(iii) and Lemma~\ref{lem:Lpest}(ii) (with $\beta=0$) give for all $t\ge0$,
\[\| v_\e-\bar  v_\e^\circ\|_{\Ld^\infty_t\Ld^2}+\|\zeta_\e\|_{\Ld^2_t\Ld^2}+\|\omega_\e\|_{\Ld^\infty_t\Ld^2}\lesssim_t1,\]
hence by Lemma~\ref{lem:reconstr}, together with the obvious estimate $\|(\omega_\e-\bar\omega_\e^\circ,\zeta_\e-\bar\zeta_\e^\circ)\|_{\dot H^{-1}}\lesssim\| v_\e-\bar  v_\e^\circ\|_{\Ld^2}$,
\begin{align*}
\| v_\e-\bar  v_\e^\circ\|_{\Ld^2_tH^1}&\lesssim \| v_\e-\bar  v_\e^\circ\|_{\Ld^2_t\Ld^2}+\|\zeta_\e-\bar\zeta_\e^\circ\|_{\Ld^2_t \Ld^2}+\|\omega_\e-\bar\omega_\e^\circ\|_{\Ld^2_t\Ld^2}
\lesssim_t1.
\end{align*}
As $\bar  v_\e^\circ$ is bounded in $H^1_\loc(\R^2)^2$, we deduce up to an extraction $ v_\e\cvf{}  v$ in $\Ld^2_\loc(\R^+;H^1_\loc(\R^2)^2)$, and also $\omega_\e\cvf{} \omega$, $\zeta_\e\cvf{} \zeta$ in $\Ld^2_\loc(\R^+;\Ld^2(\R^2))$, for some functions $ v,\omega,\zeta$.
Comparing equation~\eqref{eq:limeqn1VF} with the above estimates,
we deduce that $(\partial_t\omega_\e)_\e$ is bounded in $\Ld^{1}_\loc(\R^+;W^{-1,1}_\loc(\R^2))$.
Since by the Rellich theorem the space $\Ld^2(U)$ is compactly embedded in $H^{-1}(U)\subset W^{-1,1}(U)$ for any bounded domain $U\subset\R^2$, the Aubin-Simon lemma ensures that we have $\omega_\e\to\omega$ strongly in $\Ld^2_\loc(\R^+;H^{-1}_\loc(\R^2))$.
This implies $\omega_\e v_\e\to \omega v$ in the distributional sense. We may then pass to the limit in the weak formulation of equation~\eqref{eq:limeqn2}, and the result follows.
\end{proof}

We turn to the case of rougher initial data.
Using the a priori estimates of Lemmas~\ref{lem:Lpvort} and~\ref{lem:Lpest}(ii), we establish global existence for $\Ld^q$-data with $q>1$. In the parabolic regime $\alpha>0$, $\beta=0$, the finer a priori estimates of Lemma~\ref{lem:Lpest}(iii) further imply global existence for vortex-sheet data $\omega^\circ\in\Pc(\R^2)$.
Arguing by approximation, the main work consists in passing to the limit in the nonlinear term $\omega v$. For that purpose, as in~\cite{Lin-Zhang-00}, we make a crucial use of some compactness result due to Lions~\cite{Lions-98} in the context of the compressible Navier-Stokes equations. The conservative regime~(iv) below is however more subtle due to a lack of strong enough a priori estimates: only very weak solutions are then expected and obtained, and compactness needs to be carefully proven by hand.

\begin{prop}[Global existence for general data]\label{prop:globexist3}
Let $\lambda>0$, $\alpha\ge0$, $\beta\in\R$, and $h,\Psi\in W^{1,\infty}(\R^2)^2$. Let $\bar  v^\circ\in W^{1,\infty}(\R^2)^2$ be some reference map with $\bar \omega^\circ:=\curl\bar  v^\circ\in \Pc\cap H^{s}(\R^2)$ for some $s>1$, and with either $\Div(a\bar  v^\circ)=0$ in the case~\eqref{eq:limeqn1}, or $\bar\zeta^\circ:=\Div(a\bar  v^\circ)\in H^s(\R^2)$ in the case~\eqref{eq:limeqn2}. Let $ v^\circ\in \bar  v^\circ+\Ld^2(\R^2)^2$ with $\omega^\circ=\curl v^\circ\in\Pc(\R^2)$, and with either $\Div(a v^\circ)=0$ in the case~\eqref{eq:limeqn1}, or $\zeta^\circ:=\Div(a v^\circ)\in \Ld^2(\R^2)$ in the case~\eqref{eq:limeqn2}. Then the following hold.
\begin{enumerate}[(i)]
\item \emph{Case~\eqref{eq:limeqn2} with $\alpha>0$, $\beta=0$:}
There exists a weak solution $ v\in\Ld^\infty_\loc(\R^+;\bar  v^\circ+\Ld^2(\R^2)^2)$ on $\R^+\times\R^2$ with initial data $ v^\circ$, and with $\omega=\curl v\in \Ld^\infty(\R^+;\Pc(\R^2))$ and $\zeta=\Div(a v)\in \Ld^2_\loc(\R^+;\Ld^2(\R^2))$.
\item \emph{Case~\eqref{eq:limeqn1} with $\alpha>0$, and either $\beta=0$ or $a$ constant:}
There exists a weak solution $ v\in\Ld^\infty_\loc(\R^+;\bar  v^\circ+\Ld^2(\R^2)^2)$ on $\R^+\times\R^2$ with initial data $ v^\circ$, and with $\omega=\curl v\in \Ld^\infty(\R^+;\Pc(\R^2))$.
\item \emph{Case~\eqref{eq:limeqn1} with $\alpha>0$:}
If $\omega^\circ\in\Ld^q(\R^2)$ for some $q>1$, there exists a weak solution $ v\in\Ld^\infty_\loc(\R^+;\bar  v^\circ+\Ld^{2}(\R^2)^2)$ on $\R^+\times\R^2$ with initial data $ v^\circ$, and with $\omega=\curl v\in \Ld^\infty_\loc(\R^+;\Pc\cap\Ld^q(\R^2))$.
\item \emph{Case~\eqref{eq:limeqn1} with $\alpha=0$:}
If $\omega^\circ\in\Ld^q(\R^2)$ for some $q>1$, there exists a very weak solution $ v\in\Ld^\infty_\loc(\R^+;\bar  v^\circ+\Ld^{2}(\R^2)^2)$ on $\R^+\times\R^2$ with initial data $ v^\circ$, and with $\omega=\curl v\in \Ld^\infty_\loc(\R^+;\Pc\cap \Ld^q(\R^2))$. This is a weak solution whenever $q\ge4/3$.
\qedhere
\end{enumerate}
\end{prop}

\begin{proof}
We split the proof into three steps, first proving item~(i), then explaining how the argument has to be adapted to prove items~(ii) and~(iii), and finally turning to item~(iv).

\medskip
\noindent\step1 Proof of~(i).

\nopagebreak
In this step, we use the notation $\lesssim$ for $\le$ up to a positive constant that depends only on an upper bound on $\alpha$, $\alpha^{-1}$, $\lambda$, $\|(h,\Psi,\bar  v^\circ)\|_{W^{1,\infty}}$, $\|(\bar\omega^\circ,\bar \zeta^\circ)\|_{H^s}$, $\| v^\circ-\bar  v^\circ\|_{\Ld^2}$, and $\|\zeta^\circ\|_{\Ld^2}$. We use the notation $\lesssim_t$ (resp.\@ $\lesssim_{t,U}$) if it further depends on an upper bound on time $t$ (resp.\@ and on the size of $U\subset\R^2$).

Let $\rho\in C^\infty_c(\R^2)$ with $\rho\ge0$, $\int\rho=1$, $\rho(0)=1$, and $\rho|_{\R^2\setminus B_1}=0$, define $\rho_\e(x):=\e^{-d}\rho(x/\e)$ for all $\e>0$, and set $\omega^\circ_\e:=\rho_\e\ast \omega^\circ$, $\bar\omega^\circ_\e:=\rho_\e\ast \bar\omega^\circ$, $\zeta^\circ_\e:=\rho_\e\ast \zeta^\circ$, $\bar\zeta^\circ_\e:=\rho_\e\ast \bar\zeta^\circ$.
For all $\e>0$, we have $\omega^\circ_\e$, $\bar\omega^\circ_\e\in\Pc\cap H^\infty(\R^2)$, $\zeta_\e^\circ$, $\bar \zeta_\e^\circ\in H^\infty(\R^2)$. As in the proof of Corollary~\ref{cor:globexist2}, we have by construction $\bar\omega^\circ_\e-\bar\omega^\circ$, $\bar\zeta^\circ_\e-\bar\zeta^\circ\to0$ in $\dot H^{-1}\cap H^s(\R^2)$, and $\zeta^\circ_\e-\zeta^\circ\to0$ in $\dot H^{-1}\cap \Ld^2(\R^2)$. The assumption $ v^\circ-\bar  v^\circ\in\Ld^2(\R^2)^2$ further yields $\omega^\circ-\bar\omega^\circ\in\dot H^{-1}(\R^2)$, which implies
$\omega_\e^\circ-\bar\omega_\e^\circ\to\omega^\circ-\bar\omega^\circ$, hence $\omega_\e^\circ-\omega^\circ\to0$, in $\dot H^{-1}(\R^2)$.
Lemma~\ref{lem:reconstr} then gives a unique $ v_\e^\circ\in  v^\circ+\Ld^2(\R^2)^2$ and a unique $\bar  v_\e^\circ\in \bar  v^\circ+H^{s+1}(\R^2)^2$ such that $\curl v_\e^\circ=\omega_\e^\circ$, $\curl \bar  v_\e^\circ=\bar\omega_\e^\circ$, $\Div(a_\e v_\e^\circ)=\zeta_\e^\circ$, $\Div(a_\e \bar  v_\e^\circ)=\bar\zeta_\e^\circ$, and we have $ v_\e^\circ- v^\circ\to0$ in $\Ld^2(\R^2)^2$ and $\bar  v_\e^\circ-\bar  v^\circ\to0$ in $H^{s+1}(\R^2)^2$. In particular, arguing as in the proof of Corollary~\ref{cor:globexist2}, the assumption $\bar  v^\circ\in W^{1,\infty}(\R^2)^2$ yields $\|\bar  v_\e^\circ\|_{W^{1,\infty}}\lesssim1$ by the Sobolev embedding with $s>1$, and the assumption $ v^\circ-\bar  v^\circ\in\Ld^2(\R^2)^2$ implies $\| v_\e^\circ-\bar  v_\e^\circ\|_{\Ld^2}\lesssim1$.

Corollary~\ref{cor:globexist2} then gives a global weak solution $ v_\e\in \Ld^\infty_\loc(\R^+;\bar  v_\e^\circ+\Ld^2(\R^2)^2)$ of~\eqref{eq:limeqn2} on $\R^+\times\R^2$ with initial data $ v^\circ_\e$, and
Lemma~\ref{lem:aprioriest}(iii) yields for all $t\ge0$,
\begin{align}\label{eq:boundaprioriSt1}
\| v_\e-\bar  v_\e^\circ\|_{\Ld^\infty_t\Ld^2}+\|\zeta_\e\|_{\Ld^2_t\Ld^2}\lesssim_t1,
\end{align}
while Lemma~\ref{lem:Lpest}(iii) (with $\beta=0$) yields after time integration for all $1\le p<2$,
\[\|\omega_\e\|_{\Ld^p_t\Ld^p}\lesssim \bigg(\int_0^t\big(u^{1-p}+e^{Cu}\big)\,du\bigg)^{1/p}\lesssim_t(2-p)^{-1/p}.\]
Using this last estimate for $ P=3/2$ and $11/6$, and combining it with Lemma~\ref{lem:aprioriest}(i) in the form $\|\omega_\e\|_{\Ld^\infty_t\Ld^1}\le1$, we deduce by interpolation
\[\|\omega_\e\|_{\Ld^{2}_t(\Ld^{4/3}\cap\Ld^{12/7})}\lesssim_t1.\]
Now we need to prove more precise estimates on $v_\e$. First recall the identity
\begin{align}\label{eq:decompveps}
 v_\e= v_{\e,1}+ v_{\e,2},\qquad  v_{\e,1}:=\nabla^\bot\triangle^{-1}\omega_\e,\qquad  v_{\e,2}:=\nabla\triangle^{-1}\Div v_\e.
\end{align}
On the one hand, as $\omega_\e$ is bounded in $\Ld^{2}_\loc(\R^+;\Ld^{4/3}\cap\Ld^{12/7}(\R^2))$, we deduce from Riesz potential theory that $ v_{\e,1}$ is bounded in $\Ld^{2}_\loc(\R^+;\Ld^{4}\cap\Ld^{12}(\R^2)^2)$, and we deduce from the Calderón-Zygmund theory that $\nabla v_{\e,1}$ is bounded in $\Ld^2_\loc(\R^+;\Ld^{4/3}(\R^2))$.
On the other hand, decomposing
\[ v_{\e,2}=\nabla\triangle^{-1}\Div( v_\e-\bar  v_\e^\circ)+\bar  v_\e^\circ-\nabla^\bot\triangle^{-1}\bar\omega_\e^\circ,\]
noting that $ v_\e-\bar  v_\e^\circ$ is bounded in $\Ld^\infty_\loc(\R^+;\Ld^2(\R^2)^2)$ (cf.~\eqref{eq:boundaprioriSt1}), that $\bar  v_\e^\circ$ is bounded in $\Ld^2_\loc(\R^2)^2$, and that $\|\nabla\triangle^{-1}\bar\omega_\e^\circ\|_{\Ld^2}\lesssim\|\bar\omega_\e^\circ\|_{\Ld^1\cap\Ld^\infty}\lesssim1$ (cf.\@ Lemma~\ref{lem:singint-1}), we deduce that $ v_{\e,2}$ is bounded in $\Ld^\infty_\loc(\R^+;\Ld^2_\loc(\R^2)^2)$. Further, decomposing
\[ v_{\e,2}=\nabla\triangle^{-1}(a^{-1}(\zeta_\e-\bar\zeta_\e^\circ))-\nabla\triangle^{-1}(\nabla h\cdot( v_\e-\bar  v_\e^\circ))+\bar  v_\e^\circ-\nabla^\bot\triangle^{-1}\bar\omega_\e^\circ,\]
we easily check that $\nabla v_{\e,2}$ is bounded
in $\Ld^2_\loc(\R^+;\Ld^2_\loc(\R^2)^2)$. We then conclude from the Sobolev embedding that $ v_{\e,2}$ is bounded in $\Ld^2_\loc(\R^+;\Ld^q_\loc(\R^2)^2)$ for all $q<\infty$.
For our purposes it is enough to choose $q=4$ and $12$. In particular, we have proven that for all bounded subset $U\subset\R^2$,
\begin{multline}\label{eq:boundsmallbigexp}
\|\omega_\e\|_{\Ld^2_t\Ld^{4/3}}+\|\zeta_\e\|_{\Ld^2_t\Ld^2}+\| v_\e\|_{\Ld^\infty_t\Ld^2(U)}\\
+\| v_{\e,1}\|_{\Ld^2_t(\Ld^4\cap\Ld^{12})}+\|\nabla v_{\e,1}\|_{\Ld^2_t\Ld^{4/3}}+\| v_{\e,2}\|_{\Ld^2_t(\Ld^4\cap\Ld^{12}(U))}+\|\nabla v_{\e,2}\|_{\Ld^2_t\Ld^2(U)}\lesssim_{t,U}1.
\end{multline}
Therefore we have up to an extraction $\omega_\e\cvf{}\omega$ in $\Ld^{2}_\loc(\R^+;\Ld^{4/3}(\R^2))$, $\zeta_\e\cvf{}\zeta$ in $\Ld^2_\loc(\R^+;\Ld^2(\R^2))$, $ v_{\e,1}\cvf{} v_1$ in $\Ld^{2}_\loc(\R^+;\Ld^{4}(\R^2)^2)$, and $ v_{\e,2}\cvf{} v_2$ in $\Ld^2_\loc(\R^+;\Ld^4_\loc(\R^2)^2)$, for some functions $\omega,\zeta, v_1, v_2$. Comparing the above estimates with equation~\eqref{eq:limeqn1VF}, we deduce that $(\partial_t\omega_\e)_\e$ is bounded in $\Ld^{1}_\loc(\R^+;W^{-1,1}_\loc(\R^2))$. Moreover, we find by interpolation for all $|\xi|<1$ and all bounded domain $U\subset\R^2$, denoting by $U^1:=U+B_1$ its $1$-fattening,
\begin{eqnarray*}
\lefteqn{\| v_\e- v_\e(\cdot+\xi)\|_{\Ld^2_t\Ld^4(U)}\le\| v_{\e,1}- v_{\e,1}(\cdot+\xi)\|_{\Ld^2_t\Ld^4(U)}+\| v_{\e,2}- v_{\e,2}(\cdot+\xi)\|_{\Ld^2_t\Ld^4(U)}}\\
&\le&\| v_{\e,1}- v_{\e,1}(\cdot+\xi)\|_{\Ld^2_t\Ld^{4/3}(U)}^{1/4}\| v_{\e,1}- v_{\e,1}(\cdot+\xi)\|_{\Ld^2_t\Ld^{12}(U)}^{3/4}\\
&&\hspace{5cm}+\| v_{\e,2}- v_{\e,2}(\cdot+\xi)\|_{\Ld^2_t\Ld^{2}(U)}^{2/5}\| v_{\e,2}- v_{\e,2}(\cdot+\xi)\|_{\Ld^2_t\Ld^{12}(U)}^{3/5}\\
&\le&2\| v_{\e,1}- v_{\e,1}(\cdot+\xi)\|_{\Ld^2_t\Ld^{4/3}(U)}^{1/4}\| v_{\e,1}\|_{\Ld^2_t\Ld^{12}(U^1)}^{3/4}+2\| v_{\e,2}- v_{\e,2}(\cdot+\xi)\|_{\Ld^2_t\Ld^{2}(U)}^{2/5}\| v_{\e,2}\|_{\Ld^2_t\Ld^{12}(U^1)}^{3/5}\\
&\le&2|\xi|^{1/4}\|\nabla v_{\e,1}\|_{\Ld^2_t\Ld^{4/3}(U^1)}^{1/4}\| v_{\e,1}\|_{\Ld^2_t\Ld^{12}(U^1)}^{3/4}+2|\xi|^{2/5}\|\nabla v_{\e,2}\|_{\Ld^2_t\Ld^{2}(U^1)}^{2/5}\| v_{\e,2}\|_{\Ld^2_t\Ld^{12}(U^1)}^{3/5},
\end{eqnarray*}
and hence by~\eqref{eq:boundsmallbigexp},
\begin{align*}
\| v_\e- v_\e(\cdot+\xi)\|_{\Ld^2_t\Ld^4(U)}\lesssim_{t,U}|\xi|^{1/4}+|\xi|^{2/5}.
\end{align*}
Let us summarize the previous observations: up to an extraction, setting $ v:= v_1+ v_2$, we have
\begin{gather*}
\text{$\omega_\e\cvf{}\omega$ in $\Ld^{2}_\loc(\R^+;\Ld^{4/3}(\R^2))$, \quad$ v_{\e}\cvf{} v$ in $\Ld^{2}_\loc(\R^+;\Ld^{4}_\loc(\R^2)^2)$,}\\
\text{$(\partial_t\omega_\e)_\e$ bounded in $\Ld^1_\loc(\R^+;W^{-1,1}_\loc(\R^2))$,}\\
\text{$\sup_{\e>0}\| v_\e- v_\e(\cdot+\xi)\|_{\Ld^2_t\Ld^4(U)}\to0$~ as $|\xi|\to0$, for all $t\ge0$ and all bounded subset $U\subset\R^2$.}
\end{gather*}
We may then apply~\cite[Lemma~5.1]{Lions-98}, which ensures that $\omega_\e v_\e\to\omega v$ holds in the distributional sense. This allows to pass to the limit in the weak formulation of equation~\eqref{eq:limeqn2}, and the result follows.

\medskip
\noindent\step2 Proof of~(ii) and~(iii).

The proof of item~(ii) is again based on Lemma~\ref{lem:Lpest}(iii), and is completely analogous to the proof of item~(i) above. Regarding item~(iii), Lemma~\ref{lem:Lpest}(iii) does no longer apply in that case, but, since we further assume $\omega^\circ\in\Ld^q(\R^2)$ for some $q>1$, Lemma~\ref{lem:Lpvort} gives the following a priori estimate: for all $t\ge0$
\begin{align}\label{eq:additionalreg}
\|\omega\|_{\Ld^{q+1}_t\Ld^{q+1}}+\|\omega\|_{\Ld^\infty_t\Ld^q}\lesssim_t1,
\end{align}
hence in particular by interpolation $\|\omega\|_{\Ld^{p}_t\Ld^{p}}\lesssim_t1$ for all $1\le p \le2$. (Here we use the notation $\lesssim_t$ for $\le$ up to a constant that depends only on an upper bound on $t$, $(q-1)^{-1}$, $\alpha$, $\alpha^{-1}$, $|\beta|$, $\|(h,\Psi)\|_{W^{1,\infty}}$, $\| v^\circ-\bar  v^\circ\|_{\Ld^2}$, and $\|\omega^\circ\|_{\Ld^q}$.) The conclusion follows from a similar argument as in Step~1.

\medskip
\noindent\step3 Proof of~(iv).

We finally turn to the incompressible equation~\eqref{eq:limeqn1} in the conservative regime $\alpha=0$. Let $q>1$ be such that $\omega^\circ\in\Ld^q(\R^2)$.
Lemma~\ref{lem:Lpvort} or~\ref{lem:Lpest}(ii) ensures that $\omega_\e$ is bounded in $\Ld^\infty_\loc(\R^+;\Ld^1\cap\Ld^q(\R^2))$, and hence, for $q>4/3$, replacing the exponents $4/3$ and $12/7$ of Step~1 by $4/3$ and $q$, the argument of Step~1 can be immediately adapted to this case, for which we thus obtain global existence of a weak solution. In the remaining case $1<q<4/3$, the product $\omega \nabla\Delta^{-1}\omega$ (hence the product $\omega v$, cf.~\eqref{eq:decompveps}) does not make sense any more for $\omega\in\Ld^q(\R^2)$. Since in the conservative regime $\alpha=0$ no additional regularity is available (in particular, \eqref{eq:additionalreg} does not hold), we do not expect the existence of a weak solution, and we need to turn to the notion of very weak solutions as defined in Definition~\ref{defin:sol}(c), where the product $\omega v$ is reinterpreted à la Delort. Let $1<q\le 4/3$. We establish the global existence of a very weak solution.
(For the critical exponent $q=4/3$, the integrability of $ v$ found below directly implies by Remark~\ref{rem:sol}(ii) that the constructed very weak solution is automatically a weak solution.) In this step, we use the notation $\lesssim$ for $\le$ up to a constant $C$ that depends only on an upper bound on $(q-1)^{-1}$, $|\beta|$, $\|(h,\Psi,\bar  v^\circ)\|_{W^{1,\infty}}$, $\| v^\circ-\bar  v^\circ\|_{\Ld^2}$, $\|\bar\omega^\circ\|_{\Ld^2}$, and $\|\omega^\circ\|_{\Ld^q}$, and we use the notation $\lesssim_t$ (resp.\@ $\lesssim_{t,U}$) if it further depends on an upper bound on time $t$ (resp.\@ on $t$ and on the size of $U\subset\R^2$).

Let $\omega_\e^\circ$, $\bar\omega_\e^\circ$, $ v_\e^\circ$, $\bar  v_\e^\circ$ be defined as in Step~1 (with of course $\zeta_\e^\circ=\bar\zeta_\e^\circ=0$), and let $ v_\e\in\Ld^\infty_\loc(\R^+;\bar  v_\e^\circ+\Ld^2(\R^2)^2)$ be a global weak solution of~\eqref{eq:limeqn1} on $\R^+\times\R^2$ with initial data $ v_\e^\circ$, as given by Corollary~\ref{cor:globexist2}.
Lemmas~\ref{lem:aprioriest}(iii) and~\ref{lem:Lpest}(ii) then give for all $t\ge0$,
\begin{align}\label{eq:apriorieststep3-0}
\|\omega_\e\|_{\Ld^\infty_t(\Ld^1\cap\Ld^q)}+\| v_\e-\bar  v_\e^\circ\|_{\Ld^\infty_t\Ld^2}\lesssim_t1.
\end{align}
As $\bar  v_\e^\circ$ is bounded in $\Ld^2_\loc(\R^2)^2$, we deduce in particular that $ v_\e$ is bounded in $\Ld^\infty_\loc(\R^+;\Ld^2_\loc(\R^2))$.
Moreover, using the Delort type identity
\[\omega_\e  v_\e=-\frac12| v_\e|^2\nabla^\bot h-a^{-1}(\Div(aS_{ v_\e}))^\bot,\]
we then deduce that $\omega_\e v_\e$ is bounded in $\Ld^\infty_\loc(\R^+;W^{-1,1}_\loc(\R^2)^2)$.
Let us now recall the following useful decomposition,
\begin{align}\label{eq:decompv12veryweak}
 v_\e= v_{\e,1}+ v_{\e,2},\qquad  v_{\e,1}:=\nabla^\bot\triangle^{-1}\omega_\e,\qquad  v_{\e,2}:=\nabla\triangle^{-1}\Div v_\e.
\end{align}
By Riesz potential theory $ v_{\e,1}$ is bounded in $\Ld^\infty_\loc(\R^+;\Ld^p(\R^2)^2)$ for all $2< p\le \frac{2q}{2-q}$, while as in Step~1 we check that $ v_{\e,2}$ is bounded in $\Ld^\infty_\loc(\R^+;H^1_\loc(\R^2)^2)$. Hence by the Sobolev embedding, for all bounded domain $U\subset\R^2$ and all $t\ge0$,
\begin{align}\label{eq:apriorieststep3}
\|( v_\e, v_{\e,1})\|_{\Ld^\infty_t\Ld^{2q/(2-q)}(U)}\lesssim_{t,U}1.
\end{align}
Up to an extraction we then have $ v_\e\cvf* v$ in $\Ld^\infty_\loc(\R^+;\Ld^2_\loc(\R^2)^2)$ and $\omega_\e\cvf*\omega$ in $\Ld^\infty_\loc(\R^+;\Ld^q(\R^2))$, for some functions $ v,\omega$, with necessarily $\omega=\curl v$ and $\Div(a v)=0$.

We now need to pass to the limit in the nonlinearity $\omega_\e v_\e$. For that purpose, for all $\eta>0$, we set $ v_{\e,\eta}:=\rho_\eta\ast v_\e$ and $\omega_{\e,\eta}:=\rho_\eta\ast\omega_\e=\curl v_{\e,\eta}$, where $\rho_\eta(x):=\eta^{-d}\rho(x/\eta)$ is the regularization kernel defined in Step~1, and we then decompose the nonlinearity as follows,
\begin{align*}
\omega_\e v_\e&=(\omega_{\e,\eta}-\omega_\e)( v_{\e,\eta}- v_\e)-\omega_{\e,\eta} v_{\e,\eta}+\omega_{\e,\eta} v_\e+\omega_\e v_{\e,\eta}.
\end{align*}
We study each right-hand side term separately, and split the proof into four further substeps.

\medskip
\noindent\substep{3.1}
We prove that $(\omega_{\e,\eta}-\omega_\e)( v_{\e,\eta}- v_\e)\to0$ holds in the distributional sense (and even strongly in $\Ld^\infty_\loc(\R^+;W^{-1,1}_\loc(\R^2)^2)$) as $\eta\downarrow0$, uniformly in $\e>0$.

For that purpose, we use the Delort type identity
\[(\omega_{\e,\eta}-\omega_\e) ( v_{\e,\eta}- v_\e)=a^{-1}( v_{\e,\eta}- v_\e)\Div(a( v_{\e,\eta}- v_\e))-\frac12| v_{\e,\eta}- v_\e|^2\nabla^\bot h-a^{-1}(\Div (aS_{ v_{\e,\eta}- v_\e}))^\bot.\]
Noting that the constraint $0=a^{-1}\Div(a v_\e)=\nabla h\cdot v_\e+\Div v_\e$ yields
\[a^{-1}\Div(a( v_{\e,\eta}- v_\e))=\nabla h\cdot  v_{\e,\eta}+\Div  v_{\e,\eta}=\nabla h\cdot (\rho_\eta\ast v_\e)+\rho_\eta\ast\Div  v_{\e}=\nabla h\cdot (\rho_\eta\ast  v_{\e})-\rho_\eta\ast(\nabla h\cdot  v_{\e}),\]
the above identity becomes
\begin{multline*}
(\omega_{\e,\eta}-\omega_\e) ( v_{\e,\eta}- v_\e)=( v_{\e,\eta}- v_\e)\big(\nabla h\cdot (\rho_\eta\ast  v_{\e})-\rho_\eta\ast(\nabla h\cdot  v_{\e})\big)\\
-\frac12| v_{\e,\eta}- v_\e|^2\nabla^\bot h-a^{-1}(\Div(a S_{ v_{\e,\eta}- v_\e}))^\bot.
\end{multline*}
First, using the boundedness of $ v_\e$ (hence of $ v_{\e,\eta}$) in $\Ld^\infty_\loc(\R^+;\Ld^2_\loc(\R^2)^2)$, we may estimate, for all bounded domain $U\subset\R^2$, denoting by $U^\eta:=U+B_\eta$ its $\eta$-fattening,
\begin{eqnarray*}
\lefteqn{\int_U\big|( v_{\e,\eta}- v_\e)\big(\nabla h\cdot (\rho_\eta\ast  v_{\e})-\rho_\eta\ast(\nabla h\cdot  v_{\e})\big)\big|}\\
&\le&\|( v_\e, v_{\e,\eta})\|_{\Ld^2(U)}\bigg(\int_U \Big(\int \rho_\eta(y)|\nabla h(x)-\nabla h(x-y)|| v_\e(x-y)|dy\Big)^2dx\bigg)^{1/2}\\
&\lesssim&\|( v_\e, v_{\e,\eta})\|_{\Ld^2(U^\eta)}^2\Big(\int\rho_\eta(y)\int_U|\nabla h(x)-\nabla h(x-y)|^2dxdy\Big)^{1/2},
\end{eqnarray*}
where the right-hand side converges to $0$ as $\eta\downarrow0$, uniformly in $\e$.
Second, using the decomposition~\eqref{eq:decompv12veryweak}, and setting $ v_{\e,\eta,1}:=\rho_\eta\ast  v_{\e,1}$, $ v_{\e,\eta,2}:=\rho_\eta\ast  v_{\e,2}$, the Hölder inequality yields for all bounded domain $U\subset\R^2$,
\begin{multline*}
\int_{U}|( v_\e- v_{\e,\eta})\otimes( v_\e- v_{\e,\eta})|\le\int_{U}| v_\e- v_{\e,\eta}|| v_{\e,1}- v_{\e,\eta,1}|+\int_{U}| v_{\e}- v_{\e,\eta}|| v_{\e,2}- v_{\e,\eta,2}|\\
\le \|( v_\e, v_{\e,\eta})\|_{\Ld^{2q/(2-q)}(U)}\| v_{\e,1}- v_{\e,\eta,1}\|_{\Ld^{2q/(3q-2)}(U)}+\|( v_\e, v_{\e,\eta})\|_{\Ld^{2}(U)}\| v_{\e,2}- v_{\e,\eta,2}\|_{\Ld^{2}(U)}.
\end{multline*}
Recalling the choice $1<q\le 4/3$, we find by interpolation
\begin{align*}
\| v_{\e,1}- v_{\e,\eta,1}\|_{\Ld^{2q/(3q-2)}(U)}&\le \| v_{\e,1}- v_{\e,\eta,1}\|_{\Ld^{2}(U)}^{\frac{4-3q}{2-q}}\| v_{\e,1}- v_{\e,\eta,1}\|_{\Ld^{q}(U)}^{2\frac{q-1}{2-q}}\\
&\le\eta^{2\frac{q-1}{2-q}} \|( v_{\e,1}, v_{\e,\eta,1})\|_{\Ld^{2}(U)}^{\frac{4-3q}{2-q}}\|\nabla v_{\e,1}\|_{\Ld^{q}}^{2\frac{q-1}{2-q}},
\end{align*}
and hence by the Calderón-Zygmund theory,
\begin{align*}
\| v_{\e,1}- v_{\e,\eta,1}\|_{\Ld^{2q/(3q-2)}(U)}&\lesssim\eta^{2\frac{q-1}{2-q}} \|( v_{\e,1}, v_{\e,\eta,1})\|_{\Ld^{2}(U)}^{\frac{4-3q}{2-q}}\|\omega_\e\|_{\Ld^{q}}^{2\frac{q-1}{2-q}},
\end{align*}
while as in Step~1 we find
\begin{align*}
\| v_{\e,2}- v_{\e,\eta,2}\|_{\Ld^2_t\Ld^{2}(U)}&\le\eta\|\nabla v_{\e,2}\|_{\Ld^2_t\Ld^{2}(U^\eta)}\lesssim_U\eta.
\end{align*}
Combining this with the a priori estimate~\eqref{eq:apriorieststep3}, we may conclude
\[\int_0^t\int_{U}|( v_\e- v_{\e,\eta})\otimes( v_\e- v_{\e,\eta})|\lesssim_{t,U}\eta^{2\frac{q-1}{2-q}}+\eta,\]
and the claim follows.

\medskip
\noindent\substep{3.2}
We set $ v_\eta:=\rho_\eta\ast v$, $\omega_\eta:=\rho_\eta\ast\omega=\curl v_\eta$, and we prove that $-\omega_{\e,\eta} v_{\e,\eta}+\omega_{\e,\eta} v_\e+\omega_\e v_{\e,\eta}\to-\omega_{\eta} v_{\eta}+\omega_\eta v+\omega v_\eta$ in the distributional sense as $\e\downarrow0$, for any fixed $\eta>0$.

As $q<2<q'$, the weak convergences $ v_\e\cvf* v$ in $\Ld^\infty_\loc(\R^+;\Ld^2_\loc(\R^2)^2)$ and $\omega_\e\cvf*\omega$ in $\Ld^\infty_\loc(\R^+;\Ld^q(\R^2))$ imply for instance $ v_{\e,\eta}\cvf{*}  v_\eta$ in $\Ld^\infty_\loc(\R^+;W^{1,q'}_\loc(\R^2)^2)$ and $\omega_{\e,\eta}\cvf{*} \omega_\eta$ in $\Ld^\infty_\loc(\R^+;H^1(\R^2))$ as $\e\downarrow0$, for any fixed $\eta>0$ (note that these are still only weak-* convergences because no regularization occurs with respect to the time variable $t$). Moreover, examining equation~\eqref{eq:limeqn1VF} together with the a priori estimates obtained at the beginning of this step, we observe that $\partial_t\omega_\e$ is bounded in $\Ld^\infty_\loc(\R^+;W^{-2,1}_\loc(\R^2))$, hence $\partial_t\omega_{\e,\eta}=\rho_\eta\ast\partial_t\omega_\e$ is also bounded in the same space. Since by the Rellich theorem the space $\Ld^q(U)$ is compactly embedded in $W^{-1,q}(U)\subset W^{-2,1}(U)$ for all bounded domain $U\subset\R^2$, the Aubin-Simon lemma ensures that we have $\omega_\e\to\omega$ strongly in $\Ld^\infty_\loc(\R^+;W^{-1,q}_\loc(\R^2))$, and similarly, since $H^1(U)$ is compactly embedded in $\Ld^2(U)\subset W^{-2,1}(U)$, we also deduce $\omega_{\e,\eta}\to\omega_\eta$ strongly in $\Ld^\infty_\loc(\R^+;\Ld^2_\loc(\R^2))$.
This proves the claim.

\medskip
\noindent\substep{3.3}
We prove that $-\omega_{\eta} v_{\eta}+\omega_\eta v+\omega v_\eta\to-\frac12| v|^2\nabla^\bot h-a^{-1}(\Div (aS_{ v}))^\bot$ holds in the distributional sense as $\eta\downarrow0$.

\nopagebreak
For that purpose, we use the following Delort type identity,
\begin{multline*}
-\omega_{\eta} v_{\eta}+\omega_\eta v+\omega v_\eta=-a^{-1}( v_\eta- v)\Div(a( v_{\eta}- v))+\frac12| v_\eta- v|^2\nabla^\bot h+a^{-1}(\Div (aS_{ v_\eta- v}))^\bot\\
+a^{-1} v\Div(a v)-\frac12| v|^2\nabla^\bot h-a^{-1}(\Div (aS_{ v}))^\bot.
\end{multline*}
Noting that the limiting constraint $0=a^{-1}\Div(a v)=\nabla h\cdot v+\Div v$ gives
\[a^{-1}\Div(a( v_{\eta}- v))=\nabla h\cdot  v_{\eta}+\Div  v_{\eta}=\nabla h\cdot( \rho_{\eta}\ast  v)+\rho_\eta\ast\Div  v=\nabla h\cdot( \rho_{\eta}\ast  v)-\rho_\eta\ast(\nabla h\cdot  v),\]
the above identity takes the form
\begin{multline*}
-\omega_{\eta} v_{\eta}+\omega_\eta v+\omega v_\eta=-a^{-1}( v_\eta- v)\big(\nabla h\cdot( \rho_{\eta}\ast  v)-\rho_\eta\ast(\nabla h\cdot  v)\big)+\frac12| v_\eta- v|^2\nabla^\bot h+a^{-1}(\Div (aS_{ v_\eta- v}))^\bot\\
-\frac12| v|^2\nabla^\bot h-a^{-1}(\Div (aS_{ v}))^\bot,
\end{multline*}
and it is thus sufficient to prove that the first three right-hand side terms tend to $0$ in the distributional sense as $\eta\downarrow0$. This is proven just as in Substep~3.1 above, with $ v_{\e,\eta}, v_\e$ replaced by $ v_\eta, v$.

\medskip
\noindent\substep{3.4} Conclusion.

Combining the three previous substeps yields $\omega_\e v_\e\to -\frac12| v|^2\nabla^\bot h-a^{-1}(\Div (aS_{ v}))^\bot$ in the distributional sense as $\e\downarrow0$. Passing to the limit in the very weak formulation of equation~\eqref{eq:limeqn1VF}, the conclusion follows.
\end{proof}

\section{Uniqueness}\label{chap:unique}

We turn to the uniqueness results stated in Theorem~\ref{th:mainunique}. Using similar energy arguments as in the proof of Lemma~\ref{lem:aprioriest}, in the spirit of~\cite[Appendix~B]{Serfaty-15}, we prove a general weak-strong uniqueness principle.
Note that in the degenerate case $\lambda=0$ an additional term needs to be added to the usual energy, in link with the fact that $\omega$ and $v$ are then on an equal footing with regard to regularity.
In the incompressible case, we further prove uniqueness in the class of bounded vorticity, based on transport arguments à la Loeper~\cite{Loeper-06} (see also~\cite{Serfaty-Vazquez-14}), but these tools are not available in the compressible case.

\begin{prop}[Uniqueness]\label{prop:limeqn-unique}
Let $\alpha,\beta\in\R$, $\lambda\ge0$, $T>0$, and $h,\Psi\in W^{1,\infty}(\R^2)^2$. Let $ v^\circ:\R^2\to\R^2$ with $\omega^\circ:=\curl v^\circ\in\Pc(\R^2)$, and in the incompressible case~\eqref{eq:limeqn1} further assume that $\Div(a v^\circ)=0$.
\begin{enumerate}[(i)]
\item \emph{Weak-strong uniqueness principle for~\eqref{eq:limeqn1} and~\eqref{eq:limeqn2} in the non-degenerate case $\lambda>0$, $\alpha\ge0$:}\\
If~\eqref{eq:limeqn1} or~\eqref{eq:limeqn2} admits a weak solution $ v\in \Ld^2_\loc([0,T); v^\circ+\Ld^2(\R^2)^2)\cap\Ld^\infty_\loc([0,T);W^{1,\infty}(\R^2)^2)$ on $[0,T)\times\R^2$ with initial data $ v^\circ$, then it is the unique weak solution of~\eqref{eq:limeqn1} or of~\eqref{eq:limeqn2} on $[0,T)\times\R^2$ in the class $\Ld^2_\loc([0,T); v^\circ+\Ld^2(\R^2)^2)$ with initial data $ v^\circ$.
\item \emph{Weak-strong uniqueness principle for~\eqref{eq:limeqn2} in the degenerate parabolic case $\lambda=\beta=0$, $\alpha\ge0$:}\\
Let $E^2_{T, v^\circ}$ denote the class of all $w\in \Ld^2_\loc([0,T); v^\circ+\Ld^2(\R^2)^2)$ with $\curl w\in\Ld^2_\loc([0,T);\Ld^2(\R^2))$. If~\eqref{eq:limeqn2} admits a weak solution $ v\in E^2_{T, v^\circ}\cap\Ld^\infty_\loc([0,T);\Ld^{\infty}(\R^2)^2)$ on $[0,T)\times\R^2$ with initial data $ v^\circ$, and with $\omega:=\curl v\in\Ld^\infty_\loc([0,T);W^{1,\infty}(\R^2))$, then it is the unique weak solution of~\eqref{eq:limeqn2} on $[0,T)\times\R^2$ in the class $E^2_{T, v^\circ}$ with initial data $ v^\circ$.
\item \emph{Uniqueness for~\eqref{eq:limeqn1} with bounded vorticity, $\alpha,\beta\in\R$:}\\
There exists at most a unique weak solution $ v$ of~\eqref{eq:limeqn1} on $[0,T)\times\R^2$ with initial data $ v^\circ$, in the class of all $w$'s such that $\curl w\in\Ld^\infty_\loc([0,T);\Ld^\infty(\R^2))$.
\end{enumerate}
Moreover, in items~(i)--(ii), the condition $\alpha\ge0$ may be dropped if we further restrict to weak solutions $ v$ such that $\curl v\in\Ld^\infty_\loc([0,T);\Ld^\infty(\R^2))$.
\end{prop}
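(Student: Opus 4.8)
The plan is to establish the three items by related but distinct arguments, all of them by energy or transport methods already rehearsed earlier in the paper. For the weak--strong principles~(i) and~(ii) I would compare a candidate weak solution $\tilde v$ with the given regular solution $v$ through the \emph{relative energy} $\mathcal E(t):=\int_{\R^2}a\,|\tilde v^t-v^t|^2$, imitating the computations in the proof of Lemma~\ref{lem:aprioriest}(iii) but now with the (time-dependent) regular solution $v$ playing the role of the reference map $\bar v^\circ$. Writing $w:=\tilde v-v$, $\delta\omega:=\curl w=\tilde\omega-\omega$ and $\delta\zeta:=\Div(aw)$, one subtracts the vorticity formulations~\eqref{eq:limeqn1VF}--\eqref{eq:limeqn2VF} for $\tilde v$ and $v$ to see that $\delta\omega$ solves a transport equation driven by the \emph{Lipschitz} field $b:=\alpha(\Psi+v)^\bot+\beta(\Psi+v)$ (this is where the hypothesis $v\in\Ld^\infty_\loc([0,T);W^{1,\infty})$ enters in an essential way) together with the quadratic source $\Div(\tilde\omega(\alpha w^\bot+\beta w))$. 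Differentiating $\mathcal E$, integrating by parts, and reinterpreting the products carrying the measure $\tilde\omega$ through the inhomogeneous Delort identity~\eqref{eq:delort1} and the integrability $|v|^2\omega\in\Ld^1_\loc\Ld^1$ imposed in Definition~\ref{defin:sol}, the quadratic contribution collapses (using $w\cdot w^\bot=0$, and $\beta=0$ in the compressible case) to $-2\alpha\int a\,\tilde\omega\,|w|^2\le0$ by non-negativity of $\tilde\omega$ and $\alpha\ge0$, while all remaining terms are bounded by $C\,\mathcal E(t)$ with $C$ depending only on $\|(h,\Psi)\|_{W^{1,\infty}}$, $\|v\|_{\Ld^\infty_tW^{1,\infty}}$ and $\lambda$. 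In the non-degenerate compressible case~\eqref{eq:limeqn2} with $\lambda>0$ the dissipation $-2\lambda\int a^{-1}|\delta\zeta|^2$ coming from the transport--diffusion equation for $\delta\zeta$ absorbs the divergence contributions exactly as in the proof of Lemma~\ref{lem:aprioriest}(iii); a Gr\"onwall argument with $\mathcal E(0)=0$ then gives $w\equiv0$. This is the scheme of~\cite[Appendix~B]{Serfaty-15}.

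For item~(ii), $\lambda=\beta=0$, the same computation applies but there is no smoothing for $\delta\zeta$, so one must instead close a coupled estimate for $\mathcal E(t)$ together with $\|\delta\zeta^t\|_{\dot H^{-1}}$, the latter controlled by the elementary bound~\eqref{eq:tsph-1}; this forces one to control $\delta\omega$ itself in $\Ld^2$ (hence the class $E^2_{T,v^\circ}$) and to use the stronger regularity $\omega=\curl v\in\Ld^\infty_\loc W^{1,\infty}$, $v\in\Ld^\infty_\loc\Ld^\infty$ of the strong solution, reflecting that here $\zeta$ and $\omega$ are on the same footing.

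For item~(iii) I would follow the Serfaty--V\'azquez scheme~\cite{S-Vazquez-14} adapting Yudovich's uniqueness~\cite{Yudovich-63}. Given two weak solutions $v_1,v_2$ with the same datum and $\omega_i:=\curl v_i\in\Ld^\infty_\loc([0,T);\Ld^\infty(\R^2))$, the method of characteristics as in the proof of Lemma~\ref{lem:Lpest}(i) yields $\omega_i^t=(\psi_i^t)_\#\omega^\circ$ for the flow $\psi_i$ of $-b_i$, with $b_i:=\alpha(\Psi+v_i)^\bot+\beta(\Psi+v_i)$; since $\omega_i\in\Ld^1\cap\Ld^\infty$ forces $v_i\in\Ld^\infty_\loc\Ld^\infty$ by Lemma~\ref{lem:globellreg}, the critical estimate of Lemma~\ref{lem:singint-1}(iii) makes each $b_i^t$ log-Lipschitz uniformly in $t$. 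Setting $Q(t):=\int|\psi_1^t-\psi_2^t|^2\,d\omega^\circ$, one differentiates in $t$ and splits $b_1^t(\psi_1^t)-b_2^t(\psi_2^t)$ into $b_1^t(\psi_1^t)-b_1^t(\psi_2^t)$, estimated by the log-Lipschitz modulus of $b_1^t$ applied to $|\psi_1^t-\psi_2^t|$, and $(b_1^t-b_2^t)(\psi_2^t)=\alpha(v_1^t-v_2^t)^\bot(\psi_2^t)+\beta(v_1^t-v_2^t)(\psi_2^t)$, bounded after Cauchy--Schwarz and $d\omega_2^t\le\|\omega_2^t\|_{\Ld^\infty}\,dx$ by $\|\omega_2^t\|_{\Ld^\infty}^{1/2}Q(t)^{1/2}\|v_1^t-v_2^t\|_{\Ld^2}$. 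An inhomogeneous Loeper inequality for the operator $\Div(a^{-1}\nabla)$ (derived as in~\cite{Loeper-06} from $v_1-v_2=a^{-1}\nabla^\bot(\Div a^{-1}\nabla)^{-1}(\omega_1-\omega_2)$) gives $\|v_1^t-v_2^t\|_{\Ld^2}\lesssim(\|\omega_1^t\|_{\Ld^\infty}\vee\|\omega_2^t\|_{\Ld^\infty})^{1/2}\,W_2(\omega_1^t,\omega_2^t)\lesssim Q(t)^{1/2}$, where $W_2$ is the quadratic Wasserstein distance and $W_2(\omega_1^t,\omega_2^t)\le Q(t)^{1/2}$ because both vorticities are pushforwards of $\omega^\circ$. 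Altogether $Q'(t)\le C\,Q(t)\,(1+\log^+(1/Q(t)))$, an Osgood inequality; since $Q(0)=0$ this forces $Q\equiv0$, whence $\psi_1=\psi_2$, $\omega_1=\omega_2$ and finally $v_1=v_2$. For the concluding ``Moreover'', observe that if $\alpha<0$ the term $-2\alpha\int a\tilde\omega|w|^2$ in the proofs of~(i)--(ii) no longer has the favourable sign, but as soon as $\tilde\omega=\curl\tilde v\in\Ld^\infty_\loc([0,T);\Ld^\infty(\R^2))$ it can be absorbed as $\le 2|\alpha|\,\|\tilde\omega^t\|_{\Ld^\infty}\,\mathcal E(t)$ into the Gr\"onwall step.

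The main obstacle, as usual in such results, is the weak regularity of the candidate solution in~(i)--(ii): one must give rigorous meaning to the nonlinear terms involving the measure $\tilde\omega$ and close the energy identity without any bound on $\tilde\omega$ (the Delort reinterpretation~\eqref{eq:delort1} and the built-in integrability $|v|^2\omega\in\Ld^1_\loc\Ld^1$ of Definition~\ref{defin:sol} are the tools here), and, in the degenerate case~(ii), compensate the absence of parabolic smoothing by carrying $\delta\zeta$ along. For~(iii) the delicate points are proving the inhomogeneous Loeper inequality for $\Div(a^{-1}\nabla)$ (the non-constant weight $a$ prevents a direct use of the classical argument) and checking that the resulting modulus of continuity obeys the Osgood condition, so that $Q\equiv0$ can indeed be concluded.
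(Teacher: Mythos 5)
Your overall strategy matches the paper's for items~(i) and~(iii): a relative energy $\int a|\delta v|^2$ combined with the inhomogeneous Delort identity~\eqref{eq:delort1}, the good sign of $-2\alpha\int a\,\omega_1|\delta v|^2$, absorption of $\delta\zeta$ via the $\lambda>0$ dissipation, and Gr\"onwall for~(i); flows, log-Lipschitz regularity, the Loeper inequality, and an Osgood argument for~(iii). (One small technical remark on~(i): for the incompressible equation one cannot differentiate $\int a|\delta v|^2$ directly, because $\partial_t\delta v$ involves the pressure which is not controlled; the paper instead differentiates the equivalent quantity $\int\delta\omega\,(-\Div a^{-1}\nabla)^{-1}\delta\omega$ using the vorticity formulation, and only identifies it with $\int a|\delta v|^2$ afterwards via integration by parts. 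Your sketch does gesture at subtracting the vorticity formulations, so this may be what you had in mind.)

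Item~(ii) is where your sketch diverges and is not convincing as written. You propose to close a coupled estimate on $\mathcal E(t)$ and $\|\delta\zeta^t\|_{\dot H^{-1}}$ via~\eqref{eq:tsph-1}. But with $\lambda=\beta=0$, the equation for $\delta\zeta$ has source $-\alpha\,\Div\bigl(a(\delta\omega(\Psi+v_2)+\omega_1\,\delta v)\bigr)$, and the term $\omega_1\,\delta v$ is only in $\Ld^1$ (both factors are merely $\Ld^2$ for the candidate weak solution), so the $\dot H^{-1}$ transport estimate does not apply directly; there is no obvious way to close the loop this way. The paper sidesteps $\delta\zeta$ entirely and instead couples the $\Ld^2$ energies of $\delta v$ and $\delta\omega$: writing the equation as $\partial_t\delta v=-(\Psi+v_2)\delta\omega-\omega_1\,\delta v$ and the vorticity equation for $\delta\omega$, one tests against $\delta v$ and $\delta\omega$ respectively, and the dangerous quadratic terms $-\int\omega_1|\delta v|^2$, $-\int\omega_1|\delta\omega|^2$, etc.\ all have the favourable sign coming from $\omega_i\ge0$, leaving only a Gr\"onwall term $\lesssim(1+\|(v_2,\nabla\omega_2)\|_{\Ld^\infty})\int(|\delta v|^2+|\delta\omega|^2)$. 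This is why the class $E^2_{T,v^\circ}$ carries $\delta\omega\in\Ld^2$, which you correctly identified, but the mechanism is the sign structure of the transport terms, not a $\dot H^{-1}$ bound on $\delta\zeta$.

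On the ``delicate point'' you flag in~(iii), the paper's resolution is simple and worth knowing: rather than proving an inhomogeneous Loeper inequality, it uses the elementary comparison $\int a^{-1}|\nabla u_1|^2\le\int a|\nabla u_2|^2$ whenever $-\Div(a^{-1}\nabla u_1)=-\triangle u_2$, so that $\int a|\delta v|^2\lesssim\|\nabla\triangle^{-1}(\omega_1-\omega_2)\|_{\Ld^2}^2$, and then applies the \emph{classical} Loeper inequality verbatim. No new Loeper-type estimate is needed.
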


\begin{proof}
In this proof, we use the notation $\lesssim$ for $\le$ up to a constant $C>0$ that depends only on an upper bound on $\alpha$, $|\beta|$, $\lambda$, $\lambda^{-1}$, and $\|(h,\Psi)\|_{W^{1,\infty}}$, and we add subscripts to indicate dependence on further parameters.
We split the proof into four steps, first proving item~(i) in the case~\eqref{eq:limeqn1}, then in the case~\eqref{eq:limeqn2}, and finally turning to items~(ii) and~(iii).

\medskip
\noindent\step1 Proof of~(i) in the case~\eqref{eq:limeqn1}.

Let $\alpha\ge0$, $\beta\in\R$, and let $ v_1, v_2\in\Ld^2_\loc([0,T); v^\circ+\Ld^2(\R^2)^2)$ be two weak solutions of~\eqref{eq:limeqn1} on $[0,T)\times\R^2$ with initial data $ v^\circ$,
and assume $ v_2\in\Ld^\infty_\loc([0,T);W^{1,\infty}(\R^2)^2)$.
Set $\delta v:= v_1- v_2$ and $\delta\omega:=\omega_1-\omega_2$.
As the constraint $\Div(a\delta v)=0$ yields $\delta v=a^{-1}\nabla^\bot(\Div a^{-1}\nabla)^{-1}\delta\omega$, and as by assumption $\delta v\in\Ld^2_\loc([0,T);\Ld^2(\R^2)^2)$, we deduce
$\delta\omega\in\Ld^2_\loc([0,T);\dot H^{-1}(\R^2))$ and $(\Div a^{-1}\nabla)^{-1}\delta\omega\in\Ld^2_\loc([0,T);\dot H^{1}(\R^2))$. Moreover, the definition of a weak solution ensures that $\omega_i:=\curl v_i\in\Ld^\infty([0,T);\Pc(\R^2))$ (cf.\@ Lemma~\ref{lem:aprioriest}(i)), and $| v_i|^2\omega_i\in\Ld^1_\loc([0,T);\Ld^1(\R^2))$, for $i=1,2$, so that all the integrations by parts below are directly justified.
From equation~\eqref{eq:limeqn1VF}, we compute the following time derivative
\begin{align}
\partial_t\int\delta\omega(-\Div a^{-1}\nabla)^{-1}\delta\omega&=2\int \nabla(\Div a^{-1}\nabla)^{-1}\delta\omega\cdot\Big((\alpha(\Psi+ v_1)^\bot+\beta(\Psi+ v_1))\omega_1\nonumber\\
&\hspace{5.5cm}-(\alpha(\Psi+ v_2)^\bot+\beta(\Psi+ v_2))\omega_2\Big)\nonumber\\
&=-2\int a\delta v^\bot\cdot\Big((\alpha(\delta v)^\bot+\beta\delta v)\omega_1+(\alpha(\Psi+ v_2)^\bot+\beta(\Psi+ v_2))\delta\omega\Big)\nonumber\\
&=-2\alpha\int a|\delta v|^2\omega_1-2\int a\delta\omega\delta v^\bot\cdot(\alpha(\Psi+ v_2)^\bot+\beta(\Psi+ v_2)).\label{eq:decompderivunique}
\end{align}
As $ v_2$ is Lipschitz-continuous, and as the definition of a weak solution ensures that $\omega_1 v_1\in\Ld^1_\loc([0,T);\Ld^1(\R^2)^2)$, the following Delort type identity holds in $\Ld^1_\loc([0,T);W^{-1,1}_\loc(\R^2)^2)$,
\[\delta\omega\delta v^\bot=\frac12|\delta v|^2\nabla h+a^{-1}\Div(aS_{\delta v}).\]
Combining this with~\eqref{eq:decompderivunique} and the non-negativity of $\alpha\omega_1$ yields
\begin{align*}
\partial_t\int\delta\omega(-\Div a^{-1}\nabla)^{-1}\delta\omega&\le-\int a|\delta v|^2\nabla h\cdot(\alpha(\Psi+ v_2)^\bot+\beta(\Psi+ v_2))+2\int a S_{\delta v}:\nabla(\alpha(\Psi+ v_2)^\bot+\beta(\Psi+ v_2))\\
&\le C(1+\| v_2\|_{W^{1,\infty}})\int a|\delta v|^2.
\end{align*}
The uniqueness result $\delta v=0$ then follows from the Grönwall inequality, since by integration by parts
\[\int a|\delta v|^2=\int a^{-1}|\nabla(\Div a^{-1}\nabla)^{-1}\delta\omega|^2=\int \delta\omega (-\Div a^{-1}\nabla)^{-1}\delta\omega.\]
Note that if we further assume $\omega_1\in\Ld^\infty([0,T);\Ld^\infty(\R^2))$, then the non-negativity of $\alpha$ can be dropped: it indeed suffices to estimate in that case $-2\alpha\int a|\delta v|^2\omega_1\le C\|\omega_1\|_{\Ld^\infty}\int a|\delta v|^2$, and the result then follows as above. A similar observation also holds in the context of item~(ii).

\medskip
\noindent\step2 Proof of~(i) in the case~\eqref{eq:limeqn2}.

Let $\alpha\ge0$, $\beta\in\R$, $\lambda>0$, and let $ v_1, v_2\in\Ld^2_\loc([0,T); v^\circ+\Ld^2(\R^2)^2)$ be two weak solutions of~\eqref{eq:limeqn2} on $[0,T)\times\R^2$ with initial data $ v^\circ$, and assume $ v_2\in\Ld^\infty_\loc([0,T);W^{1,\infty}(\R^2)^2)$. The definition of a weak solution ensures that $\omega_i:=\curl v_i\in\Ld^\infty([0,T);\Pc(\R^2))$ (cf.\@ Lemma~\ref{lem:aprioriest}(i)), $\zeta_i:=\Div(a v_i)\in\Ld^2_\loc([0,T);\Ld^2(\R^2))$, and $| v_i|^2\omega_i\in\Ld^1_\loc([0,T);\Ld^1(\R^2))$, for $i=1,2$, and hence the integrations by parts below are directly justified. Set $\delta v:= v_1- v_2$, $\delta\omega:=\omega_1-\omega_2$, and $\delta\zeta:=\zeta_1-\zeta_2$. From equation~\eqref{eq:limeqn2}, we compute the following time derivative
\begin{align*}
\partial_t\int a|\delta v|^2&=2\int a\delta v\cdot \Big(\lambda\nabla(a^{-1}\delta\zeta)-\alpha(\Psi+ v_1)\omega_1+\beta(\Psi+ v_1)^\bot\omega_1+\alpha(\Psi+ v_2)\omega_2-\beta(\Psi+ v_2)^\bot\omega_2\Big)\\
&=-2\lambda\int a^{-1}|\delta\zeta|^2-2\alpha\int a|\delta v|^2\omega_1+2\int a\delta\omega\delta v\cdot\big(\alpha(\Psi+ v_2)-\beta (\Psi+ v_2)^\bot\big).
\end{align*}
As $ v_2$ is Lipschitz-continuous, and as the definition of a weak solution implies $\omega_1 v_1\in\Ld^1_\loc([0,T)\times\R^2)^2$, the following Delort type identity holds in $\Ld^1_\loc([0,T);W^{-1,1}_\loc(\R^2)^2)$,
\begin{gather*}
\delta\omega\delta v=a^{-1}\delta\zeta\delta v^\bot-\frac12|\delta v|^2\nabla^\bot h-a^{-1}(\Div (aS_{\delta v}))^\bot.
\end{gather*}
The above may then be estimated as follows, after integration by parts,
\begin{align*}
\partial_t\int a|\delta v|^2&\le-2\lambda\int a^{-1}|\delta\zeta|^2-2\alpha\int a|\delta v|^2\omega_1+C(1+\| v_2\|_{\Ld^{\infty}})\int|\delta\zeta||\delta v|+C(1+\| v_2\|_{W^{1,\infty}})\int a|\delta v|^2,
\end{align*}
and thus, using the choice $\lambda>0$, the inequality $2xy\le x^2+y^2$, and the non-negativity of $\alpha\omega_1$,
\begin{align*}
\partial_t\int a|\delta v|^2&\le C(1+\lambda_\e^{-1})(1+\| v_2\|_{W^{1,\infty}}^2)\int a|\delta v|^2.
\end{align*}
The Grönwall inequality then implies uniqueness, $\delta v=0$.

\medskip
\noindent\step3 Proof of~(ii).

Let $\lambda=\beta=0$, $\alpha=1$, and let $ v_1, v_2\in\Ld^2_\loc([0,T); v^\circ+\Ld^2(\R^2)^2)$ be two weak solutions of~\eqref{eq:limeqn2} on $[0,T)\times\R^2$ with initial data $ v^\circ$, and with $\omega_i:=\curl v_i\in\Ld^2_\loc([0,T);\Ld^2(\R^2))$ for $i=1,2$, and further assume $ v_2\in\Ld^\infty_\loc([0,T);\Ld^\infty(\R^2)^2)$ and $\omega_2\in\Ld^\infty_\loc([0,T);W^{1,\infty}(\R^2))$.
The definition of a weak solution ensures that $\omega_i:=\curl v_i\in\Ld^\infty([0,T);\Pc(\R^2))$ (cf.\@ Lemma~\ref{lem:aprioriest}(i)), $\zeta_i:=\Div(a v_i)\in\Ld^2_\loc([0,T);\Ld^2(\R^2))$, and $| v_i|^2\omega_i\in\Ld^1_\loc([0,T);\Ld^1(\R^2))$, for $i=1,2$, and hence the integrations by parts below are directly justified.
Denoting $\delta v:= v_1- v_2$ and $\delta\omega:=\omega_1-\omega_2$, equation~\eqref{eq:limeqn2} yields
\begin{align}
\partial_t\delta v=-(\Psi+ v_2)\delta\omega-\omega_1\delta v,\label{eq:lambda0eq1}
\end{align}
while equation~\eqref{eq:limeqn1VF} takes the form
\begin{align}
\partial_t\delta\omega&=\Div((\Psi+ v_2)^\bot\delta\omega)+\Div(\omega_1\delta v^\bot)\nonumber\\
&=\Div((\Psi+ v_2)^\bot\delta\omega)+\nabla\omega_1\cdot\delta v^\bot-\omega_1\delta\omega\nonumber\\
&=\Div((\Psi+ v_2)^\bot\delta\omega)+\nabla\omega_2\cdot\delta v^\bot+\nabla\delta\omega\cdot\delta v^\bot-\omega_1\delta\omega.\label{eq:lambda0eq2}
\end{align}
Testing equation~\eqref{eq:lambda0eq1} against $\delta v$ yields, by non-negativity of $\omega_1$,
\begin{align*}
\partial_t\int|\delta v|^2&=-2\int|\delta v|^2 \omega_1-2\int\delta v\cdot(\Psi+ v_2)\delta\omega\le C(1+\| v_2\|_{\Ld^\infty})\int|\delta v||\delta\omega|.
\end{align*}
Testing equation~\eqref{eq:lambda0eq2} against $\delta\omega$ and integrating by parts yields, by non-negativity of $\omega_1$ and $\omega_2$,
\begin{align*}
\partial_t\int|\delta\omega|^2&=-\int \nabla|\delta\omega|^2\cdot (\Psi+ v_2)^\bot+2\int \delta\omega\nabla\omega_2\cdot\delta v^\bot+\int\nabla|\delta\omega|^2\cdot\delta v^\bot-2\int |\delta\omega|^2\omega_1\\
&=-\int |\delta\omega|^2(\curl \Psi+\omega_2)+2\int \delta\omega \nabla\omega_2\cdot\delta v^\bot+\int|\delta\omega|^2(\omega_1-\omega_2)-2\int |\delta\omega|^2\omega_1\\
&\le C\int |\delta\omega|^2+2\|\nabla\omega_2\|_{\Ld^\infty}\int |\delta v||\delta\omega|.
\end{align*}
Combining the above two estimates and using the inequality $2xy\le x^2+y^2$, we find
\begin{align*}
\partial_t\int(|\delta v|^2+|\delta\omega|^2)&\le C(1+\|( v_2,\nabla\omega_2)\|_{\Ld^\infty})\int (|\delta v|^2+|\delta\omega|^2),
\end{align*}
and the uniqueness result follows from the Grönwall inequality.

\medskip
\noindent\step4 Proof of~(iii).

Let $\alpha,\beta\in\R$, and let $ v_1, v_2$ denote two solutions of~\eqref{eq:limeqn1} on $[0,T)\times\R^2$ with initial data $ v^\circ$, and with $\omega_1,\omega_2\in\Ld^\infty_\loc([0,T);\Ld^\infty(\R^2))$.
First we prove that $ v_1^t, v_2^t$ are log-Lipschitz for all $t\in[0,T)$ (compare with the easier situation in~\cite[Lemma~4.1]{Serfaty-Vazquez-14}). For $i=1,2$, using the identity $ v_i^t=\nabla^\bot\triangle^{-1}\omega_i^t+\nabla\triangle^{-1}\Div v_i^t$ with $\Div v_i^t=-\nabla h\cdot  v_i^t$, we may decompose for all $x,y$,
\[| v_i^t(x)- v_i^t(y)|\le|\nabla\triangle^{-1}\omega_i^t(x)-\nabla\triangle^{-1}\omega_i^t(y)|+|\nabla\triangle^{-1}(\nabla h\cdot  v_i^t)(x)-\nabla\triangle^{-1}(\nabla h\cdot  v_i^t)(y)|.\]
By the embedding of the Zygmund space $C^1_*(\R^2)=B^1_{\infty,\infty}(\R^2)$ into the space of log-Lipschitz functions (see e.g.\@ \cite[Proposition~2.107]{BCD-11}), we may estimate
\[| v_i^t(x)- v_i^t(y)|\lesssim \big(\|\nabla^2\triangle^{-1}\omega_i^t\|_{C^0_*}+\|\nabla^2\triangle^{-1}(\nabla h\cdot  v_i^t)\|_{C^0_*}\big)|x-y|(1+\log_-(|x-y|)),\]
and hence, applying Lemma~\ref{lem:pottheoryCsHs}(ii) and recalling that $\Ld^\infty(\R^2)$ is embedded in $C^0_*(\R^2)=B^0_{\infty,\infty}(\R^2)$, we find for all $1\le p<\infty$,
\begin{align*}
| v_i^t(x)- v_i^t(y)|&\lesssim_p \big(\|\omega_i^t\|_{\Ld^1\cap C^0_*}+\|\nabla h\cdot v_i^t\|_{\Ld^{p}\cap C^0_*}\big)|x-y|(1+\log_-(|x-y|))\\
&\lesssim \big(\|\omega_i^t\|_{\Ld^1\cap \Ld^\infty}+\| v_i^t\|_{\Ld^{p}\cap \Ld^\infty}\big)|x-y|(1+\log_-(|x-y|)).
\end{align*}
Noting that $ v_i^t=a^{-1}\nabla^\bot(\Div a^{-1}\nabla)^{-1}\omega_i^t$, the elliptic estimates of Lemma~\ref{lem:globellreg} yield $\| v_i^t\|_{\Ld^{p_0}\cap\Ld^\infty}\lesssim\|\omega_i^t\|_{\Ld^1\cap\Ld^\infty}$ for some exponent $2<p_0\lesssim1$. For the choice $ P=p_0$, the above thus takes the following form,
\begin{align}\label{eq:logLipom}
| v_i^t(x)- v_i^t(y)|&\lesssim \|\omega_i^t\|_{\Ld^1\cap \Ld^\infty}|x-y|(1+\log_-(|x-y|))\le(1+\|\omega_i^t\|_{\Ld^\infty})|x-y|(1+\log_-(|x-y|)),
\end{align}
which proves that $ v_1^t, v_2^t$ are log-Lipschitz for all $t\in[0,T)$.

For $i=1,2$, as the vector field $\alpha(\Psi+ v_i)+\beta(\Psi+ v_i)^\bot$ is log-Lipschitz in space, the associated flow $\psi_i:[0,T)\times\R^2\to\R^2$ is well-defined globally,
\[\partial_t\psi_i(x)=-(\alpha(\Psi+ v_i)+\beta(\Psi+ v_i)^\bot)(\psi_i(x)).\]
As the transport equation~\eqref{eq:limeqn1VF} ensures that $\omega_i^t=(\psi_i^t)_*\omega^\circ$ for $i=1,2$, the $2$-Wasserstein distance between the solutions $\omega_1^t,\omega_2^t\in\Pc(\R^2)$ is bounded by
\begin{align}\label{eq:boundW2Q}
W_2(\omega_1^t,\omega_2^t)^2\le Q^t:=\int|\psi_1^t(x)-\psi_2^t(x)|^2\omega^\circ(x)dx.
\end{align}
Now the time derivative of $Q$ is estimated by
\begin{align*}
\partial_t Q^t&=-2\int(\psi_1^t(x)-\psi_2^t(x))\cdot\big((\alpha\Psi+\beta\Psi^\bot)(\psi_1^t(x))-(\alpha\Psi+\beta\Psi^\bot)(\psi_2^t(x))\big)\omega^\circ(x)dx\\
&\qquad-2\int(\psi_1^t(x)-\psi_2^t(x))\cdot\big((\alpha v_1^t+\beta( v_1^t)^\bot)(\psi_1^t(x))-(\alpha v_2^t+\beta( v_2^t)^\bot)(\psi_2^t(x))\big)\omega^\circ(x)dx\\
&\le CQ^t+C(Q^t)^{1/2}\bigg(\int| v_1^t(\psi_1^t(x))- v_2^t(\psi_2^t(x))|^2\omega^\circ(x)dx\bigg)^{1/2}\\
&\le CQ^t+C(Q^t)^{1/2}(T_1^t+T_2^t)^{1/2},
\end{align*}
where we have set
\[T_1^t:=\int|( v_1^t- v_2^t)(\psi_2^t(x))|^2\omega^\circ(x)dx,\qquad T_2^t:=\int| v_1^t(\psi_1^t(x))- v_1^t(\psi_2^t(x))|^2\omega^\circ(x)dx.\]
We first study $T_1$. Using that $ v_i=a^{-1}\nabla^\bot(\Div a^{-1}\nabla)^{-1}\omega_i$, we find
\begin{align*}
T_1^t=\int | v_1^t- v_2^t|^2\omega_2^t\le\|\omega_2^t\|_{\Ld^\infty}\int| v_1^t- v_2^t|^2&=\|\omega_2^t\|_{\Ld^\infty}\int|\nabla(\Div a^{-1}\nabla)^{-1}(\omega_1^t-\omega_2^t)|^2\\
&\lesssim\|\omega_2^t\|_{\Ld^\infty}\int|\nabla\triangle^{-1}(\omega_1^t-\omega_2^t)|^2.
\end{align*}
(Here, we use the fact that if $-\Div (a^{-1}\nabla u_1)=-\triangle u_2$ with $u_1,u_2\in H^1(\R^2)$, then $\int a^{-1} |\nabla u_1|^2=\int\nabla u_1\cdot \nabla u_2\le\frac12\int a^{-1}|\nabla u_1|^2+\frac12\int a|\nabla u_2|^2$, hence $\int a^{-1}|\nabla u_1|^2\le\int a|\nabla u_2|^2$.) Loeper's inequality~\cite[Proposition~3.1]{Loeper-06} and the bound~\eqref{eq:boundW2Q} then imply
\begin{align*}
T_1^t\le\|\omega_2^t\|_{\Ld^\infty}(\|\omega_1^t\|_{\Ld^\infty}\vee\|\omega_2^t\|_{\Ld^\infty})W_2(\omega_1^t,\omega_2^t)^2\le\|(\omega_1^t,\omega_2^t)\|_{\Ld^\infty}^2Q^t.
\end{align*}
We finally turn to $T_2$.
Using the log-Lipschitz property~\eqref{eq:logLipom} and the concavity of the function $x\mapsto x(1+\log_-x)^2$, we obtain by Jensen's inequality,
\begin{align*}
T_2^t&\lesssim \|\omega^t_1\|_{\Ld^\infty}^2\int(1+\log_-(|\psi_1^t-\psi_2^t|))^2|\psi_1^t-\psi_2^t|^2\omega^\circ\\
&\le\|\omega^t_1\|_{\Ld^\infty}^2\Big(1+\log_-\int|\psi_1^t-\psi_2^t|^2\omega^\circ\Big)^2\int|\psi_1^t-\psi_2^t|^2\omega^\circ\\
&\lesssim \|\omega^t_1\|_{\Ld^\infty}^2(1+\log_-Q^t)^2\,Q^t.
\end{align*}
We may thus conclude $\partial_tQ\lesssim (1+\|(\omega_1,\omega_2)\|_{\Ld^\infty})(1+\log_-Q)\,Q$,
and the uniqueness result follows from a Grönwall argument.
\end{proof}

\newpage
\setcounter{section}{0}
\renewcommand\thesection{\Alph{section}}

\section{Appendix: Degenerate parabolic case (jointly written with Julian Fischer)}\label{sec:degenerate}
We now turn to the study of the compressible equation~\eqref{eq:limeqn2} in the degenerate parabolic case $\lambda=\beta=0$, $\alpha=1$, that is,
\begin{align}\label{eq:limeqn2-degen}
\partial_t v=-(\Psi+ v)\curl v,\qquad\text{in $\R^+\times\R^2$},
\end{align}
with initial data $ v|_{t=0}= v^\circ$.
A local existence result is already established in Proposition~\ref{prop:locexistdeg} above, and uniqueness is obtained in Proposition~\ref{prop:limeqn-unique}(ii), but the absence of strong enough a priori estimates on the divergence $\Div v$ due to the degeneracy of the equation makes the question of global existence delicate.
In the present appendix, jointly written with Julian Fischer, we show how to exploit the particular scalar structure of the solution $ v$ to establish global existence and finer uniqueness results. More precisely, we establish the following, which in particular implies Theorem~\ref{th:deg-case+JF}.

\begin{prop}\label{prop:main-deg}
Let $\lambda=\beta=0$, $\alpha=1$, let $ v^\circ,\Psi\in \Ld^{\infty}_\loc(\R^2)^2$ with $\curl v^\circ,\curl\Psi\in\Ld^\infty_\loc(\R^2)$ and $\curl v^\circ\ge0$, and assume that $ v^\circ$ and $\Psi$ are $\log$-Lipschitz, that is, for all $x,y$,
\[| v^\circ(x)- v^\circ(y)|+|\Psi(x)-\Psi(y)|\le C|x-y|(1+\log_-(|x-y|)).\]
There exists a unique global strong solution $ v\in \Ld^{\infty}_\loc(\R^+\times\R^2)$ of~\eqref{eq:limeqn2-degen} with $\curl v\in\Ld^\infty_\loc(\R^+\times\R^2)$ and $\curl v\ge0$.
Moreover the following hold:
\begin{enumerate}[(i)]
\item if $ v^\circ,\Psi\in W^{1,\infty}(\R^2)^2$, then the solution $ v$ satisfies $\curl v\in\Ld^\infty_\loc(\R^+;\Ld^\infty(\R^2))$, and if in addition $\curl v^\circ\in\Pc(\R^2)$, then there holds $ v\in\Ld^\infty_\loc(\R^+; v^\circ+\Ld^1\cap\Ld^\infty(\R^2)^2)$ and $\curl v\in\Ld^\infty_\loc(\R^+;\Pc\cap\Ld^\infty(\R^2))$;
\item if for some $s\ge0$ we have $ v^\circ,\Psi\in W^{s\vee1,\infty}(\R^2)^2$
and $\curl v^\circ,\curl\Psi\in W^{s,\infty}(\R^2)$,
then for all $0\le u\le s$ the solution $ v$ belongs to $W^{u+1,\infty}_\loc(\R^+;W^{s-u,\infty}(\R^2)^2)$;
\item if for some $s\ge1$ we have $ v^\circ,\Psi\in W^{s,\infty}(\R^2)^2$, $\curl v^\circ\in H^s\cap W^{s,\infty}(\R^2)$, and $\curl\Psi\in W^{s,\infty}(\R^2)$, then the solution $ v$ belongs to $\Ld^\infty_\loc(\R^+; v^\circ+H^s\cap W^{s,\infty}(\R^2)^2)$.\qedhere
\end{enumerate}
\end{prop}

We start with a suitable reduction of equation~\eqref{eq:limeqn2-degen}, making its scalar structure appear.
Assume that $ v\in W^{1,\infty}_\loc(\R^+;\Ld^\infty_\loc(\R^2))$ is a strong solution of~\eqref{eq:limeqn2-degen} with $\curl v\in\Ld^\infty_\loc(\R^+\times\R^2)$.
Since the forcing vector field $\Psi$ is time-independent, equation~\eqref{eq:limeqn2-degen} for $ v$ can be rewritten as follows,
\begin{align*}
\partial_t(\Psi+ v)=-(\Psi+ v)\curl  v,\qquad(\Psi+ v)|_{t=0}=\Psi+ v^\circ,
\end{align*}
which implies for all $x\in\R^2$ and $t\ge0$,
\begin{align}\label{eq:decomp-v-wkappa}
(\Psi+ v^t)(x)=\kappa^t(x)(\Psi+ v^\circ)(x),\qquad \kappa^t(x):=\exp\Big(-\int_0^t\curl v^s(x)\,ds\Big),
\end{align}
together with
the following scalar equation for $\kappa$,
\[\partial_t\kappa=-\kappa\,\curl v,\qquad\kappa|_{t=0}=1.\]
Assuming $\curl\Psi\in\Ld^\infty_\loc(\R^2)$, the definition~\eqref{eq:decomp-v-wkappa} of $\kappa$ in the form $v=-\Psi+\kappa(\Psi+ v^\circ)$ and the assumption $\curl v\in\Ld^\infty_\loc(\R^+\times\R^2)$ ensure that the directional derivative $((\Psi+ v^\circ)^\bot\cdot\nabla)\kappa$ is well-defined in $\Ld^\infty_\loc(\R^+\times\R^2)$, and the above scalar equation for $\kappa$ turns into
\begin{align}\label{eq:degen-MF-2}
\partial_t\kappa=\kappa \,((\Psi+ v^\circ)^\bot\cdot\nabla)\kappa-\kappa^2\curl v^\circ+\kappa(1-\kappa)\curl\Psi,\qquad \kappa|_{t=0}=1.
\end{align}
Along the characteristic curves of the vector field $(\Psi+ v^\circ)^\bot$, this equation takes the form of a Burgers' equation with additional quadratic damping and forcing terms. Although such a Burgers' equation may in general develop discontinuities in finite time (shock waves), we show that this cannot occur for constant initial data $\kappa|_{t=0}=1$ as considered here. Recall that we focus on the case with nonnegative vorticity $\curl v^\circ\ge0$.

\begin{lem}\label{lem:kappa-well-posed}
Let $W\in \Ld^{\infty}_\loc(\R^2)^2$ be $\log$-Lipschitz (that is, $|W(x)-W(y)|\le C|x-y|(1+\log_-(|x-y|))$ for all~$x,y$), and let $f,g\in\Ld^\infty_\loc(\R^2)$ with $f\ge0$. We consider the following Cauchy problem on $\R^+\times\R^2$,
\begin{align}\label{eq:kappa-red}
\partial_t\kappa=\kappa\,(W\cdot \nabla)\kappa- \kappa^2f+\kappa(1-\kappa)g,\qquad\kappa|_{t=0}=1,
\end{align}
where $(W\cdot\nabla)\kappa$ denotes the directional derivative of $\kappa$ along the flow of $W$.
There exists a global strong solution $\kappa\in W^{1,\infty}_\loc(\R^+;\Ld^\infty_\loc(\R^2))\cap \Ld^{\infty}(\R^+\times\R^2)$ 
with $\frac1\kappa,(W\cdot \nabla)\kappa\in\Ld^\infty_\loc(\R^+\times\R^2)$. This solution is unique in the class
\[\Cc:=\big\{\kappa\in W^{1,\infty}_\loc(\R^+;\Ld_\loc^\infty(\R^2)):(W\cdot\nabla)\kappa\in\Ld^\infty_\loc(\R^+\times\R^2)\big\}.\]
Moreover the following hold:
\begin{enumerate}[(i)]
\item if $f,g\in\Ld^\infty(\R^2)$ and $W\in W^{1,\infty}(\R^2)^2$, then the solution $\kappa$ satisfies $\frac1\kappa,(W\cdot\nabla)\kappa\in\Ld^\infty_\loc(\R^+;\Ld^\infty(\R^2))$, and if in addition $f\in\Ld^1(\R^2)$, then there holds $1-\kappa\in\Ld^\infty_\loc(\R^+;\Ld^1\cap\Ld^\infty(\R^2))$;
\item if for some $s\ge0$ we have $W\in W^{s\vee1,\infty}(\R^2)^2$ and $f,g\in W^{s,\infty}(\R^2)$,
then for all $0\le u\le s$ the solution $\kappa$ belongs to $W_\loc^{u+1,\infty}(\R^+;W^{s-u,\infty}(\R^2))$;
\item if for some $s\ge1$ we have $f\in H^s\cap W^{s,\infty}(\R^2)$, $W\in W^{s,\infty}(\R^2)^2$, and $g\in W^{s,\infty}(\R^2)$, then the solution $\kappa$ satisfies $1-\kappa\in\Ld^\infty_\loc(\R^+;H^s(\R^2))$.\qedhere
\end{enumerate}
\end{lem}

\begin{proof}
Let $W\in\Ld^\infty_\loc(\R^2)^2$ be $\log$-Lipschitz, and let $f,g\in\Ld^\infty_\loc(\R^2)$ with $f\ge0$.
Then the flow $\psi:\R\times\R^2\to\R^2:(s,x)\mapsto\psi^s_x$ associated with the vector field $-W$ is well-defined globally on $\R\times\R^2$,
\[\partial_s\psi^s_x=-W(\psi^s_x),\qquad\psi^s_x|_{s=0}=x.\]
We have $\psi\in C^1(\R;C(\R^2))$, and for all $s\in\R$ the map $\psi^s:\R^2\to\R^2$ is a homeomorphism with inverse $\psi^{-s}$. More precisely, since $W$ is $\log$-Lipschitz, the map $\psi^s$ is a Hölder homeomorphism in the following sense: we have for all $s,x,y$,
\[e^{-e^{C|s|}}(1\wedge|x-y|)^{e^{C|s|}}\le1\wedge|\psi_x^s-\psi_y^s|\le e(1\wedge|x-y|)^{e^{-C|s|}}.\]
We split the proof into three steps.

\medskip
\noindent\step1 Uniqueness.

In this step, we show that for all $x\in\R^d$ and $\sigma^\circ\in\R$ there exists a unique global solution $\sigma_x(\sigma^\circ):\R^+\to\R:t\mapsto\sigma_x^t(\sigma^\circ)$ of
\begin{align}\label{eq:def-sigma-flow}
\partial_t\sigma_x(\sigma^\circ)=1-\int_{\sigma^\circ}^{\sigma_x(\sigma^\circ)}f(\psi_x^s)\exp\Big(-\int_s^{\sigma_x(\sigma^\circ)}(f+g)(\psi_x^u)\,du\Big)ds,\qquad\sigma_x(\sigma^\circ)|_{t=0}=\sigma^\circ,
\end{align}
and that the corresponding map $\sigma^t_x:\R\to\R$ is invertible on $\R$.
In addition, assuming that for some $T>0$ there exists a local strong solution $\kappa\in W^{1,\infty}_\loc([0,T);\Ld^\infty_\loc(\R^2))$ of~\eqref{eq:kappa-red} on $[0,T)\times\R^2$ with $(W\cdot\nabla)\kappa\in \Ld^\infty_\loc([0,T)\times\R^2)$, we show that such a solution $\kappa$ is necessarily given by the following explicit formula,
\begin{align}\label{eq:form-sol-unique}
\kappa^t(x)=1-\int_{(\sigma^t_x)^{-1}(0)}^{0}f(\psi^s_x)\exp\Big(-\int_s^{0}(f+g)(\psi^u_x)du\Big)ds.
\end{align}
This implies the stated uniqueness result.

Setting $\hat\kappa^t_x(s):=\kappa^t(\psi_x^s)$, and noting that $\partial_s\hat\kappa_x^t(s)=-(W\cdot\nabla\kappa^t)(\psi_x^s)$, we deduce by assumption $\hat\kappa_x\in W^{1,\infty}_\loc([0,T)\times\R)$ for almost all $x$.
Picard's existence theorem then ensures the local existence and uniqueness of the flow $\sigma_x$ on $\R$ associated with the vector field $\hat\kappa_x$: for almost all $x$, for all $\sigma^\circ$, there exists $0<T_x(\sigma^\circ)\le T$ and a unique local solution $\sigma_x(\sigma^\circ)\in C^1([0,T_x(\sigma^\circ)))$ of the Cauchy problem
\begin{align}\label{eq:sigma-flow}
\partial_t\sigma^t_x(\sigma^\circ)=\hat\kappa^t_x(\sigma^t_x(\sigma^\circ)),\qquad\sigma^t_x(\sigma^\circ)|_{t=0}=\sigma^\circ.
\end{align}
Now note that by definition the function $t\mapsto\hat\kappa_x^t(\sigma_x^t(\sigma^\circ))$ belongs to $W^{1,\infty}_\loc([0,T_x(\sigma^\circ)))$ and satisfies
\begin{align}\label{eq:hatkappa-simpl}
\partial_t\big(\hat\kappa_x^t(\sigma_x^t(\sigma^\circ))\big)=- \big(\hat\kappa_x^t(\sigma^t_x(\sigma^\circ))\big)^2f(\psi^{\sigma_x^t(\sigma^\circ)}_x)+\hat\kappa^t_x(\sigma^t_x(\sigma^\circ))\big(1-\hat\kappa^t_x(\sigma^t_x(\sigma^\circ))\big)\,g(\psi^{\sigma_x^t(\sigma^\circ)}_x),\\
\hat\kappa_x^t(\sigma_x^t(\sigma^\circ))|_{t=0}=1.\nonumber
\end{align}
For $f,g\in \Ld^\infty_\loc(\R^2)$, this equation admits a unique global solution in $W^{1,\infty}_\loc([0,T_x(\sigma^\circ)))$, which must be given by the explicit formula
\begin{align}\label{eq:form-hatkappa}
\hat\kappa^t_x(\sigma^t_x(\sigma^\circ))=1-\int_{\sigma^0}^{\sigma^t_x(\sigma^\circ)}f(\psi^s_x)\exp\Big(-\int_s^{\sigma^t_x(\sigma^\circ)}(f+g)(\psi^u_x)\,du\Big)ds.
\end{align}
On the one hand, since the positive part $0\vee\hat\kappa_x(\sigma_x(\sigma^\circ))$ belongs to~$W^{1,\infty}_\loc([0,T_x(\sigma^\circ)))$ and also satisfies equation~\eqref{eq:hatkappa-simpl}, we deduce by uniqueness that $\hat\kappa_x(\sigma_x(\sigma^\circ))$ must remain nonnegative. Moreover, formula~\eqref{eq:form-hatkappa} with $f\ge0$ ensures that $\hat\kappa_x(\sigma_x(\sigma^\circ))$ remains bounded above by $1$, so that it is actually $[0,1]$-valued on its domain.
On the other hand, due to formula~\eqref{eq:form-hatkappa}, equation~\eqref{eq:sigma-flow} takes on the following guise,
\begin{align}\label{eq:sigma-flow-simpl}
\partial_t\sigma_x(\sigma^\circ)=Z(\sigma_x(\sigma^\circ),\sigma^\circ),\qquad\sigma_x(\sigma^\circ)|_{t=0}=\sigma^\circ,
\end{align}
where we have set
\[Z(\sigma,\sigma^\circ):=\max\bigg\{0~;~1-\int_{\sigma^0}^{\sigma}f(\psi^s_x)\exp\Big(-\int_s^{\sigma}(f+g)(\psi^u_x)du\Big)ds\bigg\}.\]
As $0\le Z(\sigma,\sigma^\circ)\le1$, we deduce $\sigma^\circ\le\sigma_x^t(\sigma^\circ)\le\sigma^\circ+t$ for all $t\ge0$. Since in addition for $f,g\in\Ld^\infty_\loc(\R^2)$ we have $Z\in W^{1,\infty}_\loc(\R\times\R)$, the flow $\sigma_x(\sigma^\circ)$ must exist globally. We may therefore choose $T_x(\sigma^\circ)=T$ and the representation~\eqref{eq:form-hatkappa} holds for all $0\le t< T$.

It remains to invert~\eqref{eq:form-hatkappa} and deduce the formula~\eqref{eq:form-sol-unique} for the solution $\kappa$ itself.
For that purpose, we need to invert the (non-decreasing) map $\sigma_x^t:\R\to\R$ globally for all $t\ge0$.
Since we have shown $\hat\kappa_x^t(\sigma_x^t(\sigma^\circ))=Z(\sigma_x^t(\sigma^\circ),\sigma^\circ)\in[0,1]$ for all $t\in[0,T)$, equation~\eqref{eq:sigma-flow-simpl} leads to
\begin{multline}\label{eq:form-dersts0}
\partial_t\frac{\partial\sigma_x^t(\sigma^\circ)}{\partial\sigma^\circ}=f(\psi^{\sigma^\circ}_x)\exp\Big(-\int_{\sigma^\circ}^{\sigma^t_x(\sigma^\circ)}(f+g)(\psi^u_x)du\Big)\\
+\frac{\partial\sigma^t_x(\sigma^\circ)}{\partial\sigma^\circ}\bigg(-f(\psi^{\sigma^t_x(\sigma^\circ)}_x)+(f+g)(\psi^{\sigma^t_x(\sigma^\circ)}_x)\int_{\sigma^\circ}^{\sigma^t_x(\sigma^\circ)}f(\psi^s_x)\exp\Big(-\int_s^{\sigma^t_x(\sigma^\circ)}(f+g)(\psi^u_x)du\Big)ds\bigg).
\end{multline}
For all $x,t,\sigma^\circ$, define the compact set $K_x^t(\sigma^\circ):=\overline B+\{\psi_x^{s}:\sigma^\circ\le s\le \sigma^\circ+t\}$, where $\overline B$ is the closed unit Euclidean ball at the origin in $\R^2$.
Hence, for $f,g\in\Ld^\infty_\loc(\R^2)$ with $f\ge0$, we find for almost all $x$, for all $t\in[0,T)$,
\begin{align*}
\partial_t\frac{\partial\sigma_x^t(\sigma^\circ)}{\partial\sigma^\circ}&\ge-\frac{\partial\sigma^t_x(\sigma^\circ)}{\partial\sigma^\circ}\bigg(\|f\|_{\Ld^\infty(K_x^t(\sigma^\circ))}+\|g\|_{\Ld^\infty(K_x^t(\sigma^\circ))}\|f\|_{\Ld^\infty(K_x^t(\sigma^\circ))}\int_{\sigma^\circ}^{\sigma_x^t(\sigma^\circ)}e^{(\sigma_x^t(\sigma^\circ)-s)\|g\|_{\Ld^\infty(K_x^t(\sigma^\circ))}}ds\bigg)\\
&\ge-\frac{\partial\sigma^t_x(\sigma^\circ)}{\partial\sigma^\circ}\,\|f\|_{\Ld^\infty(K_x^t(\sigma^\circ))}\Big(1+e^{(\sigma^t_x(\sigma^\circ)-\sigma^\circ)\|g\|_{\Ld^\infty(K_x^t(\sigma^\circ))}}\Big)\\
&\ge-2\,\frac{\partial\sigma^t_x(\sigma^\circ)}{\partial\sigma^\circ}\,\|f\|_{\Ld^\infty(K_x^t(\sigma^\circ))}\,e^{t\|g\|_{\Ld^\infty(K_x^t(\sigma^\circ))}},
\end{align*}
while from~\eqref{eq:form-hatkappa} we deduce
\begin{align*}
\partial_t\frac{\partial\sigma_x^t(\sigma^\circ)}{\partial\sigma^\circ}&=f(\psi^{\sigma^\circ}_x)\exp\Big(-\int_{\sigma^\circ}^{\sigma^t_x(\sigma^\circ)}(f+g)(\psi^u_x)du\Big)\\
&\hspace{4cm}+\frac{\partial\sigma^t_x(\sigma^\circ)}{\partial\sigma^\circ}\Big( (1-\hat\kappa^t_x(\sigma^t_x(\sigma^\circ)))\,g(\psi_x^{\sigma^t_x(\sigma^\circ)})-\hat\kappa^t_x(\sigma^t_x(\sigma^\circ))\,f(\psi_x^{\sigma^t_x(\sigma^\circ)})\Big)\\
&\le e^{t\|g\|_{\Ld^\infty(K_x^t(\sigma^\circ))}}\|f\|_{\Ld^\infty(K_x^0(\sigma^\circ))}+\frac{\partial\sigma^t_x(\sigma^\circ)}{\partial\sigma^\circ}\|g\|_{\Ld^\infty(K_x^t(\sigma^\circ))}.
\end{align*}
For almost all $x$, for all $t\in[0,T)$, this implies
\begin{align*}
\exp\Big(-2t\|f\|_{\Ld^\infty(K_x^t(\sigma^\circ))}e^{t\|g\|_{\Ld^\infty(K_x^t(\sigma^\circ))}}\Big)\le\frac{\partial\sigma_x^t(\sigma^\circ)}{\partial\sigma^\circ}\le\big(1+t\|f\|_{\Ld^\infty(K_x^0(\sigma^\circ))}\big)e^{t\|g\|_{\Ld^\infty(K_x^t(\sigma^\circ))}},
\end{align*}
which shows that the map $\sigma^t_x:\R\to\R$ is a Lipschitz diffeomorphism, with also
\begin{align}\label{eq:est-inverse-sigma}
\big(1+t\|f\|_{\Ld^\infty(K_x^0(\sigma^\circ))}\big)^{-1}e^{-t\|g\|_{\Ld^\infty(K_x^t(\sigma^\circ))}}\le\frac{\partial(\sigma_x^t)^{-1}(\sigma^\circ)}{\partial\sigma^\circ}\le \exp\Big(2t\|f\|_{\Ld^\infty(K_x^t(\sigma^\circ))}e^{t\|g\|_{\Ld^\infty(K_x^t(\sigma^\circ))}}\Big).
\end{align}
The representation~\eqref{eq:form-hatkappa} applied to $\sigma^\circ=(\sigma_x^t)^{-1}(0)$ then yields the desired result~\eqref{eq:form-sol-unique}.

\medskip
\noindent\step2 Existence.

Let $\kappa,\sigma$ be given by~\eqref{eq:def-sigma-flow}--\eqref{eq:form-sol-unique}.
Noting that for all $\sigma$ there holds
\[0=\partial_t\big((\sigma_x^t)^{-1}(\sigma_x^t(\sigma))\big)=(\partial_t(\sigma_x^t)^{-1})(\sigma_x^t(\sigma))+\partial_t\sigma_x^t(\sigma)\,\frac{\partial(\sigma_x^t)^{-1}}{\partial\sigma_0}(\sigma_x^t(\sigma)),\]
equation~\eqref{eq:sigma-flow} leads to the relation
\[\partial_t(\sigma_x^t)^{-1}(0)=-\kappa^t(x)\frac{\partial(\sigma_x^t)^{-1}}{\partial\sigma_0}(0).\]
The definition~\eqref{eq:form-sol-unique} and the estimate~\eqref{eq:est-inverse-sigma} then ensure that $\kappa\in W^{1,\infty}_\loc(\R^+;\Ld^\infty_\loc(\R^2))$.
We now check that $(W\cdot\nabla)\kappa\in \Ld^\infty_\loc(\R^+\times\R^2)$.
For almost all $x$ and for all $t,\sigma^\circ$, rewriting equation~\eqref{eq:def-sigma-flow} in the form
\begin{align*}
\partial_t\sigma_{\psi_x^r}(\sigma^\circ)=1-\int_{r+\sigma^\circ}^{r+\sigma_{\psi_x^r}(\sigma^\circ)}f(\psi_x^{s})\exp\Big(-\int_{s}^{r+\sigma_{\psi_x^r}(\sigma^\circ)}(f+g)(\psi_x^{u})\,du\Big)ds,
\end{align*}
we easily find that the map $r\mapsto \sigma_{\psi_x^r}^t(\sigma^\circ)$ belongs to $W^{1,\infty}_\loc(\R)$. Using the relation
\[\partial_r(\sigma_{\psi_x^r}^t)^{-1}(0)=-\big(\partial_r\sigma_{\psi_x^r}^t\big)\big((\sigma_{\psi_x^r}^t)^{-1}(0)\big)\,\frac{\partial(\sigma_{\psi_x^r}^t)^{-1}}{\partial\sigma^\circ}(0),\]
it follows that the map $r\mapsto (\sigma_{\psi_x^r}^t)^{-1}(0)$ also belongs to $W^{1,\infty}_\loc(\R)$.
For almost all $x$ and for all $t$, writing $(W\cdot\nabla)\kappa^t(x)=-\partial_r\kappa^t(\psi_x^r)|_{r=0}$, and using the definition~\eqref{eq:form-sol-unique} in the form
\[\kappa^t(\psi_x^r)=1-\int_{r+(\sigma_{\psi_x^r}^t)^{-1}(0)}^rf(\psi_x^{s})\exp\Big(-\int_{s}^r(f+g)(\psi_x^{u})du\Big)ds,\]
we then easily deduce that $(W\cdot\nabla)\kappa\in \Ld^\infty_\loc(\R^+\times\R^2)$. We now check that $\kappa$ is a strong solution of the Cauchy problem~\eqref{eq:kappa-red}.
By construction, the map $t\mapsto\kappa^t(\psi_x^{\sigma_x^t(\sigma^\circ)})$ is given by~\eqref{eq:form-hatkappa} and thus satisfies
\[\partial_t\big(\kappa^t(\psi_x^{\sigma_x^t(\sigma^\circ)})\big)=-\big(\kappa^t(\psi_x^{\sigma_x^t(\sigma^\circ)})\big)^2f(\psi_x^{\sigma_x^t(\sigma^\circ)})+\kappa^t(\psi_x^{\sigma_x^t(\sigma^\circ)})\big(1-\kappa^t(\psi_x^{\sigma_x^t(\sigma^\circ)})\big)\,g(\psi_x^{\sigma_x^t(\sigma^\circ)}),\]
or alternatively,
\[\big(\partial_t\kappa^t-\kappa^t\,(W\cdot\nabla)\kappa^t\big)(\psi_x^{\sigma_x^t(\sigma^\circ)})=\big(-(\kappa^t)^2f+\kappa^t(1-\kappa^t)g\big)(\psi_x^{\sigma_x^t(\sigma^\circ)}).\]
As this holds for almost all $x$ and for all $\sigma^\circ$, we indeed deduce that $\kappa$ is a strong solution of~\eqref{eq:kappa-red}.
It remains to check that $\frac1\kappa\in \Ld^\infty_\loc(\R^+\times\R^2)$. For that purpose, we note that equation~\eqref{eq:kappa-red} implies
\begin{align*}
\big|\partial_t\big(|\kappa^t(x)|^{-1}\big)\big|\le|\kappa^t(x)|^{-1}\big(|(W\cdot\nabla)\kappa^t(x)|+(1+|\kappa^t(x)|)|g(x)|\big)+|f(x)|,
\end{align*}
which easily implies by a Grönwall argument that $\frac1\kappa\in \Ld^\infty_\loc(\R^+\times\R^2)$.

\medskip
\noindent\step3 Regularity and integrability.

The additional regularity statement~(ii) in $W^{s,\infty}(\R^2)$ is a straightforward consequence of formulas~\eqref{eq:form-sol-unique}--\eqref{eq:def-sigma-flow}, together with the identity~\eqref{eq:form-dersts0} and the estimate~\eqref{eq:est-inverse-sigma}. Also note that for $f,g\in\Ld^\infty(\R^2)$ and $W\in W^{1,\infty}(\R^2)$ the argument in Step~2 ensures that $\frac1\kappa,(W\cdot\nabla)\kappa\in\Ld^\infty_\loc(\R^+;\Ld^\infty(\R^2))$.

We now turn to the additional integrability~(i) for $1-\kappa$.
Assume that $f\in\Ld^1\cap \Ld^{\infty}(\R^2)$, $W\in W^{1,\infty}(\R^2)$, and $g\in \Ld^{\infty}(\R^2)$.
For all $R\ge1$, denote by $\chi_R(x):=e^{-|x|/R}$ the exponential cut-off function at scale $R$. We compute
\begin{align*}
\partial_t\int_{\R^2}\chi_R|1-\kappa^t|\le\int_{\R^2}\chi_R\kappa^tW\cdot\nabla|1-\kappa^t|+\int_{\R^2}\chi_R(\kappa^t)^2f+\int_{\R^2}\chi_R|\kappa^tg||1-\kappa^t|,
\end{align*}
and hence, after integration by parts, using the property $|\nabla\chi_R|\le\chi_R$ of the exponential cut-off function, for all $R\ge1$,
\begin{align*}
\partial_t\int_{\R^2}\chi_R|1-\kappa^t|&\le \|\kappa^t\|_{\Ld^\infty}^2\|f\|_{\Ld^1}+(\|\chi_R^{-1}\Div(\kappa^t\chi_R W)\|_{\Ld^\infty}+\|\kappa^t g\|_{\Ld^\infty})\int_{\R^2}\chi_R|1-\kappa^t|\\
&\le \|\kappa^t\|_{\Ld^\infty}^2\|f\|_{\Ld^1}+(\|(W\cdot\nabla)\kappa^t\|_{\Ld^\infty}+\|\kappa^t\|_{\Ld^{\infty}}\|W\|_{W^{1,\infty}}+\|\kappa^t\|_{\Ld^{\infty}}\|g\|_{\Ld^\infty})\int_{\R^2}\chi_R|1-\kappa^t|.
\end{align*}
Applying the Grönwall inequality, and letting $R\uparrow\infty$, we deduce $1-\kappa\in\Ld^\infty_\loc(\R^+;\Ld^1(\R^2))$.

We finally turn to the $H^s$-regularity.
Let $s\ge1$ be fixed. Assume that $f\in H^s\cap W^{s,\infty}(\R^2)$, $W\in W^{s,\infty}(\R^2)^2$, $g\in W^{s,\infty}(\R^2)$.
For all $R\ge1$, denote by $\tilde\chi_R(x):=\exp(-(1+|x|^2)^{1/2}/R)$ a smooth exponential cut-off function at scale $R$.
We compute
\begin{multline}\label{eq:first-compute-dt-1kappa}
\partial_t\|\tilde\chi_R(1-\kappa^t)\|_{H^s}^2=-2\int_{\R^2}\langle\nabla\rangle^s\big(\tilde\chi_R(1-\kappa^t)\big)\langle\nabla\rangle^s\big(\kappa^t\tilde\chi_RW\cdot\nabla\kappa^t\big)\\
-2\int_{\R^2}\langle\nabla\rangle^s\big(\tilde\chi_R(1-\kappa^t)\big)\langle\nabla\rangle^s\big(-\tilde\chi_R(\kappa^t)^2f+\tilde\chi_R\kappa^t(1-\kappa^t)g\big).
\end{multline}
Decomposing
\begin{multline*}
-2\langle\nabla\rangle^s\big(\kappa^t\tilde\chi_RW\cdot\nabla\kappa^t\big)=2[\langle\nabla\rangle^s,\kappa^tW\cdot]\nabla(\tilde\chi_R(1-\kappa^t))+2\kappa^tW\cdot\nabla\langle\nabla\rangle^s(\tilde\chi_R(1-\kappa^t))\\
-2\langle\nabla\rangle^s\big((1-\kappa^t)\kappa^tW\cdot\nabla\tilde\chi_R\big),
\end{multline*}
we find, after integration by parts in the second right-hand side term,
\begin{multline*}
\partial_t\|\tilde\chi_R(1-\kappa^t)\|_{H^s}^2=2\int_{\R^2}\langle\nabla\rangle^s\big(\tilde\chi_R(1-\kappa^t)\big)[\langle\nabla\rangle^s,\kappa^tW\cdot]\nabla(\tilde\chi_R(1-\kappa^t))-\int_{\R^2}|\langle\nabla\rangle^s(\tilde\chi_R(1-\kappa^t))|^2\Div(\kappa^tW)\\
-2\int_{\R^2}\langle\nabla\rangle^s\big(\tilde\chi_R(1-\kappa^t)\big)\langle\nabla\rangle^s\big((1-\kappa^t)\kappa^tW\cdot\nabla\tilde\chi_R-\tilde\chi_R(\kappa^t)^2f+\tilde\chi_R\kappa^t(1-\kappa^t)g\big),
\end{multline*}
and hence,
\begin{multline*}
\partial_t\|\tilde\chi_R(1-\kappa^t)\|_{H^s}\lesssim\|[\langle\nabla\rangle^s,\kappa^tW\cdot]\nabla(\tilde\chi_R(1-\kappa^t))\|_{\Ld^2}+\|\kappa^t\|_{W^{s,\infty}}^2\|\tilde\chi_Rf\|_{H^s}\\
+\big(\|\Div(\kappa^tW)\|_{\Ld^\infty}+\|\tilde\chi_R^{-1}\kappa^tW\cdot\nabla\tilde\chi_R\|_{W^{s,\infty}}+\|\kappa^tg\|_{W^{s,\infty}}\big)\|\tilde\chi_R(1-\kappa^t)\|_{H^s}.
\end{multline*}
Applying the Kato-Ponce commutator estimate~\cite[Lemma~X1]{Kato-Ponce-88} in the form~\eqref{eq:kato-ponce-commutator-statement} with $s\ge1$ in order to estimate the first right-hand side term, we find
\begin{multline*}
\partial_t\|\tilde\chi_R(1-\kappa^t)\|_{H^s}\lesssim\big(\|\kappa^tW\|_{W^{s,\infty}}+\|\tilde\chi_R^{-1}\kappa^tW\cdot\nabla\tilde\chi_R\|_{W^{s,\infty}}+\|\kappa^tg\|_{W^{s,\infty}}\big)\|\tilde\chi_R(1-\kappa^t)\|_{H^s}+\|\kappa^t\|_{W^{s,\infty}}^2\|\tilde\chi_Rf\|_{H^s},
\end{multline*}
and thus, for all $R\ge1$, using the properties of the smooth exponential cut-off function $\tilde\chi_R$,
\begin{align*}
\partial_t\|\tilde\chi_R(1-\kappa^t)\|_{H^s}\lesssim\|\kappa^t\|_{W^{s,\infty}}\|(W,g)\|_{W^{s,\infty}}\|\tilde\chi_R(1-\kappa^t)\|_{H^s}+\|\kappa^t\|_{W^{s,\infty}}^2\|f\|_{H^s},
\end{align*}
Applying the Grönwall inequality, using the regularity result for the solution $\kappa$ in $W^{s,\infty}(\R^2)$, and letting $R\uparrow\infty$, this implies $1-\kappa\in\Ld^\infty_\loc(\R^+;H^s(\R^2))$.
\end{proof}

We may now conclude with the proof of Proposition~\ref{prop:main-deg}.

\begin{proof}[Proof of Proposition~\ref{prop:main-deg}]
Let $ v^\circ,\Psi\in\Ld^\infty_\loc(\R^2)^2$ be $\log$-Lipschitz vector fields with  $\curl v^\circ,\curl\Psi\in\Ld^\infty_\loc(\R^2)$ and $\curl v^\circ\ge0$.
We start with the existence part.
By Lemma~\ref{lem:kappa-well-posed} with $W:=(\Psi+ v^\circ)^\bot$, $f:=\curl v^\circ$, and $g:=\curl\Psi$, there exists a global strong solution $\kappa\in W^{1,\infty}_\loc(\R^+;\Ld^\infty_\loc(\R^2))$ of~\eqref{eq:degen-MF-2} with $\frac1\kappa,((\Psi+ v^\circ)^\bot\cdot\nabla)\kappa\in\Ld^\infty_\loc(\R^+\times\R^2)$. Then the function $ v:=-\Psi+\kappa(\Psi+ v^\circ)\in W^{1,\infty}_\loc(\R^+;\Ld^\infty_\loc(\R^2))$ is by construction a global strong solution of~\eqref{eq:limeqn2-degen} with initial data $ v^\circ$ and with $\curl v\in\Ld^\infty_\loc(\R^+\times\R^2)$. The additional regularity statements follow from the corresponding statements for $\kappa$ in Lemma~\ref{lem:kappa-well-posed} together with the representation $ v- v^\circ=-(1-\kappa)( v^\circ+\Psi)$.

We now turn to the uniqueness part. 
Assume that $ v_1, v_2\in \Ld^{\infty}_\loc([0,T)\times\R^2)$ are strong solutions of~\eqref{eq:limeqn2-degen} on $[0,T)\times\R^2$ with $\curl v_1,\curl v_2\in\Ld^\infty_\loc([0,T)\times\R^2)$ and $\curl v_1,\curl v_2\ge0$.
From~\eqref{eq:decomp-v-wkappa}, it follows that for $i=1,2$ we have $ v_i=-\Psi+\kappa_i(\Psi+ v^\circ)$ where $\kappa_i$ is given by
\[\kappa_i^t(x):=\exp\Big(-\int_0^t\curl v_i^s(x)\,ds\Big).\]
As $ v_i$ is a strong solution of~\eqref{eq:limeqn2-degen} on $[0,T)\times\R^2$, we deduce that $\kappa_i$ is a strong solution of equation~\eqref{eq:degen-MF-2} on $[0,T)\times\R^2$, and the boundedness assumption on $\curl v_i$ implies that $\kappa_i\in W^{1,\infty}_\loc([0,T);\Ld^\infty_\loc(\R^2))$ satisfies $\frac1{\kappa_i},((\Psi+v^\circ)^\bot\cdot\nabla)\kappa_i\in\Ld^\infty_\loc([0,T)\times\R^2)$.
The conclusion $\kappa_1=\kappa_2$ then follows from the uniqueness statement in Lemma~\ref{lem:kappa-well-posed}.
\end{proof}

\section{Appendix: Proof of the preliminary results}\label{app:proofs-prel}
In this appendix, we prove the various preliminary results stated in Section~\ref{chap:prelim}.
We start with the a priori estimate for transport equations, stated in Lemma~\ref{lem:katoponce}.

\begin{proof}[Proof of Lemma~\ref{lem:katoponce}]
We split the proof into two steps: we first prove~\eqref{eq:katoponcecom} as a corollary of the celebrated Kato-Ponce commutator estimate, and then we check estimate~\eqref{eq:tsph-1}, which is but a straightforward observation.

\medskip
\noindent\step1 Proof of~\eqref{eq:katoponcecom}.

Let $s\ge0$. The time derivative of the $H^s$-norm of the solution $\rho$ can be computed as follows, using the notation $\langle \nabla\rangle:=(1+|\nabla|^2)^{1/2}$,
\begin{align*}
\partial_t\|\rho^t\|_{H^s}^2=2\int (\langle\nabla\rangle^s\rho^t) (\langle\nabla\rangle^s\Div(\rho^t w^t))&=2\int (\langle\nabla\rangle^s\rho^t) [\langle\nabla\rangle^s\Div,w^t]\rho^t+2\int (\langle\nabla\rangle^s\rho^t) (w^t\cdot \nabla\langle\nabla\rangle^s\rho^t)\\
&=2\int (\langle\nabla\rangle^s\rho^t) [\langle\nabla\rangle^s\Div,w^t]\rho^t-\int |\langle\nabla\rangle^s\rho^t|^2\Div w^t\\
&\le 2 \|\rho^t\|_{H^s} \|[\langle\nabla\rangle^s\Div,w^t]\rho^t\|_{\Ld^2}+\|(\Div w^t)_-\|_{\Ld^\infty}\|\rho^t\|_{H^s}^2,
\end{align*}
and hence,
\begin{align}\label{eq:lem-katoponce-preapply}
\partial_t\|\rho^t\|_{H^s}&\le \|[\langle\nabla\rangle^s\Div,w^t-W]\rho^t\|_{\Ld^2}+\|[\langle\nabla\rangle^s\Div,W]\rho^t\|_{\Ld^2}+\frac12\|(\Div w^t)_-\|_{\Ld^\infty}\|\rho^t\|_{H^s}.
\end{align}
Now we recall the following forms of the Kato-Ponce commutator estimate~\cite[Lemma~X1]{Kato-Ponce-88} (see e.g.~\cite{Li-16}):
given $p\in(1,\infty)$, and $\frac1{p_i}+\frac1{q_i}=\frac1p$ with $p_i,q_i\in(1,\infty]$ for $i=1,2$, we have for all $f,g\in C^\infty_c(\R^d)$,
\begin{align*}
\|[\langle\nabla\rangle^s\nabla,f] g\|_{\Ld^p}\lesssim_{s,p,p_1,p_2} \|f\|_{W^{s+1,q_1}}\|g\|_{\Ld^{p_1}}+\|\nabla f\|_{\Ld^{p_2}}\|g\|_{W^{s,q_2}},
\end{align*}
and also
\begin{align}\label{eq:kato-ponce-commutator-statement}
\|[\langle\nabla\rangle^s,f]\nabla g\|_{\Ld^p}\lesssim_{s,p,p_1,p_2} \|f\|_{W^{s,q_1}}\|g\|_{W^{1,p_1}}+\mathds1_{s\ge1}\|\nabla f\|_{\Ld^{p_2}}\|g\|_{W^{s,q_2}}.
\end{align}
Together with the Kato-Ponce inequality of Lemma~\ref{lem:katoponce-1}, these estimates yield on the one hand
\begin{align*}
\|[\langle\nabla\rangle^s\Div,W]\rho^t\|_{\Ld^2}&\lesssim_s\|W\|_{W^{s+1,\infty}}\|\rho^t\|_{\Ld^2}+\|\nabla W\|_{\Ld^{\infty}}\|\rho^t\|_{H^s},
\end{align*}
and
\begin{align*}
\|[\langle\nabla\rangle^s\Div,w^t-W]\rho^t\|_{\Ld^2}\lesssim_s\|\rho^t\|_{\Ld^\infty}\|w^t-W\|_{H^{s+1}}+\|\nabla(w^t-W)\|_{\Ld^{\infty}}\|\rho^t\|_{H^s},
\end{align*}
and on the other hand,
\begin{align*}
\|[\langle\nabla\rangle^s\Div,w^t-W]\rho^t\|_{\Ld^2}&\le\|\rho^t\Div(w^t-W)\|_{H^s}+\|[\langle\nabla\rangle^s,(w^t-W)\cdot\,]\,\nabla\rho^t\|_{\Ld^2}\\
&\lesssim_s\|\nabla(w^t-W)\|_{\Ld^{\infty}}\|\rho^t\|_{H^s}+\|\rho^t\|_{\Ld^\infty}\|\Div(w^t-W)\|_{H^s}+\|\rho^t\|_{W^{1,\infty}}\|w^t-W\|_{H^{s}}.
\end{align*}
Injecting these estimates into~\eqref{eq:lem-katoponce-preapply}, the result~\eqref{eq:katoponcecom} follows.

\medskip
\noindent\step2 Proof of~\eqref{eq:tsph-1}.
\nopagebreak

Let $\e>0$. We denote by $\hat u$ the Fourier transform of a function $u$ on $\R^d$. Set $G:=\rho w$, so that the equation for $\rho$ takes the form $\partial_t\rho=\Div G$. Rewriting this equation in Fourier space and testing it against $(\e+|\xi|)^{-2}(\hat\rho^t-\hat\rho^\circ)(\xi)$, we find
\begin{align*}
\partial_t\int (\e+|\xi|)^{-2}|\hat \rho^t(\xi)-\hat\rho^\circ(\xi)|^2d\xi&=2i\int (\e+|\xi|)^{-2}\xi\cdot \hat G^t(\xi)(\overline{\hat \rho^t(\xi)-\hat\rho^\circ(\xi)})d\xi\\
&\le2\int (\e+|\xi|)^{-1}|\hat \rho^t(\xi)-\hat\rho^\circ(\xi)||\hat G^t(\xi)|d\xi,
\end{align*}
and hence, by the Cauchy-Schwarz inequality,
\begin{align*}
\partial_t\bigg(\int (\e+|\xi|)^{-2}|\hat \rho^t(\xi)-\hat\rho^\circ(\xi)|^2d\xi\bigg)^{1/2}&\le\bigg(\int|\hat G^t(\xi)|^2d\xi\bigg)^{1/2}.
\end{align*}
Integrating in time and letting $\e\downarrow0$, we obtain
\[\| \rho^t-\rho^\circ\|_{\dot H^{-1}}\le \|G\|_{\Ld^1_t\Ld^2}\le \|\rho\|_{\Ld^\infty_t\Ld^2}\|w\|_{\Ld^1_t\Ld^\infty},\]
that is,~\eqref{eq:tsph-1}.
\end{proof}

We turn to the proof of the a priori estimates for transport-diffusion equations, stated in Lemma~\ref{lem:parreg+tsp}.

\begin{proof}[Proof of Lemma~\ref{lem:parreg+tsp}]
We split the proof into three steps, proving items (i), (ii), and~(iii) separately.

\medskip
\noindent\step1 Proof of~(i).

\nopagebreak
Denote $G:=g-w\nabla h$, so that $w$ satisfies $\partial_tw-\triangle w=\Div G$. Set $\langle\xi\rangle:=(1+|\xi|^2)^{1/2}$, and let $\hat u$ denote the Fourier transform of a function $u$ on $\R^d$. Let $s\ge0$ be fixed, and assume that $\nabla h,w,g$ are as in the statement of~(i) (which implies $G\in\Ld^2_\loc([0,T);H^s(\R^d))$ as shown below). In this step, we use the notation $\lesssim$ for $\le$ up to a constant $C$ as in the statement.
For all $\e>0$, rewriting the equation for $w$ in Fourier space and testing it against $(\e+|\xi|)^{-2}\langle\xi\rangle^{2s}\partial_t \hat w(\xi)$, we obtain
\begin{align*}
\int(\e+|\xi|)^{-2}\langle\xi\rangle^{2s}|\partial_t\hat w^t(\xi)|^2d\xi+\frac12\int\frac{|\xi|^2}{(\e+|\xi|)^2}\langle\xi\rangle^{2s}\partial_t|\hat w^t(\xi)|^2d\xi=i\int(\e+|\xi|)^{-2}\langle\xi\rangle^{2s} \xi\cdot\hat G^t(\xi)\overline{\partial_t\hat w^t(\xi)}d\xi,
\end{align*}
and hence, integrating over $[0,t]$, and using the inequality $2xy\le x^2+y^2$,
\begin{align*}
&\int_0^t\int(\e+|\xi|)^{-2}\langle\xi\rangle^{2s}|\partial_u\hat w^u(\xi)|^2d\xi du+\frac12\int\frac{|\xi|^2}{(\e+|\xi|)^2}\langle\xi\rangle^{2s}|\hat w^t(\xi)|^2d\xi\\
=~&\frac12\int\frac{|\xi|^2}{(\e+|\xi|)^2}\langle\xi\rangle^{2s}|\hat w^\circ(\xi)|^2d\xi+i\int_0^t\int(\e+ |\xi|)^{-2}\langle\xi\rangle^{2s}\xi\cdot\hat G^u(\xi)\overline{\partial_u\hat w^u(\xi)}d\xi du\\
\le~& \frac12\int\langle\xi\rangle^{2s}|\hat w^\circ(\xi)|^2d\xi+\frac12\int_0^t\int \langle\xi\rangle^{2s}|\hat G^u(\xi)|^2d\xi du+\frac12\int_0^t\int (\e+|\xi|)^{-2}\langle\xi\rangle^{2s}|\partial_u\hat w^u(\xi)|^2d\xi du.
\end{align*}
Absorbing in the left-hand side the last right-hand side term, and letting $\e\downarrow0$, it follows that
\begin{align*}
\int\langle\xi\rangle^{2s}|\hat w^t(\xi)|^2d\xi\le\int\langle\xi\rangle^{2s}|\hat w^\circ(\xi)|^2d\xi+\int_0^t\int \langle\xi\rangle^{2s}|\hat G^u(\xi)|^2d\xi du,
\end{align*}
or equivalently
\[\|w^t\|_{H^s}\le\|w^\circ\|_{H^s}+\|G\|_{\Ld^2_tH^s}.\]
Lemma~\ref{lem:katoponce-1} yields
\begin{align*}
\|G\|_{\Ld^2_tH^s}\le\|g\|_{\Ld^2_tH^s}+\|w\nabla h\|_{\Ld^2_tH^s}&\lesssim\|g\|_{\Ld^2_tH^s}+\|\nabla h\|_{W^{s,\infty}}\|w\|_{\Ld^2_t\Ld^2}+\|\nabla h\|_{\Ld^\infty}\|w\|_{\Ld^2_tH^s}\\
&\lesssim\|g\|_{\Ld^2_tH^s}+\|w\|_{\Ld^2_tH^s},
\end{align*}
so that we obtain
\[\|w^t\|_{H^s}^2\lesssim \|w^\circ\|_{H^s}^2+\|g\|_{\Ld^2_tH^s}^2+\int_0^t\|w^u\|_{H^s}^2du,\]
and item~(i) now follows from the Grönwall inequality.

\medskip
\noindent\step2 Proof of~(ii).

Set again $G:=g-w\nabla h$, and let $\nabla h,w^\circ,w,g$ be as in the statement of~(ii). For all $\e>0$, rewriting the equation for $w$ in Fourier space and then integrating it against $(\e+|\xi|)^{-2}(\hat w^t-\hat w^\circ)(\xi)$, we may estimate
\begin{align*}
&\partial_t\int(\e+|\xi|)^{-2}|(\hat w^t-\hat w^\circ)(\xi)|^2d\xi=2\int(\e+|\xi|)^{-2}\overline{(\hat w^t-\hat w^\circ)(\xi)}\partial_t\hat w^t(\xi)d\xi\\
\le~&-2\int\frac{|\xi|^2}{(\e+|\xi|)^2}|(\hat w^t-\hat w^\circ)(\xi)|^2+2\int\frac{|\xi|^2}{(\e+|\xi|)^2}|(\hat w^t-\hat w^\circ)(\xi)||\hat w^\circ(\xi)|+2\int(\e+ |\xi|)^{-1}|(\hat w^t-\hat w^\circ)(\xi)||\hat G^t(\xi)|d\xi\\
\le~&\int\frac{|\xi|^2}{(\e+|\xi|)^2}|\hat w^\circ(\xi)|^2+\int(\e+ |\xi|)^{-2}|(\hat w^t-\hat w^\circ)(\xi)|^2d\xi+\int(1+ |\xi|^2)^{-1}|\hat G^t(\xi)|^2d\xi,
\end{align*}
that is
\begin{align*}
\partial_t\int(\e+|\xi|)^{-2}|(\hat w^t-\hat w^\circ)(\xi)|^2d\xi&\le\int(\e+ |\xi|)^{-2}|(\hat w^t-\hat w^\circ)(\xi)|^2d\xi+\|w^\circ\|_{\Ld^2}^2+\|G^t\|_{H^{-1}}^2,
\end{align*}
and hence by the Grönwall inequality,
\begin{align*}
\int(\e+|\xi|)^{-2}|(\hat w^t-\hat w^\circ)(\xi)|^2d\xi&\le e^{t}\big(t\|w^\circ\|_{\Ld^2}^2+\|G\|_{\Ld^2_tH^{-1}}^2\big).
\end{align*}
Letting $\e\downarrow0$, it follows that $w^t-w^\circ\in \dot H^{-1}(\R^2)$ with
\begin{align*}
\|w^t-w^\circ\|_{\dot H^{-1}}\le e^{Ct}(\|w^\circ\|_{\Ld^2}+\|G\|_{\Ld^2_tH^{-1}})\le e^{Ct}(\|w^\circ\|_{\Ld^2}+\|g\|_{\Ld^2_tH^{-1}}+\|\nabla h\|_{\Ld^\infty}\|w\|_{\Ld^2_t\Ld^2}).
\end{align*}
Combining this with~(i) for $s=0$, item~(ii) follows.

\medskip
\noindent\step3 Proof of~(iii).
\nopagebreak

Let $1\le p,q\le\infty$, and assume that $w\in\Ld^p([0,T);\Ld^q(\R^d))$, $\nabla h\in\Ld^\infty(\R^d)$, and $g\in\Ld^p([0,T);\Ld^q(\R^d))$. In this step, we use the notation $\lesssim$ for $\le$ up to a constant $C$ as in the statement.
Denoting by $\Gamma^t(x):=Ct^{-d/2}e^{-|x|^2/(2t)}$ the heat kernel, Duhamel's representation formula yields
\[w^t(x)=\Gamma^t\ast w^\circ(x)+\phi_g^t(x)-\int_0^t\int\nabla \Gamma^{u}(y)\cdot \nabla h(x-y)\,w^{t-u}(x-y)dydu,\]
where we have set
\[\phi_g^t(x):=\int_0^t\int\nabla\Gamma^u(y)\cdot g^{t-u}(x-y)dydu.\]
We find by the triangle inequality
\[\|w^t\|_{\Ld^q}\le \|w^\circ\|_{\Ld^q}\int|\Gamma^t(y)|dy+\|\phi_g^t\|_{\Ld^q}+\|\nabla h\|_{\Ld^\infty}\int_0^t \|w^{t-u}\|_{\Ld^q}\int|\nabla \Gamma^{u}(y)|dydu,\]
hence by a direct computation
\[\|w^t\|_{\Ld^q}\lesssim \|w^\circ\|_{\Ld^q}+\|\phi_g^t\|_{\Ld^q}+\int_0^t \|w^{t-u}\|_{\Ld^q}u^{-1/2}du.\]
Integrating with respect to $t$, and using the triangle and the Hölder inequalities, we find
\begin{align*}
\|w\|_{\Ld^p_t\Ld^q}&\lesssim t^{1/p}\|w^\circ\|_{\Ld^q}+\|\phi_g\|_{\Ld^p_t\Ld^q}+\bigg(\int_0^t\Big(\int_0^t \mathds1_{u<v}\|w^{v-u}\|_{\Ld^q}u^{-1/2}du\Big)^pdv\bigg)^{1/p}\\
&\lesssim t^{1/p}\|w^\circ\|_{\Ld^q}+\|\phi_g\|_{\Ld^p_t\Ld^q}+\int_0^t\|w\|_{\Ld^p_u\Ld^q}(t-u)^{-1/2}du\\
&\lesssim t^{1/p}\|w^\circ\|_{\Ld^q}+\|\phi_g\|_{\Ld^p_t\Ld^q}+(1-r'/2)^{-1/r'}t^{\frac12-\frac1r}\bigg(\int_0^t\|w\|_{\Ld^p_u\Ld^q}^rdu\bigg)^{1/r},
\end{align*}
for all $r>2$.
Noting that $(1-r'/2)^{-1/r'}\lesssim1+(r-2)^{-1/2}$, and optimizing in $r$, the Grönwall inequality then leads to
\begin{align}\label{eq:resparregzetapre}
\|w\|_{\Ld^p_t\Ld^q}&\lesssim (t^{1/p}\|w^\circ\|_{\Ld^q}+\|\phi_g\|_{\Ld^p_t\Ld^q})\exp\Big(\inf_{2<r<\infty}\frac{C^r}r(1+(r-2)^{-r/2})\,t^{r/2}\Big).
\end{align}
Now it remains to estimate the norm of $\phi_g$.
A similar computation as above yields $\|\phi_g\|_{\Ld^p_t\Ld^q}\lesssim t^{1/2}\|g\|_{\Ld^p_t\Ld^q}$,
but a more careful estimate is needed.
For $1\le s\le q$, we may estimate by the Hölder inequality
\begin{align*}
|\phi_g^t(x)|&\le\int_0^t\bigg(\int|\nabla \Gamma^{u}|^{s'/2}\bigg)^{1/s'}\bigg(\int|\nabla \Gamma^{u}(x-y)|^{s/2} |g^{t-u}(y)|^sdy\bigg)^{1/s}du,
\end{align*}
and hence, by the triangle inequality,
\begin{align*}
\|\phi_g^t\|_{\Ld^q}&\le\int_0^t\bigg(\int|\nabla \Gamma^{u}|^{s'/2}\bigg)^{1/s'}\bigg(\int|\nabla \Gamma^{u}|^{q/2}\bigg)^{1/q}\bigg(\int |g^{t-u}|^s\bigg)^{1/s}du.
\end{align*}
Assuming that $\kappa:=\frac d2\big(\frac1d+\frac 1q-\frac1s\big)>0$ (note that $\kappa\le1/2$ follows from the choice $s\le q$), a direct computation then yields
\begin{align*}
\|\phi_g^t\|_{\Ld^q}&\lesssim \int_0^tu^{\kappa-1}\|g^{t-u}\|_{\Ld^s}du.
\end{align*}
Integrating with respect to $t$, we find by the triangle inequality
\begin{align*}
\|\phi_g\|_{\Ld^p_t\Ld^q}&\lesssim \int_0^tu^{\kappa-1}\bigg(\int_0^{t-u}\|g^{v}\|_{\Ld^s}^pdv\bigg)^{1/p}du~\lesssim \kappa^{-1}t^\kappa\|g^{v}\|_{\Ld^p_t\Ld^s},
\end{align*}
and the result~(iii) follows from this together with~\eqref{eq:resparregzetapre}.
\end{proof}

We turn to the proof of the potential estimates in $\Ld^\infty(\R^d)$, stated in Lemma~\ref{lem:singint-1}.

\begin{proof}[Proof of Lemma~\ref{lem:singint-1}]
Recall that $-\triangle^{-1} w=g_d\ast  w$, where we define $g_d(x):=c_d|x|^{2-d}$ if $d>2$ and $g_2(x):=-c_2\log|x|$ if $d=2$. The stated results are based on suitable decompositions of this Green's integral.
We split the proof into three steps, separately proving items~(i), (ii) and~(iii).

\medskip
\noindent\step1 Proof of~(i).
\nopagebreak

Let $0<\gamma\le\Gamma<\infty$. The obvious estimate $|\nabla\triangle^{-1} w(x)|\lesssim \int|x-y|^{1-d}| w(y)|dy$ may be decomposed as
\begin{align*}
|\nabla\triangle^{-1} w(x)|&\lesssim\int_{|x-y|<\gamma}|x-y|^{1-d}| w(y)|dy+\int_{\gamma<|x-y|<\Gamma}|x-y|^{1-d}| w(y)|dy+\int_{|x-y|>\Gamma}|x-y|^{1-d}| w(y)|dy.
\end{align*}
Let $1\le p<d<q\le\infty$. We use the H\"older inequality with exponents $(q/(q-1),q)$ for the first term, $(d/(d-1),d)$ for the second, and $(p/(p-1),p)$ for the third, which yields after straightforward computations
\begin{align*}
|\nabla\triangle^{-1} w(x)|&\lesssim (q'(1-d/q))^{-1/q'}\gamma^{1-d/q}\| w\|_{\Ld^q}+(\log (\Gamma/\gamma))^{(d-1)/d}\| w\|_{\Ld^{d}}+(p'(d/p-1))^{-1/p'}\Gamma^{1-d/p}\| w\|_{\Ld^p}.
\end{align*}
Item~(i) now easily follows, choosing $\gamma^{1-d/q}=\| w\|_{\Ld^{d}}/\| w\|_{\Ld^q}$ and $\Gamma^{d/p-1}=\| w\|_{\Ld^p}/\| w\|_{\Ld^{d}}$, noting that $\gamma\le\Gamma$ follows from interpolation of $\Ld^{d}$ between $\Ld^p$ and $\Ld^\infty$, and observing that
\[(q'(1-d/q))^{-1/q'}\lesssim (1-d/q)^{-1+1/d},\qquad (p'(d/p-1))^{-1/p'}\lesssim (1-p/d)^{-1+1/d}.\]

\medskip
\noindent\step2 Proof of~(ii).
\nopagebreak

Let $0<\gamma\le1\le\Gamma<\infty$, and let $\chi_\Gamma$ denote a cut-off function with $\chi_\Gamma=0$ on $B_\Gamma$, $\chi_\Gamma=1$ outside $B_{\Gamma+1}$, and $|\nabla\chi_\Gamma|\le2$. We may then decompose
\begin{align*}
-\nabla\triangle^{-1} w(x)=~&\int_{|x-y|<\gamma}\nabla g_d(x-y) w(y)dy+\int_{\gamma\le|x-y|\le\Gamma}\nabla g_d(x-y) w(y)dy\\
&+\int_{\Gamma\le|x-y|\le\Gamma+1}\nabla g_d(x-y)(1-\chi_\Gamma(x-y)) w(y)dy+\int_{|x-y|\ge\Gamma}\nabla g_d(x-y)\chi_\Gamma(x-y) w(y)dy.
\end{align*}
Using $ w=\Div\xi$ and integrating by parts, the last term becomes
\begin{align*}
\int\nabla g_d(x-y)\chi_\Gamma(x-y) w(y)dy&=-\int\nabla g_d(x-y) \otimes\nabla\chi_\Gamma(x-y) \cdot\xi(y)dy-\int\chi_\Gamma(x-y) \nabla^2g_d(x-y)\cdot\xi(y)dy.
\end{align*}
Choosing $\Gamma=1$, we may then estimate
\begin{align*}
|\nabla\triangle^{-1} w(x)|\lesssim~&\int_{|x-y|<\gamma}|x-y|^{1-d} |w(y)|dy+\int_{\gamma\le|x-y|\le2}|x-y|^{1-d} |w(y)|dy+\int_{|x-y|\ge1}|x-y|^{-d}|\xi(y)|dy.
\end{align*}
Using the Hölder inequality just as in Step~1 for the first two terms, with $d<q\le\infty$, and using the Hölder inequality with exponents $(p/(p-1),p)$ for the last term, we obtain, for any $1\le p<\infty$,
\begin{align*}
|\nabla\triangle^{-1} w(x)|\lesssim~&(q'(1-d/q))^{-1/q'}\gamma^{1-d/q}\| w\|_{\Ld^q}+(\log(2/\gamma))^{(d-1)/d}\| w\|_{\Ld^{d}}+(d(p'-1))^{-1/p'}\|\xi\|_{\Ld^p},
\end{align*}
so that item~(ii) follows from the choice $\gamma^{1-d/q}=1\wedge(\| w\|_{\Ld^{d}}/\| w\|_{\Ld^q})$, observing that $(d(p'-1))^{-1/p'}\le p$.

\medskip
\noindent\step3 Proof of~(iii).

Given $0<\gamma\le1$, using the integration by parts
\begin{align*}
\int_{|x-y|<\gamma}\nabla^2g_d(x-y)dy=\int_{|x-y|=\gamma}n\otimes \nabla g_d(x-y)dy,
\end{align*}
we may decompose
\begin{align*}
|\nabla^2\triangle^{-1} w(x)|&\lesssim\bigg|\int_{|x-y|<\gamma}\frac{(x-y)^{\otimes2}}{|x-y|^{d+2}} w(y)dy\bigg|+\bigg|\int_{\gamma\le|x-y|<1}\frac{(x-y)^{\otimes2}}{|x-y|^{d+2}} w(y)dy\bigg|+\bigg|\int_{|x-y|\ge1}\frac{(x-y)^{\otimes2}}{|x-y|^{d+2}} w(y)dy\bigg|\\
&\lesssim\bigg|\int_{|x-y|<\gamma}\frac{(x-y)^{\otimes2}}{|x-y|^{d+2}}( w(x)- w(y))dy\bigg|+| w(x)|\bigg|\int_{|x-y|=\gamma}\frac{x-y}{|x-y|^{d}}dy\bigg|\\
&\hspace{3cm}+\bigg|\int_{\gamma\le|x-y|<1}\frac{(x-y)^{\otimes2}}{|x-y|^{d+2}} w(y)dy\bigg|+\bigg|\int_{|x-y|\ge1}\frac{(x-y)^{\otimes2}}{|x-y|^{d+2}} w(y)dy\bigg|.
\end{align*}
Let $0<s\le1$ and $1\le p<\infty$. Using the inequality $| w(x)- w(y)|\le|x-y|^s| w|_{C^s}$, and then applying the H\"older inequality with exponents $(1,\infty)$ for the first three terms, and $(p/(p-1),p)$ for the last one, we obtain after straightforward computations
\begin{align*}
|\nabla^2\triangle^{-1} w(x)|&\lesssim s^{-1}\gamma^s| w|_{C^s}+\| w\|_{\Ld^\infty}+|\log\gamma|\| w\|_{\Ld^\infty}+(d(p'-1))^{-1/p'}\| w\|_{\Ld^p}.
\end{align*}
Item~(iii) then follows for the choice $\gamma^s=\| w\|_{\Ld^\infty}/\| w\|_{C^s}\le1$.
\end{proof}

We turn to the proof of the potential estimates in Sobolev and Hölder-Zygmund spaces, stated in Lemma~\ref{lem:pottheoryCsHs}.

\begin{proof}[Proof of Lemma~\ref{lem:pottheoryCsHs}]
As item~(i) is obvious via Fourier transform, we focus on item~(ii).
Let $s\in\R$, let $\chi\in C^\infty_c(\R^d)$ be a fixed even function with $\chi=1$ in a neighborhood of the origin, and let $\chi(\nabla)$ denote the corresponding pseudo-differential operator. Applying~\cite[Proposition~2.78]{BCD-11} to the operator $(1-\chi(\nabla))\nabla\triangle^{-1}$, we find
\begin{align*}
\|\nabla\triangle^{-1}w\|_{C^s_*}\le\|(1-\chi(\nabla))\nabla\triangle^{-1}w\|_{C^s_*}+\|\chi(\nabla)\nabla\triangle^{-1}w\|_{C^s_*}\lesssim_s\|w\|_{C^{s-1}_*}+\|\chi(\nabla)\nabla\triangle^{-1}w\|_{C^s_*}.
\end{align*}
Let $k$ denote the smallest nonnegative integer $\ge s$. Noting that $\|v\|_{C^s_*}\lesssim\sum_{j=0}^k\|\nabla^jv\|_{\Ld^\infty}$ holds for all $v$, we deduce
\begin{align*}
\|\nabla\triangle^{-1}w\|_{C^s_*}\lesssim\|w\|_{C^{s-1}_*}+\sum_{j=0}^k\|\nabla^j\chi(\nabla)\nabla\triangle^{-1}w\|_{\Ld^\infty},
\end{align*}
and similarly
\begin{align*}
\|\nabla^2\triangle^{-1}w\|_{C^s_*}\lesssim\|w\|_{C^{s}_*}+\sum_{j=0}^k\|\nabla^j\chi(\nabla)\nabla^2\triangle^{-1}w\|_{\Ld^\infty}.
\end{align*}
Writing $\nabla^j\chi(\nabla)\nabla\triangle^{-1}w=\nabla^j\hat\chi\ast\nabla\triangle^{-1}w$, we find
\begin{align*}
\|\nabla^j\chi(\nabla)\nabla\triangle^{-1}w\|_{\Ld^\infty}\le\|\nabla^j\hat\chi\|_{\Ld^2}\|\nabla\triangle^{-1}w\|_{\Ld^2}=\|\nabla^j\hat\chi\|_{\Ld^2}\|w\|_{\dot H^{-1}},
\end{align*}
and the first two estimates in item~(ii) follow. Rather writing $\nabla^j\chi(\nabla)\nabla\triangle^{-1}w=\nabla\triangle^{-1}(\nabla^j\hat\chi\ast w)$, and using the estimate $|\nabla\triangle^{-1}v(x)|\lesssim\int|x-y|^{1-d}|v(y)|dy$ as in the proof of Lemma~\ref{lem:singint-1}, we find for all $1\le p<d$,
\begin{multline*}
\|\nabla^j\chi(\nabla)\nabla\triangle^{-1}w\|_{\Ld^\infty}\lesssim\sup_x\int_{|x-y|\le1}|x-y|^{1-d}|\nabla^j\hat\chi\ast w(y)|dy+\sup_x\int_{|x-y|>1}|x-y|^{1-d}|\nabla^j\hat\chi\ast w(y)|dy\\
\lesssim_p\|\nabla^j\hat\chi\ast w\|_{\Ld^p\cap\Ld^\infty}\le\|\nabla^j\hat\chi\|_{\Ld^1}\|w\|_{\Ld^p\cap\Ld^\infty},
\end{multline*}
and the third estimate in item~(ii) follows. The last estimate in item~(ii) is now easily obtained, arguing similarly as in the proof of Lemma~\ref{lem:singint-1}(iii).
\end{proof}

We turn to the proof of the 2D global elliptic regularity results stated in Lemma~\ref{lem:globellreg}.

\begin{proof}[Proof of Lemma~\ref{lem:globellreg}]
We split the proof into three steps, first proving~(i) as a consequence of Meyers' perturbative argument, then turning to the Sobolev regularity~(ii), and finally to the Schauder type estimate~(iii). The additional $\Ld^\infty$-estimate for $ v$ directly follows from item~(i) and the Sobolev embedding, while the corresponding estimate for $\nabla u$ follows from items~(i) and~(iii) by interpolation: for $2<p\le p_0$ and $s\in(0,1)$, we indeed find
\[\|\nabla u\|_{\Ld^\infty}\lesssim \|\nabla u\|_{\Ld^p}+|\nabla u|_{C^s}\le C_p\|f\|_{\Ld^{2p/(p+2)}}+C_s\|f\|_{\Ld^{2/(1-s)}}\le C_{p,s}\|f\|_{\Ld^1\cap\Ld^\infty}.\]
In the proof below, we use the notation $\lesssim$ for $\le$ up to a constant $C>0$ that depends only on an upper bound on $\Lambda$, and we add subscripts to indicate dependence on further parameters.

\medskip
\noindent\step1 Proof of~(i).
\nopagebreak

We start with the norm of $v$. By Meyers' perturbative argument~\cite{Meyers-63}, there exists some $1<r_0<2$ (depending only on $\Lambda$) such that $\|\nabla v\|_{\Ld^r}\lesssim \|g\|_{\Ld^r}$ holds for all $r_0\le r\le r_0'$, $\frac1{r_0}+\frac{1}{r_0'}=1$.
On the other hand, decomposing the equation for $v$ as
\[-\triangle v=\Div (g+(b-1)\nabla v),\]
we deduce from Riesz potential theory that for all $1<r<2$
\[\|v\|_{\Ld^{2r/(2-r)}}\lesssim_r\|g+(b-1)\nabla v\|_{\Ld^{r}}\lesssim\|g\|_{\Ld^{r}}+\|\nabla v\|_{\Ld^{r}},\]
and hence $\|v\|_{\Ld^{2r/(2-r)}}\lesssim_r\|g\|_{\Ld^r}$ for all $r_0\le r<2$, that is, $\|v\|_{\Ld^q}\lesssim_q\|g\|_{\Ld^{2q/(q+2)}}$ for all $\frac{2r_0}{2-r_0}\le q<\infty$.

We now turn to the norm of $\nabla u$. The proof follows from a suitable adaptation of Meyers' perturbative argument~\cite{Meyers-63}, again combined with Riesz potential theory. For the reader's convenience a complete proof is included.
First recall that the Calderón-Zygmund theory yields $\|\nabla^2\triangle w\|_{\Ld^p}\le K_p\|w\|_{\Ld^p}$ for all $1<p<\infty$ and all $w\in C^\infty_c(\R^2)$, where the constants $K_p$'s moreover satisfy $\limsup_{p\to2}K_p\le K_2$, while a simple energy estimate allows to choose $K_2=1$. Now rewriting the equation for $u$ as
\[-\triangle u=\frac2{\Lambda+1}f+\Div\bigg(\frac{2}{\Lambda+1}\Big(b-\frac{\Lambda+1}2\Big)\nabla u\bigg),\]
we deduce from Riesz potential theory and from the Calderón-Zygmund theory (applied to the first and to the second right-hand side term, respectively), for all $2<p<\infty$,
\begin{align*}
\|\nabla u\|_{\Ld^p}&\le \frac2{\Lambda+1}\|\nabla\triangle^{-1}f\|_{\Ld^p}+\bigg\|\nabla\triangle^{-1}\Div\bigg(\frac{2}{\Lambda+1}\Big(b-\frac{\Lambda+1}2\Big)\nabla u\bigg)\bigg\|_{\Ld^p}\\
&\le \frac{2C_p}{\Lambda+1}\|f\|_{\Ld^{2p/(p+2)}}+\frac{2K_p}{\Lambda+1}\Big\|\Big(b-\frac{\Lambda+1}2\Big)\nabla u\Big\|_{\Ld^p}\\
&\le \frac{2C_p}{\Lambda+1}\|f\|_{\Ld^{2p/(p+2)}}+\frac{K_p(\Lambda-1)}{\Lambda+1}\|\nabla u\|_{\Ld^p},
\end{align*}
where the last inequality follows from $\Id\le b\le\Lambda\Id$. Since we have $\frac{\Lambda-1}{\Lambda+1}<1$ and $\limsup_{p\to2}K_p\le K_2=1$, we may choose $ P_0>2$ close enough to $2$ such that $\frac{K_p(\Lambda-1)}{\Lambda+1}<1$ holds for all $2\le p\le p_0$. This allows to absorb the last right-hand side term, and to conclude $\|\nabla u\|_{\Ld^p}\lesssim_p \|f\|_{\Ld^{2p/(p+2)}}$ for all $2<p\le p_0$.

\medskip
\noindent\step2 Proof of~(ii).
\nopagebreak

We focus on the result for $u$, as the argument for $v$ is very similar. A simple energy estimate yields
\[\int|\nabla u|^2\le\int \nabla u\cdot b\nabla u=\int fu\le\|f\|_{\dot H^{-1}}\|\nabla u\|_{\Ld^2},\]
hence $\|\nabla u\|_{\Ld^2}\le\|f\|_{\dot H^{-1}}$, that is, (ii) with $s=0$. The result~(ii) for any integer $s\ge0$ is then deduced by induction, successively differentiating the equation.
It remains to consider the case of fractional values $s\ge0$. We only display the argument for $0<s<1$, while the other cases are similarly obtained after differentiation of the equation. Let $0<s<1$ be fixed. We use the following finite difference characterization of the fractional Sobolev space $H^s(\R^2)$: a function $w\in\Ld^2(\R^2)$ belongs to $H^s(\R^2)$, if and only if it satisfies $\|w-w(\cdot+h)\|_{\Ld^2}\le K|h|^s$ for all $h\in\R^2$, for some $K>0$, and we then have $\|w\|_{\dot H^s}\le K$. This characterization is easily checked, using e.g.\@ the identity $\|w-w(\cdot+h)\|_{\Ld^2}^2\simeq\int|1-e^{i\xi\cdot h}|^2|\hat w(\xi)|^2d\xi$, where $\hat w$ denotes the Fourier transform of $w$, and noting that $|1-e^{ia}|\le 2\wedge |a|$ holds for all $a\in\R$.
Now applying finite difference to the equation for $u$, we find for all $h\in\R^2$,
\[-\Div( b(\cdot+h)(\nabla u-\nabla u(\cdot+h)))=\Div( (b-b(\cdot+h))\nabla u)+f-f(\cdot+h),\]
and hence, testing against $u-u(\cdot+h)$,
\begin{align*}
\int |\nabla u-\nabla u(\cdot+h)|^2&\le-\int (\nabla u-\nabla u(\cdot+h))\cdot (b-b(\cdot+h))\nabla u+\int (u-u(\cdot+h))(f-f(\cdot+h))\\
&\le |h|^s|b|_{C^s}\|\nabla u\|_{\Ld^2}\|\nabla u-\nabla u(\cdot+h)\|_{\Ld^2}+\|f-f(\cdot+h)\|_{\dot H^{-1}}\|\nabla u-\nabla u(\cdot+h)\|_{\Ld^2},
\end{align*}
where we compute by means of Fourier transforms
\begin{align*}
\|f-f(\cdot+h)\|_{\dot H^{-1}}^2&\simeq\int |\xi|^{-2}|1-e^{i\xi\cdot h}|^2|\hat f(\xi)|^2d\xi\lesssim \int|\xi|^{-2}|\xi\cdot h|^{2s}|\hat f(\xi)|^2d\xi\lesssim |h|^{2s}\|f\|_{\dot H^{-1}\cap H^{s-1}}^2.
\end{align*}
Further combining this with the $\Ld^2$-estimate for $\nabla u$ proven at the beginning of this step, we conclude
\begin{align*}
\|\nabla u-\nabla u(\cdot+h)\|_{\Ld^2}&\lesssim |h|^s(|b|_{C^{s}}\|\nabla u\|_{\Ld^2}+\|f\|_{\dot H^{-1}\cap H^{s-1}})\lesssim |h|^s(1+|b|_{C^{s}})\|f\|_{\dot H^{-1}\cap H^{s-1}},
\end{align*}
and the result follows from the above stated characterization of $H^s(\R^2)$.

\medskip
\noindent\step3 Proof of~(iii).

We focus on the result for $u$, while that for $v$ is easily obtained as an adaptation of~\cite[Theorem~3.8]{Han-Lin-97}.
Let $x_0\in\R^2$ be fixed. The equation for $u$ may be rewritten as
\[-\Div(b(x_0)\nabla u)=f+\Div((b-b(x_0))\nabla u).\]
For all $r>0$, let $w_r\in u+H^1_0(B(x_0,r))$ be the unique solution of $-\Div(b(x_0)\nabla w_r)=0$ in $B(x_0,r)$. The difference $v_r:=u-w_r\in H^1_0(B(x_0,r))$ then satisfies in $B(x_0,r)$
\[-\Div(b(x_0)\nabla v_r)=f+\Div((b-b(x_0))\nabla u).\]
Testing this equation against $v_r$ itself, we obtain
\begin{align*}
\int|\nabla v_r|^2&\le \bigg|\int_{B(x_0,r)}fv_r\bigg|+\int_{B(x_0,r)}|b-b(x_0)||\nabla u||\nabla v_r|\le \bigg|\int_{B(x_0,r)}fv_r\bigg|+r^s|b|_{C^s}\|\nabla u\|_{\Ld^2(B(x_0,r))}\|\nabla v_r\|_{\Ld^2}.
\end{align*}
We estimate the first term as follows
\begin{align*}
\bigg|\int_{B(x_0,r)}fv_r\bigg|=\bigg|\int_{B(x_0,r)}\nabla v_r\cdot\nabla\triangle^{-1}(\mathds1_{B(x_0,r)}f)\bigg|&\le\|\nabla v_r\|_{\Ld^{p'}(B(x_0,r))}\|\nabla\triangle^{-1}(\mathds1_{B(x_0,r)}f)\|_{\Ld^p},
\end{align*}
and hence by Riesz potential theory, for all $2<p<\infty$,
\begin{align*}
\bigg|\int_{B(x_0,r)}fv_r\bigg|&\lesssim_p\|\nabla v_r\|_{\Ld^{p'}(B(x_0,r))}\|f\|_{\Ld^{2p/(p+2)}(B(x_0,r))}.
\end{align*}
The Hölder inequality then yields, choosing $q:=\frac2{1-s}>2$,
\begin{align*}
\bigg|\int_{B(x_0,r)}fv_r\bigg|&\lesssim_pr^{\frac2{p'}-1}\|\nabla v_r\|_{\Ld^{2}}~r^{1+\frac2p-\frac2q}\|f\|_{\Ld^{q}}=r^{2(1-\frac1q)}\|\nabla v_r\|_{\Ld^{2}}\|f\|_{\Ld^{q}}=r^{1+s}\|\nabla v_r\|_{\Ld^{2}}\|f\|_{\Ld^{2/(1-s)}}.
\end{align*}
Combining the above estimates, we deduce
\begin{align*}
\int|\nabla v_r|^2\lesssim r^{2(1+s)}\|f\|_{\Ld^{2/(1-s)}}^2+r^{2s}|b|_{C^s}^2\|\nabla u\|_{\Ld^2(B(x_0,r))}^2.
\end{align*}
We are now in position to conclude exactly as in the classical proof of the Schauder estimates (see e.g.~\cite[Theorem~3.13]{Han-Lin-97}).
\end{proof}

We turn to the proof of Lemma~\ref{lem:reconstr}, concerning the reconstruction of $v$ from the knowledge of $\curl v$ and $\Div(av)$.

\begin{proof}[Proof of Lemma~\ref{lem:reconstr}]
We split the proof into two steps.

\medskip
\noindent\step1 Uniqueness.

We prove that at most one function $\delta v\in\Ld^2(\R^2)^2$ can be associated with a given couple $(\delta\omega,\delta\zeta)$. For that purpose, we assume that $\delta v\in \Ld^2(\R^2)^2$ satisfies $\curl\delta v=0$ and $\Div(a\delta v)=0$, and we deduce $\delta v=0$. By the Hodge decomposition in $\Ld^2(\R^2)^2$, there exist functions $\phi,\psi\in H^1_\loc(\R^2)$ such that $a\delta v=\nabla\phi+\nabla^\bot\psi$ with $\nabla\phi,\nabla\psi\in\Ld^2(\R^2)^2$. Now note that $\triangle \phi=\Div(a\delta v)=0$ and $\Div (a^{-1}\nabla\psi)+\curl(a^{-1}\nabla \phi)=\curl\delta v=0$, which implies $\nabla\phi=0$ and $\nabla\psi=0$, hence $\delta v=0$.

\medskip
\noindent\step2 Existence.
\nopagebreak

Given $\delta\omega,\delta\zeta\in\dot H^{-1}(\R^2)$,
we observe that $\nabla(\Div a^{-1}\nabla)^{-1}\delta\omega$ and $\nabla(\Div a\nabla)^{-1}\delta\zeta$ are well-defined in $\Ld^2(\R^2)^2$. The vector field
\begin{align*}
\delta v:=a^{-1}\nabla^\bot(\Div a^{-1}\nabla)^{-1}\delta\omega+\nabla(\Div a\nabla)^{-1}\delta\zeta
\end{align*}
is thus well-defined in $\Ld^2(\R^2)^2$, and trivially satisfies $\curl \delta v=\delta\omega$, $\Div(a\delta v)=\delta\zeta$. The additional estimate follows from Lemmas~\ref{lem:katoponce-1} and~\ref{lem:globellreg}(ii).
\end{proof}

Finally, we turn to the control on the pressure stated in Lemma~\ref{lem:pressure}.

\begin{proof}[Proof of Lemma~\ref{lem:pressure}]
In this proof, we use the notation $\lesssim$ for $\le$ up to a constant $C$ depending only on an upper bound on $\|(h,\Psi,\bar  v^\circ)\|_{\Ld^\infty}$.
Let $2<p_0,q_0\lesssim1$ and $r_0=p_0$ be as in Lemma~\ref{lem:globellreg}(i) (with $b$ replaced by $a$ or $a^{-1}$), and note that $q_0$ can be chosen large enough such that $\frac1{p_0}+\frac1{q_0}\le\frac12$. Assume that $\omega\in \Ld^\infty_\loc([0,T);\Pc\cap\Ld^{q_0}(\R^2))$ holds for this choice of the exponent $q_0$. By Lemma~\ref{lem:globellreg}(i), the function
\[P:=(-\Div a\nabla)^{-1}\Div(a\omega(-\alpha(\Psi+v)+\beta(\Psi+v)^\bot))\]
is well-defined in $\Ld^\infty_\loc([0,T);\Ld^{q_0}(\R^2))$ and satisfies for all $t\ge0$,
\begin{align*}
\|P^t\|_{\Ld^{q_0}}&\lesssim \|a\omega^t(-\alpha(\Psi+ v^t)+\beta(\Psi+ v^t)^\bot)\|_{\Ld^{2q_0/(2+q_0)}}\\
&\lesssim\|\Psi+\bar  v^\circ\|_{\Ld^\infty}\|\omega^t\|_{\Ld^{2q_0/(2+q_0)}}+\| v^t-\bar  v^\circ\|_{\Ld^{2}}\|\omega^t\|_{\Ld^{q_0}}\\
&\lesssim (1+\| v^t-\bar  v^\circ\|_{\Ld^2})\|\omega^t\|_{\Ld^1\cap\Ld^{q_0}}.
\end{align*}
Now note that the following Helmholtz-Leray type identity follows from the proof of Lemma~\ref{lem:reconstr}: for any vector field $F\in C^\infty_c(\R^2)^2$,
\begin{align}\label{eq:Helmholtz}
F=a^{-1}\nabla^\bot(\Div a^{-1}\nabla)^{-1}\curl F+\nabla(\Div a\nabla)^{-1}\Div (aF).
\end{align}
This implies in particular, for the choice $F=\omega\big(-\alpha(\Psi+ v)+\beta(\Psi+ v)^\bot\big)$,
\begin{eqnarray}
\lefteqn{a^{-1}\nabla^\bot(\Div a^{-1}\nabla)^{-1}\Div\big(\omega(\alpha(\Psi+ v)^\bot+\beta(\Psi+ v))\big)}\nonumber\\
&=&a^{-1}\nabla^\bot(\Div a^{-1}\nabla)^{-1}\curl\big(\omega(-\alpha(\Psi+ v)+\beta(\Psi+ v)^\bot)\big)\nonumber\\
&=&\omega\big(-\alpha(\Psi+ v)+\beta(\Psi+ v)^\bot\big)+\nabla P.\label{eq:linkpressure}
\end{eqnarray}
For $\phi\in C^\infty_c([0,T)\times\R^2)^2$, it follows from Lemma~\ref{lem:globellreg}(i) that $(\Div a^{-1}\nabla)^{-1}\curl (a^{-1}\phi)\in C^\infty_c([0,T);\Ld^{q_0}(\R^2))$ and that $\nabla(\Div a^{-1}\nabla)^{-1}\curl (a^{-1}\phi)\in C^\infty_c([0,T);\Ld^2\cap\Ld^{p_0}(\R^2))$. With the choice $\frac1{p_0}+\frac1{q_0}\le\frac12$, the $\Ld^{q_0}$-regularity of $\omega$ then allows to test the weak formulation of~\eqref{eq:limeqn1VF} (which defines weak solutions of~\eqref{eq:limeqn1}, cf.\@ Definition~\ref{defin:sol}(b)) against $(\Div a^{-1}\nabla)^{-1}\curl (a^{-1}\phi)$, to the effect of
\begin{multline*}
\int\omega^\circ(\Div a^{-1}\nabla)^{-1}\curl (a^{-1}\phi(0,\cdot))+\iint\omega(\Div a^{-1}\nabla)^{-1}\curl (a^{-1}\partial_t\phi)\\
=\iint \omega(\alpha(\Psi+ v)^\bot+\beta(\Psi+ v))\cdot\nabla(\Div a^{-1}\nabla)^{-1}\curl (a^{-1}\phi).
\end{multline*}
As by~\eqref{eq:Helmholtz} the constraint $\Div(a v)=0$ implies $ v=a^{-1}\nabla^\bot(\Div a^{-1}\nabla)^{-1}\omega$ and $ v^\circ=a^{-1}\nabla^\bot(\Div a^{-1}\nabla)^{-1}\omega^\circ$, and as by definition $\omega\in\Ld^\infty_\loc([0,T);\Ld^1\cap\Ld^{2}(\R^2))$, Lemma~\ref{lem:globellreg}(i) implies $ v\in\Ld^\infty_\loc([0,T);\Ld^{p_0}(\R^2)^2)$. We may then integrate by parts in the weak formulation above, which yields
\begin{align*}
\int \phi(0,\cdot)\cdot  v^\circ+\iint \partial_t\phi\cdot  v=-\iint a^{-1}\phi\cdot\nabla^\bot(\Div a^{-1}\nabla)^{-1}\Div(\omega(\alpha(\Psi+ v)^\bot+\beta(\Psi+ v))),
\end{align*}
and the result now directly follows from the decomposition~\eqref{eq:linkpressure}.
\end{proof}

\subsection*{Acknowledgements}
The work of the author is supported by F.R.S.-FNRS (Belgian National Fund for Scientific Research) through a Research Fellowship.
The author would like to thank his PhD advisor Sylvia Serfaty as well as two anonymous referees for valuable comments and suggestions on this work.

\bigskip
\bibliographystyle{plain}
\bibliography{biblio}

\bigskip
{\small (Mitia Duerinckx) {\sc Université Libre de Bruxelles (ULB), Brussels, Belgium, \& Laboratoire Jacques-Louis-Lions, Université Pierre et Marie Curie (UPMC), Paris, France}

{\it E-mail address:} mduerinc@ulb.ac.be
}

\medskip
{\small (Julian Fischer) {\sc Institute of Science and Technology Austria (IST Austria), Klosterneuburg, Austria}

{\it E-mail address:} julian.fischer@ist.ac.at
}

\end{document}